\documentclass{article}
\usepackage[margin=3cm]{geometry}

\usepackage{microtype}
\usepackage[dvipsnames]{xcolor}
\usepackage{amsmath,amssymb,amsthm,mathtools}
\usepackage{bm}
\usepackage{mathrsfs}
\usepackage{enumitem}
\usepackage[colorlinks=true,allcolors=blue]{hyperref}
\usepackage[nameinlink,capitalise,noabbrev]{cleveref}
\usepackage{xspace}

\numberwithin{equation}{section}

\theoremstyle{plain}
\newtheorem{theorem}{Theorem}[section]
\newtheorem{proposition}[theorem]{Proposition}
\newtheorem{lemma}[theorem]{Lemma}
\newtheorem{corollary}[theorem]{Corollary}

\theoremstyle{definition}
\newtheorem{assumption}[theorem]{Assumption}
\newtheorem{definition}[theorem]{Definition}

\theoremstyle{remark}
\newtheorem{remark}[theorem]{Remark}
\crefname{assumption}{Assumption}{Assumptions}
\Crefname{assumption}{Assumption}{Assumptions}

\newcommand{\restatementname}{}
\theoremstyle{plain}
\newtheorem*{restatementinner}{\restatementname}
\newenvironment{restatement}[3][]
  {%
    \renewcommand{\restatementname}{#2~\ref{#3}}%
    \begin{restatementinner}[#1]%
  }
  {%
    \end{restatementinner}%
  }

\newcommand\AverageSmallMatrix[1]{{%
\footnotesize\arraycolsep=0.22\arraycolsep\ensuremath{\begin{bmatrix}#1\end{bmatrix}}}}
\newcommand\SmallMatrix[1]{{%
\tiny\arraycolsep=0.5\arraycolsep\ensuremath{\begin{bmatrix}#1\end{bmatrix}}}}

\newcommand{\R}{\mathbb{R}}
\newcommand{\Prob}{\mathbb{P}}
\newcommand{\E}{\mathbb{E}} 
\newcommand{\N}{\mathbb{N}}
\newcommand{\F}{\mathcal{F}}
\newcommand{\dd}{\mathop{}\!\mathrm{d}}

\DeclarePairedDelimiterX{\ip}[2]{\langle}{\rangle}{#1,#2}

\newcommand\nablaof[1]{\nabla_{\hspace{-1pt}#1}}

\newcommand{\Int}{\textrm{Int}} 

\renewcommand{\P}{\mathcal{P}}

\newcommand{\U}{\mathcal{U}}

\newcommand{\C}{\mathcal{C}}
\newcommand{\sC}{\mathscr{C}}
\newcommand{\sD}{\mathscr{D}}

\newcommand{\bX}{\mathbb{X}}

\newcommand{\bB}{\mathbb{B}}
\newcommand{\mbX}{\mathbf{X}}

\newcommand{\mbB}{\mathbf{B}}

\newcommand\OCP[1]{\hyperref[OCP]{$\textbf{OCP}_{#1}$}\xspace}
\newcommand\SOCP[1]{\hyperref[SOCP]{$\textbf{SOCP}_{#1}$}\xspace}
\newcommand\PMP{\hyperref[PMP]{$\textbf{PMP}$}\xspace}
\newcommand{\ocp}{\hyperref[OCP]{\textbf{OCP}}\xspace}

\newcommand{\pmp}{\hyperref[PMP]{\textbf{PMP}}\xspace}

\newcommand{\ellone}[1]{#1}

\usepackage{fix-cm} 
\usepackage{multicol}
\usepackage{cite}
\usepackage[many]{tcolorbox}

\makeatletter
\newcommand{\customlabel}[2]{%
\protected@write\@auxout{}{\string \newlabel{#1}{{#2}{\thepage}{#2}{#1}{}}}%
\hypertarget{#1}{#2}
}
\makeatother

\usepackage[titles]{tocloft}
\title{
\bf
\fontsize{17pt}{24pt}\selectfont
\vspace{-8mm}
Unified Optimality Conditions for Stochastic Optimal Control in the Rough Path and It\^o Frameworks}

\author{Thomas Lew\footnote{Toyota Research Institute. Email: thomas.lew@tri.global}}

\begin{document}

\maketitle

\begin{abstract}
Stochastic differential equations (SDEs) can be studied via It\^o calculus and rough path theory. For stochastic optimal control, these two frameworks give distinct Pontryagin Maximum Principle (PMP) optimality conditions 
with forward-backward SDEs (FBSDEs) or rough differential equations. 
We show that the adjoint equations of the It\^o and rough PMPs  are connected via the conditional expectation
$$
p_t^{\textrm{It\^o}}=\E\big[p_t^{\textrm{rough}} \mid \mathcal{F}_t \big],
$$
where $\mathcal{F}_t$ represents information available at time $t$. 
First, we derive a rough stochastic PMP for problems with adapted controls that does not use FBSDEs. 
Its proof extends the rough stochastic PMP over deterministic controls by considering stochastic needle variations. 
Second, we derive a unified PMP connecting the It\^o and rough PMPs, 
using  It\^o-Stratonovich conversion formulas and duality identities between the forward tangent and backward adjoint SDEs. 
As a first application, we rederive the adjoint matching method for fine-tuning generative models. 
As a second application, we  propose an indirect shooting method for a class of feedback problems. 
Overall, these results give a new conditional bridge connecting two popular frameworks for stochastic optimal control.
\end{abstract}

\setcounter{tocdepth}{2}
\tableofcontents

\section{Introduction and Main Results}
Stochastic optimal control has many applications such as in 
finance, robotics, and generative modeling,  
and provides insights on dynamical systems and biological phenomena \cite{Berret2020}.
Studying the structure of optimal solutions can reveal low-dimensional representations and %
more efficient algorithms, such as dynamic programming \cite{Yong1999}, indirect shooting methods \cite{Lew2026}, and data-driven approximations \cite{E2017}.

Using It\^o calculus, %
solutions to stochastic optimal control problems can be characterized through Hamilton-Jacobi-Bellman  partial differential equations and the stochastic Pontryagin maximum principle (PMP) \cite{Yong1999}. A limitation of the classical It\^o framework is that it does not apply to non-semimartingale processes such as fractional Brownian motion (fBM). Also, the It\^o PMP is formulated using forward-backward stochastic differential equations (FBSDEs), where the forward equation describes the state trajectory, and the backward adjoint equation encodes the effect of future costs while remaining adapted to the information available at each time. This adapted backward representation is fundamental to the It\^o formulation, but the resulting FBSDE structure complicates the analysis and numerical resolution of nonlinear problems.

Stochastic optimal control has also been studied using rough path theory \cite{Lyons2002}, which enables a pathwise analysis that can handle non-semimartingale processes such as fBM and problems with anticipative controls \cite{Diehl2016,Horst2026}, with applications to robust filtering \cite{Allan2020} and reinforcement learning \cite{Ashkarian2026}. 
A difficulty, however, is that adaptedness and non-anticipativity  are not naturally encoded by a pathwise formulation. 
To handle problems with non-anticipative controls, duality methods add  and  optimize over suitable penalties for anticipative controls \cite{HADavis1992,Buckdahn2007,Rogers2007,Diehl2016,Bank2026}.   
Alternatively, problems with deterministic open-loop controls are studied in \cite{Lew2026}. This approach enables optimizing over parameterized non-anticipative controls, but it restricts the search space. %
\textit{Can we derive a PMP for adapted controls using rough path theory only, without restricting the control parameterization or penalizing anticipative controls?}

A second question arises when both It\^o and rough path theories apply. 
Despite their common goal, results obtained via these approaches appear in disjoint forms: It\^o and rough path PMPs have different Hamiltonians and adjoint equations  despite describing the same problem. 
\textit{In such settings, how are the optimality conditions from these two frameworks related?}

Recent work \cite{FrizControlled2024,Bank2026,Horst2026} combines both theories to study systems described by rough-It\^o SDEs
$
\dd x_t=b(t,x_t,u_t)\dd t+\sigma_1(t,x_t,u_t)\dd B_t+\sigma_2(t,x_t)\dd W_t
$,  
providing a useful generalization that gives It\^o or rough path optimality conditions if either $\sigma_1=0$ or $\sigma_2=0$. 
In this work, we do not consider SDEs driven simultaneously by rough and It\^o signals. 
Instead, we study problems that admit both an It\^o and an equivalent rough path formulation, and look for unified optimality conditions that relate the two frameworks. 
Compared to rough path duality methods \cite{Diehl2016,Bank2026}, we do not  penalize anticipative controls, and instead derive optimality conditions with a conditional expectation.

\paragraph{Problem setting and main results.} 
Let $B$ be a $d$-dimensional Brownian motion on a filtered probability space $(\Omega,\F,(\F_t)_{0\leq t\leq T},\Prob)$, and  the state  process solve the  It\^o SDE
\begin{equation}\label{eq:sde:ito}
\dd x_t=b(t,x_t,u_t)\dd t+\sigma(t,x_t)\dd B_t,
\qquad x_0=\bar x_0. 
\end{equation}
where $\sigma$ is independent of the control $u$. %
This state process also solves 
the rough SDE
\begin{equation}\label{eq:sde:rough}
\dd x_t=\mathbf b(t,x_t,u_t)\dd t+\sigma(t,x_t)\dd\mbB_t,
\qquad x_0=\bar x_0,
\end{equation} 
where $\mbB$ is the Stratonovich lift of $B$, and $\mathbf b$ is the corresponding rough (or Stratonovich) drift
\begin{equation}
\mathbf b(t,x,u)
:=
b(t,x,u)
-
\tfrac12 \nabla_x\sigma(t,x)\sigma(t,x),
\label{eq:rough-drift}
\end{equation}
see Section \ref{sec:background} for details.

We consider  stochastic optimal control problems (OCPs) 
\begin{align}
\label{OCP} 
\tag{\ocp}
&\begin{cases}
\min\limits_{u\in \mathcal U} 
	&\E\left[
\int_0^T f(t,x_t,u_t)\dd t+g(x_T)\right]
\\
\ \textrm{s.t.}   
&
\dd x_t = \mathbf b(t,x_t,u_t)\dd t +\sigma(t,x_t)\dd\mbB_t,
\quad t\in[0,T],
\\
&x_0=\bar{x}_0,
\\
&\E\left[h(x_T)\right]=0,
\end{cases}
\end{align}
where the control $u$ is in the class of non-anticipative, $\F$-adapted controls 
\begin{equation}
u\in \mathcal U := L^2_{\F}([0,T]\times\Omega,U)
\end{equation}
with values in $U\subseteq\R^m$, $f$ and $g$ are cost functions, and $h$ encodes terminal state constraints.

\paragraph{Rough PMP.}  %
Our first contribution is the following rough path optimality conditions for \OCP{}. 

\begin{tcolorbox}[
title={%
\textbf{Theorem \customlabel{thm:pmp:rough}{1.1}}\textbf{(Rough Stochastic Pontryagin Maximum Principle)}\phantom{$A^1_1$}
}, 
coltitle=black, 
boxrule=3pt,
colframe=blue!6, %
enhanced, colback=blue!2, boxsep=1pt, left=5pt, right=5pt] 
Define $T,\Omega,\F,\Prob,\mbB,U,\mathbf b,\sigma,f,g,h,\bar x_0$ as in  Assumption \ref{assum:pmp:rough}, 
where $\mbB$ is an enhanced Gaussian process. 
Define the Hamiltonian 
$\mathbf H:[0,T]\times\R^n\times\R^m\times\R^n\times\R\to\R$ by
$$
\mathbf H(t,x,u,\mathbf p,\mathfrak p_0)
=
\mathbf p^\top\mathbf b(t,x,u)
+
\mathfrak p_0 f(t,x,u).
$$ 
Let $(x,u)$ be an optimal solution to  \OCP{}. 
Then, there exist a rough
adjoint stochastic process $\mathbf p\in L^2(\Omega,C([0,T],\R^n))$  
and nontrivial multipliers $(\mathfrak p_0,\mathfrak p_1,\dots,\mathfrak p_r)\neq0$
with $\mathfrak p_0\in\{-1,0\}$, such that:
\begin{enumerate}[label=(\roman*)]
\item \textbf{Adjoint Equation}: 
for some initial conditions $\mathbf p_0\in L^2(\Omega,\R^n)$, 
$\mathbf p$ solves the rough SDE
\begin{align}
\label{eq:pmp:rough:adjoint}
\hspace{-6mm}
\dd \mathbf p_t
=
-\nabla_x\mathbf H(t,x_t,u_t,\mathbf p_t,\mathfrak p_0)\dd t
-
\nabla_x\sigma(t,x_t)^\top\mathbf p_t\dd\mbB_t,
\quad t\in[0,T].
\end{align}
\item \textbf{Transversality Condition}: 
almost surely, 
\begin{equation}
\label{eq:pmp:rough:transversality}
\mathbf p_T
=
\mathfrak p_0\nabla g(x_T)
+
\smash{
\sum_{i=1}^r
\mathfrak p_i\nabla h_i(x_T).
}
\end{equation}
\item \textbf{Maximality Condition}: 
for $(\dd t\otimes\Prob)$-almost every $(t,\omega)$, 
\begin{equation}
\label{eq:pmp:rough:maximality}
u_t
\in
\mathop{\arg\max}_{v\in U}\, 
\mathbf H
(
t,x_t,v,
\E[\mathbf p_t\mid\F_t],
\mathfrak p_0
).
\end{equation}
\end{enumerate} 
\end{tcolorbox} 

\noindent This result is proved using rough path theory only, so it avoids It\^o FBSDEs. 
Its proof extends the proof of the open-loop PMP  \cite{Lew2026} using stochastic needle variations. The result extends previous rough PMPs to  adapted controls and to problems with constraints $\E[h(x_T)]=0$, 
without penalizing anticipative controls like duality methods.  
The rough adjoint $\mathbf p$ is defined pathwise, backwards-in-time from $\mathbf p_T$, and encodes information about the entire path of $(x,\mbB)$. 
\textit{Taking the conditional expectation $\E[\mathbf p_t\mid\F_t]$ in the maximality condition resolves the anticipativeness of the rough adjoint $\mathbf p$} to give adapted controls.

\textit{Anticipative controls}: If we instead optimize over anticipative controls
$$
u\in \mathcal U^{\textrm{anticipative}} := L^2([0,T]\times\Omega,U),
$$
then Theorem \ref{thm:pmp:rough} holds with the maximality condition replaced with
\begin{equation}\label{eq:pmp:rough:maximality:anticipative}
u_t
\in
\mathop{\arg\max}\limits_{v\in U}
\mathbf H
\left(
t,x_t,v,\mathbf p_t,\mathfrak p_0
\right).
\end{equation}
Such pathwise optimality conditions were proven in \cite{Diehl2016}. Theorem \ref{thm:pmp:rough} states how to change the maximality condition to yield adapted controls, by replacing  $\mathbf p$ with its conditional expectation.

\paragraph{Unified PMP.} Our second contribution is showing that, for problems where both It\^o calculus and rough path theory apply, the It\^o and rough path PMPs are connected by a conditional projection.

\begin{tcolorbox}[
title={%
\textbf{Theorem \customlabel{thm:pmp}{1.2}}\textbf{(Unified Stochastic Pontryagin Maximum Principle (\PMP))}\phantom{$A^1_1$}
},
coltitle=black, 
boxrule=3pt,
colframe=blue!6, %
enhanced, colback=blue!2, boxsep=1pt, left=5pt, right=5pt] 
\label{PMP}
Define $T,\Omega,\F,\Prob,B,\mbB,U,b,\sigma,f,g,h,\bar x_0$ as in  Assumption \ref{assum:pmp} and  
the It\^o and rough Hamiltonians $H:[0,T]\times\R^n\times\R^m\times\R^n\times\R^{n\times d}\times\R\to\R$ 
and 
$\mathbf H:[0,T]\times\R^n\times\R^m\times\R^n\times\R\to\R$ by
\vspace{-3mm}
\begin{align*}
H(\scalebox{0.92}{$t,x,u,p,q,\mathfrak p_0$})
&=
p^\top b(\scalebox{0.92}{$t,x,u$})
+
\mathfrak p_0 f(\scalebox{0.92}{$t,x,u$})
+
\sum_{j=1}^d q^{j\top} \sigma^j(\scalebox{0.92}{$t,x$}),
\\[-1mm]
\mathbf H(\scalebox{0.92}{$t,x,u,\mathbf p,\mathfrak p_0$})
&=
\mathbf p^\top \mathbf b(\scalebox{0.92}{$t,x,u$})
+
\mathfrak p_0 f(\scalebox{0.93}{$t,x,u$}).
\end{align*}
Let $(x,u)$ be an optimal solution to \ocp.  
Then, there exist stochastic processes
$$(p,q)\in L^2_\F(\Omega,C([0,T],\R^n)) \times  L^2_\F([0,T]\times\Omega,\R^{n\times d}),
\qquad \qquad 
\mathbf p\in L^2(\Omega,C([0,T],\R^n)),
$$ 
called adjoint vectors, and nontrivial multipliers $(\mathfrak p_0,\mathfrak p_1,\dots,\mathfrak p_r)\neq0$
with $\mathfrak p_0\in\{-1,0\}$, such that:
\begin{enumerate}[label=(\roman*),leftmargin=6mm]

\item \textbf{Adjoint Equations}: 
$(p,q)$ and $\mathbf p$ solve the It\^o and rough SDEs
$$
\dd p_t
=
-\nabla_xH(\scalebox{0.9}{$t,x_t,u_t,p_t,q_t,\mathfrak p_0$})\dd t
+q_t\dd B_t,
\quad
\dd \mathbf p_t
=
-\nabla_x\mathbf H(\scalebox{0.9}{$t,x_t,u_t,\mathbf p_t,\mathfrak p_0$})\dd t
-
\nabla_x\sigma(\scalebox{0.9}{$t,x_t$})^\top\mathbf p_t\dd\mbB_t
$$
over $t\in[0,T]$. 
\textcolor{gray}{(see Theorem  \ref{thm:sdes:ito} and Theorem \ref{thm:sdes:rough})}

\item \textbf{Transversality Conditions}: almost surely,
\begin{multicols}{2}
\noindent
$$
p_T%
=
\mathfrak p_0\nabla g(x_T)
+
\smash{\sum_{i=1}^{r}}\,\mathfrak p_i\nabla h_i(x_T),
$$

\columnbreak

\noindent
$$ 
\mathbf p_T%
=
\mathfrak p_0\nabla g(x_T)
+
\smash{\sum_{i=1}^{r}}\,\mathfrak p_i\nabla h_i(x_T).
$$
\end{multicols}

\item \textbf{Maximality Conditions}: 
for $(\dd t\otimes\Prob)$-almost every $(t,\omega)$, 
\begin{multicols}{2}
\noindent
$$
u_t
\in
\smash{\mathop{\arg\max}_{v\in U}}\,
H(t,x_t,v,p_t,q_t,\mathfrak p_0),
$$

\columnbreak

\noindent
$$
u_t
\in
\smash{\mathop{\arg\max}_{v\in U}}\,
\mathbf H(t,x_t,v,\E[\mathbf p_t\, |\, \F_t],\mathfrak p_0).
$$
\end{multicols}
\item  \textbf{Conditional Bridge}:
for $(\dd t\otimes\Prob)$-almost every $(t,\omega)$, 
$$
p_t=\E\!\left[ \mathbf p_t \ \big|\ \F_t \right].
$$
\end{enumerate} 
\end{tcolorbox} 
\noindent 

\noindent
The conditional bridge (iv) %
shows that the conditional expectation $\E\!\left[\mathbf p_t \mid \F_t\right]$ in the rough PMP is precisely the It\^o adjoint $p_t$ when both frameworks apply, %
although the It\^o PMP defines an adapted adjoint pair $(p,q)$ through FBSDEs, %
and the rough PMP defines the adjoint $\mathbf p$ pathwise and depends on future information.  
As will be visible from the proof, the remaining differences between the Hamiltonians $H$ and $\mathbf H$ are due to the different chain rules of It\^o calculus and Stratonovich or rough path theory.

 We prove \pmp under Assumption \ref{assum:pmp}, which intersects standard assumptions for It\^o and rough path PMPs. The smoothness assumptions on the drift $b$ and diffusion $\sigma$ are more restrictive than usual assumptions for the It\^o PMP, due to the use of rough path theory. 
Conversely, the Brownian motion assumption restricts the use of non-martingale processes like fBM due to the use of It\^o calculus. These assumptions are only required to prove a unified PMP with the conditional bridge (iv).

\textit{Deterministic controls}: 
If we instead optimize over deterministic open-loop controls  $u\in L^2([0,T],U)$, then the optimality conditions   correspond to the PMP above with the same adjoint equations (i), the same transversality conditions (ii), and the maximality condition from \cite{Lew2026}
\begin{equation}\label{eq:maximality:openloop}
u_t\in
\mathop{\arg\max}_{v\in U}
\E\left[
\mathbf H(t,x_t,v,\mathbf p_t,\mathfrak p_0)
\right].
\end{equation}
On problems where Assumption \ref{assum:pmp} holds, \PMP{} applies with the same conditional bridge (iv) and $u_t\in
\operatorname*{arg\,max}_{v\in U}
\E\left[
H(t,x_t,v,p_t,q_t,\mathfrak p_0)
\right]$.

\paragraph{Paper outline.} The paper is organized as follows. %
\begin{itemize}[leftmargin=4mm]
\item In Section \ref{sec:background}, we provide background on It\^o SDEs and rough SDEs.
\item In Section \ref{sec:pmp_unified}, we prove the unified \PMP (Theorem \ref{thm:pmp}) using It\^o calculus and rough path theory. 
We consider the It\^o and rough tangent vectors $v$ and $\mathbf v$ that solve the It\^o and rough linearized SDEs
\begin{align*}
\dd v_t
=\nabla b(\scalebox{0.95}{$t,x_t,u_t$})v_t\dd t
+
\nabla\sigma(\scalebox{0.95}{$t,x_t$})v_t\dd B_t
\quad\text{and}\quad
\dd \mathbf{v}_t
=\nabla \mathbf b(\scalebox{0.95}{$t,x_t,u_t$})\mathbf{v}_t\dd t
+
\nabla\sigma(\scalebox{0.95}{$t,x_t$})\mathbf{v}_t\dd\mbB_t,
\end{align*}
that define the same forward tangent process $v=\mathbf v$. 
Then, we pair them with their respective adjoint processes $p$ and $\mathbf p$ via the duality identities
$$
\ip{p_t}{v_t} =  \E\left[\ip{p_T}{v_T}\,\big|\,\F_t\right] 
\quad\text{and}\quad
\ip{\mathbf p_t}{\mathbf v_t} =  \ip{\mathbf p_T}{\mathbf v_T}.
$$
Since the tangent processes $v$ and $\mathbf v$ are the same, we deduce the conditional bridge 
$$p_t =  \E\left[\mathbf p_t\,\big|\,\F_t\right],$$ which is the identity (iv) of \PMP. 
The remaining identities follow from the classical It\^o PMP. 
\item In Section \ref{sec:pmp_rough}, we prove the  rough stochastic PMP (Theorem \ref{thm:pmp:rough}) without using It\^o calculus. This result applies under weaker assumptions than Assumption \ref{assum:pmp}, and allows handling more general stochastic processes that cannot be handled by It\^o calculus like fBM. It also shows how to extend the rough stochastic PMP for deterministic open-loop controls \cite{Lew2026} to the setting with non-anticipative controls via stochastic needle variations, and where the proof differs from the standard It\^o PMP.

\item In Section \ref{sec:applications}, we provide  applications. 
\begin{enumerate}[leftmargin=5mm]
\item In Section \ref{sec:applications:example},   an example shows the difference between adapted and anticipative solutions.
\item In Section \ref{sec:applications:adjoint_matching}, we rederive the adjoint matching method \cite{Domingo2025} for generative modeling. 
These derivations, inspired by \cite{Domingo2026}, show that adjoint matching is a clever application of the method of successive approximations \cite{Chernousko1982} using \PMP{} that allows replacing  $\E[\mathbf p_t\,|\,\F_t]$ with $\mathbf p_t$.
\item In Section \ref{sec:applications:shooting}, we explore the design of a shooting method for solving feedback problems. The method resembles the method of \cite{Lew2026} for optimizing over parameterized feedback controls, but   empirically shows improved robustness thanks to a simpler adjoint equation.
\end{enumerate}
\item Appendix \ref{sec:appendix:additional_proofs}, we provide additional details about It\^o-Stratonovich  conversions.  In Appendix \ref{sec:pmp_ito}, we give a standard proof of the It\^o stochastic PMP, for completeness and    to highlight differences with the rough path approach. In Appendix \ref{sec:proofs:rough_paths}, we provide additional proofs for rough SDEs.
\end{itemize}

\paragraph{Outlook.} The conditional bridge of \PMP{} connects FBSDE optimality conditions from It\^o calculus and pathwise optimality conditions from rough path theory, raising interesting questions. 
First, future work could explore how to  approximate  the conditional expectation $\E[\mathbf p_t\mid \F_t]$ (as done initially in Section \ref{sec:applications:shooting}), or find other methods and problems that do not require evaluating it (as in adjoint matching, see Section \ref{sec:applications:adjoint_matching}), leading to new algorithms and applications. 
Second, deriving guarantees for the proposed shooting method (e.g., asymptotic convergence as the sample size increases) remains an open problem. 
Third,  extending the rough PMP to the case where
the diffusion $\sigma$ depends on the control remains challenging, as it can lead to degenerate formulations with irregular
controls \cite{Diehl2016,Allan2020} and rough path theory relies on smooth-in-time coefficients.

\paragraph{Acknowledgments.} We thank Riccardo Bonalli for his helpful feedback on the manuscript.

\paragraph{Use of Large Language Models.}
LLMs were used to review grammar and proofs, help extend a proof of the bridge for the case with state-independent diffusion $\sigma(t)$ to the case with diffusion $\sigma(t,x)$, 
find the counter-example in Remark \ref{remark:rough_adjoint_integrability}, 
refine the definition of Lebesgue times, 
sketch the argument in Remark \ref{remark:pmp:rough:localization}, 
iterate on the example in Section \ref{sec:applications:example}, and 
help extend code from the open-loop shooting method \cite{Lew2026}.
The author wrote every statement and takes full responsibility for the paper and code.

\section{Background}\label{sec:background}
We  summarize useful results in It\^o calculus \cite{Yong1999,LeGall2016} and rough path theory \cite{Allan2021,Friz2010,Friz2020}.

\textbf{Notations}. %
Given $a,b\in\R^n$, the inner product $\ip{a}{b}=a^\top b:=\sum_{i=1}^na_ib_i$. 
The norm $\|\cdot\|$ denotes the Euclidean norm for vectors and the corresponding operator norm for linear maps. The Kronecker product is denoted by $\otimes$. %

Given a continuously differentiable map $F:\R^n\to\R^N$, we denote by $\nabla_xF(x)\in\R^{N\times n}$ its Jacobian, with 
$
[\nabla_xF(x)]_{ij}
=
\frac{\partial F_i}{\partial x_j}(x)
$. 
Given $\sigma(t,x)=\left(
\sigma^1(t,x),\cdots,\sigma^d(t,x)
\right)\in\R^{n\times d}$, for $v,p\in\R^n$, we define
\begin{align}
\nabla_x\sigma(t,x)v
&:=
\big(
\nabla_x\sigma^1(t,x)v,\cdots,\nabla_x\sigma^d(t,x)v
\big)
\in\R^{n\times d},
\\
\nabla_x\sigma(t,x)^\top p
&:=
\big(
\nabla_x\sigma^1(t,x)^\top p,\cdots,\nabla_x\sigma^d(t,x)^\top p
\big)
\in\R^{n\times d},
\\
\nabla_x\sigma(t,x)\sigma(t,x) 
&:=
\Bigg(
\sum_{j=1}^d
\sum_{k=1}^n
\frac{
\partial \sigma^{1j}}{\partial x^k}\sigma^{kj}
,\cdots,
\sum_{j=1}^d
\sum_{k=1}^n
\frac{
\partial \sigma^{nj}}{\partial x^k}\sigma^{kj}
\Bigg)
\in\R^n,
\label{eq:nablasigmasigma}
\end{align}
where the last equation is used to concisely define the rough drift $\mathbf b:=b-\tfrac12\nabla\sigma\sigma$ in \eqref{eq:rough-drift}. 
The space $C(E,F)$ consists of the continuous maps from $E$ to $F$. The space $C_b^k(E,F)$ consists of the $k$-times continuously differentiable maps from $E$ to $F$ that are bounded and whose derivatives up to order $k$ are bounded, with corresponding norm $\|f\|_{C_b^n}:=\|f\|_\infty+\|\nabla f\|_\infty+\dots+\|\nabla^n f\|_\infty<\infty$.

Let $T>0$. For any interval $I=[s,t]\subseteq[0,T]$, we write $|I|:=|t-s|$ and $I^2:=I\times I$. Given a path $X:[0,T]\to\R^n$, its increments are denoted by $X_{s,t}:=X_t-X_s$ for any $s,t\in[0,T]$. 

\textbf{Stochastic processes}. Throughout the paper,  $(\Omega,\F,(\F_t)_{0\leq t\leq T},\Prob)$ denotes a filtered probability space whose filtration $(\F_t)_{0\leq t\leq T}$ is complete and
right-continuous. 
For $\ell\geq1$, we denote by
\begin{itemize}[leftmargin=4mm]
\item $L^\ell(\Omega,\R^n)$ the space of measurable maps $\xi:\Omega\to\R^n$ (called random variables) satisfying $\E[\|\xi\|^\ell]{<}\infty$,
and $L_{\F_t}^\ell(\Omega,\R^n)$ the space of random variables $\xi\in L^\ell(\Omega,\R^n)$ that are $\F_t$-measurable,
\item $L^\ell(\Omega,C([0,T],\R^n))$ the space of measurable maps
$X:\Omega\to C([0,T],\R^n)$ (called stochastic processes) satisfying
$
\|X\|_{L^\ell(\Omega,C([0,T],\R^n))}:=\E[\sup_{0\leq t\leq T}\|X_t\|^\ell]^{1/\ell}<\infty,
$  %
\item $L^\ell_\F(\Omega,C([0,T],\R^n))$ the space of adapted processes in
$L^\ell(\Omega,C([0,T],\R^n))$, which are progressively measurable,
\item $L^\ell_\F([0,T]\times\Omega,\R^n)$ the space of progressively measurable processes $X$ satisfying
$
\E\big[\int_0^T\|X_t\|^\ell\dd t\big]<\infty
$.
\end{itemize}
Given a Borel set $U\subseteq\R^m$, the spaces of $U$-valued processes like $L^\ell_\F([0,T]\times\Omega,U)$  are defined similarly. 

\textbf{Conditional expectation}. 
Given a random variable $X\in L^1(\Omega,\R^n)$, the conditional expectation $\E[X\mid\F_t]$ is defined componentwise. If $X\in L^2(\Omega,\R^n)$ and
$Z\in L^2_{\F_t}(\Omega,\R^n)$, the conditional expectation satisfies $\E[\ip{X}{Z}]
=
\E\left[\ip{\E[X\mid\F_t]}{Z}\right]$. 
An $\F$-adapted stochastic process $X$ satisfying
$X_t\in L^1(\Omega,\R^n)$ for all $t\in[0,T]$ is called a martingale if $$\E\left[X_t \,|\, \F_s\right]=X_s$$ for all $0\leq s\leq t\leq T$ almost surely.

\subsection{It\^o and Stratonovich SDEs} 
Let $(\Omega,\F,(\F_t)_{0\le t\le T},\Prob)$ be a filtered probability space with a $d$-dimensional Brownian
motion $B=(B^1,\dots,B^d)$, where 
$(\F_t)_{0\leq t\leq T}$ is the usual augmentation of the filtration
generated jointly by $B$ and by an initial sigma algebra %
independent of $B$. %

Let $\ell\geq2$ and $X=(X^1,\dots,X^d)\in L^\ell_\F([0,T]\times\Omega,\R^{n\times d})$. 
The It\^o integral of $X$ against $B$ is the stochastic process $y$ denoted as
\begin{equation}
y_t=\int_0^tX_s\dd B_s=\sum_{i=1}^d
\int_0^tX_s^i\dd B^i_s.
\end{equation}
The It\^o integral satisfies\footnote{By the Burkholder-Davis-Gundy inequality, $
\E[\sup_{0\leq t\leq T}\|y_t\|^\ell]
\leq
C_\ell
\E[(
\int_0^T\|X_t\|^2\dd t)^{\ell/2}]
\leq
C_{\ell,T}\E[\int_0^T\|X_t\|^\ell\dd t]<\infty$.} $y\in L^\ell_\F(\Omega,C([0,T],\R^n))$ and is a martingale. 

Conversely, by the martingale representation theorem %
\cite[Theorem 5.18]{LeGall2016}, any martingale $y\in L^\ell_\F([0,T]\times\Omega,\R^n)$ 
can be represented as an It\^o integral
\begin{equation}\label{eq:martingale_representation}
y_t=y_0+\int_0^t Q_s\dd B_s
\end{equation}
for some process $Q\in L^2_\F(\Omega\,{\times}\,[0,T],\R^{n\times d})$. 
This representation is used to derive the It\^o adjoint FBSDE. %
Given initial conditions $\bar y_0\in L^2_{\F_0}(\Omega,\R^n)$ and coefficients $(b,\sigma)$, we say that a stochastic process %
$y$ solves the It\^o SDE
\begin{equation*}
\dd y_t
=
b(t,y_t)\dd t
+
\sigma(t,y_t)\dd B_t,
\qquad
y_0=\bar y_0,
\end{equation*}
if $y_t=\bar y_0+\int_0^tb(s,y_s)\dd s+\int_0^t\sigma(s,y_s)\dd B_s$ for all $t\in[0,T]$ almost surely.

The assumption below is standard to prove the It\^o  PMP (see e.g.  \cite{BonalliLewESAIM2022}).
\begin{assumption}[Assumptions for It\^o PMP]
\label{assum:pmp:ito}
Let 
$T>0$, 
$U\subseteq\R^m$,    
$(\Omega,\F,(\F_t)_{0\le t\le T},\Prob)$ be a filtered probability space and $B$ be a Brownian
motion as defined above.
\begin{itemize}[leftmargin=6mm]
\item The initial conditions $\bar x_0$ satisfy $\bar x_0\in L^2_{\F_0}(\Omega,\R^n)$.
\item The drift  $b:[0,T]\times\R^n\times U\to\R^n$ and diffusion $\sigma:[0,T]\times\R^n\to\R^{n\times d}$ satisfy, for some $C>0$:
\begin{enumerate}[label=(\roman*),leftmargin=6mm]
\item 
$b(\cdot, x, u):[0,T]\to\R^n$  and $\sigma(\cdot,x):[0,T]\to\R^{n\times d}$ are  measurable for all $(x,u)\in\R^n\times U$,  
\item  
$b(t,\cdot,\cdot):\R^n\times U\to\R^n$   and $\sigma(t,\cdot):\R^n\to\R^{n\times d}$
are continuous for almost every $t\in[0,T]$,
\item $\|b(t,0,u)\|+\|\sigma(t,0)\|\leq k(t)$ for some map $k\in L^2([0,T],\R_+)$  and  all $u\in U$.
\item 
$b(t,\cdot,u):\R^n\to\R^n$ and $\sigma(t,\cdot):\R^n\to\R^{n\times d}$ are continuously differentiable  for almost every $t\in[0,T]$  and  all $u\in U$,  
\item 
$\big\|\nablaof{x}b(t,x,u)\big\|+\big\|\nablaof{x}\sigma(t,x)\big\| \leq C$
and 
$\big\|\nablaof{x}b(t,x,u)-\nablaof{x}b(t,\tilde{x},u)\big\|
+
\big\|\nablaof{x}\sigma(t,x)-\nablaof{x}\sigma(t,\tilde{x})\big\|
\leq C\|x-\tilde{x}\|$  
for almost every $t\in[0,T]$, all $x,\tilde{x}\in\R^n$, and all $u\in U$,
\item %
$\|b(t,x,u)-b(t,x,\tilde{u})\|\leq C\|u-\tilde{u}\|$ for almost every $t\in[0,T]$, all $x\in\R^n$, and all $u,\tilde{u}\in U$. 
\end{enumerate} 
\end{itemize}
\end{assumption} 

In particular, Assumption \ref{assum:pmp:ito} ensures the existence and uniqueness of solutions to It\^o SDEs. 
\begin{theorem}[It\^o SDEs 
{(\hspace{-1pt}\cite[Chap. 1, Thm. 6.3]{Yong1999} 
\cite[Chap. 1, Thm. 6.14]{Yong1999}  
\cite[Chap. 7, Thm. 2.2]{Yong1999})}]
\label{thm:sdes:ito}

Under Assumption \ref{assum:pmp:ito}, let $u\in L^2_\F([0,T]\times\Omega,U)$.

\textit{(a) Nonlinear SDEs}:
The SDE
\begin{equation}
\tag{\ref{eq:sde:ito}}
\dd x_t
=
b(t,x_t,u_t)\dd t
+
\sigma(t,x_t)\dd B_t,
\qquad
x_0=\bar x_0,
\end{equation}
has a unique solution (up to indistinguishability), and
$x\in L^2_\F(\Omega,C([0,T],\R^n))$.

\textit{(b) Linear SDEs}:
Let $A\in L^\infty_\F([0,T]\times\Omega,\R^{n\times n})$, $\Sigma\in L^\infty_\F([0,T]\times\Omega,\R^{n\times d\times n})$, 
$t_1\in[0,T]$,  
$\bar v_{t_1}\in L^2_{\F_{t_1}}(\Omega,\R^n)$. 
The SDE\footnote{With $\Sigma=(\Sigma^1,\dots,\Sigma^d)$ where $\Sigma^1,\dots,\Sigma^d\in L^\infty_\F([0,T]\times\Omega,\R^{n\times n})$, the It\^o integral term is $\Sigma_tv_t\dd B_t=\sum_{i=1}^d\Sigma^i_tv_t\dd B^i_t$.}
\begin{equation}
\label{eq:sde:ito_linear}
\dd v_t
=
A_tv_t\dd t
+
\Sigma_tv_t\dd B_t,
\qquad
v_{t_1}=\bar v_{t_1},
\end{equation}
has a unique solution, and
$v\in L^2_\F(\Omega,C([t_1,T],\R^n))$. Moreover, 
$
v_t = 
\phi_t\psi_{t_1}\bar v_{t_1}
$ for any $t\in[t_1,T]$,
where $\phi,\psi\in L^\ell_\F(\Omega,C([0,T],\R^{n\times n}))$ for any $\ell\in [2,+\infty)$ with $\phi(t)=\psi^{-1}(t)$  solve the matrix-valued SDEs
\begin{equation}\label{eq:ito_sde:matrix}
\dd \phi_t=A_t\phi_t\dd t+\Sigma_t\phi_t\dd B_t, 
\ \  
\phi_0=I,
\qquad\quad
\dd \psi_t=-\psi_t(A_t-\Sigma_t^2)\dd t-\psi_t\Sigma_t\dd B_t,
\ \ 
\psi_0=I.
\end{equation}

\textit{(c) Linear BSDEs}: 
Let $\bar p_T\in L^2_{\F_T}(\Omega,\R^n)$ and $r\in L^2_\F([0,T]\times\Omega,\R^n)$. 
The backward SDE (BSDE)\footnote{With $q=(q^1,\dots,q^d)$ where $q^1,\dots,q^d\in L^2_\F([0,T]\times\Omega,\R^n)$, the drift term contains $\Sigma_t^\top q_t=\sum_{j=1}^d \Sigma_t^j q_t^j$ and the It\^o integral term is $q_t\dd B_t=\sum_{i=1}^dq^i_t\dd B^i_t$.}
\begin{align}
\label{eq:sde:ito_bsde}
\dd p_t
&=
-\left(
A_t^\top p_t
+r_t
+
\Sigma_t^\top q_t
\right)\dd t
+
q_t\dd B_t,
\qquad
p_T=\bar p_T
\end{align}
has a unique solution $(p,q)\in  L^2_\F(\Omega,C([0,T],\R^n)) \times  L^2_\F([0,T]\times\Omega,\R^{n\times d})$.
\end{theorem}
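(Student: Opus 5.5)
This theorem collects classical results cited from \cite{Yong1999}, so the plan is to verify that Assumption \ref{assum:pmp:ito} puts us in the setting of those references. I would supply short arguments only where the statement adds something, mainly the $L^\ell$ bounds on $\phi,\psi$ and the identity $\psi=\phi^{-1}$.

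For (a), Assumption \ref{assum:pmp:ito}(v) gives uniform Lipschitz continuity in $x$. Assumption \ref{assum:pmp:ito}(iii) gives $\|b(t,x,u_t)\|+\|\sigma(t,x)\|\le k(t)+C\|x\|$ with $k\in L^2$. Since $u$ is progressively measurable, the random coefficients $\tilde b(t,x,\omega)=b(t,x,u_t(\omega))$ are progressively measurable, using the measurability in (i) and the continuity in (ii), which make $b$ Carathéodory. The standard Picard iteration in $L^2_\F(\Omega,C([0,T],\R^n))$ then gives existence and uniqueness: combine Doob's/BDG inequality for the martingale part with Gronwall. This is \cite[Chap.~1, Thm.~6.3]{Yong1999}.

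For (b), the coefficients $A,\Sigma$ are bounded and adapted, so (a) applies with Lipschitz constant $\|A\|_\infty+\|\Sigma\|_\infty$. This gives a unique solution on $[t_1,T]$ started from $\bar v_{t_1}$, and the same holds for the matrix equations \eqref{eq:ito_sde:matrix}. The $L^\ell$ bounds for every $\ell\ge2$ follow from BDG applied to $\|\phi_t\|^\ell$ plus Gronwall, since the initial condition $I$ is deterministic. To show $\psi_t\phi_t=I$, I apply It\^o's product rule:
\[
\dd(\psi_t\phi_t)=\dd\psi_t\,\phi_t+\psi_t\,\dd\phi_t+\dd\langle\psi,\phi\rangle_t .
\]
The covariation term is $-\psi_t\Sigma_t^2\phi_t\dd t$, which is exactly cancelled by the $+\psi_t\Sigma_t^2\phi_t\dd t$ contained in the first term, where $\Sigma_t^2:=\sum_i(\Sigma^i_t)^2$. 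The $A$-terms and the $\dd B$-terms cancel directly. Hence $\psi\phi\equiv I$, so $\phi_t$ is invertible with $\phi_t^{-1}=\psi_t$.

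Next, set $w_t=\phi_t\psi_{t_1}\bar v_{t_1}$ for $t\ge t_1$. The factor $\psi_{t_1}\bar v_{t_1}$ is $\F_{t_1}$-measurable, so it can be pulled inside the stochastic integral on $[t_1,t]$. Therefore $w$ solves \eqref{eq:sde:ito_linear} with $w_{t_1}=\bar v_{t_1}$, and uniqueness gives $v=w$. The main technical point here is integrability: $\psi_{t_1}\bar v_{t_1}$ is only guaranteed to be in $L^{2-\epsilon}$ by H\"older. I would sidestep this by using uniqueness from (a), which gives $v\in L^2$ directly, and by checking the product representation through localization (stopping times where $\|\psi\|$ is bounded) before passing to the limit.

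For (c), I would invoke the linear BSDE theory of \cite[Chap.~7, Thm.~2.2]{Yong1999}. The generator $(p,q)\mapsto A_t^\top p+\Sigma_t^\top q+r_t$ is uniformly Lipschitz because $A,\Sigma$ are bounded, and $r\in L^2_\F$. Existence can also be sketched directly, and this is the step I expect to require the most care, since one must close a fixed point on $(p,q)$ jointly. For fixed $(P,Q)$, define $M_t=\E[\bar p_T+\int_0^T(A_s^\top P_s+\Sigma_s^\top Q_s+r_s)\dd s\mid\F_t]$. The martingale representation \eqref{eq:martingale_representation} writes $M_t=M_0+\int_0^tq_s\dd B_s$, and we set $p_t=M_t-\int_0^t(\cdots)\dd s$. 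This map is a contraction in the norm $\E[\int_0^Te^{\beta t}(\|p_t\|^2+\|q_t\|^2)\dd t]$ for $\beta$ large, by It\^o's formula applied to $e^{\beta t}\|p_t-p'_t\|^2$. Uniqueness follows from the same estimate, and continuity of $p$ together with $p\in L^2_\F(\Omega,C)$ follows from Doob's inequality.
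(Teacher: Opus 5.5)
Your proposal is correct and follows the paper's approach: the paper gives no separate proof and simply cites Yong--Zhou for (a)--(c). Your supplementary checks are the standard ones and are sound, namely the covariation cancellation giving $\dd(\psi_t\phi_t)=0$, the BDG--Gronwall moment bounds for $\phi,\psi$, and the weighted-norm contraction built on \eqref{eq:martingale_representation}; for the integrability issue you flag in $v_t=\phi_t\psi_{t_1}\bar v_{t_1}$, the clean route is pathwise uniqueness of Lipschitz SDEs among all continuous adapted solutions (stop when $\|v_t-\phi_t\psi_{t_1}\bar v_{t_1}\|$ exceeds $N$, apply Gronwall, let $N\to\infty$), rather than bounding $\|\psi\|$, which by H\"older still only places $\phi_t\psi_{t_1}\bar v_{t_1}$ in $L^{2-\epsilon}$.
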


\textbf{It\^o's formula}.
Let $x$ and $y$ solve the It\^o SDEs 
$
x_t=x_0+\int_0^ta_s\dd s+\int_0^t\sum_{j=1}^d\alpha_s^j\dd B_s^j
$ and $
y_t=y_0+\int_0^tb_s\dd s+\sum_{j=1}^d\beta_s^j\dd B_s^j
$. Then, It\^o's formula \cite[Theorem 5.10]{LeGall2016} states that 
\begin{equation}
\label{eq:ito_formula}
\ip{x_t}{y_t}
=
\ip{x_0}{y_0}
+
\int_0^t
\bigg(
\ip{a_s}{y_s}
+
\ip{x_s}{b_s}
+
\sum_{j=1}^d\ip{\alpha_s^j}{\beta_s^j}
\bigg)\dd s
+
\int_0^t
\sum_{j=1}^d
\left(
\ip{\alpha_s^j}{y_s}
+
\ip{x_s}{\beta_s^j}
\right)\dd B_s^j.
\end{equation}

\textbf{It\^o to Stratonovich conversion}. 
Let $\ell\geq 2$ and $X=(X^1,\ldots,X^d)\in L^\ell_\F([0,T]\times\Omega,\R^{n\times d})$ be a continuous semimartingale \cite[Definition 4.19]{LeGall2016}. The Stratonovich
integral of $X$ against $B$ is defined by
\begin{equation*}
y_t
=
\int_0^tX_s\circ\dd B_s
:=
\sum_{j=1}^d
\left(
\int_0^tX_s^j\dd B_s^j
+\frac12[X^j,B^j]_t
\right),
\end{equation*}
where the quadratic covariation $[X^j,B^j]_t:=\lim_{|\pi|\to 0}\sum_{[u,v]\in\pi}X^j_{u,v}B^j_{u,v}$ %
is a well-defined limit in probability over partitions $\pi$ of $[0,t]$.  
This integral is generally not a martingale.  
Given initial conditions $\bar y_0\in L^2_{\F_0}(\Omega,\R^n)$ and coefficients $(b,\sigma)$, we say that a stochastic process %
$y$ solves the Stratonovich SDE
\begin{equation}
\label{eq:sde:background_stratonovich}
\dd y_t
=
b(t,y_t)\dd t
+
\sigma(t,y_t)\circ\dd B_t,
\qquad
y_0=\bar y_0,
\end{equation}
if $y_t=\bar y_0+\int_0^tb(s,y_s)\dd s+\int_0^t\sigma(s,y_s)\circ\dd B_s$ for all $t\in[0,T]$ almost surely. 
The following standard result connects solutions to It\^o and Stratonovich SDEs. Its proof is in Appendix \ref{sec:ito_stratonovich_conversion:details} for completeness.

\begin{proposition}[It\^o-Stratonovich conversion]
\label{prop:ito_stratonovich_conversion}
Under Assumption \ref{assum:pmp:ito}, let $u\in L^2_\F([0,T]\times\Omega,U)$. 
Assume also that
$\sigma\in C_b^2([0,T]\times\R^n,\R^{n\times d})$. Define  the Stratonovich drift $\mathbf b:[0,T]\times\R^n\times U\to\R^n$ by 
\begin{equation}
\tag{\ref{eq:rough-drift}}
\mathbf b(t,x,u)
:=
b(t,x,u)
-
\tfrac12 \nabla_x\sigma(t,x)\sigma(t,x),
\end{equation}
with 
$\nabla_x\sigma(t,x)\sigma(t,x)$ as in \eqref{eq:nablasigmasigma}.
 Then, $x\in L^2_\F(\Omega,C([0,T],\R^n))$  solves the It\^o SDE 
\begin{equation}
\tag{\ref{eq:sde:ito}}
\dd x_t
=
b(t,x_t,u_t)\dd t
+
\sigma(t,x_t)\dd B_t,
\qquad\ \ \ 
x_0=\bar x_0,
\end{equation}
if and only if it solves the Stratonovich SDE
\begin{equation}
\label{eq:background:stratonovich_sde}
\dd x_t
=
\mathbf b(t,x_t,u_t)\dd t
+
\sigma(t,x_t)\circ\dd B_t,
\qquad
x_0=\bar x_0.
\end{equation}
\end{proposition}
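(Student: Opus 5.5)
The plan is to show that for any continuous semimartingale solution $x\in L^2_\F(\Omega,C([0,T],\R^n))$ of either equation, the Stratonovich integral of $\sigma(t,x_t)$ against $B$ differs from the It\^o integral by exactly $\tfrac12\int_0^t\nabla_x\sigma(s,x_s)\sigma(s,x_s)\dd s$. Both directions then follow at once, since this correction is exactly the difference between $b$ and $\mathbf b$ in \eqref{eq:rough-drift}. By the definition of the Stratonovich integral,
\begin{equation*}
\int_0^t\sigma(s,x_s)\circ\dd B_s=\int_0^t\sigma(s,x_s)\dd B_s+\tfrac12\sum_{j=1}^d[\sigma^j(\cdot,x_\cdot),B^j]_t,
\end{equation*}
so everything reduces to computing the covariations $[\sigma^j(\cdot,x_\cdot),B^j]_t$.

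First I check that the process $X_t:=\sigma(t,x_t)$ is a continuous semimartingale lying in $L^\ell_\F$, so that the Stratonovich integral is defined. Boundedness of $\sigma\in C_b^2$ gives the integrability. For the semimartingale property, I use that in either formulation $x$ is a continuous semimartingale whose martingale part is $\int_0^t\sigma(s,x_s)\dd B_s$. In the Stratonovich case this holds because the covariation term has finite variation. Since $\sigma\in C^2_b$ jointly in $(t,x)$, It\^o's formula applied componentwise gives
\begin{equation*}
\dd\sigma^{ij}(t,x_t)=(\text{finite-variation terms})\dd t+\sum_{k=1}^n\frac{\partial\sigma^{ij}}{\partial x^k}(t,x_t)\sum_{l=1}^d\sigma^{kl}(t,x_t)\dd B^l_t.
\end{equation*}
The finite-variation terms involve $\partial_t\sigma$, $\nabla_x\sigma\, b$ (or $\nabla_x\sigma\,\mathbf b$) and second derivatives.

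The covariation with $B^j$ sees only the martingale part. Using $[B^l,B^j]_t=\delta_{lj}t$, I get
\begin{equation*}
[\sigma^{ij}(\cdot,x_\cdot),B^j]_t=\int_0^t\sum_{k=1}^n\frac{\partial\sigma^{ij}}{\partial x^k}(s,x_s)\sigma^{kj}(s,x_s)\dd s.
\end{equation*}
Summing over $j$ reproduces the $i$-th component of $\nabla_x\sigma\sigma$ in \eqref{eq:nablasigmasigma}. Substituting into the integral form of either SDE, the identity $x_t=\bar x_0+\int_0^t b\dd s+\int_0^t\sigma\dd B_s$ becomes $x_t=\bar x_0+\int_0^t\mathbf b\dd s+\int_0^t\sigma\circ\dd B_s$, and conversely, almost surely for all $t$ by continuity.

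The main obstacle is the converse direction, where $x$ is only assumed to solve the Stratonovich SDE. There I must justify beforehand that $x$ is a semimartingale with the stated martingale part. I would handle this by noting that the Stratonovich SDE presupposes that $\sigma(t,x_t)$ is a semimartingale, so the covariation term is a continuous finite-variation process. Hence $x$ is a semimartingale with martingale part $\int\sigma(s,x_s)\dd B_s$, and the same computation applies.

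A secondary technical point is that $\sigma$ may be only measurable in $t$ under Assumption \ref{assum:pmp:ito}. The $C_b^2([0,T]\times\R^n)$ hypothesis is what makes It\^o's formula legitimate in both variables. Alternatively, $t$ can be treated as an extra state coordinate, which contributes no covariation.
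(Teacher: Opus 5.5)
Your proposal is correct and follows essentially the same route as the paper: apply It\^o's formula to $\sigma^{ij}(t,x_t)$, read off $[\sigma^{ij}(\cdot,x_\cdot),B^j]_t=\int_0^t\sum_{k}\partial_{x^k}\sigma^{ij}\sigma^{kj}\dd s$ from the martingale part $\int\sigma\dd B$ of $x$, and identify the resulting correction $\tfrac12\int_0^t\nabla_x\sigma\sigma\dd s$ with $\int_0^t(b-\mathbf b)\dd s$. Your treatment of the converse direction makes explicit what the paper only summarizes as ``following similar derivations.'' There, the Stratonovich formulation presupposes that $\sigma(\cdot,x_\cdot)$ is a semimartingale, so the covariation term has finite variation and $x$ has the same martingale part.
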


\subsection{Rough SDEs}\label{sec:background:rough_sdes}
We recall concepts in rough path theory \cite{Allan2021,Friz2010,Friz2020}, following \cite{Lew2026}.  Unless specified,  $p\in[2,3)$.

\subsubsection{Rough paths and rough integration}

\textbf{$p$-variations}. 
We quantify the regularity of a path via the notion of $p$-variation. 
For example, the sample paths $B(\omega)$ of Brownian motion have finite $p$-variation for $p>2$ almost surely. 
Given $p\geq 1$, the $p$-variation  of a path $X:[0,T]\to\R^n$  is defined as
$$
\|X\|_p
:=
\|X\|_{p,[0,T]},
\ \ \text{where }\ \,
\|X\|_{p,[s,t]}
:=
\bigg(
\sup_{\pi\in\mathcal{P}([s,t])}\sum_{[u,v]\in\pi}\|X_{u,v}\|^p
\bigg)^{\frac{1}{p}}
\  
\text{for any $[s,t]\subseteq[0,T]$},
$$
where  $\P([s,t])$ denotes the set of all partitions of $[s,t]$. Given a %
map $\bX:[0,T]^2\to\R^n$, let
$$
\|\bX\|_{\frac{p}{2}}:=\|\bX\|_{\frac{p}{2},[0,T]},
\ \ \text{where }\ \,
\|\bX\|_{\frac{p}{2},[s,t]}:=\bigg(\sup_{\pi\in\mathcal{P}([s,t])}\sum_{[u,v]\in\pi}
\|\bX_{u,v}\|^{\frac{p}{2}}\bigg)^{\frac{2}{p}}
\  
\text{for any $[s,t]\subseteq[0,T]$}.
$$
We denote by $\C^p=\C^p([0,T],\R^n)$ the space of continuous paths $X:[0,T]\to\R^n$ of finite $p$-variation. 
\textbf{Geometric rough paths}.
Let $p\in[2,3)$. A \textit{geometric $p$-rough path} (or simply \textit{rough path}) is a pair $\mbX=(X,\bX)$ with a path $X:[0,T]\to\R^d$ and its enhancement $\bX:[0,T]^2\to\R^{d\times d}$ that satisfy Chen's relation and the integration by parts identity
\begin{equation}
\label{eq:chen's_relation}
\bX_{s,t}=\bX_{s,r}+\bX_{r,t}+X_{s,r}\otimes X_{r,t}
\qquad \ \text{and}\qquad \ 
\bX_{s,t}+\bX^\top_{s,t}=X_{s,t}\otimes X_{s,t}
\end{equation}
for all $s,r,t\in[0,T]$, 
and that has finite inhomogeneous $p$-variation rough path norm:
$$
\|\mbX\|_p:=\|X\|_p+\|\bX\|_{\frac{p}{2}}<\infty.
$$  
We denote by  $\sC^p_g=\sC^p_g([0,T],\R^d)$ the space of geometric rough paths. We write $\|\mbX\|_{p,[s,t]}:=\|X\|_{p,[s,t]}+\|\bX\|_{\frac{p}{2},[s,t]}$ for any $[s,t]\subseteq[0,T]$.
\textbf{Controlled rough paths}. 
Let %
$\mbX=(X,\bX)\in\sC_g^p([0,T],\R^d)$. A \textit{controlled rough path} (with respect to $X$) is a pair
$$
(Y,Y')\in\C^p([0,T],\R^n)\times\C^p([0,T],\R^{n\times d}),
$$
such that the remainder term $R^Y:[0,T]^2\to\R^n,(s,t)\mapsto R^Y_{s,t}:=Y_{s,t}-Y'_sX_{s,t}$ 
satisfies $\|R^Y\|_{\frac{p}{2}}<\infty$.  The path $Y'$ is called the Gubinelli derivative of $Y$.

We denote by $\sD^p_X=\sD^p_X([0,T],\R^n)$ the set of controlled rough paths with respect to $X$. 

\begin{proposition}[Rough integration {\cite[Proposition 2.6]{Friz2018}}]
\label{prop:rough_integral_welldefined:error_bound}
Let %
$\mbX=(X,\bX)\in\sC_g^p([0,T],\R^d)$ 
and $(Y,Y')\in\sD^p_X([0,T],\R^{n\times d})$. 
Then, for every $0\leq s\leq t\leq T$, the limit\footnote{The limit is independent of the choice of partition of $[s,t]$ with vanishing mesh size, and we identify $\R^{n\times d\times d}$ with the space of linear maps from $\R^{d\times d}$ to $\R^n$ %
to make sense of the last term $Y'_s\bX_{s,t}$.} %
\begin{equation}
\label{eq:rough_int}
\int_s^tY_r\dd\mbX_r
:=
\lim_{|\pi|\to0}
\sum_{[u,v]\in\pi}
\left(
Y_uX_{u,v}+Y'_u\bX_{u,v}
\right)
\end{equation}
exists, and is called the rough integral of $(Y,Y')$ against $\mbX$. 
Moreover,  
$\|\int_s^tY_r\dd\mbX_r-Y_sX_{s,t}-Y'_s\bX_{s,t}\|\leq C_p(\|R^Y\|_{\frac{p}{2},[s,t]}\|X\|_{p,[s,t]}+\|Y'\|_{p,[s,t]}\|\bX\|_{\frac{p}{2},[s,t]})$  
for a constant $C_p$ that only depends on $p$.
\end{proposition}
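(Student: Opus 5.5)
The statement to prove is Proposition~\ref{prop:rough_integral_welldefined:error_bound}: existence of the rough integral as a limit of compensated Riemann sums, together with the local error bound. I would follow the classical sewing-lemma route, adapted to $p$-variation controls rather than H\"older norms.

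\textbf{Setup.} Define the germ $\Xi_{s,t}:=Y_sX_{s,t}+Y'_s\bX_{s,t}$ for $s\le t$. For $s\le u\le t$, compute the defect $\delta\Xi_{s,u,t}:=\Xi_{s,t}-\Xi_{s,u}-\Xi_{u,t}$. Chen's relation \eqref{eq:chen's_relation} gives $\bX_{s,t}-\bX_{s,u}-\bX_{u,t}=X_{s,u}\otimes X_{u,t}$. Substituting,
\begin{equation*}
\delta\Xi_{s,u,t}=-R^Y_{s,u}X_{u,t}-Y'_{s,u}\bX_{u,t}.
\end{equation*}
Only the algebraic Chen identity is needed here; the integration-by-parts part of the geometric condition plays no role.

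\textbf{Controls.} Introduce the controls $w_1(s,t):=\|R^Y\|_{\frac p2,[s,t]}^{p/2}$, $w_2(s,t):=\|X\|_{p,[s,t]}^p$, $w_3(s,t):=\|Y'\|^p_{p,[s,t]}$ and $w_4(s,t):=\|\bX\|^{p/2}_{\frac p2,[s,t]}$. Each is superadditive, and continuity of these controls follows from continuity of the paths. The defect then satisfies
\begin{equation*}
\|\delta\Xi_{s,u,t}\|\le w_1(s,t)^{2/p}w_2(s,t)^{1/p}+w_3(s,t)^{1/p}w_4(s,t)^{2/p}.
\end{equation*}
Since the exponents satisfy $2/p+1/p=3/p>1$ (because $p<3$), this puts us in the sewing regime.

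\textbf{Sewing.} Apply the Young/Lyons point-removal argument. Given a partition $\pi$ of $[s,t]$ with $N$ intervals, the control $\omega:=w_1+w_2+w_3+w_4$ is superadditive. One can therefore find a point $u\in\pi$ whose removal changes the Riemann sum $\sum_{[a,b]\in\pi}\Xi_{a,b}$ by at most a quantity bounded by $(\frac{2}{N-1})^{3/p}$ times the products above. This bound comes from a superadditivity/pigeonhole selection, applied to each product term using H\"older's inequality for the products of controls.

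Removing points successively down to the trivial partition and summing the series $\sum_N N^{-3/p}<\infty$ gives
\begin{equation*}
\Big\|\sum_{\pi}\Xi-\Xi_{s,t}\Big\|\le C_p\big(\|R^Y\|_{\frac p2,[s,t]}\|X\|_{p,[s,t]}+\|Y'\|_{p,[s,t]}\|\bX\|_{\frac p2,[s,t]}\big),
\end{equation*}
uniformly in $\pi$. This is exactly the desired error bound, provided the limit exists.

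\textbf{Existence of the limit.} Compare two partitions via their common refinement. Apply the previous uniform bound on each interval $[a,b]$ of the coarser partition, then sum. The resulting difference is at most $C_p\sum_{[a,b]\in\pi}\big(\|R^Y\|_{\frac p2,[a,b]}\|X\|_{p,[a,b]}+\dots\big)$. By H\"older and superadditivity, this is bounded by $\max_{[a,b]\in\pi}\omega(a,b)^{\theta}$ times a finite constant, for some $\theta=3/p-1>0$. That maximum tends to $0$ as $|\pi|\to0$ by uniform continuity of $\omega$. The sums are therefore Cauchy, the limit exists and is independent of the partition sequence, and passing to the limit in the uniform bound yields the stated estimate.

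\textbf{Main obstacle.} The delicate step is the point-removal selection with mixed products of controls. To handle it, I would first bound each product $w_i^{a}w_j^{b}$ with $a+b=3/p$ by $\omega^{3/p}$ for the single superadditive control $\omega$, and then select $u$ so that $\omega$ over the merged interval is at most $\frac{2}{N-1}\omega(s,t)$.
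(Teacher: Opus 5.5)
The paper does not prove this statement itself; it imports it from \cite[Proposition 2.6]{Friz2018}. Your overall route is the standard sewing argument behind that result: the germ $\Xi$, the Chen-based identity $\delta\Xi_{s,u,t}=-R^Y_{s,u}X_{u,t}-Y'_{s,u}\bX_{u,t}$, Young's point removal, and a Cauchy argument through common refinements. The defect identity, the point-removal mechanism and the existence of the limit are fine.

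\textbf{The gap: the form of the estimate.} In your ``main obstacle'' paragraph you dominate both products by $\omega^{3/p}$ with $\omega=w_1+w_2+w_3+w_4$, and you choose the removed point using $\omega$. What this actually proves is $\|\sum_\pi\Xi-\Xi_{s,t}\|\le C_p\,\omega(s,t)^{3/p}$. The sentence ``this is exactly the desired error bound'' does not follow from it, because the two bounds are genuinely different:
\begin{itemize}
\item For $Y$ constant and $Y'\equiv0$, the claimed right-hand side vanishes, and so does the true error. By contrast, $\omega(s,t)^{3/p}\ge\|X\|_{p,[s,t]}^3>0$.
\item More generally, $\omega^{3/p}$ contains terms such as $\|R^Y\|_{\frac p2,[s,t]}^{3/2}$ and $\|Y'\|_{p,[s,t]}^{3}$, which are not linear in $(Y,Y')$. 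Linearity is what makes the estimate usable for linear RDEs, where bounds must be proportional to the initial condition, as in Lemma~\ref{lem:rde:linear:bounded_solutions}.
\item Homogeneity does not rescue this. Rescaling $(Y,Y')\mapsto\mu(Y,Y')$ and dilating $\mbX$, for constant $Y'$ you only reach $\|R^Y\|_{\frac p2}\big(\|X\|_p^p+\|\bX\|_{\frac p2}^{p/2}\big)^{1/p}$. This is not bounded by a multiple of $\|R^Y\|_{\frac p2}\|X\|_p$, since $\|\bX\|_{\frac p2}$ is not controlled by $\|X\|_p$ for a general geometric rough path.
\end{itemize}

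\textbf{The fix.} Build the sewing control from geometric means rather than a sum:
\[
\hat w_1:=w_1^{2/3}w_2^{1/3},\qquad \hat w_2:=w_3^{1/3}w_4^{2/3},\qquad \hat\omega:=\hat w_1+\hat w_2 .
\]
Each $\hat w_i$ is superadditive by the discrete H\"older inequality $x_1^{\alpha}y_1^{\beta}+x_2^{\alpha}y_2^{\beta}\le(x_1+x_2)^{\alpha}(y_1+y_2)^{\beta}$ for $\alpha+\beta=1$. This is where H\"older must enter: to construct the control, not merely to compare with $\omega$.

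Since $\hat w_1^{3/p}=w_1^{2/p}w_2^{1/p}$ and $\hat w_2^{3/p}=w_3^{1/p}w_4^{2/p}$, your defect bound becomes $\|\delta\Xi_{s,u,t}\|\le\hat w_1(s,t)^{3/p}+\hat w_2(s,t)^{3/p}\le\hat\omega(s,t)^{3/p}$. Now remove a point with $\hat\omega(t_{i-1},t_{i+1})\le\frac{2}{N-1}\hat\omega(s,t)$ and sum. With $c_p:=2^{3/p}\sum_{k\ge1}k^{-3/p}$, this gives
\[
\Big\|\sum_{[a,b]\in\pi}\Xi_{a,b}-\Xi_{s,t}\Big\|\le c_p\,\hat\omega(s,t)^{3/p}\le 2^{\frac3p-1}c_p\big(\|R^Y\|_{\frac p2,[s,t]}\|X\|_{p,[s,t]}+\|Y'\|_{p,[s,t]}\|\bX\|_{\frac p2,[s,t]}\big),
\]
which is the claim.

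This choice also streamlines your existence step. You only need $\max_{[a,b]\in\pi}\hat\omega(a,b)\to0$ as $|\pi|\to0$. That follows from $\hat w_1(a,b)\le w_1(0,T)^{2/3}w_2(a,b)^{1/3}$, $\hat w_2(a,b)\le w_3(a,b)^{1/3}w_4(0,T)^{2/3}$, and continuity of the one-parameter controls $\|X\|^p_{p,[a,b]}$ and $\|Y'\|^p_{p,[a,b]}$. You therefore no longer need the continuity of the two-parameter controls $w_1,w_4$, which you asserted without argument.
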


\textbf{It\^o's formula for geometric rough paths}. %
We have the following chain rule.

\begin{lemma}[Chain rule for geometric rough paths {\cite[Theorem 7.7]{Friz2020}}]
\label{lem:rough_path:ito_formula}
Let %
$\mbX\in\sC_g^p([0,T],\R^d)$, 
$(Y,Y')\in\sD_X^p([0,T],\R^n)$, 
and $f\in C^3(\R^n,\R)$. 
Assume that
$$
Y_t
=
Y_0
+
\Gamma_t
+
\int_0^tY'_s\dd\mbX_s,
$$
where $\Gamma\in\C^{\frac{p}{2}}([0,T],\R^n)$ and
$
(Y',Y'')
\in
\sD_X^p([0,T],\R^{n\times d})$. 
Then,
\begin{equation}\label{eq:rough_path:ito_formula}
f(Y_t)=f(Y_0)
+\int_0^t\nabla f(Y_u)\dd\Gamma_u
+\int_0^t\nabla f(Y_u)Y'_u\dd\mbX_u
\end{equation}
for all $t\in[0,T]$, where the first integral is a Young integral, and the second is a rough integral.
\end{lemma}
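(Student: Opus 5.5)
The chain rule \eqref{eq:rough_path:ito_formula} is the rough analogue of the fundamental theorem of calculus. I would prove it by a Taylor expansion on the intervals of a partition, and then identify the limits of the two resulting sums as the Young integral and the rough integral.

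\textbf{Step 1: the integrand is controlled.} Since $Y'$ has Gubinelli derivative $Y''$, the path $(Y',Y'')$ is controlled. Composing with the $C^2$ map $\nabla f$ should then give a controlled path
\begin{equation*}
Z_u := \nabla f(Y_u)Y'_u, \qquad Z'_u := \nabla^2 f(Y_u)(Y'_u\cdot,Y'_u\cdot)+\nabla f(Y_u)Y''_u .
\end{equation*}
To check this, I would first observe that $Y$ itself is controlled with derivative $Y'$. Indeed, by Proposition \ref{prop:rough_integral_welldefined:error_bound} together with $\|\Gamma_{s,t}\|\lesssim$ a $\tfrac p2$-variation control, we have $Y_{s,t}=Y'_sX_{s,t}+O(\text{$\tfrac p2$-control})$. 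The controlled structure of $Z$ then follows from the standard stability of controlled paths under products and under composition with smooth maps.

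\textbf{Step 2: local expansion.} Fix a partition $\pi$ of $[0,t]$ and telescope $f(Y_t)-f(Y_0)=\sum_{[u,v]\in\pi} f(Y_v)-f(Y_u)$. On each interval I would Taylor expand to third order:
\begin{equation*}
f(Y_v)-f(Y_u)=\nabla f(Y_u)Y_{u,v}+\tfrac12\nabla^2 f(Y_u)(Y_{u,v},Y_{u,v})+O(\|Y_{u,v}\|^3).
\end{equation*}
Then I substitute $Y_{u,v}=\Gamma_{u,v}+Y'_uX_{u,v}+Y''_u\bX_{u,v}+\rho_{u,v}$, where $\rho_{u,v}$ is the remainder from Proposition \ref{prop:rough_integral_welldefined:error_bound}. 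Since $\rho$ is controlled by $\|R^{Y'}\|_{p/2}\|X\|_p+\|Y''\|_p\|\bX\|_{p/2}$, it is of total order $3/p>1$.

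In the quadratic term, only $\tfrac12\nabla^2 f(Y_u)(Y'_uX_{u,v},Y'_uX_{u,v})$ survives at order below $3/p$. Every other cross term involves $\Gamma$ or $\bX$ multiplied by an increment of $X$, which has order $\tfrac2p+\tfrac1p>1$. The cubic term is also of order $3/p>1$.

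Now I use the geometric identity $\operatorname{Sym}(\bX_{u,v})=\tfrac12 X_{u,v}\otimes X_{u,v}$ from \eqref{eq:chen's_relation}. It lets me rewrite
\begin{equation*}
\tfrac12\nabla^2 f(Y_u)(Y'_uX_{u,v},Y'_uX_{u,v}) = \nabla^2 f(Y_u)(Y'_u\cdot,Y'_u\cdot)\bX_{u,v},
\end{equation*}
because the bilinear form is symmetric, so only the symmetric part of $\bX_{u,v}$ contributes. Each summand therefore equals
\begin{equation*}
\nabla f(Y_u)\Gamma_{u,v}+Z_uX_{u,v}+Z'_u\bX_{u,v}+o,
\end{equation*}
where $o$ is bounded by $\omega(u,v)^{\theta}$ for some $\theta>1$ and a superadditive control $\omega$ built from the $p$-variation norms.

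\textbf{Step 3: passing to the limit.} Superadditivity gives $\sum_\pi\omega(u,v)^\theta\le \max_\pi\omega(u,v)^{\theta-1}\,\omega(0,t)\to0$ as the mesh tends to zero, because $\omega$ is continuous. The sum $\sum\nabla f(Y_u)\Gamma_{u,v}$ converges to the Young integral, since $\nabla f(Y)\in\C^p$, $\Gamma\in\C^{p/2}$, and $\tfrac1p+\tfrac2p>1$. By Step 1 and Proposition \ref{prop:rough_integral_welldefined:error_bound}, the sum $\sum Z_uX_{u,v}+Z'_u\bX_{u,v}$ converges to $\int_0^t\nabla f(Y_u)Y'_u\dd\mbX_u$.

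\textbf{Main obstacle.} I expect the main difficulty to be the bookkeeping in Step 2. One must show that every discarded term is dominated by a single control raised to a power strictly greater than one. This requires controls that mix the $p$-variation norms of $X$, $\bX$, $\Gamma$, $Y'$, $Y''$ and $R^{Y'}$. I would handle it by defining one control $\omega(s,t)$ as the sum of the $p$-th and $\tfrac p2$-th powers of these norms on $[s,t]$, and checking each term against it.
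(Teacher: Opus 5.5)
Your proposal is correct and follows the standard argument behind the cited result. The paper proves nothing here itself and defers to \cite[Theorem 7.7]{Friz2020}, whose proof is the same as yours: a second-order Taylor expansion on partition intervals, the geometric identity $\operatorname{Sym}(\bX_{u,v})=\tfrac12 X_{u,v}\otimes X_{u,v}$ to absorb the quadratic term into the compensator of the controlled path $\bigl(\nabla f(Y)Y',\,\nabla^2 f(Y)(Y'\cdot,Y'\cdot)+\nabla f(Y)Y''\bigr)$, and identification of the compensated Riemann sums with the rough and Young integrals.

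The only step that needs one more line in your $p$-variation version is the claim that the combined control $\omega$ is continuous, since it contains $\|R^{Y'}\|^{p/2}_{p/2,[s,t]}$. You can avoid checking this: each discarded term carries a factor such as $\|X\|_{p,[u,v]}$, $\|Y\|_{p,[u,v]}$ or $\|Y''\|_{p,[u,v]}$, which comes from a genuinely continuous control. A H\"older-inequality argument then makes the error sums vanish as the mesh goes to zero.
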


Applying Lemma \ref{lem:rough_path:ito_formula} to the inner product gives the following product rule: If
$$
Y_t=Y_0+\int_0^ta_s\dd s+\int_0^tY'_s\dd\mbX_s
\quad\text{and}\quad
Z_t=Z_0+\int_0^tb_s\dd s+\int_0^tZ'_s\dd\mbX_s,
$$ 
where $a,b\in L^1([0,T],\R^n)$ and the controlled paths satisfy the assumptions of Lemma \ref{lem:rough_path:ito_formula}, then
\begin{align}
\label{eq:rough_product_rule}
\ip{Y_t}{Z_t}
&=
\ip{Y_0}{Z_0}
+
\int_0^t
\left(
\ip{a_s}{Z_s}
+
\ip{Y_s}{b_s}
\right)\dd s +
\int_0^t
\left(
Z_s^\top Y'_s
+
Y_s^\top Z'_s
\right)\dd\mbX_s.
\end{align}
Equivalently, $
\dd\ip{Y_t}{Z_t}
=
\ip{Y_t}{\dd Z_t}
+
\ip{Z_t}{\dd Y_t}$. 

\subsubsection{Rough differential equations (RDEs)}

\begin{assumption}[RDE drift]
\label{assum:rde_drift}
Let $U\subseteq\R^m$ and $b:[0,T]\times\R^n\times U\to\R^n$ satisfy:
\begin{enumerate}[label=(\roman*),leftmargin=6mm]
\item $b(\cdot,x,u):[0,T]\to\R^n$ is measurable for every $(x,u)\in\R^n\times U$;
\item $b(t,\cdot,\cdot):\R^n\times U\to\R^n$ is continuous for almost every $t\in[0,T]$;
\item  
$
\|b(t,x,u)\|\leq C$ and $
\|b(t,x,u)-b(t,\tilde x,u)\|
\leq
C\|x-\tilde x\|
$ 
for some constant $C > 0$, 
almost every $t\in[0,T]$, all $x,\tilde x\in\R^n$, and all $u\in U$.
\end{enumerate}
\end{assumption}

\begin{theorem}[Nonlinear RDEs {\cite[Theorem 3.9]{Lew2026}}]
\label{thm:rdes:existence_uniqueness}
Let %
$u\in L^1([0,T],U)$, 
 $b$ satisfy Assumption \ref{assum:rde_drift}, 
$\sigma\in C_b^3([0,T]\times\R^n,\R^{n\times d})$, $\bar y\in\R^n$, and 
$\mbX\in\sC_g^p([0,T],\R^d)$. 
Then, there exists a unique
$(Y,Y')\in\sD_X^p([0,T],\R^n)$ with $Y'_t=\sigma(t,Y_t)$ that solves the RDE
\begin{equation}
\label{eq:nonlinear_rde}
Y_t
=
\bar y
+
\int_0^tb(s,Y_s,u_s)\dd s
+
\int_0^t\sigma(s,Y_s)\dd\mbX_s.
\end{equation}
\end{theorem}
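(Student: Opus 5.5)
The plan is the standard Picard/contraction route for controlled rough paths, with the drift handled as a Lipschitz perturbation integrated in time. Because $u\in L^1([0,T],U)$ is deterministic here, $\tilde b(s,y):=b(s,y,u_s)$ is measurable in $s$ by Assumption~\ref{assum:rde_drift}(i)--(ii), bounded by $C$ and uniformly Lipschitz in $y$. This reduces \eqref{eq:nonlinear_rde} to an RDE with a bounded Lipschitz, merely measurable-in-time drift and a $C_b^3$ diffusion.

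\textbf{Fixed-point map.} On a short interval $[s,s+h]$ I would define $\mathcal M$ on controlled paths $(Y,Y')\in\sD^p_X$ with $Y_s$ given and $Y'=\sigma(\cdot,Y)$ by
\[
\mathcal M(Y)_t=Y_s+\int_s^t\tilde b(r,Y_r)\dd r+\int_s^t\sigma(r,Y_r)\dd\mbX_r ,
\]
with Gubinelli derivative $\sigma(t,Y_t)$.

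\textbf{Well-definedness.} The rough integral is defined by Proposition~\ref{prop:rough_integral_welldefined:error_bound}. This requires $(\sigma(\cdot,Y),\nabla_x\sigma(\cdot,Y)\sigma(\cdot,Y))$ to be controlled, which follows from a Taylor expansion using $\sigma\in C_b^2$. The time dependence of $\sigma$ is harmless because $\sigma$ is $C^1$ in $t$, so it contributes a bounded-variation remainder that lies in the $p/2$-variation class.

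The drift integral is Lipschitz in time, hence of finite $1$-variation. It can therefore be absorbed into the remainder $R^Y$ (note $p/2\ge1$). From the error bound of Proposition~\ref{prop:rough_integral_welldefined:error_bound}, I would show that $\mathcal M$ maps a ball of $\sD^p_X([s,s+h])$, in the norm $\|Y'\|_{p}+\|R^Y\|_{p/2}$ plus the initial values, into itself once $\|\mbX\|_{p,[s,s+h]}+h$ is small.

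\textbf{Contraction.} Next I would show that $\mathcal M$ is a contraction on that ball, using two ingredients.
\begin{itemize}
\item The stability estimate for compositions $Y\mapsto\sigma(\cdot,Y)$ in the controlled-path metric. This is the step that needs $\sigma\in C_b^3$ rather than $C_b^2$.
\item The elementary bound $\int_s^t\|\tilde b(r,Y_r)-\tilde b(r,\tilde Y_r)\|\dd r\le Ch\sup\|Y-\tilde Y\|$.
\end{itemize}

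\textbf{Global solution and uniqueness.} Since $r\mapsto\|\mbX\|^p_{p,[0,r]}$ is a control, $[0,T]$ can be split into finitely many intervals on which $\|\mbX\|_{p,[t_k,t_{k+1}]}+|t_{k+1}-t_k|$ is below the threshold. The number of intervals is uniform because the a priori bound on the solution uses the boundedness of $\sigma$ and $b$. Concatenating the local solutions, with Chen's relation guaranteeing additivity of the rough integral, gives a global solution. Uniqueness follows from local contraction and induction over the intervals.

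\textbf{Main obstacle.} I expect the hardest step to be the contraction estimate for the composition $\sigma(\cdot,Y)$ in the inhomogeneous $p$-variation controlled metric, because the remainders have to be compared at the level of $\|R^{\sigma(Y)}-R^{\sigma(\tilde Y)}\|_{p/2}$. I would first try to import this directly from the standard estimates in \cite{Friz2020} (Chapter 8), treating the drift as a perturbation. Alternatively, one can embed time as an extra rough-path component, which handles the $t$-dependence of $\sigma$, and write the $L^1$-in-time drift as a Young/Lebesgue term. This is exactly how the result is cited from \cite{Lew2026}.
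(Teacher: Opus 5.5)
Your proposal is correct and follows the paper. The paper's proof consists only of observing that, because $u$ is fixed and $b$ is bounded, the argument of \cite[Theorem 3.9]{Lew2026} goes through unchanged for $u\in L^1([0,T],U)$. That is exactly your reduction to the bounded, Lipschitz, measurable-in-time drift $\tilde b(s,y)=b(s,y,u_s)$, and your local-contraction and patching sketch is the standard argument behind the cited result.

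One small correction: define $\mathcal M$ on all $(Y,Y')\in\sD^p_X([s,s+h])$ with only $Y_s$ and $Y'_s=\sigma(s,Y_s)$ prescribed, and use $(\sigma(\cdot,Y),\nabla_x\sigma(\cdot,Y)Y')$ as the integrand. As you wrote it, $\mathcal M$ does not preserve the constraint $Y'=\sigma(\cdot,Y)$, so it is not a self-map of your domain. With the standard setup, that constraint is instead recovered automatically at the fixed point.
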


If $(\bar y,u,\mbX)$ are random, the random RDE \eqref{eq:nonlinear_rde} is understood pathwise. 
Obtaining moment bounds requires additional integrability assumptions on the (e.g., Gaussian) rough paths, see Section \ref{sec:background:gaussian_paths}.

\begin{theorem}[Linear RDEs {\cite[Theorem 3.10]{Lew2026}}]
\label{thm:rde:linear:existence_uniqueness}
Let %
$\mbX\in\sC_g^p([0,T],\R^d)$, 
 $A\in L^\infty([0,T],\R^{n\times n})$,  
$
(\Sigma,\Sigma')
\in
\sD_X^p\!\left([0,T],\R^{n\times d\times n}\right)$, and 
$\bar v,\bar p\in\R^n$.

Then, there exists a unique
$(V,V')\in\sD_X^p([0,T],\R^n)$ with $V'_t=\Sigma_tV_t$ that solves the RDE\footnote{We identify $\R^{n\times d\times n}$ with the space of linear maps from $\R^n$  to $\R^{n \times d}$
to make sense of the last term $\Sigma_sV_s$, so with $\Sigma_t=(\Sigma_t^1,\dots,\Sigma_t^d)$ where
$\Sigma_t^j\in\R^{n\times n}$, we have
$
\Sigma_tV_t
:=
(\Sigma_t^1V_t,\dots,\Sigma_t^dV_t
)
\in\R^{n\times d}
$.}
\begin{equation}
\label{eq:linear_rde}
V_t
=
\bar v
+
\int_0^tA_sV_s\dd s
+
\int_0^t\Sigma_sV_s\dd\mbX_s.
\end{equation}

Also, there exists a unique
$(P,P')\in\sD_X^p([0,T],\R^n)$ with $P'_t=-\Sigma_t^\top P_t$ that solves the RDE
\begin{equation}
\label{eq:linear_rde_terminal}
P_t
=
\bar p
+
\int_t^T
A_s^\top P_s
\dd s
+
\int_t^T
\Sigma_s^\top P_s\dd\mbX_s.
\end{equation} 
Equivalently, $(P,P')\in\sD_X^p([0,T],\R^n)$ has Gubinelli derivative 
$P'_t=-\Sigma_t^\top P_t$,  
final value $P_T=\bar p$, and satisfies 
$
P_t
=
P_0
-
\int_0^t
A_s^\top P_s
\dd s
-
\int_0^t
\Sigma_s^\top P_s\dd\mbX_s
$.
\end{theorem}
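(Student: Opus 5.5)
The plan is to treat the forward equation \eqref{eq:linear_rde} by a Picard fixed-point argument on short intervals in $\sD^p_X$, and then to obtain the backward equation \eqref{eq:linear_rde_terminal} from the forward one by reversing time. Because the vector field $V\mapsto \Sigma V$ is linear, hence unbounded, \cref{thm:rdes:existence_uniqueness} (which assumes $\sigma\in C^3_b$) does not apply directly. Linearity also helps, though: the solution map is affine, so a contraction estimate alone gives existence and uniqueness, with no need for an invariant ball.

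\textbf{Step 1 (local fixed point).} On an interval $I=[s,t]$ with initial value $V_s=\xi$, define
\[
\mathcal M(V,V')_r=\Big(\xi+\int_s^r A_uV_u\dd u+\int_s^r\Sigma_uV_u\dd\mbX_u,\ \Sigma_rV_r\Big).
\]
First I check that $(\Sigma V,(\Sigma V)')$, with $(\Sigma V)'=\Sigma' V+\Sigma V'$, is controlled whenever $(V,V')\in\sD^p_X$. I then bound its remainder and $p$-variation by $C(1+\|(\Sigma,\Sigma')\|)\,\|(V,V')\|_{\sD^p_X(I)}$. The error bound in \cref{prop:rough_integral_welldefined:error_bound} then shows that $\mathcal M$ maps $\sD^p_X(I)$ into itself. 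It also gives
\[
\|\mathcal M(V)-\mathcal M(W)\|_{\sD^p_X(I)}\le C\big(\|A\|_\infty|I|+\|\mbX\|_{p,I}\big)\|V-W\|_{\sD^p_X(I)},
\]
since $\mathcal M(V)-\mathcal M(W)$ solves the same equation with zero initial value. Here the Gubinelli derivative of the difference is $\Sigma(V-W)$. The norm is chosen with $\|V'\|_\infty$ weighted appropriately so that the $V'$ component also contracts; this is the usual trick of measuring $V'_s$ separately from $\|V'\|_{p,I}$.

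\textbf{Step 2 (globalisation).} The quantity $\omega(s,t)=|t-s|+\|X\|^p_{p,[s,t]}+\|\bX\|^{p/2}_{p/2,[s,t]}$ is a continuous superadditive control. I therefore choose a greedy partition $0=t_0<\dots<t_N=T$ with $\omega(t_k,t_{k+1})\le\delta$, where $\delta$ makes the contraction constant at most $1/2$. Then $N\le 1+\omega(0,T)/\delta<\infty$. I solve on each $[t_k,t_{k+1}]$ with initial value $V_{t_k}$, and concatenate using Chen's relation \eqref{eq:chen's_relation} and additivity of the rough integral. Uniqueness follows interval by interval.

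\textbf{Step 3 (backward equation by time reversal).} Set $\tilde X_t=X_{T-t}$ and $\tilde\bX_{s,t}=\bX_{T-t,T-s}^\top$. I verify that $\tilde\mbX\in\sC^p_g$: Chen's relation follows by transposing Chen for $\bX$, and the symmetric-part identity is preserved since $\tilde X_{s,t}\otimes\tilde X_{s,t}=X_{T-t,T-s}\otimes X_{T-t,T-s}$. I then apply Steps 1--2 to the forward equation $\tilde P_t=\bar p+\int_0^t A^\top_{T-u}\tilde P_u\dd u+\int_0^t \Sigma^\top_{T-u}\tilde P_u\dd\tilde\mbX_u$, with the reversed coefficient $(\Sigma^\top_{T-\cdot},-\Sigma'^{\top}_{T-\cdot})$ controlled by $\tilde X$. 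Finally I define $P_t=\tilde P_{T-t}$.

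The main obstacle is the next check: that $\int_0^{T-t}\cdots\dd\tilde\mbX$ equals $\int_t^T\Sigma^\top P\dd\mbX$ as a rough integral against $\mbX$ with Gubinelli derivative $P'=-\Sigma^\top P$. The compensated Riemann sums for $\tilde\mbX$ use right endpoints of $\mbX$-intervals. To convert them, I rewrite $Y_v X_{u,v}$ as $Y_uX_{u,v}+Y'_uX_{u,v}\otimes X_{u,v}+o$ and use $\bX+\bX^\top=X\otimes X$; this is exactly where geometricity is needed. The sign flip of increments under reversal produces the minus sign in $P'$. The equivalent forward form then follows from splitting $\int_t^T=\int_0^T-\int_0^t$ and setting $P_0=\bar p+\int_0^TA^\top P\dd s+\int_0^T\Sigma^\top P\dd\mbX$. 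Uniqueness of $P$ transfers back through the reversal bijection.
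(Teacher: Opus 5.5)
Your plan is the paper's route. The paper takes the forward statement from \cite[Theorem 3.10]{Lew2026} and gets the backward one by applying it to the time-reversed equation (\cite[Proposition 5.12]{Friz2020}). Two of your steps, however, do not work as written.

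\textbf{Step 3: the sign convention.} With $\tilde X_t=X_{T-t}$ as you define it, the increments of both $X$ and the reversed integrand flip. For $s<t$, set $a=T-t$, $b=T-s$ and $\tilde Y_r:=Y_{T-r}$. Then $\tilde Y_{s,t}-Y'_{T-s}\tilde X_{s,t}=-R^Y_{a,b}+Y'_{a,b}X_{a,b}$, so $\tilde Y'=Y'_{T-\cdot}$ keeps its sign. The Riemann-sum conversion you outline then gives $\int_0^{T-t}\tilde Y_u\,\dd\tilde{\mbX}_u=-\int_t^TY_r\,\dd\mbX_r$. Two consequences follow.
\begin{itemize}
\item Your pair $(\Sigma^\top_{T-\cdot},-\Sigma'^{\top}_{T-\cdot})$ is not controlled by $\tilde X$: its remainder contains $2\Sigma'^{\top}_{T-s}\tilde X_{s,t}$, which has only finite $p$-variation.
\item Read back in forward time, your reversed equation gives $P'=+\Sigma^\top P$ and $P_t=\bar p+\int_t^TA^\top_sP_s\,\dd s-\int_t^T\Sigma^\top_sP_s\,\dd\mbX_s$, which is not the required backward equation.
\end{itemize}
There are two ways to fix this.
\begin{itemize}
\item Keep $\tilde X_t=X_{T-t}$ and solve $\tilde P_r=\bar p+\int_0^rA^\top_{T-u}\tilde P_u\,\dd u-\int_0^r\Sigma^\top_{T-u}\tilde P_u\,\dd\tilde{\mbX}_u$, i.e.\ use the coefficient $(-\Sigma^\top_{T-\cdot},-\Sigma'^{\top}_{T-\cdot})$. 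The minus sign in $P'=-\Sigma^\top P$ then comes from this coefficient, not from the reversal.
\item Use the reflected reversal $\hat X_t:=X_T-X_{T-t}$. It has the same second level $\bX^\top_{T-t,T-s}$ but unflipped increments $\hat X_{s,t}=X_{T-t,T-s}$. For $\hat{\mbX}$, Gubinelli derivatives do change sign and the integral identity holds with a plus sign, so all of your signs become correct.
\end{itemize}

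\textbf{Step 1: the contraction factor.} The bound $\|\mathcal M(V)-\mathcal M(W)\|\le C(\|A\|_\infty|I|+\|\mbX\|_{p,I})\|V-W\|$ is false for the norm $\|V'\|_{p,I}+\|R^V\|_{\frac p2,I}$, with or without extra weighted $V'_s$ or $\|V'\|_\infty$ terms. Here is a counterexample. Take $n=d=1$, $A=0$, $\Sigma\equiv1$, $\Sigma'=0$, and the admissible difference $D_r=r-s$, $D'\equiv0$ on $I=[s,t]$. Then $D_s=D'_s=0$ and the input norm is $\|R^D\|_{\frac p2,I}=|I|$. The output has Gubinelli derivative $\Sigma D=D$ with $\|D\|_{p,I}=|I|$, so the ratio is at least $1$ however short $I$ is.

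The problem is the term $\|\Sigma\|_\infty\|R^D\|_{p,I}$ inside $\|\Sigma D\|_{p,I}$, which carries no small factor. It is $\|V'\|_{p,I}$ that must be down-weighted relative to the remainder, not $\|V'\|_\infty$. Two standard fixes work:
\begin{itemize}
\item With the norm $\|V'\|_{p,I}+\lambda\|R^V\|_{\frac p2,I}$, the contraction factor is of order $C_\Sigma/\lambda+\lambda C(|I|+\|\mbX\|_{p,I})$. This is below $\tfrac12$ once you take $\lambda$ large and then $I$ short.
\item Alternatively, $\mathcal M\circ\mathcal M$ contracts in the unweighted norm. This is because differences of outputs of $\mathcal M$ have $p$-variation of order $(|I|+\|\mbX\|_{p,I})$ times the input norm.
\end{itemize}
With either fix, your globalisation and uniqueness arguments go through unchanged.
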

The second part of  Theorem \ref{thm:rde:linear:existence_uniqueness} follows from the first part applied to the time-reversed augmented equation, see \cite[Proposition 5.12]{Friz2020}. The result can be applied to the linearization of nonlinear RDEs  (also called the Jacobian flow), see Theorem \ref{thm:sdes:rough}.

\subsubsection{Gaussian rough paths and rough SDEs}\label{sec:background:gaussian_paths}  

\textbf{Gaussian rough paths}. 
Let $B=(B^1,\dots,B^d)$ be a centered, continuous, $\R^d$-valued Gaussian process with independent components that satisfies the regular covariance  condition \cite[Condition 10]{Bayer2016}
\begin{equation}
\label{eq:gaussian_covariance_regular}
\sup_{\pi,\widetilde\pi\in\mathcal P([s,t])}
\left(
\sum_{[t_j',t_{j+1}']\in\widetilde\pi}
\left(
\sum_{[t_i,t_{i+1}]\in\pi}
\left|
\E\left[B_{t_i,t_{i+1}}^kB_{t_j',t_{j+1}'}^k\right]
\right|
\right)^\rho
\right)^\frac{1}{\rho}
\leq
K|t-s|^\frac{1}{\rho}
\end{equation}
for some $\rho\in[1,3/2)$ and $K<\infty$, for every $k=1,\dots,d$ and $0\leq s<t\leq T$. Then, for  $p\in(2\rho,3)$, $B$ admits a measurable natural lift
$$
\mbB=(B,\bB):
\Omega
\to
\sC_g^p([0,T],\R^d),
$$
unique up to modification, and whose $p$-variation rough path norm $\|\mbB\|_p$ satisfies useful moment bounds  
(see \cite{Friz2013,Bayer2016} and Theorem \ref{thm:gaussian_rough_paths}). We call $\mbB$ an enhanced Gaussian process, and its sample paths $\mbB(\omega)$ are called Gaussian rough paths.

The Stratonovich lift of a Brownian motion $B$ is a particular example of enhanced Gaussian process. In this case, the natural lift is the Stratonovich lift (see
\cite[Theorem 15.33]{Friz2010} or \cite[Theorem 10.4]{Friz2020})
\begin{equation}
\label{eq:brownian_stratonovich_lift}
\bB_{s,t}^{ij}
:=
\int_s^t
(B_r^i-B_s^i)\circ\dd B_r^j,
\qquad
1\leq i,j\leq d.
\end{equation}
However, rough path theory allows handling more general Gaussian rough paths that cannot be handled via It\^o calculus, such as the natural lift of fractional Brownian motion.

\textbf{Gaussian rough SDEs}. 
Solutions to random RDEs, also called rough SDEs, are obtained by composing the random inputs with the It\^o-Lyons solution map:
\begin{gather*}
\hspace{-18mm}
\Omega \longrightarrow
\left(\R^n,L^1([0,T],U),\sC_g^p([0,T],\R^d)\right)
\ \longrightarrow \ 
C([0,T],\R^n)
\\
\omega
\ \longmapsto \
\hspace{9mm}
\hspace{0.5pt}
\big(\bar x_0(\omega),u(\omega),\mbB(\omega)\big)
\hspace{1pt}
\hspace{10mm}
\ \longmapsto\ 
x(\omega)
=
\operatorname{SolveRDE}
\big(\bar x_0(\omega),u(\omega),\mbB(\omega)\big).
\hspace{-2.1cm}
\end{gather*}

The following result gives measurability and moment bounds for this composition.

\begin{theorem}[Rough SDEs]
\label{thm:sdes:rough}
Let $\rho\in[1,3/2)$, $p\in(2\rho,3)$, 
$\mbB$ be an enhanced Gaussian process as defined above, 
 $\ell\geq1$, 
$U\subseteq\R^m$, 
$u\in L^\ell([0,T]\times\Omega,U)$, 
$\bar x_0\in L^\ell(\Omega,\R^n)$, 
 $b$ satisfy Assumption \ref{assum:rde_drift} and be Lipschitz in the control $u$, 
and $\sigma\in C_b^3([0,T]\times\R^n,\R^{n\times d})$.

\textit{(a) Rough nonlinear SDEs}: For almost every $\omega\in\Omega$, let $(x(\omega),x'(\omega))\in\sD_{B(\omega)}^p([0,T],\R^n)$ with $x'_t(\omega)=\sigma(t,x_t(\omega))$ be the solution to the RDE
\begin{equation}
\label{eq:random_gaussian_rde}
x_t(\omega)
=
\bar x_0(\omega)
+
\int_0^t
b(s,x_s(\omega),u_s(\omega))\dd s
+
\int_0^t
\sigma(s,x_s(\omega))\dd\mbB_s(\omega).
\end{equation}
Then, $x\in L^\ell(\Omega,C([0,T],\R^n))$. 

If $\bar x_0\in L^\ell_{\F_0}$, $u$ is progressively measurable, and $\mbB|_{[0,t]}:\Omega\to\sC^p_g([0,t],\R^d)$ is $\F_t$-measurable  for every $t\in[0,T]$, then $x$ is also $\F$-adapted. %

\textit{(b) Rough linearized SDEs}: Assume also that for some $C>0$:
\begin{enumerate}[label=(\roman*),leftmargin=6mm]
\item
$b(t,\cdot,u):\R^n\to\R^n$ is continuously differentiable
for almost every $t\in[0,T]$ and every $u\in U$,
\item
$\nabla_xb:[0,T]\times\R^n\times U\to\R^{n\times n}$
is a Carath\'eodory map,
\item $\|\nabla_xb(t,x,u)\|\leq C$ for almost every $t\in[0,T]$, all $x\in\R^n$, and all $u\in U$,
\item $\|\nabla_xb(t,x,u)-\nabla_xb(t,\tilde x,u)\|
\leq
C\|x-\tilde x\|
$ 
for almost every $t\in[0,T]$, all $x,\tilde x\in\R^n$, and all
$u\in U$,
\item $\sigma\in C_b^4([0,T]\times\R^n,\R^{n\times d})$.
\end{enumerate}
Let $x$ solve the rough SDE %
\eqref{eq:random_gaussian_rde}, 
and define
$
A_t
:=
\nabla_xb(t,x_t,u_t)$ and $\Sigma_t^j
:=
\nabla_x\sigma^j(t,x_t)$ for $j=1,\dots,d$. 
Let $\ell>1$, $t_1\in[0,T]$,  and 
$\bar v_{t_1},\bar p_T\in L^\ell(\Omega,\R^n)$. 
For almost every $\omega\in\Omega$, let $(v(\omega),v'(\omega)),(p(\omega),p'(\omega))\in\sD_{B(\omega)}^p([t_1,T],\R^n)$ with $v'_t(\omega)=\Sigma_tv_t$ and $p'_t(\omega)=-\Sigma_t^\top p_t$ be the solutions to the RDEs
\begin{align}
\label{eq:tangent_rde_background}
v_t(\omega)
&=
\bar v_{t_1}(\omega)
+
\int_{t_1}^tA_s(\omega)v_s(\omega)\dd s
+
\int_{t_1}^t\Sigma_s(\omega)v_s(\omega)\dd\mbB_s(\omega),
&&\hspace{-10mm}t\in[t_1,T],
\\
\label{eq:adjoint_rde_background}
p_t(\omega)
&=
\bar p_T(\omega)
+
\int_t^T
A_s(\omega)^\top p_s(\omega)
\dd s
+
\int_t^T
\Sigma_s(\omega)^\top p_s(\omega)\dd\mbB_s(\omega),
&&\hspace{-10mm}t\in[t_1,T].
\end{align}
Then, $v,p\in L^{\ell'}(\Omega,C([t_1,T],\R^n))$ for any $\ell'<\ell$. 
If $\bar x_0\in L^\ell_{\F_0}$, $\bar v_{t_1}$ is $\F_{t_1}$-measurable, $u$ is progressively measurable, and $\mbB|_{[0,t]}:\Omega\to\sC^p_g([0,t],\R^d)$ is $\F_t$-measurable  for every $t\in[0,T]$, then $v$ is also $\F$-adapted.
\end{theorem}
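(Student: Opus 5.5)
The plan is to argue pathwise first, using the deterministic RDE results, and then lift measurability, moment bounds and adaptedness to the random setting.

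\textbf{Part (a): existence and measurability.} For almost every $\omega$, the inputs $\bar x_0(\omega)$, $u(\omega)\in L^1([0,T],U)$ and $\mbB(\omega)\in\sC^p_g$ are fixed. Theorem~\ref{thm:rdes:existence_uniqueness} then gives a unique controlled solution $(x(\omega),x'(\omega))$ with $x'_t(\omega)=\sigma(t,x_t(\omega))$. I will establish measurability of $\omega\mapsto x(\omega)$ through continuity of the It\^o--Lyons map $(\bar y,u,\mbX)\mapsto Y$. It is locally Lipschitz in $(\bar y,\mbX)$ for the $p$-variation rough path metric. Since $b$ is Lipschitz in $u$, it is also continuous in $u$ for the $L^1$ topology; a Gronwall-type estimate on the drift term gives this. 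Composing a continuous map with the measurable inputs gives a measurable map into $C([0,T],\R^n)$.

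\textbf{Part (a): moment bound.} I will use the standard a priori estimate for RDEs with bounded drift and $C^3_b$ vector fields:
\[
\sup_t\|x_t\|\le \|\bar x_0\|+CT+C\big(1+\|\mbB\|_p\big)^{p}.
\]
This follows from the greedy-partition argument in \cite{Friz2020}, which counts the intervals on which $\|\mbB\|_{p,[s,t]}\le 1$. Because $\|\mbB\|_p$ has Gaussian-type tails (Theorem~\ref{thm:gaussian_rough_paths}), all its moments are finite. Hence $x\in L^\ell(\Omega,C([0,T],\R^n))$.

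\textbf{Part (a): adaptedness.} By uniqueness and Chen's relation, $x|_{[0,t]}$ is obtained by solving the RDE on $[0,t]$ alone. It is therefore the continuous solution map applied to $(\bar x_0,u|_{[0,t]},\mbB|_{[0,t]})$. Each of these inputs is $\F_t$-measurable, using progressive measurability of $u$. So $x$ is adapted, and since it is continuous it is progressively measurable.

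\textbf{Part (b): setup.} I will write $A_t$ and $\Sigma_t$ for the linearization coefficients along $x$. $A$ is bounded by assumption (iii). The pair $(\Sigma,\Sigma')$, with $\Sigma'_t=\nabla^2_x\sigma(t,x_t)\sigma(t,x_t)$ plus time-regularity terms, is controlled because $\sigma\in C^4_b$. Its controlled norm is bounded by a polynomial in $\|\mbB\|_p$, obtained from the controlled norm of $x$. Theorem~\ref{thm:rde:linear:existence_uniqueness} then gives pathwise solutions $v$ and $p$; for $p$ I use the time-reversed version. Measurability follows as in part (a).

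\textbf{Part (b): the main obstacle, linear moment bounds.} The difficulty is integrability. For linear RDEs the natural Gronwall bound,
\[
\sup_t\|v_t\|\le \|\bar v_{t_1}\|\exp\!\big(C(1+\|\mbB\|_p)^{p}\big),
\]
is not integrable, because Gaussian tails only control $\exp(\|\mbB\|_p^{2})$ when $p>2$. To get around this I will use the Cass--Litterer--Lyons estimate (as in \cite{Bayer2016,Friz2013}):
\[
\sup_t\|v_t\|\le \|\bar v_{t_1}\|\exp\!\big(C N_{1}(\mbB)\big),
\]
where $N_1(\mbB)$ is the number of greedy intervals on which the controlled norm is at most one. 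They show $\exp(CN_1)$ has finite moments of all orders under the covariance condition~\eqref{eq:gaussian_covariance_regular}. The coefficient $\Sigma$ depends on $x$, so this needs a little care: the controlled norm of $\Sigma$ on each small interval is bounded by a polynomial in the local norm of $\mbB$ there, which keeps the interval count comparable to $N_1$. H\"older's inequality then gives, for $1/\ell'=1/\ell+1/r$,
\[
\E\|v\|^{\ell'}\le \|\bar v\|_{L^\ell}^{\ell'}\,\big\|e^{C N_1}\big\|_{L^r}^{\ell'}<\infty.
\]
This is exactly the loss from $\ell$ to any $\ell'<\ell$ in the statement. The same argument applies to $p$.

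\textbf{Part (b): adaptedness of $v$.} On $[t_1,t]$, $v$ is a continuous function of $\bar v_{t_1}$ together with $x|_{[0,t]}$, $u|_{[0,t]}$ and $\mbB|_{[0,t]}$, all of which are $\F_t$-measurable. Hence $v$ is adapted, just as in part (a).
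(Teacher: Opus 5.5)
Your route is the paper's. Measurability comes from continuity of the solution map in $(\bar y,u,\mbX)$, with the $L^1$ topology on controls. Adaptedness comes from restricting to $[0,t]$. In (b), integrability comes from three ingredients: the greedy-partition estimate $\|v\|_\infty\le C\exp(CN_{\alpha}(\mbB))\|\bar v_{t_1}\|$ (Lemma \ref{lem:rde:linearized:bounded_solutions}, with the local bounds on $\Sigma$ inherited from those on $x$), exponential moments of $N_\alpha(\mbB)$ (Theorem \ref{thm:gaussian_rough_paths}), and H\"older's inequality, which is exactly where $\ell$ drops to $\ell'$.

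In (a) you integrate the polynomial bound $\|x\|_\infty\lesssim\|\bar x_0\|+(1+\|\mbB\|_p)^p$, whereas the paper uses $C\exp(CN_\alpha(\mbB))+\|\bar x_0\|$ (Lemma \ref{lem:rdes:bound:entire_interval}). Your bound is correct and slightly sharper: bounded coefficients give an $O(1)$ increment per greedy interval, and $N_\alpha\le w(0,T)/\alpha$. However, Theorem \ref{thm:gaussian_rough_paths} gives exponential moments of $N_\alpha$, not Gaussian tails of $\|\mbB\|_p$. The moments you need should therefore be deduced from it via Corollary \ref{cor:Nalpha:NX_NXtilde_NT:small_intervals}, or from Fernique-type estimates.

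The one step that does not go through as written is ``measurability follows as in part (a)'' in (b). Your continuity argument in $u$ used that $b$ is Lipschitz in $u$. The tangent drift, however, is $\nabla_x b(t,x_t,u_t)v_t$, and nothing in the hypotheses makes $\nabla_x b$ Lipschitz in $u$.

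The paper supplies this with the interpolation inequality of Remark \ref{remark:interpolation}. Lipschitz continuity of $b$ in $u$, together with Lipschitz continuity of $\nabla_x b$ in $x$, yields $\|\nabla_x b(t,x,u)-\nabla_x b(t,x,\tilde u)\|\le C\sqrt{\|u-\tilde u\|}$. This gives a $\|u-\tilde u\|_{L^1}^{1/2}$ modulus of continuity for the Jacobian flow (Lemmas \ref{lem:rdes:continuity:sqrtholderinu} and \ref{lem:rde:linearized:continuous}). Alternatively, you could use the Carath\'eodory assumption (ii) with dominated convergence along a.e.-convergent subsequences of $u^k\to u$ in $L^1$. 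Either way, this argument must be added before $v$ and $p$ are known to be random variables.
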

This result extends \cite[Theorem 3.22]{Lew2026} to handle stochastic controls, as described in Appendix \ref{sec:proofs:rough_paths}.

\subsubsection{Brownian rough paths and consistency with Stratonovich and It\^o SDEs}

\begin{proposition}[Consistency with Stratonovich and It\^o SDEs]
\label{prop:brownian_rde_consistency}
Let $B$ be a $d$-dimensional Brownian motion, and $\mbB=(B,\bB)$ be its Stratonovich lift \eqref{eq:brownian_stratonovich_lift}. 
Let the assumptions of Theorem \ref{thm:sdes:rough} (a) hold, with $\bar x_0\in L^2_{\F_0}$ and progressively measurable $u$. 
Define
$
\mathbf b(t,x,u)
=
b(t,x,u)
-
\frac12
\nabla_x\sigma(t,x)\sigma(t,x)
$. 
Then, the It\^o, Stratonovich, and rough SDEs
\begin{align}
\hspace{20mm}
\dd x_t
&=
b(t,x_t,u_t)\dd t
+
\sigma(t,x_t)\dd B_t,
&&
\hspace{-10mm}
x_0=\bar x_0,
\hspace{15mm}
\\
\dd x_t
&=
\mathbf b(t,x_t,u_t)\dd t
+
\sigma(t,x_t)\circ\dd B_t,
&&
\hspace{-10mm}
x_0=\bar x_0,
\\
\dd x_t
&=
\mathbf b(t,x_t,u_t)\dd t
+
\sigma(t,x_t)\dd\mbB_t,
&&
\hspace{-10mm}
x_0=\bar x_0,
\end{align}
have unique solutions, that are indistinguishable, and satisfy $x\in L^2_\F(\Omega,C([0,T],\R^n))$.
\end{proposition}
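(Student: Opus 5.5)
The plan is to prove the three statements in sequence: rough to Stratonovich, then Stratonovich to It\^o, and then well-posedness and integrability. The first two are consistency statements about the same pathwise object. The third comes from the earlier existence results.

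\emph{Step 1 (rough to Stratonovich).} By Theorem \ref{thm:sdes:rough}(a), the rough SDE has a pathwise unique solution $x$. It lies in $L^2(\Omega,C([0,T],\R^n))$ and is $\F$-adapted, since the Stratonovich lift \eqref{eq:brownian_stratonovich_lift} restricted to $[0,t]$ is $\F_t$-measurable. Note that $\mathbf b$ inherits Assumption \ref{assum:rde_drift} from $b$, because $\sigma\in C_b^3$. I then show that the rough integral $\int_0^t\sigma(s,x_s)\dd\mbB_s$ agrees a.s.\ with the Stratonovich integral $\int_0^t\sigma(s,x_s)\circ\dd B_s$. The rough integral is the limit of the compensated Riemann sums \eqref{eq:rough_int}, $\sum \sigma(u,x_u)B_{u,v}+\nabla_x\sigma(u,x_u)\sigma(u,x_u)\bB_{u,v}$. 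Since $x$ is adapted, I split $\bB_{u,v}=\int_u^v B_{u,r}\otimes \dd B_r+\tfrac12(v-u)I$ (It\^o part plus correction). The sum of the It\^o parts weighted by adapted bounded coefficients vanishes in $L^2$ as the mesh goes to zero, by It\^o isometry. The first sum converges to the It\^o integral. The diagonal correction gives $\tfrac12\int_0^t\nabla_x\sigma\sigma\,\dd s$, and this equals the covariation term $\tfrac12\sum_j[\sigma^j(\cdot,x),B^j]_t$. Since rough sums converge pathwise along any sequence of partitions, the limits in probability coincide a.s. for each $t$, and by continuity they agree up to indistinguishability. Hence $x$ solves the Stratonovich SDE.

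\emph{Step 2 (Stratonovich to It\^o).} Proposition \ref{prop:ito_stratonovich_conversion} says that an adapted $x\in L^2_\F(\Omega,C)$ solves the Stratonovich SDE with drift $\mathbf b$ if and only if it solves the It\^o SDE with drift $b$. To use it, I need Assumption \ref{assum:pmp:ito} and $\sigma\in C_b^2$. Both follow from the standing hypotheses: $b$ is bounded, Lipschitz in $x$, and Lipschitz in $u$, and $\sigma\in C_b^3$. One caveat: if Assumption \ref{assum:pmp:ito}(iv)--(v) is not implied by hypothesis (a), I only need Lipschitz continuity for uniqueness, and the conversion in Appendix \ref{sec:ito_stratonovich_conversion:details} needs only $C^2$ regularity of $\sigma$. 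Theorem \ref{thm:sdes:ito}(a) gives existence and uniqueness of the It\^o solution. Therefore the rough solution is an It\^o solution, and by uniqueness of each equation all three solutions are indistinguishable.

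\emph{Main obstacle.} The delicate point is Step 1: identifying the rough integral with the Stratonovich integral for a state-dependent integrand. This requires both modes of convergence to hold along a common sequence of partitions, and it requires controlling the remainder $\sum \nabla\sigma\sigma\,(\bB_{u,v}-\tfrac12(v-u)I)$ in $L^2$. If that becomes technical, I will instead cite the standard consistency result \cite[Theorem 9.1]{Friz2020} or \cite[Chapter 17]{Friz2010}: RDE solutions driven by the Stratonovich lift solve the Stratonovich SDE. I would then verify only that a bounded measurable drift term does not affect the argument, because it enters as a finite-variation perturbation.
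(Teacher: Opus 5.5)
Your proposal is correct. It has the same two-link structure as the paper's proof, which consists of two citations: Proposition \ref{prop:ito_stratonovich_conversion} for It\^o $\Leftrightarrow$ Stratonovich, and \cite[Theorem 9.1]{Friz2020} for rough $\Leftrightarrow$ Stratonovich.

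\textbf{Where you differ: Step 1.} You prove the rough-to-Stratonovich link directly instead of citing it. You substitute $\bB_{u,v}=\int_u^v B_{u,r}\otimes\dd B_r+\tfrac12(v-u)I$ into the compensated Riemann sums. The It\^o iterated-integral terms vanish in $L^2$, because the increments are conditionally centred and carry bounded $\F_u$-measurable weights, which gives the bound $C\sum(v-u)^2\to0$. The diagonal correction $\tfrac12\int_0^t\nabla_x\sigma\sigma\,\dd s$ is what remains. This argument buys three things:
\begin{itemize}
\item It is self-contained.
\item It handles the random, control-dependent drift $\mathbf b(t,x,u_t(\omega))$ without adapting a cited theorem.
\item It shows directly that the RDE solution solves the It\^o SDE. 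Proposition \ref{prop:ito_stratonovich_conversion} is then needed only to rewrite the equation in Stratonovich form, and uniqueness comes from the It\^o and RDE theories.
\end{itemize}
The paper's citation buys brevity.

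\textbf{Step 2.} Your caveat is well taken. The hypotheses of Theorem \ref{thm:sdes:rough}(a) do not include Assumption \ref{assum:pmp:ito}(iv)--(v), yet Proposition \ref{prop:ito_stratonovich_conversion} formally requires them. The paper's one-line proof passes over this. Your fix is right: the argument only uses Lipschitz continuity of $b$ in $x$, for It\^o well-posedness, and $C^2_b$ regularity of $\sigma$, for the It\^o-formula computation in Appendix \ref{sec:ito_stratonovich_conversion:details}.
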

\begin{proof}
From  Proposition \ref{prop:ito_stratonovich_conversion}, the It\^o and Stratonovich SDEs have the same solution $x\in L^2_\F$. The rough and Stratonovich SDEs also have the same solutions by \cite[Theorem 9.1]{Friz2020}.
\end{proof}

\begin{corollary}
\label{cor:brownian_tangent_consistency}
Under the assumptions of Proposition \ref{prop:brownian_rde_consistency} and Theorem \ref{thm:sdes:rough} (b), 
let $t_1\in[0,T]$ and $\bar v_1\in L^2_{\F_{t_1}}(\Omega,\R^n)$.
Then, the rough and Stratonovich SDEs
\begin{align*}
\dd v_t
&=
\nabla_x\mathbf b(t,x_t,u_t)v_t\dd t
+
\nabla_x\sigma(t,x_t)v_t\dd\mbB_t,
&&
\hspace{-20mm}
v_{t_1}=\bar v_1,
\\
\dd v_t
&=
\nabla_x\mathbf b(t,x_t,u_t)v_t\dd t
+
\nabla_x\sigma(t,x_t)v_t\circ\dd B_t,
&&
\hspace{-20mm}
v_{t_1}=\bar v_1,
\end{align*}
have unique solutions, that are indistinguishable, and satisfy
$v\in L^2_\F(\Omega,C([t_1,T],\R^n))$.
\end{corollary}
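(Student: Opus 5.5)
The plan is to obtain the corollary as a special case of Proposition \ref{prop:brownian_rde_consistency}, by viewing the pair $(x,v)$ as the solution of an augmented nonlinear system, or more directly by repeating its proof for the linear tangent equation. Throughout I write $A_t:=\nabla_x\mathbf b(t,x_t,u_t)$ and $\Sigma^j_t:=\nabla_x\sigma^j(t,x_t)$.

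\textbf{Rough SDE.} For almost every $\omega$, $\mbB(\omega)$ is a geometric $p$-rough path. The process $t\mapsto\Sigma_t$ is a controlled rough path: by the chain rule (Lemma \ref{lem:rough_path:ito_formula}), its Gubinelli derivative is $\nabla^2_x\sigma(t,x_t)\sigma(t,x_t)$, which is well defined since $\sigma\in C_b^4$. Also, $A$ is bounded by item (iii) of Theorem \ref{thm:sdes:rough}(b), applied to $\mathbf b$, which satisfies the same assumptions since $\nabla\sigma\sigma$ is smooth and bounded. Hence Theorem \ref{thm:rde:linear:existence_uniqueness}, started at $t_1$, gives a unique pathwise solution $(v,v')$ with $v'_t=\Sigma_tv_t$. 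Theorem \ref{thm:sdes:rough}(b) then gives measurability, adaptedness (because $\bar v_1$ is $\F_{t_1}$-measurable and $\mbB|_{[0,t]}$ is $\F_t$-measurable) and $v\in L^{\ell'}$. To get the $L^2$ bound I take $\bar v_1\in L^{\ell}$ with $\ell>2$; otherwise I obtain it from the Stratonovich/It\^o side below.

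\textbf{Stratonovich SDE.} Convert the Stratonovich equation into an It\^o equation using the definition of the Stratonovich integral. The process $\Sigma_tv_t$ is a continuous semimartingale, and its bracket with $B^j$ is
$$\dd[\Sigma^j v,B^j]_t=\big(\nabla^2_x\sigma^j(t,x_t)[\sigma^j(t,x_t),v_t]+\Sigma^j_t\Sigma^j_tv_t\big)\dd t.$$
The Stratonovich equation is therefore equivalent to a linear It\^o SDE
$$\dd v_t=\tilde A_tv_t\dd t+\Sigma_tv_t\dd B_t,\qquad \tilde A_t=A_t+\tfrac12\sum_j\big(\nabla^2\sigma^j[\sigma^j,\cdot]+(\Sigma^j_t)^2\big).$$
Here $\tilde A$ is bounded and progressively measurable. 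Theorem \ref{thm:sdes:ito}(b) then yields a unique solution in $L^2_\F(\Omega,C([t_1,T],\R^n))$. This conversion is exactly the argument of Proposition \ref{prop:ito_stratonovich_conversion}, applied to the augmented state $(x,v)$ with diffusion $(\sigma(t,x),\nabla\sigma(t,x)v)$. That diffusion is not bounded in $v$, but it is linear in $v$, so the Appendix argument goes through after localization.

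\textbf{Indistinguishability.} Apply \cite[Theorem 9.1]{Friz2020} (consistency of RDEs driven by the Stratonovich lift with Stratonovich SDEs) to the augmented system $(x,v)$ on $[t_1,T]$. Its vector fields $(\mathbf b,\sigma)$ and $(A v,\Sigma v)$ have the required regularity, but the linear growth in $v$ means the standard theorem does not apply verbatim, because it needs $C_b^3$ fields. I expect this to be the main obstacle. I would handle it by localization: replace the $v$-fields by $C_b^3$ truncations equal to them on $\{\|v\|\le N\}$. The truncated systems coincide pathwise (rough sense) and almost surely (Stratonovich sense) up to the exit time $\tau_N$. Since both solutions have continuous paths, $\tau_N\to T$ almost surely, which gives indistinguishability on $[t_1,T]$. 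A cleaner alternative is to use linearity: the rough solution is $\Phi_t\Phi_{t_1}^{-1}\bar v_1$, where $\Phi$ is the Jacobian flow of $x$, and the analogous Stratonovich representation holds, so matching the flows through Proposition \ref{prop:brownian_rde_consistency} applied to the derivative of the flow would also conclude.
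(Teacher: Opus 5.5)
Your proposal is correct and follows the route the paper implicitly intends. The paper states this corollary without proof, as a consequence of Proposition \ref{prop:brownian_rde_consistency}, i.e.\ It\^o--Stratonovich conversion plus \cite[Theorem 9.1]{Friz2020} applied to the augmented state $(x,v)$; the same bracket computation appears in the proof of Lemma \ref{lem:tangent:conversion}. Two of your steps supply details the paper leaves implicit: the localization for the linearly growing $v$-fields, and taking the $L^2$ bound from the It\^o side. The latter is necessary, because with $\bar v_1\in L^2$ Theorem \ref{thm:sdes:rough}(b) only gives $L^{\ell'}$ for $\ell'<2$, so your $\ell>2$ option is not available under the stated hypotheses.
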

\noindent
We will show in Lemma \ref{lem:tangent:conversion} that the It\^o SDE 
$
\dd v_t
=
\nabla_x b(t,x_t,u_t)v_t\dd t
+
\nabla_x\sigma(t,x_t)v_t\dd B_t
$ with $v_{t_1}=\bar v_1$ also has the same solution.

\section{Proof of the Unified Stochastic PMP}
\label{sec:pmp_unified}

We prove \PMP under  the following assumptions.
\begin{assumption}[Definitions and Assumptions for \ocp and \pmp (Theorem \ref{thm:pmp})]
\label{assum:pmp}
Let 
$T>0$, 
$\rho\in[1,\frac{3}{2})$, $p\in(2\rho,3)$, 
and 
$U\subseteq\R^m$.
\begin{itemize}
\item Let $(\Omega,\F,(\F_t)_{0\le t\le T},\Prob)$ be a filtered probability space with a $d$-dimensional Brownian
motion $B=(B^1,\dots,B^d)$, where $(\F_t)_{0\leq t\leq T}$ is the usual augmentation of the filtration generated by $B$ and by an initial sigma algebra independent of $B$. Let $\mbB=(B,\bB)$ be its Stratonovich lift.  
\item The initial conditions $\bar x_0$ satisfy $\bar x_0\in L^2_{\F_0}(\Omega,\R^n)$.
\item The drift  $b:[0,T]\times\R^n\times U\to\R^n$ and cost  $f:[0,T]\times\R^n\times U\to\R$   
satisfy:
\begin{itemize} 
\item 
$b(\cdot, x, u):[0,T]\to\R^n$  is  measurable for all $(x,u)\in\R^n\times U$,  
\item  
$b(t,\cdot,\cdot):\R^n\times U\to\R^n$ 
is continuous for almost every $t\in[0,T]$,
\item 
$b(t,\cdot,u):\R^n\to\R^n$  is  continuously differentiable  for almost every $t\in[0,T]$  and  all $u\in U$,  
\item 
$\|b(t,x,u)\|+\big\|\nablaof{x}b(t,x,u)\big\| \leq C_{b,f}$ 
and 
$\big\|\nablaof{x}b(t,x,u)-\nablaof{x}b(t,\tilde{x},u)\big\|\leq C_{b,f}\|x-\tilde{x}\|$  
for almost every $t\in[0,T]$, all $x,\tilde{x}\in\R^n$, and all $u\in U$,
\item %
$\|b(t,x,u)-b(t,x,\tilde{u})\|\leq C_{b,f}\|u-\tilde{u}\|$ for almost every $t\in[0,T]$, all $x\in\R^n$, and all $u,\tilde{u}\in U$,
\end{itemize}
and similarly for $f$. 
\item The diffusion $\sigma:[0,T]\times\R^n\to\R^{n\times d}$ satisfies $\sigma\in C^4_b([0,T]\times\R^n,\R^{n\times d})$.
\item The terminal cost $g:\R^n\to\R$ and constraints $h:\R^n\to\R^r$ are continuously differentiable and 
satisfy 
$\big\|\nabla g(x)\big\|+\big\|\nabla h(x)\big\|
\leq C_{g,h}$
and 
$\big\|\nabla g(x)-\nabla g(\tilde{x})\big\|
+
\big\|\nabla h(x)-\nabla h(\tilde{x})\big\|
\leq C_{g,h}\|x-\tilde{x}\|$ for all $x,\tilde{x}\in\R^n$ 
for a constant $C_{g,h}>0$.
\end{itemize}
\end{assumption}
\noindent These assumptions are the intersection of the usual assumptions of the It\^o and rough stochastic PMPs. Specifically, the smoothness assumptions on the drift $b$ and diffusion $\sigma$ are more restrictive than usual assumptions for the It\^o PMP, due to the use of rough path theory. 
Conversely, the Brownian motion assumption restricts the use of non-martingale processes like fBM due to the use of It\^o calculus. 
These assumptions are only required to prove a unified PMP with the conditional bridge (iv), while individual It\^o- or rough path-only PMPs rely on weaker assumptions.

Under Assumption \ref{assum:pmp}, given a non-anticipative control $u\in\U := L^2_{\F}([0,T]\times\Omega,U)$, and 
the drift 
$
\mathbf b(t,x,u)
:=
b(t,x,u)
-
\tfrac12\nabla_x\sigma(t,x)\sigma(t,x),
$ 
the SDEs
\begin{align}
\tag{\ref{eq:sde:ito}}
\textcolor{gray}{\text{(It\^o SDE)}}
\hspace{13mm}
\dd x_t
&=
b(t,x_t,u_t)\dd t
+
\sigma(t,x_t)\dd B_t,
&& 
x_0=\bar x_0,
\\
\hspace{-10mm}\textcolor{gray}{\text{(Stratonovich SDE)}}
\hspace{13mm}
\dd x_t
&=
\mathbf b(t,x_t,u_t)\dd t
+
\sigma(t,x_t)\circ\dd B_t,
&& 
x_0=\bar x_0,
\nonumber
\\
\tag{\ref{eq:sde:rough}}
\textcolor{gray}{\text{(Rough SDE)}}
\hspace{13mm}
\dd x_t
&=
\mathbf b(t,x_t,u_t)\dd t
+
\sigma(t,x_t)\dd\mbB_t,
&& 
x_0=\bar x_0,
\end{align}
have the same unique solution $x\in L^2_\F(\Omega,C([0,T],\R^n))$, by Proposition \ref{prop:brownian_rde_consistency}.

Let $t_1\in[0,T)$, $\bar v_1\in L^2_{\F_{t_1}}(\Omega,\R^n)$, and $\bar p_T\in L^{2+\epsilon}_{\F_T}(\Omega,\R^n)$ for some $\epsilon\geq 0$. 
The linear SDEs
\begin{align}
\label{eq:tangent:ito}
\textcolor{gray}{\text{(It\^o SDE)}}
\hspace{14mm}
\dd v_t
&=\nabla b_tv_t\dd t
+
\nabla\sigma_tv_t\dd B_t,
&&v_{t_1}=\bar v_1,
\\
\label{eq:tangent:rough}
\hspace{2mm}\textcolor{gray}{\text{(Rough SDE)}}
\hspace{13mm}\hspace{1.5pt}
\dd \mathbf{v}_t
&=\nabla \mathbf b_t\mathbf{v}_t\dd t
+
\nabla\sigma_t\mathbf{v}_t\dd \mbB_t,
&&\mathbf{v}_{t_1}=\bar v_1,
\\
\label{eq:adjoint:ito}
\textcolor{gray}{\text{(It\^o BSDE)}}
\hspace{14mm}
\dd p_t
&=
-\left(
\nabla b^\top_t p_t 
+
\nabla\sigma^\top_t q_t
\right)
\dd t
+
q_t\dd B_t,
&&p_T=\bar p_T,
\\
\label{eq:adjoint:rough}
\hspace{2mm}\textcolor{gray}{\text{(Rough SDE)}}
\hspace{13mm}\hspace{1.5pt}
\dd \mathbf p_t
&=
-\nabla \mathbf b^\top_t \mathbf p_t 
\dd t
-\nabla\sigma^\top_t
\mathbf p_t
\dd \mbB_t,
&&\mathbf p_T=\bar p_T,
\end{align}
have unique solutions on $[t_1,T]$ and $[0,T]$, 
thanks to  Theorem \ref{thm:sdes:ito} and Theorem \ref{thm:sdes:rough} (b), 
where 
$$
\left(b_t,\mathbf b_t,\sigma_t,\nabla b_t,\nabla\mathbf b_t,\nabla\sigma_t\right):=\left(b(t,x_t,u_t),\mathbf b(t,x_t,u_t),\sigma(t,x_t),\nabla_xb(t,x_t,u_t),\nabla_x\mathbf b(t,x_t,u_t),\nabla_x\sigma(t,x_t)\right)
$$ 
for conciseness.  

This section is organized as follows.
\begin{itemize}
\item In Section \ref{sec:pmp_unified:v=v}, we prove that  the It\^o and rough tangent equations define the same process: $\mathbf v = v$.
\item In Section \ref{sec:pmp_unified:<p,v>}, we prove the It\^o and rough duality identities
$$
\ip{p_t}{v_t} =  \E\left[\ip{p_T}{v_T}\,\big|\,\F_t\right] 
\quad\text{and}\quad
\ip{\mathbf p_t}{\mathbf v_t} =  \ip{\mathbf p_T}{\mathbf v_T}.
$$
\item In Section \ref{sec:pmp_unified:bridge}, we use the duality identities above to prove the conditional bridge 
\begin{align*}
 \hspace{4cm}
 &p_t =  \E\left[\mathbf p_t\,\big|\,\F_t\right] \quad\text{for all $t\in[t_1,T]$}.
 &&\hspace{1cm}\textcolor{gray}{\text{(condition (iv) of the PMP)}}
\end{align*}
\item In Section \ref{sec:pmp_unified:conclusion}, we conclude the proof of \PMP using this conditional bridge and the It\^o PMP.
\end{itemize}

\subsection{Forward tangent processes}
\label{sec:pmp_unified:v=v}

\begin{lemma}[It\^o-rough forward tangent conversion]\label{lem:tangent:conversion}
Under Assumption \ref{assum:pmp}, let $x\in L_\F^2(\Omega,C([0,T],\R^n))$ be the solution to the It\^o SDE
\begin{equation}\tag{\ref{eq:sde:ito}}
\dd x_t
=
b(t,x_t,u_t)\dd t
+
\sigma(t,x_t)\dd B_t,
\qquad
x_0=\bar x_0.
\end{equation}
Let $\bar v_1\in L^2_{\F_{t_1}}(\Omega,\mathbb R^n)$, and $v,\mathbf v\in L_\F^2(\Omega,C([t_1,T],\R^n))$ be the solutions to the It\^o and
rough SDEs
\begin{align*}
\tag{\eqref{eq:tangent:ito},\eqref{eq:tangent:rough}} 
\dd v_t
&=\nabla b_tv_t\dd t
+
\nabla\sigma_tv_t\dd B_t,
\ 
v_{t_1}=\bar v_1,
\qquad
\dd \mathbf v_t
=\nabla \mathbf b_t\mathbf v_t\dd t
+
\nabla\sigma_t\mathbf v_t\dd \mbB_t,
\ 
\mathbf v_{t_1}=\bar v_1,
\end{align*}
Then, $v=\mathbf v$ almost surely. 
\end{lemma}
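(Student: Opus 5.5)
By Corollary \ref{cor:brownian_tangent_consistency}, the rough tangent $\mathbf v$ coincides with the solution of the Stratonovich SDE
$$
\dd \mathbf v_t=\nabla\mathbf b_t\mathbf v_t\dd t+\nabla\sigma_t\mathbf v_t\circ\dd B_t,\qquad \mathbf v_{t_1}=\bar v_1 .
$$
The plan is to convert this Stratonovich SDE into It\^o form and check that the resulting drift equals $\nabla b_t\mathbf v_t$. Uniqueness of the linear It\^o SDE \eqref{eq:tangent:ito} (Theorem \ref{thm:sdes:ito} (b)) then gives $v=\mathbf v$ almost surely. Note that $\nabla b_t$ and $\nabla\sigma_t$ are bounded and progressively measurable, so Theorem \ref{thm:sdes:ito} (b) applies.

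For the conversion, I use the definition of the Stratonovich integral:
$$
\int\nabla\sigma^j_t\mathbf v_t\circ\dd B^j_t=\int\nabla\sigma^j_t\mathbf v_t\dd B^j_t+\tfrac12[\nabla\sigma^j(\cdot,x)\mathbf v,B^j].
$$
The integrand $Y^j_t:=\nabla_x\sigma^j(t,x_t)\mathbf v_t$ is a continuous semimartingale. By It\^o's formula applied to $\nabla_x\sigma^j(t,x)v$, which is $C^2$ in $(x,v)$ since $\sigma\in C_b^4$, its martingale part is
$$
\sum_k\Big(\nabla^2_x\sigma^j(t,x_t)[\sigma^k(t,x_t),\mathbf v_t]+\nabla_x\sigma^j(t,x_t)\nabla_x\sigma^k(t,x_t)\mathbf v_t\Big)\dd B^k_t .
$$
Here I use that $x$ has It\^o martingale part $\sigma\dd B$ and $\mathbf v$ has martingale part $\nabla\sigma\,\mathbf v\dd B$ (the Stratonovich correction only changes drifts). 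Consequently
$$
[Y^j,B^j]_t=\int_{t_1}^t\Big(\nabla^2_x\sigma^j(s,x_s)[\sigma^j(s,x_s),\mathbf v_s]+\nabla_x\sigma^j\nabla_x\sigma^j\mathbf v_s\Big)\dd s .
$$

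It then remains to verify the algebraic identity
$$
\nabla_x\mathbf b\,\mathbf v+\tfrac12\sum_j\Big(\nabla^2\sigma^j[\sigma^j,\mathbf v]+\nabla\sigma^j\nabla\sigma^j\mathbf v\Big)=\nabla_x b\,\mathbf v .
$$
Differentiating \eqref{eq:rough-drift}, with $\nabla_x\sigma\sigma=\sum_j\nabla\sigma^j\sigma^j$, gives
$$
\nabla_x\mathbf b\,v=\nabla_xb\,v-\tfrac12\sum_j\Big(\nabla^2\sigma^j[v,\sigma^j]+\nabla\sigma^j\nabla\sigma^j v\Big).
$$
By symmetry of the Hessian, the correction terms cancel exactly. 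Hence $\mathbf v$ solves \eqref{eq:tangent:ito}.

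The main obstacle is the rigorous justification of the covariation computation. I need $\mathbf v$ to be a semimartingale with the stated decomposition, and I need integrability so that every term lies in $L^2_\F$. An alternative route that avoids this is to apply Proposition \ref{prop:ito_stratonovich_conversion} to the augmented system $(x,\mathbf v)$. However, its coefficient $(x,v)\mapsto\nabla\sigma(t,x)v$ is not bounded in $v$, so the proposition does not apply verbatim, which is why I will do the direct computation. Integrability follows from $\mathbf v\in L^2_\F(\Omega,C)$ (Corollary \ref{cor:brownian_tangent_consistency}) together with boundedness of $\nabla\sigma$ and $\nabla^2\sigma$. If needed, I localize with stopping times $\tau_N=\inf\{t:\|\mathbf v_t\|\ge N\}$, show the identity on $[t_1,\tau_N]$, and let $N\to\infty$ using continuity of paths.
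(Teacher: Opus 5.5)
Your proof is correct and is essentially the paper's argument. The paper converts the augmented Stratonovich system $(x,\mathbf v)$ to It\^o form via Proposition \ref{prop:ito_stratonovich_conversion} and obtains exactly your correction term $\nabla_x(\Sigma^j v)\sigma^j+\Sigma^j\Sigma^j v$ with $\Sigma^j=\nabla_x\sigma^j$. It then cancels this term against $\nabla_x\mathbf b$ using the same Hessian-symmetry identity and concludes by uniqueness of the linear It\^o SDE.

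Your direct covariation computation simply inlines the proof of that proposition. This is a fair precaution, since the augmented coefficient $(x,v)\mapsto\nabla_x\sigma(t,x)v$ indeed falls outside its $C_b^2$ and global-Lipschitz hypotheses. The localization is unnecessary, though: It\^o's formula and the covariation identity hold for any continuous semimartingale, and $\mathbf v\in L^2_\F(\Omega,C([t_1,T],\R^n))$ is already given for the uniqueness step.
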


\begin{proof} 
By Proposition \ref{prop:ito_stratonovich_conversion} and Corollary \ref{cor:brownian_tangent_consistency}, the state trajectory $x$ and tangent process $\mathbf v$ solve the Stratonovich SDEs
\begin{align*}
\dd x_t
&= 
\mathbf b(t,x_t,u_t)\dd t
+
\sigma(t,x_t)\circ \dd B_t,
&&\hspace{-1cm}
x_0=\bar x_0,
\\
\dd \mathbf v_t
&=
\nabla_x\mathbf b(t,x_t,u_t)\mathbf v_t\dd t
+
\sum_{j=1}^d
\nabla_x\sigma^j(t,x_t)\mathbf v_t\circ \dd B_t^j,
&&\hspace{-1cm}
\mathbf v_{t_1}=\bar v_1.
\end{align*}

Next, we convert this SDE into It\^o form. 
The joint process
$z:=(x,\mathbf v)$ solves the Stratonovich SDE
\begin{align*}
\dd z_t
&=
\begin{bmatrix}
\mathbf b(t,x_t,u_t)
\\
\nabla_x\mathbf b(t,x_t,u_t)\mathbf v_t
\end{bmatrix}
\dd t
+
\sum_{j=1}^d
\underbrace{
\begin{bmatrix}
\sigma^j(t,x_t)
\\
\nabla_x\sigma^j(t,x_t)\mathbf v_t
\end{bmatrix}
}_{=:V^j(t,x_t,\mathbf v_t)}
\circ \dd B_t^j,
\quad
z_{t_1}=\begin{bmatrix}
x_{t_1}
\\
\bar v_1
\end{bmatrix},
\end{align*}
where 
$
V^j(t,x,v)
:=
\begin{bmatrix}
\sigma^j(t,x)\\
\Sigma^j(t,x)v
\end{bmatrix}
\in\R^{2n}$ with $\Sigma^j(t,x):=\nabla_x\sigma^j(t,x)\in\mathbb R^{n\times n}$ 
for $j=1,\dots,d$. 
By Proposition \ref{prop:ito_stratonovich_conversion}, $z$ solves the equivalent It\^o SDE
\begin{align*}
\dd 
\begin{bmatrix}
x_t
\\
\mathbf v_t
\end{bmatrix}
&=
\bigg(
\begin{bmatrix}
\mathbf b(t,x_t,u_t)
\\
\nabla_x\mathbf b(t,x_t,u_t)\mathbf v_t
\end{bmatrix}
+
\frac12 
\sum_{j=1}^d\nabla_{(x,v)}V^j(t,x_t,\mathbf v_t)V^j(t,x_t,\mathbf v_t)
\bigg)
\dd t
+
\sum_{j=1}^d
V^j(t,x_t,\mathbf v_t)
\dd B_t^j.
\end{align*}
Each $j$-th correction term is
\begin{small}
\begin{align*}
\nabla_{(x,v)}V^j(t,x,v)V^j(t,x,v)
&=
\begin{bmatrix}
\Sigma^j(t,x) &0
\\
\nabla_x\left(\Sigma^j(t,x)v\right) & \Sigma^j(t,x)
\end{bmatrix}
\begin{bmatrix}
\sigma^j(t,x)\\
\Sigma^j(t,x)v
\end{bmatrix}
 =
\begin{bmatrix}
\Sigma^j(t,x)\sigma^j(t,x)
\\
\nabla_x\Sigma^j(t,x)\sigma^j(t,x)v + \Sigma^j(t,x)\Sigma^j(t,x)v
\end{bmatrix},
\end{align*}
\end{small}

\noindent where
\begin{equation}\label{eq:tangent:strat_to_ito_conversion}
\big(\nabla_x \Sigma^j(t,x)\sigma^j(t,x)\big)v
+
\Sigma^j(t,x)\Sigma^j(t,x)v
=
\nabla_x\!\left(\Sigma^j(t,x)\sigma^j(t,x)\right)v.
\end{equation}
The identity \eqref{eq:tangent:strat_to_ito_conversion} follows from direct computations, 
see Section \ref{sec:tangent:strat_to_ito_conversion:details}. 
Thus, by Proposition \ref{prop:ito_stratonovich_conversion}, $\mathbf v$ solves the equivalent SDEs
\begin{align*}
\dd \mathbf v_t
&=
\nabla_x\mathbf b(t,x_t,u_t)\mathbf v_t\dd t
+
\sum_{j=1}^d
\nabla_x\sigma^j(t,x_t)\mathbf v_t\circ \dd B_t^j
\\[-1mm]
&=
\bigg(\!
\nabla_x\mathbf b(t,x_t,u_t)\mathbf v_t
+
\frac12
\sum_{j=1}^d
\nabla_x\!\left(\Sigma^j(t,x_t)\sigma^j(t,x_t)\right)\!\mathbf v_t
\!\bigg)
\dd t
+
\sum_{j=1}^d
\nabla_x\sigma^j(t,x_t)\mathbf v_t\dd B_t^j
\\[-1mm]
&=
\nabla_x b(t,x_t,u_t)\mathbf v_t
\dd t
+
\sum_{j=1}^d
\nabla_x\sigma^j(t,x_t)\mathbf v_t\dd B_t^j,
\end{align*}
which is the It\^o SDE solved by $v_t$. 
By uniqueness of solutions,
$\mathbf v=v$ almost surely.
\end{proof}

\subsection{Duality identities}\label{sec:pmp_unified:<p,v>}

\begin{lemma}[It\^o duality identity]
\label{lem:duality:ito}
Let $A\in L^\infty_\F([0,T]\times\Omega,\R^{n\times n})$ ,  
$\Sigma\in L^\infty_\F([0,T]\times\Omega,\R^{n\times d\times n})$,  
$\bar v_1\in L^2_{\F_{t_1}}(\Omega,\R^n)$, and $\bar p_T\in L^2_{\F_T}(\Omega,\R^n)$.   
Let $v\in  L^2_\F(\Omega,C([0,T],\R^n))$ and $(p,q)\in  L^2_\F(\Omega,C([0,T],\R^n)) \times  L^2_\F([0,T]\times\Omega,\R^{n\times d})$ solve the It\^o SDE and BSDE
\begin{align*}
\dd v_t
&=
A_tv_t\dd t
+
\Sigma_tv_t\dd B_t,
\hspace{20mm}
v_{t_1}=\bar v_1,
\\
\dd p_t
&=
-\left(
A_t^\top p_t
+
\Sigma_t^\top q_t
\right)\dd t
+
q_t\dd B_t,
\hspace{4mm}\hspace{1pt}
p_T=\bar p_T.
\end{align*}
Then, for every $t\in[t_1,T]$, almost surely,
$$
\ip{p_t}{v_t}
=
\E\left[
\ip{p_T}{v_T}
\,\big|\,
\mathcal F_t
\right].
$$
\end{lemma}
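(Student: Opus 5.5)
The plan is to apply It\^o's product formula \eqref{eq:ito_formula} to $\ip{p_t}{v_t}$ on $[t_1,T]$. We then observe that the drift terms cancel exactly, so that $\ip{p_t}{v_t}$ is a local martingale. Finally, we upgrade it to a true martingale using the integrability of $v$, $p$ and $q$.

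\textbf{Step 1: the $\dd t$ term vanishes.} In the notation of \eqref{eq:ito_formula}, take $x=v$ with $a_s=A_sv_s$ and $\alpha^j_s=\Sigma^j_sv_s$. Take $y=p$ with $b_s=-(A_s^\top p_s+\Sigma_s^\top q_s)$ and $\beta^j_s=q^j_s$. The $\dd s$ integrand is then
$$
\ip{A_sv_s}{p_s}-\ip{v_s}{A_s^\top p_s}-\ip{v_s}{\textstyle\sum_j(\Sigma^j_s)^\top q^j_s}+\textstyle\sum_j\ip{\Sigma^j_sv_s}{q^j_s}=0.
$$
Here we use the convention $\Sigma_t^\top q_t=\sum_j(\Sigma^j_t)^\top q^j_t$ from the footnote of Theorem~\ref{thm:sdes:ito}. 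Hence, for $t\in[t_1,T]$,
$$
\ip{p_T}{v_T}=\ip{p_t}{v_t}+\int_t^T M_s\dd B_s,
\qquad
M^j_s:=\ip{\Sigma^j_sv_s}{p_s}+\ip{v_s}{q^j_s}.
$$

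\textbf{Step 2: the stochastic integral is a true martingale.} This is the main obstacle, since $M$ is only a product of $L^2$ quantities. We bound $\E\big[(\int_{t_1}^T\|M_s\|^2\dd s)^{1/2}\big]$ by
$$
C\,\E\Big[\sup_s\|v_s\|\Big(\sup_s\|p_s\|+\big(\textstyle\int\|q_s\|^2\dd s\big)^{1/2}\Big)\Big],
$$
using $\Sigma\in L^\infty$. By Cauchy--Schwarz this is at most
$$
C\,\|v\|_{L^2(\Omega,C)}\Big(\|p\|_{L^2(\Omega,C)}+\|q\|_{L^2_\F}\Big)<\infty.
$$
The Burkholder--Davis--Gundy inequality with exponent $1$ then gives $\E[\sup_t|\int_{t_1}^tM_s\dd B_s|]<\infty$. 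A continuous local martingale whose supremum is integrable is a true (uniformly integrable) martingale. This avoids needing $L^4$ bounds; alternatively, one could localize with stopping times $\tau_k$ and pass to the limit by dominated convergence, using the integrable dominator $\sup_s|\ip{p_s}{v_s}|\le\sup\|p\|\sup\|v\|\in L^1$.

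\textbf{Step 3: conclusion.} Take conditional expectation given $\F_t$ in the identity of Step 1. The martingale term has zero conditional expectation. Since $\ip{p_t}{v_t}$ is $\F_t$-measurable and integrable (again by Cauchy--Schwarz), this yields $\ip{p_t}{v_t}=\E[\ip{p_T}{v_T}\mid\F_t]$ almost surely for every $t\in[t_1,T]$.
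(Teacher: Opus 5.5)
Your proof is correct and follows the same route as the paper: apply It\^o's product formula to $\ip{p_t}{v_t}$, observe that the $\dd s$ terms cancel, and take conditional expectations of the remaining stochastic integral. Your Step~2 (BDG with exponent $1$, or localization with the dominator $\sup_s|\ip{p_s}{v_s}|\in L^1$) supplies the justification that this local martingale is a true martingale, which the paper's proof simply asserts when it says ``thus $\ip{p}{v}$ is a martingale.''
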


\begin{proof}

By Theorem \ref{thm:sdes:ito}, the solutions to the SDEs are well-defined. By the Cauchy-Schwarz inequality,
$\ip{p_t}{v_t}\in L_{\F_t}^1(\Omega,\R)$ for each $t\in[0,T]$.
By It\^o's formula \eqref{eq:ito_formula},
\begin{align*}
\ip{p_t}{v_t}
=
\ip{p_{t_2}}{v_{t_2}}
&+
\int_{t_2}^t
\overbrace{
\bigg(
-\ip{A^\top_s p_s+\Sigma^\top_sq_s}{v_s}
+
\ip{p_s}{A_sv_s}
+\sum_{j=1}^d\ip{q_s^j}{\Sigma^j_sv_s}
\bigg)
}^{=0}
\dd s
\\[-2mm]
&+
\int_{t_2}^t
\sum_{j=1}^d
\left(
\ip{q_s^j}{v_s}
+
\ip{p_s}{\Sigma^j_sv_s}
\right)\dd B_s^j,
\end{align*}
for any $t_1\leq t_2\leq t\leq T$. 
Thus, $\ip{p}{v}$ is a martingale, and it follows that
$
\ip{p_t}{v_t} = 
\E\left[\ip{p_T}{v_T}\,\big|\,\F_t\right]
$
for all $t\in[t_1,T]$, which concludes the proof. 
\end{proof}

\begin{corollary}[Duality identity for tangent and adjoint It\^o SDEs]
\label{lem:duality:ito:cost_augmented}

Under Assumption \ref{assum:pmp}, let
\begin{itemize}
\item $x\in L_\F^2(\Omega,C([0,T],\R^n))$ be the solution to the It\^o state SDE \eqref{eq:sde:ito}, 
\item $v\in L_\F^2(\Omega,C([t_1,T],\R^n))$ be the solution to the It\^o tangent SDE \eqref{eq:tangent:ito}, 
\item  $p\in L_\F^2(\Omega,C([0,T],\R^n))$ be the solution to the  It\^o adjoint BSDE \eqref{eq:adjoint:ito}.
\end{itemize}
Then, for every $t\in[t_1,T]$, almost surely,
$$
\ip{p_t}{v_t}
=
\E\left[
\ip{p_T}{v_T}
\,\big|\,
\mathcal F_t
\right].
$$
\end{corollary}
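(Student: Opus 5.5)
The corollary is a direct specialization of Lemma \ref{lem:duality:ito}. I will set $A_t:=\nabla b_t=\nabla_x b(t,x_t,u_t)$ and $\Sigma_t:=\nabla\sigma_t=(\nabla_x\sigma^1(t,x_t),\dots,\nabla_x\sigma^d(t,x_t))$. With these choices, the It\^o tangent SDE \eqref{eq:tangent:ito} and the adjoint BSDE \eqref{eq:adjoint:ito} coincide exactly with the linear SDE and BSDE of Lemma \ref{lem:duality:ito}. Indeed, the drift of \eqref{eq:adjoint:ito} is $-(A_t^\top p_t+\Sigma_t^\top q_t)$, where $\Sigma_t^\top q_t=\sum_j \nabla_x\sigma^j(t,x_t)^\top q^j_t$, matching the convention in the footnote of Theorem \ref{thm:sdes:ito}(c).

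Next I will check the hypotheses of Lemma \ref{lem:duality:ito}. First, $A$ and $\Sigma$ must be progressively measurable. This holds because $x$ is continuous and adapted, $u$ is progressively measurable, $\nabla_x b$ is Carath\'eodory (measurable in $t$ and continuous in $(x,u)$ under Assumption \ref{assum:pmp}), and $\nabla_x\sigma$ is continuous since $\sigma\in C_b^4$. Second, $A$ and $\Sigma$ must be essentially bounded. This follows from the uniform bound $\|\nabla_x b\|\le C_{b,f}$ and from $\sigma\in C^4_b$. Third, the data must have the right integrability: $\bar v_1\in L^2_{\F_{t_1}}$ by assumption, and $\bar p_T\in L^{2+\epsilon}_{\F_T}\subseteq L^2_{\F_T}$.

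Theorem \ref{thm:sdes:ito}(b),(c) then gives existence and uniqueness of $v$ and $(p,q)$ in the stated spaces. Lemma \ref{lem:duality:ito} yields $\ip{p_t}{v_t}=\E[\ip{p_T}{v_T}\mid\F_t]$ for all $t\in[t_1,T]$.

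The one point needing care is measurability of the composed coefficients $A_t=\nabla_xb(t,x_t,u_t)$. The Carath\'eodory property is stated for $\nabla_x b$ in Theorem \ref{thm:sdes:rough}(b), but only continuity in $x$ and measurability of $b$ are listed in Assumption \ref{assum:pmp}. I would handle this by noting that $\nabla_x b(t,x,u)$ is a pointwise limit of difference quotients of $b$. Each difference quotient is Carath\'eodory, so $\nabla_x b$ is jointly measurable, and composing it with the progressively measurable $(x,u)$ gives progressive measurability.

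A second, minor point concerns the martingale property of the stochastic integral in the proof of the lemma. Since $v,p\in L^2(\Omega,C)$ and $q\in L^2_\F$, the integrand $\ip{q^j}{v}+\ip{p}{\Sigma^j v}$ is only in $L^1$-type spaces. So the stochastic integral is a priori a local martingale, and I would upgrade it to a true martingale. My first attempt is to use the fact that $v\in L^\ell$ for all $\ell$ (Theorem \ref{thm:sdes:ito}(b), $v=\phi\psi\bar v$ with $\phi,\psi\in L^\ell$) only when $\bar v$ has higher moments. If that fails, I would use a BDG estimate showing that $\E[\sup_t|\int\cdots\dd B|]\le C\,\E[(\int\|q\|^2\|v\|^2)^{1/2}]\le C\,\E[\sup\|v\|^2]^{1/2}\E[\int\|q\|^2]^{1/2}<\infty$. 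This bound makes the local martingale of class (D), hence a true martingale, which is what the lemma's proof uses.
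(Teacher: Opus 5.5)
Your proposal is correct and matches the paper's proof, which simply applies Lemma~\ref{lem:duality:ito} with $A_t=\nabla b_t$ and $\Sigma_t=\nabla\sigma_t$. Your extra checks are sound and supply details the paper leaves implicit: progressive measurability and boundedness of the composed coefficients, and the BDG bound showing that the stochastic integral in the lemma's proof is a true martingale, not merely a local one.
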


\begin{proof}
Apply Lemma \ref{lem:duality:ito} with 
$A_t=\nabla b_t$ and $
\Sigma_t=\nabla\sigma_t$. 
\end{proof}

\begin{lemma}[Rough duality identity]
\label{lem:duality:rough}
Let $p\in[2,3)$, 
$\mbX\in\sC_g^p([0,T],\R^d)$, 
$A\in L^\infty([0,T],\R^{n\times n})$, 
$
(\Sigma,\Sigma')
\in
\sD_X^p\left([0,T],\R^{n\times d\times n}\right),
$
and $\bar v_1, \bar p_T\in\R^n$. 
Let $\mathbf v$ and $\mathbf p$ solve the RDEs
\begin{align*}
\dd \mathbf v_t
&=
A_t\mathbf v_t\dd t
+
\Sigma_t\mathbf v_t\dd\mbX_t,
\hspace{11mm}
\mathbf v_{t_1}=\bar v_1,
\\
\dd\mathbf p_t
&=
-A_t^\top \mathbf p_t\dd t
-\Sigma^\top_t\mathbf p_t\dd\mbX_t, 
\hspace{6mm}
\mathbf p_T=\bar p_T.
\end{align*}
Then, for every $t\in[t_1,T]$,
$$
\ip{\mathbf p_t}{\mathbf v_t}
=
\ip{\mathbf p_T}{\mathbf v_T}.
$$
\end{lemma}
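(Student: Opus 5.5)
The plan is to apply the rough product rule \eqref{eq:rough_product_rule} to $\ip{\mathbf p_t}{\mathbf v_t}$ on $[t_1,T]$ and show that both the $\dd t$ and the $\dd\mbX$ integrands vanish identically. This mirrors the It\^o computation in Lemma \ref{lem:duality:ito}, but without the It\^o correction term. The absence of that term is precisely why the rough adjoint carries no $q$-type process.

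First, by Theorem \ref{thm:rde:linear:existence_uniqueness} the solutions exist as controlled rough paths. We have $(\mathbf v,\mathbf v')\in\sD^p_X$ with $\mathbf v'_t=\Sigma_t\mathbf v_t$, and $(\mathbf p,\mathbf p')\in\sD^p_X$ with $\mathbf p'_t=-\Sigma_t^\top\mathbf p_t$. The forward equation is posed on $[t_1,T]$, and we restrict $\mathbf p$ to that interval. Both processes have the form required by Lemma \ref{lem:rough_path:ito_formula}: a drift $\int A\mathbf v\,\dd s$ (resp. $-\int A^\top\mathbf p\,\dd s$), which has bounded variation and so lies in $\C^{1}\subseteq\C^{p/2}$, plus a rough integral.

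The step needing care is verifying the hypotheses of the product rule. Lemma \ref{lem:rough_path:ito_formula} requires the Gubinelli derivatives $Y'$ themselves to be controlled, with some $(Y',Y'')\in\sD^p_X$. For $Y'=\Sigma\mathbf v$ this follows from the product of controlled paths: $(\Sigma\mathbf v)'=\Sigma'\mathbf v+\Sigma\,\Sigma\mathbf v$. Since $\Sigma\in\sD^p_X$ and $\mathbf v\in\sD^p_X$, and products of controlled paths are controlled, this holds, and the same argument applies to $-\Sigma^\top\mathbf p$. One should also check that applying the chain rule to the bilinear (hence smooth) map $(y,z)\mapsto\ip{y}{z}$ for the joint controlled path $(\mathbf v,\mathbf p)$ is legitimate. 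It is, because $\ip{\cdot}{\cdot}$ is $C^\infty$ and the lemma only uses $f$ and its derivatives along the path. I would treat this regularity bookkeeping as the main obstacle, though a routine one.

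With the hypotheses in place, the product rule gives, for $t\in[t_1,T]$,
\[
\ip{\mathbf p_T}{\mathbf v_T}-\ip{\mathbf p_t}{\mathbf v_t}
=\int_t^T\big(\ip{-A_s^\top\mathbf p_s}{\mathbf v_s}+\ip{\mathbf p_s}{A_s\mathbf v_s}\big)\dd s
+\int_t^T\big(\mathbf v_s^\top(-\Sigma_s^\top\mathbf p_s)+\mathbf p_s^\top\Sigma_s\mathbf v_s\big)\dd\mbX_s .
\]
The drift integrand is zero pointwise. The rough integrand is zero componentwise in $j$, since $-\ip{\mathbf v_s}{(\Sigma^j_s)^\top\mathbf p_s}+\ip{\mathbf p_s}{\Sigma^j_s\mathbf v_s}=0$. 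Its Gubinelli derivative is then also identically zero, so the rough integral of the zero controlled path vanishes; this can be read off directly from the Riemann sums in \eqref{eq:rough_int}. Hence $\ip{\mathbf p_t}{\mathbf v_t}=\ip{\mathbf p_T}{\mathbf v_T}$ for every $t\in[t_1,T]$.
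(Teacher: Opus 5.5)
Your proposal is correct and follows the paper's proof: apply the rough product rule \eqref{eq:rough_product_rule} to $\ip{\mathbf p_t}{\mathbf v_t}$ on $[t_1,T]$ and observe that both the $\dd t$ and $\dd\mbX$ integrands vanish, so the pairing is constant. Your check of the hypotheses of Lemma~\ref{lem:rough_path:ito_formula} goes beyond the paper, which omits it.

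One small correction concerns the Gubinelli derivative of the $\dd\mbX$ integrand. Its vanishing does not follow merely from the integrand being zero: for a general geometric rough path, e.g.\ a pure-area one with $X\equiv 0$, the rough integral of the zero path depends on the chosen derivative. It does hold here, because the integrand is $\Psi(\mathbf v_s,\mathbf p_s,\Sigma_s)$ with $\Psi(v,p,S):=p^\top Sv-v^\top S^\top p\equiv 0$, so its chain-rule derivative $\nabla\Psi(\mathbf v,\mathbf p,\Sigma)\,(\mathbf v',\mathbf p',\Sigma')$ vanishes identically.
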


\begin{proof}
The solutions to the RDEs are well-defined, by Theorem \ref{thm:rde:linear:existence_uniqueness}. 
By the chain rule \eqref{eq:rough_product_rule},
\begin{align*} 
\ip{\mathbf p_t}{\mathbf v_t}
&\mathop{=}^{\eqref{eq:rough_product_rule}}
\ip{\mathbf p_s}{\mathbf v_s}
+
\int_s^t
\left(
\ip{-A^\top_\tau\mathbf p_\tau}{\mathbf v_\tau}
+
\ip{\mathbf p_\tau}{A_\tau\mathbf v_\tau}
\right)\dd \tau +
\int_s^t
\left(
\mathbf p_\tau^\top \Sigma_\tau\mathbf v_\tau
-
\mathbf v_\tau^\top \Sigma^\top_\tau\mathbf p_\tau
\right)\dd\mbX_\tau
=
\ip{\mathbf p_s}{\mathbf v_s}
\end{align*}
for any $s,t\in[t_1,T]$. 
Thus,  $\ip{\mathbf p_t}{\mathbf v_t}$ is constant on $[t_1,T]$, and the claim follows.
\end{proof}

\begin{corollary}[Duality identity for tangent and adjoint rough SDEs]
\label{cor:duality:rough:cost_augmented}
Under Assumption \ref{assum:pmp}, let $\epsilon>0$, $\bar p_T\in L^{2+\epsilon}(\Omega,\R^n)$, and
\begin{itemize}
\item $x\in L_\F^2(\Omega,C([0,T],\R^n))$ be the solution to the It\^o state SDE \eqref{eq:sde:ito}, 
\item $\mathbf v\in L_\F^2(\Omega,C([t_1,T],\R^n))$ be the solution to the rough tangent SDE \eqref{eq:tangent:rough}, 
\item  $\mathbf p\in L^2(\Omega,C([0,T],\R^n))$ be the solution to the  rough adjoint SDE \eqref{eq:adjoint:rough}.
\end{itemize}
Then, for every $t\in[t_1,T]$, almost surely,
$$
\ip{\mathbf p_t}{\mathbf v_t}
=
\ip{\mathbf p_T}{\mathbf v_T}.
$$
\end{corollary}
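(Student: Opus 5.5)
The plan is to apply Lemma \ref{lem:duality:rough} pathwise, for almost every $\omega\in\Omega$, with the rough path $\mbX=\mbB(\omega)$. The coefficients are taken along the sample path of the state:
\[
A_t=\nabla_x\mathbf b(t,x_t(\omega),u_t(\omega)),\qquad \Sigma_t=\nabla_x\sigma(t,x_t(\omega)),\qquad \Sigma'_t=\nabla_x^2\sigma(t,x_t(\omega))\,\sigma(t,x_t(\omega))+\partial_t\text{-free terms}.
\]
The initial and terminal values are $\bar v_1(\omega)$ and $\bar p_T(\omega)$.

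First, by Proposition \ref{prop:brownian_rde_consistency}, for almost every $\omega$ the state $x(\omega)$ coincides with the RDE solution of Theorem \ref{thm:sdes:rough}(a). Hence $(x(\omega),\sigma(\cdot,x(\omega)))\in\sD^p_{B(\omega)}$ and $\mbB(\omega)\in\sC^p_g$.

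Second, I check the hypotheses of Lemma \ref{lem:duality:rough}.
\begin{itemize}
\item $A\in L^\infty$: here $\nabla_x\mathbf b=\nabla_xb-\tfrac12\nabla_x(\nabla_x\sigma\,\sigma)$. The first term is bounded by $C_{b,f}$ from Assumption \ref{assum:pmp}. The second is bounded because $\sigma\in C^4_b$.
\item $(\Sigma,\Sigma')\in\sD^p_{B(\omega)}$: this holds because a $C^2_b$ (here $C^3_b$) function composed with a controlled path is again controlled. The composition $\nabla_x\sigma(t,x_t)$ has Gubinelli derivative $\nabla_x^2\sigma(t,x_t)\sigma(t,x_t)$. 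Its remainder has finite $p/2$-variation, using Taylor expansion together with the time-regularity of $\sigma$ and the bounded-variation drift part of $x$.
\end{itemize}

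Third, I identify the rough SDE solutions $\mathbf v(\omega)$ and $\mathbf p(\omega)$ of Theorem \ref{thm:sdes:rough}(b) as the unique RDE solutions in Theorem \ref{thm:rde:linear:existence_uniqueness} for these coefficients. This is immediate since they are defined pathwise. Lemma \ref{lem:duality:rough} then gives $\ip{\mathbf p_t(\omega)}{\mathbf v_t(\omega)}=\ip{\mathbf p_T(\omega)}{\mathbf v_T(\omega)}$ for all $t\in[t_1,T]$. Taking the union of the exceptional null sets, this holds simultaneously for all $t$, almost surely. That is stronger than the claim.

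The main obstacle is verifying that $\Sigma$ is a controlled path with respect to $B(\omega)$, including the time dependence of $\sigma$. If the composition lemma is not directly available in the needed form, I would argue via the stability of controlled paths under $C^2_b$ maps in \cite{Friz2020}, treating $(t,x_t)$ as a controlled path whose time component has zero Gubinelli derivative.

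Integrability ($\bar p_T\in L^{2+\epsilon}$, giving $\mathbf p\in L^2$ and $\ip{\mathbf p_t}{\mathbf v_t}\in L^1$) is not needed for the identity itself. It only ensures the quantities are meaningful for the later conditional expectations.
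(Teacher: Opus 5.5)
Your proposal is correct and is essentially the paper's proof: a pathwise application of Lemma~\ref{lem:duality:rough} with $A_t=\nabla_x\mathbf b(t,x_t,u_t)$ and $\Sigma_t=\nabla_x\sigma(t,x_t)$. The paper adds only citations of Theorem~\ref{thm:sdes:ito}, Corollary~\ref{cor:brownian_tangent_consistency} and Theorem~\ref{thm:sdes:rough} (with $\ell=2+\epsilon$, $\ell'=2$) to place $x$, $\mathbf v$, $\mathbf p$ in the stated spaces. Your extra hypothesis checks are details the paper leaves implicit: boundedness of $\nabla_x\mathbf b$, controlledness of $\nabla_x\sigma(\cdot,x_\cdot)$, and identifying the It\^o solution with the pathwise RDE solution via Proposition~\ref{prop:brownian_rde_consistency}. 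Just drop the stray ``$+\partial_t$-free terms'' in your formula for $\Sigma'$: the Gubinelli derivative is exactly $\nabla_x^2\sigma(t,x_t)\sigma(t,x_t)$, and the time dependence of $\sigma$ only enters the remainder.
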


\begin{proof}
First, $x\in L_\F^2(\Omega,C([0,T],\R^n))$ by Theorem \ref{thm:sdes:ito}, $\mathbf v\in L_\F^2(\Omega,C([t_1,T],\R^n))$ by Corollary \ref{cor:brownian_tangent_consistency}, and $\mathbf p\in L^2(\Omega,C([0,T],\R^n))$ by Theorem \ref{thm:sdes:rough}. The equality $\ip{\mathbf p_t}{\mathbf v_t}
=
\ip{\mathbf p_T}{\mathbf v_T}$ follows by applying Lemma \ref{lem:duality:rough} with 
$A_t=\nabla \mathbf b_t$ and $
\Sigma_t=\nabla\sigma_t$. 
\end{proof}

\subsection{Conditional bridge}\label{sec:pmp_unified:bridge}

\begin{lemma}[It\^o-rough conditional bridge]
\label{lem:ito_rough_bridge}
Under Assumption \ref{assum:pmp}, let $\epsilon>0$, $\bar p_T\in L^{2+\epsilon}_{\F_T}(\Omega,\R^n)$, and
\begin{itemize}
\item $x\in L_\F^2(\Omega,C([0,T],\R^n))$ be the solution to the It\^o state SDE \eqref{eq:sde:ito}, 
\item  $p\in L_\F^2(\Omega,C([0,T],\R^n))$ be the solution to the  It\^o adjoint BSDE \eqref{eq:adjoint:ito},
\item  $\mathbf p\in L^2(\Omega,C([0,T],\R^n))$ be the solution to the  rough adjoint SDE \eqref{eq:adjoint:rough}.
\end{itemize}
Then, %
for $(\dd t\otimes\Prob)$-almost every $(t,\omega)$, 
$$
p_t
=
\E\left[
\mathbf p_t\,\big|\,\mathcal F_t
\right].
$$
\end{lemma}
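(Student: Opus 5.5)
Fix $t_1\in[0,T)$ and an arbitrary test vector $\bar v_1\in L^\infty_{\F_{t_1}}(\Omega,\R^n)$, which lies in $L^2_{\F_{t_1}}$. Let $v$ and $\mathbf v$ be the It\^o and rough tangent processes \eqref{eq:tangent:ito} and \eqref{eq:tangent:rough} started at $v_{t_1}=\mathbf v_{t_1}=\bar v_1$. By Lemma \ref{lem:tangent:conversion}, $v=\mathbf v$ almost surely. Both adjoint processes $p$ and $\mathbf p$ share the terminal value $p_T=\mathbf p_T=\bar p_T$, so $\ip{p_T}{v_T}=\ip{\mathbf p_T}{\mathbf v_T}$. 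Evaluating Corollary \ref{lem:duality:ito:cost_augmented} at $t=t_1$ and Corollary \ref{cor:duality:rough:cost_augmented} at $t=t_1$ then gives, almost surely,
$$
\ip{p_{t_1}}{\bar v_1}
=\E\big[\ip{p_T}{v_T}\mid\F_{t_1}\big]
=\E\big[\ip{\mathbf p_T}{\mathbf v_T}\mid\F_{t_1}\big]
=\E\big[\ip{\mathbf p_{t_1}}{\bar v_1}\mid\F_{t_1}\big].
$$
Because $\bar v_1$ is bounded and $\F_{t_1}$-measurable, it can be pulled out of the conditional expectation, which yields $\ip{p_{t_1}}{\bar v_1}=\ip{\E[\mathbf p_{t_1}\mid\F_{t_1}]}{\bar v_1}$ almost surely.

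We then take $\bar v_1=e_i$, the deterministic standard basis vectors, for $i=1,\dots,n$. This gives $p_{t_1}=\E[\mathbf p_{t_1}\mid\F_{t_1}]$ almost surely, for every fixed $t_1\in[0,T)$. At $t_1=T$ the identity is immediate, since $\bar p_T$ is $\F_T$-measurable. Pointwise-in-$t$ almost-sure equality implies equality $(\dd t\otimes\Prob)$-almost everywhere by Fubini's theorem. Applying Fubini requires a jointly measurable version of $(t,\omega)\mapsto\E[\mathbf p_t\mid\F_t]$, for instance an optional projection of the continuous process $\mathbf p$. This measurability issue is exactly why the statement is phrased for almost every $(t,\omega)$.

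The main obstacle is integrability, which is needed so that the conditional expectations exist and the corollaries apply. We need $\mathbf p_{t_1}\in L^1$ and $\ip{\mathbf p_T}{\mathbf v_T}\in L^1$. Theorem \ref{thm:sdes:rough}(b) only gives $\mathbf p\in L^{\ell'}$ for $\ell'<\ell$ when $\bar p_T\in L^\ell$; this is the reason for the assumption $\bar p_T\in L^{2+\epsilon}$, which ensures $\mathbf p\in L^2(\Omega,C([0,T],\R^n))$. Taking a bounded (or deterministic) $\bar v_1$ gives $v\in L^\ell$ for all $\ell$, by the moment bounds on the It\^o fundamental matrices $\phi$ and $\psi$ in Theorem \ref{thm:sdes:ito}(b). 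H\"older's inequality then makes every pairing integrable. I would also check that the rough coefficient $\Sigma_t=\nabla\sigma(t,x_t)$ is pathwise a controlled rough path, so that Lemma \ref{lem:duality:rough} applies $\omega$-by-$\omega$; this holds because $\sigma\in C_b^4$.
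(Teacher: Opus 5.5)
Your proposal is correct and follows essentially the same route as the paper. Both arguments start the tangent processes at the basis vectors, identify $v=\mathbf v$ via Lemma \ref{lem:tangent:conversion}, compare the It\^o and rough duality identities at $t_1$, and pass to $(\dd t\otimes\Prob)$-a.e.\ equality by Fubini using an optional projection. Your integrability bookkeeping ($v_T\in L^\ell$ for all $\ell$ via $\phi,\psi$, and $\mathbf p\in L^2$ from $\bar p_T\in L^{2+\epsilon}$) makes explicit details that the paper leaves implicit.
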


This identity does not rely on optimality of the control $u$, and applies to any control $u\in\U$. Also, this identity relates the two adjoint processes $p$ and $\mathbf p$, whose equations do not involve the tangent processes $v$ and $\mathbf v$. The tangent processes are only used in the proof: They provide  test directions $v^k$ to compare $p$ and $\mathbf p$. Thus, the bridge relies on the assumption that the It\^o and rough forward tangent equations define the same process $v=\mathbf{v}$, but  $p$ and $\mathbf p$  are defined independently.

\begin{proof}
Let $t_1\in[0,T]$, $e^k=(0,\dots,1,\dots,0)\in\R^n$ for $k\in\{1,\dots,n\}$, and
\begin{itemize}
\item $v^k\in L_\F^2(\Omega,C([t_1,T],\R^n))$ solve the It\^o tangent SDE \eqref{eq:tangent:ito} with initial conditions $\bar v_1=e^k$, 
\item $\mathbf v^k\in L_\F^2(\Omega,C([t_1,T],\R^n))$ solve the rough tangent SDE \eqref{eq:tangent:rough} with initial conditions $\bar v_1=e^k$.
\end{itemize}
By Lemma \ref{lem:tangent:conversion}, $v^k=\mathbf v^k$ almost surely. By the It\^o and rough duality identities,
\begin{align}
\hspace{2cm}\langle p_{t_1},e^k\rangle
&=
\E\left[
\langle \bar p_T,v_T^k\rangle
\,\big|\,
\mathcal F_{t_1}
\right],
&&\hspace{-1.5cm}\textcolor{gray}{\text{(Lemma \ref{lem:duality:ito})}}
\nonumber
\\
\label{eq:p_t1ek=p_Tvk_T}
\langle \mathbf p_{t_1},e^k\rangle
&=
\langle \bar p_T,v_T^k\rangle
&&\hspace{-1.5cm}\textcolor{gray}{\text{(Lemma \ref{lem:duality:rough})}}
\end{align}
almost surely.
Therefore, almost surely,
$$
\langle p_{t_1},e^k\rangle
=
\E\left[
\langle \mathbf p_{t_1},e^k\rangle
\,\big|\,
\mathcal F_{t_1}
\right].
$$
Since this holds for any basis vector $e^k\in\R^n$ and every $t_1\in[0,T]$,
we obtain $
p_t
=
\E\left[
\mathbf p_t\,\big|\,
\mathcal F_t
\right]
$ 
for all $t\in[0,T]$ almost surely. 

To conclude  for $(\dd t\otimes\Prob)$-almost every $(t,\omega)$, we define the set $A=\{(t,\omega)\in[0,T]\times\Omega:p_t(\omega)\neq \E[\mathbf p_t\mid\mathcal F_t](\omega)\}$, where $(t,\omega)\mapsto\E[\mathbf p_t\mid\F_t](\omega)$ denotes an optional (hence measurable) projection of $\mathbf p$ \cite[Chapter 2, Theorem 4.2]{EthierKurtz1986} so that $A$ is measurable, 
 and the set $A_t=\{\omega\in\Omega:(t,\omega)\in A\}$, which satisfies $\Prob(A_t)=0$ for all $t\in[0,T]$ as shown  above. 
 Then, we apply Fubini's theorem to obtain $(\dd t\otimes\Prob)(A)=\int_{[0,T]}\int_\Omega\mathbf{1}_A(t,\omega)\dd\Prob\dd t=\int_{[0,T]}\Prob(A_t)=0$.
\end{proof}

 \begin{remark}[Integrability and anticipativity of the rough adjoint $\mathbf p$]
\label{remark:rough_adjoint_integrability}
Since the forward tangent vector satisfies $\mathbf v=v\in L^2_\F(\Omega,C([t_1,T],\R^n))$ if $\bar v_1\in L^2_{\F_{t_1}}(\Omega,\R^n)$, 
the It\^o-rough bridge could seem to imply that 
\begin{equation}\label{eq:incorrect:prough_L2}
\bar p_T\in L^2_{\F_T}(\Omega,\R^n)
\ \mathop{\implies}^{\text{(Incorrect)}}\ 
\mathbf p\in L^2(\Omega,C([0,T],\R^n)).
\end{equation}
However, the claim  \eqref{eq:incorrect:prough_L2} is incorrect, because the rough adjoint is generally anticipative. 
We do have
\begin{equation}\label{eq:bpTinL2+eps=>bpinL2}
\bar p_T\in L^{2+\epsilon}_{\F_T}(\Omega,\R^n)\text{ for some $\epsilon>0$}
\implies 
\mathbf p\in L^2(\Omega,C([0,T],\R^n)),
\end{equation}
which can be shown using rough path theory  only (see Theorem \ref{thm:sdes:rough}), 
or using the characterization
\begin{equation}\label{eq:bpt_psiphibarpT}
\mathbf p_t=\psi_t^\top \phi_T^\top \bar p_T
\end{equation}
where $\phi,\psi\in L^\ell_\F(\Omega,C([0,T],\R^{n\times n}))$ for any $\ell\in [2,+\infty)$  solve the matrix-valued  It\^o SDEs \eqref{eq:ito_sde:matrix}. The characterization  \eqref{eq:bpt_psiphibarpT} is derived by following the proof of Lemma \ref{lem:ito_rough_bridge} and deducing that
$$
\ip{\mathbf p_{t_1}}{e^k}
\mathop{=}^{\eqref{eq:p_t1ek=p_Tvk_T}}
\ip{\bar p_T}{v_T^k}
\mathop{=}^{\text{(Theorem \ref{thm:sdes:ito})}}
\ip{\bar p_T}{\phi_T\psi_{t_1}e^k}
=
\ip{\psi_{t_1}^\top\phi_T^\top \bar p_T}{e^k}
$$
for any basis vector $e^k=(0,\dots,1,\dots,0)$ and $t_1\in[0,T]$. 
In the proof of \PMP, the boundedness of $\nabla g$ and $\nabla h$
gives $\bar p_T\in L^\infty(\Omega,\R^n)$, so that $\mathbf p\in L^2(\Omega,C([0,T],\R^n))$.

As a counter-example to \eqref{eq:incorrect:prough_L2}, consider the scalar case with $b(t,x,u)=0$, $\sigma(t,x)=\sin(x)$, and $\bar x_0=0$, so that $x_t=0$, $\nabla\sigma(t,x)=1$, 
$\phi_t=\exp(B_t-t/2)=\psi_t^{-1}$, and 
$$
\mathbf p_t
=
\bar p_T\phi_T\phi_t^{-1}
=
\bar p_T
\exp\left(B_T-B_t-(T-t)/2\right).
$$
Take
$
\bar p_T
=
\frac{1}{\sqrt{1+B_T^2}}\exp\left(\frac{B_T^2}{4T}\right)$. Since $B_T\sim \mathcal N(0,T)$, 
\begin{small}
$$
\E\!\left[\bar p_T^2\right]
=
\E\left[
\frac{1}{1+B_T^2}\exp\left(\frac{B_T^2}{2T}\right)
\right]
=
\frac{1}{\sqrt{2\pi T}}\int_\R\frac{1}{1+x^2}
e^{x^2/(2T)}
e^{-x^2/(2T)}\dd x
=
\frac{1}{\sqrt{2\pi T}}\int_\R\frac{1}{1+x^2}\dd x<\infty,
$$
\end{small}
so $\bar p_T\in L^2_{\F_T}(\Omega,\R)$. 
However, 
\begin{small}
$$
\E\!\left[\mathbf p_0^2\right] = 
\E\left[
\left(\frac{1}{1+B_T^2}\exp\left(\frac{B_T^2}{2T}\right)\right)
\exp\left(2B_T-T\right)
\right]
=
\frac{e^{-T}}{\sqrt{2\pi T}}\int_\R\frac{1}{1+x^2}
e^{x^2/(2T)}e^{2x}
e^{-x^2/(2T)}\dd x
=
+\infty,
$$
\end{small}
which contradicts \eqref{eq:incorrect:prough_L2}.

Nevertheless, the conditional bridge $\E[\mathbf p_t|\mathcal F_t]=p_t$ gives the improved integrability estimate
$$
\left( 
t\mapsto \E\left[
\mathbf p_t\,\big|\,\mathcal F_t
\right]
\right)
\in
L^2_\F(\Omega,C([0,T],\R)),
$$ 
but this estimate does not give better integrability estimates for the rough adjoint $\mathbf p$ itself.
\end{remark}

\subsection{Proof of the PMP}\label{sec:pmp_unified:conclusion}
To prove the unified \PMP, we use the It\^o PMP below. The proof is standard and is in Appendix \ref{sec:pmp_ito}.
\begin{tcolorbox}[
title={%
\textbf{Theorem \customlabel{thm:pmp:ito}{3.1}}\textbf{(It\^o Stochastic PMP)}\phantom{$A^1_1$}
}, 
coltitle=black, 
boxrule=3pt,
colframe=black!6, %
enhanced, colback=black!2, boxsep=1pt, left=5pt, right=5pt]
Define $T,U,\Omega,\F,\Prob,B,\bar x_0,b,\sigma$ as in Assumption \ref{assum:pmp:ito}. 
 Let   $f:[0,T]\times\R^n\times U\to\R$   
satisfy the same assumptions as $b$.   
Define $f,g,h$ as in Assumption \ref{assum:pmp}, and the It\^o Hamiltonian 
\begin{align*}
H(t,x,u,p,q,\mathfrak p_0)
=
p^\top b(t,x,u)
+
\mathfrak p_0 f(t,x,u)
+
\sum_{j=1}^d q^{j\top} \sigma^j(t,x). 
\end{align*}
Let $(x,u)$ be an optimal solution to \ocp.  
Then, there exist stochastic processes
$(p,q)\in L^2_\F(\Omega,C([0,T],\R^n)) \times  L^2_\F([0,T]\times\Omega,\R^{n\times d})$,  
and nontrivial multipliers $(\mathfrak p_0,\mathfrak p_1,\dots,\mathfrak p_r)\neq0$
with $\mathfrak p_0\in\{-1,0\}$, such that:
\begin{enumerate}[label=(\roman*)]
\item \textbf{Adjoint Equation}: $(p,q)$ solves the BSDE
\begin{equation}
\dd p_t
=
-\nabla_xH(t,x_t,u_t,p_t,q_t,\mathfrak p_0)\dd t
+q_t\dd B_t.
\end{equation}
\item \textbf{Transversality Condition}: almost surely,
\begin{equation}
\smash{
p_T
=
\mathfrak p_0\nabla g(x_T)
+
\sum_{i=1}^{r}\mathfrak p_i\nabla h_i(x_T).
}
\end{equation}
\item \textbf{Maximality Condition}: for $(\dd t\otimes\Prob)$-almost every\,$(t,\omega)$, 
\begin{equation}
u_t
\in
\arg\max_{v\in U}\,
H\left(t,x_t,v,p_t,q_t,\mathfrak p_0\right).
\end{equation}
\end{enumerate} 
\end{tcolorbox} 

The unified stochastic PMP follows from the It\^o PMP and the conditional bridge in Lemma \ref{lem:ito_rough_bridge}.

\begin{proof}[Proof of \PMP (Theorem \ref{thm:pmp})]
Define the cost-augmented state $\widetilde x=(x,x^0)$ with SDE dynamics
\begin{align}
\dd \widetilde x_t
&=
\widetilde b(t,\widetilde x_t,u_t)\dd t
+
\widetilde \sigma(t,\widetilde x_t)\dd B_t,
\hspace{10mm} 
\widetilde x_0=(\bar x_0,0),
\\
\dd \widetilde x_t
&=
\widetilde{\mathbf b}(t,\widetilde x_t,u_t)\dd t
+
\widetilde \sigma(t,\widetilde x_t)\dd\mbB_t,
\hspace{9mm} 
\widetilde x_0=(\bar x_0,0),
\end{align}
where $
\widetilde{\mathbf b}(t,x,u)
:=
\widetilde b(t,\widetilde x,u)
-
\tfrac12\nabla_{\widetilde x}\widetilde\sigma(t,\widetilde x)\widetilde\sigma(t,\widetilde x)$ and
\begin{align*}
\widetilde b(t,\widetilde x,u):=\begin{bmatrix}
b(t,x,u)
\\
f(t,x,u)
\end{bmatrix},
\ \ 
\widetilde {\mathbf b}(t,\widetilde x,u):=\begin{bmatrix}
\mathbf b(t,x,u)
\\
f(t,x,u)
\end{bmatrix},
\ \ 
\widetilde \sigma(t,\widetilde x):=\begin{bmatrix}
\sigma(t,x)
\\
0
\end{bmatrix}.
\end{align*}
Thanks to Proposition \ref{prop:brownian_rde_consistency}, these SDEs have the same well-defined solution  $\widetilde x\in L^2_\F(\Omega,C([0,T],\R^{n+1}))$.  
The linearized drift and diffusion have the structure
\begin{gather*}
\nabla_{\widetilde x}\widetilde b(t,\widetilde x,u)=\AverageSmallMatrix{
\nabla_xb(t,x,u) & 0
\\
\nabla_xf(t,x,u)  & 0
},
\ \ \ 
\nabla_{\widetilde x}\widetilde\sigma^j(t,\widetilde x)=\AverageSmallMatrix{
\nabla_x\sigma^j(t,x) & 0
\\
0 & 0
},
\ \ \ 
\nabla_{\widetilde x}
\widetilde\sigma^j(t,\widetilde x)
\widetilde\sigma^j(t,\widetilde x)
=\AverageSmallMatrix{
\nabla_x\sigma^j(t,x)\sigma^j(t,x)
\\
0
}.
\end{gather*}

By the It\^o PMP, there exist nontrivial multipliers
$$
(\mathfrak p_0,\mathfrak p_1,\dots,\mathfrak p_r)\neq0,
\qquad
\mathfrak p_0\in\{-1,0\},
$$
and $$(p,q)\in L^2_\F(\Omega,C([0,T],\R^n)) \times  L^2_\F([0,T]\times\Omega,\R^{n\times d})$$ that satisfy the It\^o adjoint equation,
transversality condition, 
and maximality condition. Let
$$
\bar p_T
:=
\mathfrak p_0\nabla g(x_T)
+
\sum_{i=1}^r
\mathfrak p_i\nabla h_i(x_T)
\in L^\infty_{\F_T}(\Omega,\R^n).
$$

Let $(\widetilde p, \widetilde q)\in L^2_\F(\Omega,C([0,T],\R^{n+1})) \times  L^2_\F([0,T]\times\Omega,\R^{(n+1)\times d})$ and $\widetilde{\mathbf p}\in L^2(\Omega,C([0,T],\R^{n+1}))$ solve the SDEs
\begin{align} 
\dd \widetilde p_t
&=
-\left(
\nabla_{\widetilde x}\widetilde b(t,\widetilde x_t,u_t)^\top \widetilde p_t 
+
\nabla_{\widetilde x}\widetilde\sigma(t,\widetilde x_t)^\top \widetilde q_t
\right)
\dd t
+
\widetilde q_t\dd B_t,
\hspace{1cm}
\widetilde p_T=(\bar p_T,\mathfrak p_0),
\\ 
\dd \widetilde{\mathbf p}_t
&=
-\nabla_{\widetilde x}\widetilde{\mathbf b}(t,\widetilde x_t,u_t)^\top \widetilde{\mathbf p}_t 
\dd t
-\nabla_{\widetilde x}\widetilde\sigma(t,\widetilde x_t)^\top
\widetilde{\mathbf p}_t
\dd \mbB_t,
\hspace{19mm}\hspace{1pt}
\widetilde{\mathbf p}_T=(\bar p_T,\mathfrak p_0),
\end{align}
and denote
$$
\widetilde p_t=
\begin{bmatrix}
\widehat p_t\\
p^0_t
\end{bmatrix},
\qquad
\widetilde q_t^j=
\begin{bmatrix}
\widehat q_t^j\\
q_t^{0,j}
\end{bmatrix},
\qquad
\widetilde{\mathbf p}_t=
\begin{bmatrix}
\mathbf p_t\\
\mathbf p^0_t
\end{bmatrix}.
$$
These solutions  are well-defined thanks to  Theorem \ref{thm:sdes:ito} and Theorem \ref{thm:sdes:rough},  and $\widetilde{\mathbf p}\in L^2$ by \eqref{eq:bpTinL2+eps=>bpinL2}. 

The conditional bridge in  Lemma \ref{lem:ito_rough_bridge} gives
\begin{equation}\label{eq:pmp:proof:bridge}
\widetilde p_t
=
\E\left[
\widetilde{\mathbf p}_t\,\big|\,
\mathcal F_t
\right]
\implies
\begin{cases}
\widehat p_t
=
\E\left[
\mathbf p_t\,\big|\,
\mathcal F_t
\right],
\\
p_t^0
=
\E\left[
\mathbf p_t^0\,\big|\,
\mathcal F_t
\right].
\end{cases}
\end{equation}

\textbf{(i)-(ii) Rough adjoint equation and transversality condition}: The sparse structure of $(\nabla_{\widetilde x}\widetilde{\mathbf b}, \nabla_{\widetilde x}\widetilde\sigma)$ implies that $\mathbf p^0_t=\mathfrak p_0$ for all $t\in[0,T]$ almost surely, and $\mathbf p$ solves the rough SDE
\begin{align*}
\dd \mathbf p_t
&=
-\left(
\nabla_x\mathbf b(t,x_t,u_t)^\top\mathbf p_t
+
\mathfrak p_0\nabla_x f(t,x_t,u_t)
\right)\dd t
-
\nabla_x\sigma(t,x_t)^\top
\mathbf p_t
\dd\mbB_t
\\
&=
-\nabla_x\mathbf H(t,x_t,u_t,\mathbf p_t,\mathfrak p_0)\dd t
-
\nabla_x\sigma(t,x_t)^\top
\mathbf p_t\dd\mbB_t,
&&\mathbf p_T=\bar p_T,
\end{align*}
which is precisely the rough adjoint equation and transversality condition.

\textbf{It\^o adjoint equation}: 
The conditional bridge \eqref{eq:pmp:proof:bridge}  gives $p_t^0
=
\E\left[
\mathbf p_t^0\,\big|\,
\mathcal F_t
\right]=\mathfrak{p}_0$, so $p_t^0$ is constant.   Thus, the first $n$ components $(\widehat p, \widehat q)$ of the It\^o adjoint solve
\begin{align} 
\dd \widehat p_t
&=
-\left(
\nabla_x b(t,x_t,u_t)^\top \widehat p_t 
+
\mathfrak p_0\nabla_x f(t,x_t,u_t)
+
\nabla_x\sigma(t,x_t)^\top  \widehat q_t
\right)
\dd t
+
 \widehat q_t\dd B_t,
\hspace{0.5cm}
 \widehat p_T=\bar p_T,
\end{align}
which is precisely the It\^o adjoint BSDE. By uniqueness of solutions (Theorem \ref{thm:sdes:ito}), $(\widehat p,\widehat q)=(p,q)$.

\textbf{(iv) It\^o-rough conditional bridge}: From \eqref{eq:pmp:proof:bridge}, we   obtain the conditional bridge of \PMP
$$
p_t
=
\widehat p_t
=
\E\left[
\mathbf p_t\,\big|\,\F_t
\right].
$$ 
  
\textbf{(iii) Rough maximality condition}:
Since $\sigma$ is
independent of the control, the term 
$\sum_{j=1}^d q_t^{j\top}\sigma^j(t,x_t)$ 
in the It\^o Hamiltonian is independent of the control.
Moreover, the correction term
$$
b(t,x,u)
=
\mathbf b(t,x,u)
+
\frac12
\sum_{j=1}^d
\nabla_x\sigma^j(t,x)\sigma^j(t,x),
$$
is independent of $u$. Therefore, for fixed
$(t,\omega)$,  
$$
\mathop{\arg\max}_{v\in U}
H(t,x_t,v,p_t,q_t,\mathfrak p_0)
=
\mathop{\arg\max}_{v\in U}
\mathbf H(t,x_t,v,p_t,\mathfrak p_0).
$$
The It\^o maximality condition then implies
$$
u_t
\in
\mathop{\arg\max}_{v\in U}
\mathbf H(t,x_t,v,p_t,\mathfrak p_0).
$$
Using the conditional bridge $p_t=\E[\mathbf p_t\mid\F_t]$, we obtain
$$
u_t
\in
\mathop{\arg\max}_{v\in U}
\mathbf H
\left(
t,x_t,v,
\E\left[\mathbf p_t\,\big|\,\F_t\right],
\mathfrak p_0
\right)
\quad\text{for $(\dd t\otimes\Prob)$-a.e. $(t,\omega)$},
$$
which proves the rough
Hamiltonian maximality condition, and concludes the proof of Theorem \ref{thm:pmp}. 
\end{proof}

\section{Proof of the Rough Stochastic PMP}%
\label{sec:pmp_rough}

We now prove the rough PMP (Theorem \ref{thm:pmp:rough}) using rough path theory only. %
We  relax the assumptions made for \PMP that uses It\^o calculus, so Theorem \ref{thm:pmp:rough} applies to a broader class of problems. 

\begin{assumption}[Definitions and assumptions for the rough stochastic PMP (Theorem \ref{thm:pmp:rough})]
\label{assum:pmp:rough}
Let 
$T>0$, $\rho\in[1,\frac{3}{2})$, $p\in(2\rho,3)$, 
and  
$U\subseteq\R^m$. 
\begin{itemize}
\item Let $(\Omega,\F,(\F_t)_{0\le t\le T},\Prob)$ be a filtered probability space with $\mbB=(B,\bB)$ an enhanced Gaussian process as defined in Section \ref{sec:background:gaussian_paths}, where  $B$ is a centered, continuous, $\R^d$-valued Gaussian process with independent components satisfying the regular covariance  condition \eqref{eq:gaussian_covariance_regular} for $\rho$. Assume that $\mbB|_{[0,t]}$ is $\F_t$-measurable  for every $t\in[0,T]$. 
 
\item The initial conditions $\bar x_0$ satisfy $\bar x_0\in L^2_{\F_0}(\Omega,\R^n)$. 
\item The drift  $\mathbf b:[0,T]\times\R^n\times U\to\R^n$ and cost $f:[0,T]\times\R^n\times U\to\R$   
satisfy:
\begin{itemize} 
\item 
$\mathbf b(\cdot, x, u):[0,T]\to\R^n$  is  measurable for all $(x,u)\in\R^n\times U$,  
\item  
$\mathbf b(t,\cdot,\cdot):\R^n\times U\to\R^n$ 
is continuous for almost every $t\in[0,T]$,
\item 
$\mathbf b(t,\cdot,u):\R^n\to\R^n$  is  continuously differentiable  for almost every $t\in[0,T]$  and  all $u\in U$,  
\item 
$\|\mathbf b(t,x,u)\|+\big\|\nablaof{x}\mathbf b(t,x,u)\big\| \leq C_{\mathbf b,f}$ 
and 
$\big\|\nablaof{x}\mathbf b(t,x,u)-\nablaof{x}\mathbf b(t,\tilde{x},u)\big\|\leq C_{\mathbf b,f}\|x-\tilde{x}\|$  
for almost every $t\in[0,T]$, all $x,\tilde{x}\in\R^n$, and all $u\in U$,
\item %
$\|\mathbf b(t,x,u)-\mathbf b(t,x,\tilde{u})\|\leq C_{\mathbf b,f}\|u-\tilde{u}\|$ for almost every $t\in[0,T]$, all $x\in\R^n$, and all $u,\tilde{u}\in U$,
\end{itemize}
and similarly for $f$.
\item The diffusion $\sigma:[0,T]\times\R^n\to\R^{n\times d}$ satisfies $\sigma\in C^4_b([0,T]\times\R^n,\R^{n\times d})$.
\item The terminal cost $g:\R^n\to\R$ and constraints $h:\R^n\to\R^r$ are continuously differentiable and %
satisfy 
$\big\|\nabla g(x)\big\|+\big\|\nabla h(x)\big\|
\leq C_{g,h}$
and 
$\big\|\nabla g(x)-\nabla g(\tilde{x})\big\|
+
\big\|\nabla h(x)-\nabla h(\tilde{x})\big\|
\leq C_{g,h}\|x-\tilde{x}\|$ for all $x,\tilde{x}\in\R^n$ 
for a constant $C_{g,h}>0$.
\end{itemize}
\end{assumption}

\subsection*{Proof of the rough PMP}
\label{sec:pmp_rough:proof}

\begin{proof}[Proof of Theorem \ref{thm:pmp:rough}]
The proof follows the classical needle variations and endpoint separation strategy. 
Throughout this section, under Assumption \ref{assum:pmp:rough}, the rough SDEs have well-defined solutions thanks to Theorem \ref{thm:sdes:rough}.

\textbf{Preliminary step: Cost-augmented system, needle variations, endpoint mapping.}

\textit{Cost-augmented system}: 
Define the cost-augmented state $$\widetilde x=\begin{bmatrix}
x \\ x^0
\end{bmatrix}
\in L^2_{\F}\!\left(\Omega,C\!\left([0,T],\R^{n+1}\right)\right)$$ with SDE dynamics
\begin{align}
\label{eq:cost_augmented_state:rough}
\dd \widetilde x_t
&=
\widetilde{\mathbf b}(t,\widetilde x_t,u_t)\dd t
+
\widetilde \sigma(t,\widetilde x_t)\dd\mbB_t,
\hspace{9mm} 
\widetilde x_0=(\bar x_0,0),
\end{align}
where
\begin{gather} 
\widetilde {\mathbf b}(t,\widetilde x,u):=\begin{bmatrix}
\mathbf b(t,x,u)
\\
f(t,x,u)
\end{bmatrix},
\qquad
\widetilde \sigma^j(t,\widetilde x):=\begin{bmatrix}
\sigma^j(t,x)
\\
0
\end{bmatrix},
\\
\label{eq:rough_tilde_b_sig_Phi}
\widetilde{\mathbf{A}}_t
:=
\nabla_{\widetilde x}\widetilde{\mathbf b}(t,\widetilde x_t,u_t)
=
\begin{bmatrix}
\nabla_x\mathbf b(t,x_t,u_t) & 0\\
\nabla_x f(t,x_t,u_t) & 0
\end{bmatrix},
\qquad
\widetilde\Sigma^j_t 
:= 
\nabla_{\widetilde x}\widetilde\sigma^j(t,\widetilde x_t)
=
\begin{bmatrix}
\nabla_x\sigma^j(t,x_t) & 0\\
0 & 0
\end{bmatrix},
\end{gather} 
with $j=1,\dots,d$.

\textit{Needle variations}: We use needle variations to analyze how state trajectories  change under short control variations. 
Let $u\in L^2([0,T]\times\Omega,U)$ and $V\subset U$ be a countable dense subset.  
We say that $t_1\in[0,T]$ is a \textit{Lebesgue time}  if  almost surely,
\begin{subequations}
\label{eq:lebesgue_time}
\begin{align}
\label{eq:lebesgue_time:u}
\lim_{h\to0}\frac{1}{h}\int_{t_1}^{t_1+h}\widetilde {\mathbf b}(t,\widetilde x_t(\omega),u_t(\omega))\dd t
&= 
\widetilde {\mathbf b}(t_1,\widetilde x_{t_1}(\omega),u_{t_1}(\omega)), 
\quad \text{ and}
\\
\label{eq:lebesgue_time:vn}
\lim_{h\to0}\frac{1}{h}\int_{t_1}^{t_1+h}\widetilde {\mathbf b}(t,\widetilde x_t(\omega),v_n)\dd t 
&=
\widetilde {\mathbf b}(t_1,\widetilde x_{t_1}(\omega),v_n)
\ \ \text{for all $v_n\in V$.}
\end{align}
\end{subequations}
Almost every $t_1\in[0,T]$ is a Lebesgue point. 
Indeed, let $V=\{v_1,v_2,\dots\}\subset U$ be a dense subset. 
Almost surely, $t\mapsto \widetilde {\mathbf b}(t,\widetilde x_t(\omega),u_t(\omega))$ and each $t\mapsto \widetilde {\mathbf b}(t,\widetilde x_t(\omega),v_n)$ is in $L^1([0,T],\R^n)$.  
By the Lebesgue differentiation theorem, there exist sets $I_u(\omega),I_1(\omega),I_2(\omega),\dots\subseteq[0,T]$ of full measure such that 
\eqref{eq:lebesgue_time:u} holds for all $t_1\in I_u(\omega)$ and 
\eqref{eq:lebesgue_time:vn} holds for all $t_1\in I_n(\omega)$ for each $n$. 
The set $I(\omega):=I_u(\omega)\cap I_1(\omega)\cap\dots\subseteq[0,T]$ has full measure,  
and any $t_1\in I(\omega)$ satisfies \eqref{eq:lebesgue_time:u} and \eqref{eq:lebesgue_time:vn} for all $n$. 
So, by Fubini's theorem applied to the indicator function of the set $A=\{(t_1,\omega)\in[0,T]\times\Omega:\smash{\lim\limits_{h\to 0}}\frac1h\int_{t_1}^{t_1+h}\widetilde {\mathbf b}(t,\widetilde x_t(\omega),u_t(\omega))\dd t \neq \widetilde {\mathbf b}(t_1,\widetilde x_{t_1}(\omega),u_{t_1}(\omega)) 
\ \text{or} \  
\smash{\lim\limits_{h\to 0}}\frac1h\int_{t_1}^{t_1+h}\widetilde {\mathbf b}(t,\widetilde x_t(\omega),v_n)\dd t \neq \widetilde {\mathbf b}(t_1,\widetilde x_{t_1}(\omega),v_n) \ \text{for some } n=1,2,\dots\}$, almost every $t\in[0,T]$ is a Lebesgue time\footnote{Fubini's theorem gives  
$0=
\int_\Omega\int_0^T\mathbf{1}_A(t,\omega)\dd t\dd\Prob(\omega)=\int_0^T\int_\Omega\mathbf{1}_A(t,\omega)\dd\Prob(\omega)\dd t=\int_0^T\Prob(A_t)\dd t$ with $A_t(\omega)=\{\omega\in\Omega:(t,\omega)\in A\}$. Since $\Prob(A_t)\geq 0$, we get $\Prob(A_t)=0$ for almost every $t_1\in[0,T]$, so almost every $t_1\in[0,T]$ is a Lebesgue time.}. 

Let $0<t_1<\cdots<t_q<T$ 
be Lebesgue times of the optimal control $u\in\U$. For $i=1,\dots,q$, let 
$$
\bar u_i\in L^2_{\F_{t_i}}(\Omega,U),
\qquad
0\leq\eta_i<t_{i+1}-t_i,
$$
with $t_{q+1}:=T$. Define the needle variation
$\pi=\{t_i,\eta_i,\bar u_i\}_{i=1}^q$ of $u$ by
$$
u_t^\pi
=
\begin{cases}
\bar u_i, & t\in[t_i,t_i+\eta_i],\\
u_t, & \text{otherwise}.
\end{cases}
$$

Let  $\widetilde x^\pi\in L^2_\F(\Omega,C([0,T],\R^{n+1}))$ be the solution to the SDE
\begin{align}
\dd \widetilde x^\pi_t
&=
\widetilde{\mathbf b}(t,\widetilde x^\pi_t,u^\pi_t)\dd t
+
\widetilde \sigma(t,\widetilde x^\pi_t)\dd\mbB_t,
\hspace{9mm} 
\widetilde x^\pi_0=(\bar x_0,0),
\end{align}
that corresponds to the control $u^\pi$. 

For each $i=1,\dots,q$, 
let $\widetilde v^{\pi_i}\in L^2_\F(\Omega,C([t_i,T],\R^{n+1}))$ be the solution to the linearized SDE
\begin{equation}
\label{eq:linearized_variation:rough} 
\dd \widetilde v_t^{\pi_i}
=
\widetilde{\mathbf{A}}_t \widetilde v_t^{\pi_i}\dd t
+
\widetilde\Sigma_t\widetilde v_t^{\pi_i}\dd\mbB_t,
 \quad\ 
 \widetilde v_{t_i}^{\pi_i}
=
\Delta\widetilde{\mathbf b}_i
:=
\widetilde{\mathbf b}(t_i,\widetilde x_{t_i},\bar u_i)
-
\widetilde{\mathbf b}(t_i,\widetilde x_{t_i},u_{t_i}).
\end{equation}

\begin{lemma}[Needle variations for rough SDEs]
\label{lem:needle:rough}
Under Assumption \ref{assum:pmp:rough}, define the needle variation $\pi=\{t_i,\eta_i,\bar u_i\}_{i=1}^q$, state trajectories $(\widetilde x,\widetilde x^\pi)$, and linear variations $\{\widetilde v^{\pi_i}\}_{i=1}^q$ as above. 
Let $\Phi:\R^{n+1}\to\R^k$ be continuously differentiable and satisfy $\|\nabla\Phi(x)\|\leq C_\Phi$ and
$\|\nabla\Phi(x)-\nabla\Phi(y)\|
\leq 
C_\Phi\|x-y\|$ for all $x,y\in\R^{n+1}$ for some $C_\Phi<\infty$. 
Then, %
\begin{equation}
\label{eq:rough_needle}
\E\left[
\left\|
\Phi(\widetilde x_T^\pi)
-
\Phi(\widetilde x_T)
-
\sum_{i=1}^q
\eta_i \nabla\Phi(\widetilde x_T)\widetilde v_T^{\pi_i}
\right\|
\right]
\leq
C\,
\E\left[\exp(CN_{\alpha,[0,T]}(\mbB))\right]
o\left(\sum_{i=1}^q\eta_i\right),
\end{equation}
where $0 < C < \infty$ and $0 < \alpha < 1$ depend on $(p, T, b, \sigma, \Phi)$, and $\E\left[\exp(CN_{\alpha,[0,T]}(\mbB))\right]<\infty$.
\end{lemma}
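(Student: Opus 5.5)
We will argue pathwise, for almost every fixed $\omega$, and take expectations at the end. The pathwise argument is a deterministic needle-variation estimate for RDEs driven by the fixed rough path $\mbB(\omega)$, in the spirit of the open-loop result of \cite{Lew2026}. Since $\nabla\Phi$ is bounded and Lipschitz, a first-order Taylor expansion gives
\[
\|\Phi(\widetilde x_T^\pi)-\Phi(\widetilde x_T)-\nabla\Phi(\widetilde x_T)(\widetilde x_T^\pi-\widetilde x_T)\|\le C_\Phi\|\widetilde x_T^\pi-\widetilde x_T\|^2 .
\]
It therefore suffices to control two pathwise quantities:
\[
\|\widetilde x_T^\pi-\widetilde x_T\|\le C e^{CN_{\alpha}(\mbB)}\textstyle\sum_i\eta_i,
\qquad
\big\|\widetilde x_T^\pi-\widetilde x_T-\sum_i\eta_i\widetilde v_T^{\pi_i}\big\|\le C e^{CN_{\alpha}(\mbB)}\,\varepsilon_\omega\big(\textstyle\sum_i\eta_i\big),
\]
where $\varepsilon_\omega(\eta)/\eta\to0$. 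Squaring the first bound is harmless because its square is $O(\eta^2)$.

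\textbf{First estimate.} The two trajectories solve RDEs with the same diffusion $\widetilde\sigma$ and initial condition. Their drifts differ only on $\bigcup_i[t_i,t_i+\eta_i]$, and there by at most $2C_{\mathbf b,f}$. We will use the standard stability estimate for RDEs with respect to an $L^1$ perturbation of the drift, which follows from the greedy-partition Gronwall argument behind Theorem \ref{thm:sdes:rough}. This yields the factor $\exp(CN_{\alpha,[0,T]}(\mbB))$, where $N_\alpha$ is the number of intervals in the greedy partition with $\|\mbB\|_{p,I}\le\alpha$. Its exponential moments are finite for enhanced Gaussian processes satisfying \eqref{eq:gaussian_covariance_regular}, by the Cass--Litterer--Lyons / Friz--Riedel tail estimates cited in Section \ref{sec:background:gaussian_paths}.

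\textbf{Second estimate.} We proceed interval by interval, using the linear flow $J_{t\leftarrow s}$ of the tangent RDE (Theorem \ref{thm:rde:linear:existence_uniqueness}).
\begin{enumerate}
\item On $[t_i,t_i+\eta_i]$, we show that $\widetilde x^\pi_{t_i+\eta_i}-\widetilde x_{t_i+\eta_i}-J_{t_i+\eta_i\leftarrow t_i}(\widetilde x^\pi_{t_i}-\widetilde x_{t_i})$ equals $\int_{t_i}^{t_i+\eta_i}\big(\widetilde{\mathbf b}(t,\widetilde x_t,\bar u_i)-\widetilde{\mathbf b}(t,\widetilde x_t,u_t)\big)\dd t$ up to an error $o(\eta_i)$. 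The error comes from the Lipschitz dependence on $x$ and from $\|\widetilde x^\pi-\widetilde x\|=O(\eta)$ on that interval.
\item Since $t_i$ is a Lebesgue time and $\bar u_i(\omega)$ is a fixed point of $U$, the integral above equals $\eta_i\Delta\widetilde{\mathbf b}_i+o(\eta_i)$. For the term with $\bar u_i$, we approximate $\bar u_i(\omega)$ by $v_n\in V$, use the Lipschitz dependence of $\widetilde{\mathbf b}$ on $u$, and apply \eqref{eq:lebesgue_time:vn}. The term with $u_t$ is handled by \eqref{eq:lebesgue_time:u}.
\item Outside the needles, the difference $\widetilde x^\pi-\widetilde x$ satisfies a linear RDE with a quadratic remainder. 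We compare it with $J$ via a Duhamel / rough Gronwall argument; the remainder contributes $O((\sum\eta_i)^2)e^{CN_\alpha}$. Finally, we use the continuity $\|J_{T\leftarrow t_i+\eta_i}-J_{T\leftarrow t_i}\|\to0$ as $\eta_i\to0$.
\end{enumerate}
Summing over $i$ and composing flows then gives $\sum_i\eta_i\widetilde v_T^{\pi_i}$.

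\textbf{Main obstacle.} The delicate point is turning the pathwise $o(\cdot)$, whose rate depends on $\omega$ through the Lebesgue-time property and $\bar u_i(\omega)$, into the uniform $o(\sum\eta_i)$ in expectation claimed in \eqref{eq:rough_needle}. We will write the error as $e^{CN_\alpha(\mbB)}\,\eta\,r_\eta(\omega)$, where $r_\eta\to0$ almost surely and $r_\eta$ is bounded (thanks to the boundedness of $\widetilde{\mathbf b}$), possibly up to an $L^2$ factor in $\bar u_i$ and $u$. Dominated convergence, justified by the finite exponential moment and Hölder's inequality, then gives $\E[e^{CN_\alpha}r_\eta]\to0$, i.e.\ the $o(\sum_i\eta_i)$ rate. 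A secondary technical point is the measurability of each pathwise object, which follows from Theorem \ref{thm:sdes:rough}.
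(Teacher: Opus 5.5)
Your proposal is correct and follows essentially the same route as the paper, which derives pathwise single- and multi-spike bounds by adapting \cite[Proposition 4.2, Lemma 4.3]{Lew2026} (Proposition~\ref{prop:linear_variation}), then Taylor-expands $\Phi$ and uses the exponential moments of $N_{\alpha,[0,T]}(\mbB)$; as in your sketch, those bounds are made independent of the values $u(\omega),\bar u_i(\omega)$ through the boundedness of $\widetilde{\mathbf b}$, via the needle-length stability estimate \eqref{eq:RDE:DY:close:full_interval:u_subinterval}. Your explicit dominated-convergence treatment of the $\omega$-dependent rate is something the paper leaves implicit, and it is the right way to obtain a deterministic $o(\sum_i\eta_i)$; that rate comes from the Lebesgue-time limits, from $\bar u_i(\omega)$, and from $\|\mbB\|_{p,[t_i,t_i+\eta_i]}\to0$.
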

\noindent 
Lemma \ref{lem:needle:rough} extends \cite[Lemma 4.3]{Lew2026} to the case with stochastic needle variations. 
Its proof closely follows  \cite[Lemma 4.3]{Lew2026}, since all error bounds such as
$
\left\|
\widetilde x^\pi(\omega)
{-}
\widetilde x(\omega)
{-}
\eta_1 \widetilde v^{\pi_1}(\omega)
\right\|_\infty
\leq
C\exp(CN_{\alpha,[0,T]}(\mbB(\omega)))o(\eta_1)$ in \cite[Proposition 4.2]{Lew2026} hold pathwise, but they need to be extended to random controls with values in $L^1([0,T],U)$ (instead of deterministic controls $u\in L^\infty([0,T],U)$), and be adapted to avoid dependencies on $(u,\bar{u}_1,\dots,\bar{u}_q)$. These extensions are possible since the drift $b$ is bounded so all error bounds can be derived as a function of the impulse durations $\eta_i$ instead of using bounds on norms of $u$. These  long but straightforward derivations are described in Appendix \ref{sec:proofs:rough_paths}.

\textit{Endpoint mapping}: Next, define the map
\begin{equation}
\label{eq:endpoint:rough_gradient}
\Phi:\mathbb R^{n+1}\to\mathbb R^{r+1},
\ 
\widetilde x\mapsto\Phi(\widetilde x)
=
\begin{bmatrix}
h(x)\\
x^0+g(x)
\end{bmatrix},
\quad\text{with}\ 
\nabla\Phi(\widetilde x)
=
\begin{bmatrix}
\nabla h(x) & 0\\
\nabla g(x) & 1
\end{bmatrix},
\end{equation}
that evaluates the terminal constraints and total cost. 
With $\R^q_+ = \{\eta=(\eta_1,\dots,\eta_q)\in\R^q: \eta_1\geq 0,\dots,\eta_q\geq 0\}$, $\delta = \min\{T-t_q,t_{i+1}-t_i, i=1,\dots,q-1\}$, and $B^q_\delta=\{\eta\in\R^q:\|\eta\|\leq\delta\}$, we define the endpoint map $F:\R^q_+\cap B^q_\delta\to \R^{r+1}$ by 
\begin{equation}
\label{eq:endpoint:rough}
F(\eta)
:=
\E\left[
\Phi(\widetilde x_T^\pi)
-
\Phi(\widetilde x_T)
\right].
\end{equation}
The endpoint map $F$ satisfies $F(0)=0$. 
Also, the linear map
\begin{equation}
\label{eq:endpoint_differential:rough}
\dd F_0:\R^q_+\to\R^{r+1}, 
\
\eta\mapsto\dd F_0(\eta)
=
\sum_{i=1}^q
\eta_i
\E\left[
\nabla\Phi(\widetilde x_T)
\widetilde v_T^{\pi_i}
\right].
\end{equation}
is the Gateaux differential of $F$ at $0$ in the direction $\eta$: 
\begin{equation}
\label{eq:gateaux_limit:rough}
\lim_{\alpha>0,\alpha\to0}
\frac{F(\alpha\eta)}{\alpha}
=
\dd F_0(\eta)
\ \text{ for any $\eta\in\mathbb R_+^q$.}
\end{equation}
Indeed, by Lemma \ref{lem:needle:rough} and Jensen's inequality,  
$
\left\|
F(\alpha\eta)
-
\dd F_0(\alpha\eta)
\right\|
\leq
Co(\sum_{i=1}^q\alpha\eta_i),
$ 
for a constant $C>0$ and $\alpha>0$ small-enough, so \eqref{eq:gateaux_limit:rough} follows after dividing by $\alpha$ and letting
$\alpha\to 0$.

\textbf{Step 1: Variational linearization and separation argument.} 
This step appears identically in the proof of the rough stochastic PMP for deterministic controls \cite{Lew2026} and of the classical It\^o PMP \cite{BonalliLewESAIM2022}, see Section \ref{sec:pmp_ito}. 
We define the closed convex cone
$$
K
=
\overline{
\left\{
\sum_{i=1}^{\widetilde q}
\alpha_i
\E\left[
\nabla\Phi(\widetilde x_T)
\widetilde v_T^{\pi_i}
\right]
:
\alpha_i\geq0,\ 
\pi_i \text{ a needle variation},\
\widetilde q\in\mathbb N
\right\}
},
$$
and note that  $K\neq\R^{r+1}$. Otherwise, 
a fixed point argument would give a feasible needle perturbation with  strictly
smaller expected cost, contradicting the optimality of $(x,u)$. Indeed, by contradiction, assume that $K=\R^{r+1}$. Then there exist $q\in\N$ needle variations $\pi_i=\{t_i,\eta_i,\bar{u}_i\}$ such that $K=\{
\sum_{i=1}^q\alpha_i
\E[
\nabla\widetilde\Phi(\tilde{x}_T)
\tilde{v}^{\pi_i}_T
]:\alpha_i\geq 0\}$. With this needle variation $\pi=\{t_1,\dots,t_q,\eta_1,\dots,\eta_q,\bar{u}_1,\dots,\bar{u}_q\}$ of $u$,  define the maps $F$ and $\dd F_0$. Then, $\dd F_0(\R^q_+)=K=\R^{r+1}$, so $0\in\Int(F(B^q_\delta\cap\R^q_+))$ by \cite[Lemma 12.4]{Agrachev2004} (whose proof relies on Brouwer's fixed point theorem). %
So, there exists another feasible trajectory $(x^\pi,u^\pi)$ with a strictly lower cost $\E[x^{\pi,0}_T+g(x^\pi_T)]<\E[x^0_T+g(x_T)]$, so $(x,u)$ is not optimal: a contradiction. 
Thus, $K\subset\R^{r+1}$.

Therefore, by the hyperplane separation theorem, there exist
$$
\mathfrak p
=
(\mathfrak p_1,\dots,\mathfrak p_r,\mathfrak p_0)
\in\mathbb R^{r+1},
\qquad
\mathfrak p\neq0,
\qquad
\mathfrak p_0\leq0,
$$
such that
$$
\mathfrak p^\top z\leq0
\qquad
\text{for all }z\in K.
$$
After normalization, we may assume $
\mathfrak p_0\in\{-1,0\}$. 
In particular, we get the separation inequality
\begin{equation}
\label{eq:rough_sep_ineq}
\mathfrak p^\top
\E\left[
\nabla\Phi(\widetilde x_T)
\widetilde v_T^{\pi_1}
\right]
\leq0
\ \text{ for any needle variation $\pi_1=(t_1,\eta_1,\bar u_1)$}.
\end{equation}

\textbf{Step 2: Adjoint equation (i) and transversality condition (ii).} 
This step follows Section \ref{sec:pmp_unified:conclusion}. 
Let $\widetilde{\mathbf p}\in L^2(\Omega,C([0,T],\R^{n+1}))$ be the solution to the rough SDE
\begin{equation}
\label{eq:rough_aug_adjoint}
\dd\widetilde{\mathbf p}_t
=
-
\widetilde{\mathbf{A}}_t^\top
\widetilde{\mathbf p}_t
\dd t
-
\widetilde\Sigma_t^\top
\widetilde{\mathbf p}_t
\dd\mbB_t,
\qquad
\widetilde{\mathbf p}_T
=
\mathfrak p^\top \nabla\Phi(\widetilde x_T).
\end{equation}
Denoting $ 
\widetilde{\mathbf p}_t
=(\mathbf p_t,\mathbf p_t^0)$, the transversality condition follows directly:
$$
\widetilde{\mathbf p}_T
=
\begin{bmatrix}
\mathbf p_T\\
\mathbf p_T^0
\end{bmatrix}
=
\mathfrak p^\top\nabla\Phi(\widetilde x_T)
\mathop{=}^{\eqref{eq:endpoint:rough_gradient}}
\begin{bmatrix}
\mathfrak p_0\nabla g(x_T)
+
\sum_{i=1}^r
\mathfrak p_i\nabla h_i(x_T)
\\
\mathfrak p_0
\end{bmatrix}.
$$
The adjoint equation follows from the sparse structure of $(\widetilde{\mathbf{A}},\widetilde\Sigma)= \eqref{eq:rough_tilde_b_sig_Phi}$ in the SDE \eqref{eq:rough_aug_adjoint}:
$$
\dd\mathbf p_t^0=0,
\qquad
\mathbf p_T^0=\mathfrak p_0,
$$
so $\mathbf p_t^0=\mathfrak p_0$ 
almost surely for all $t\in[0,T]$. The first $n$ components then satisfy
\begin{align*}
\dd \mathbf p_t
&=
-
\left(
\nabla_x\mathbf b(t,x,u)^\top\mathbf p
+
\mathfrak p_0\nabla_x f(t,x,u)
\right)\dd t
-
\nabla_x\sigma(t,x_t)^\top\mathbf p_t\dd\mbB_t
\\
&=
-
\nabla_x\mathbf H(t,x,u,\mathbf p,\mathfrak p_0)
\dd t
-
\nabla_x\sigma(t,x_t)^\top\mathbf p_t\dd\mbB_t,
\end{align*}
which is precisely the adjoint equation. 

\textbf{Step 3: Maximality condition.} 
This step follows the proof of the rough stochastic PMP \cite{Lew2026}, replacing  deterministic needle variations for stochastic ones, and using a conditional expectation to obtain non-anticipative controls.

Let 
$\pi_1=\{t_1,\eta_1,\bar u_1\}$ be a needle variation  of the optimal control $u$, and 
 $\widetilde v^{\pi_1}$ solve the rough SDE 
\begin{align}\tag{\ref{eq:linearized_variation:rough}}
\dd \widetilde v_t^{\pi_1}
=
\widetilde{\mathbf{A}}_t\widetilde v_t^{\pi_1}\dd t
+
\widetilde\Sigma_t\widetilde v_t^{\pi_1}\dd\mbB_t,
\qquad
\widetilde v_{t_1}^{\pi_1}
=
\Delta\widetilde{\mathbf b}_1
:=
\widetilde{\mathbf b}(t_1,\widetilde x_{t_1},\bar u_1)
-
\widetilde{\mathbf b}(t_1,\widetilde x_{t_1},u_{t_1}).
\end{align}
By   the chain rule for rough paths (Corollary \ref{cor:duality:rough:cost_augmented}), 
the inner product $\ip{\widetilde{\mathbf p}_t}{\widetilde v_t^{\pi_1}}$ is constant on $t\in[t_1,T]$, so
$$
\ip{\widetilde{\mathbf p}_T}{\widetilde v_T^{\pi_1}}
=
\ip{\widetilde{\mathbf p}_{t_1}}{\Delta\widetilde{\mathbf b}_1}$$
almost surely. 
Using the terminal condition $
\widetilde{\mathbf p}_T^\top
=
\mathfrak p^\top
\nabla\Phi(\widetilde x_T)$ and 
the inequality $\mathfrak p^\top
\E[
\nabla\Phi(\widetilde x_T)
\widetilde v_T^{\pi_1}
]
\smash{\mathop{\leq}\limits^{\eqref{eq:rough_sep_ineq}}}
0$,
$$
0
\geq
\E\big[
\ip{
\widetilde{\mathbf p}_T}{
\widetilde v_T^{\pi_1}
}
\big]
=
\E\big[
\ip{
\widetilde{\mathbf p}_{t_1}
}{
\Delta\widetilde{\mathbf b}_1
}
\big].
$$
Expanding the augmented variables and using
$\mathbf p_{t_1}^0=\mathfrak p_0$, we obtain
\begin{align}
\label{eq:rough_variational_ineq_raw}
0
&\geq
\E\Big[
\mathbf p_{t_1}^\top
\big(
\mathbf b(t_1,x_{t_1},\bar u_1)
-
\mathbf b(t_1,x_{t_1},u_{t_1})
\big)
+
\mathfrak p_0
\big(
f(t_1,x_{t_1},\bar u_1)
-
f(t_1,x_{t_1},u_{t_1})
\big)
\Big].
\end{align}
The rough adjoint $\mathbf p_{t_1}$ is generally not
$\F_{t_1}$-measurable, since it is defined backwards-in-time from the terminal condition. However, the needle impulse is
$\F_{t_1}$-measurable:
$$
\mathbf b(t_1,x_{t_1},\bar u_1)
-
\mathbf b(t_1,x_{t_1},u_{t_1})
\quad\text{and}\quad
f(t_1,x_{t_1},\bar u_1)
-
f(t_1,x_{t_1},u_{t_1})
$$
are $\F_{t_1}$-measurable. 
The conditional expectation identity
$$
\E[\mathbf p_{t_1}^\top Z]
=
\E[
\E[\mathbf p_{t_1}\mid\F_{t_1}]^\top Z
],
\qquad
Z\in L^2_{\F_{t_1}}(\Omega,\R^n),
$$
turns \eqref{eq:rough_variational_ineq_raw} into
\begin{align}
0
&\geq
\E\Big[
\E\left[\mathbf p_{t_1}|\F_{t_1}\right]^\top
\big(
\mathbf b(t_1,x_{t_1},\bar u_1)
-
\mathbf b(t_1,x_{t_1},u_{t_1})
\big)
+
\mathfrak p_0
\big(
f(t_1,x_{t_1},\bar u_1)
-
f(t_1,x_{t_1},u_{t_1})
\big)
\Big].
\end{align}
Equivalently,
\begin{equation}
\label{eq:hamiltonian_variational_ineq:rough}
\E\big[
\mathbf H(t_1,x_{t_1},\bar u_1,\E[\mathbf p_{t_1}|\F_{t_1}],\mathfrak p_0)
-
\mathbf H(t_1,x_{t_1},u_{t_1},\E[\mathbf p_{t_1}|\F_{t_1}],\mathfrak p_0)
\big]
\leq 0,
\end{equation}
which holds for any Lebesgue time $t_1$  and $\bar u_1\in L^2_{\F_{t_1}}(\Omega,U)$.

We now  prove the maximality condition \eqref{eq:pmp:rough:maximality}. By contradiction,
suppose that   \eqref{eq:pmp:rough:maximality}  does not hold on a set of positive $(\dd t\otimes\Prob)$-measure. 
Then, by the argument in Remark \ref{remark:pmp:rough:localization}, there exist a Lebesgue point
$t_1$, a set  $A_{t_1}\in\F_{t_1}$ with $\Prob(A_{t_1})>0$, and a random variable $\widehat u_1\in L^2_{\F_{t_1}}(\Omega,U)$, such that
\begin{equation}\label{eq:hamiltonian_variational_ineq:rough:con}
\mathbf H(t_1,x_{t_1},\widehat u_1,\E[\mathbf p_{t_1}|\F_{t_1}],\mathfrak p_0)
>
\mathbf H(t_1,x_{t_1},u_{t_1},\E[\mathbf p_{t_1}|\F_{t_1}],\mathfrak p_0)
\quad\text{on }A_{t_1}.
\end{equation}
Define
$\bar u_1 := \widehat u_1\mathbf 1_{A_{t_1}} + u_{t_1}\mathbf 1_{A_{t_1}^c}$. 
Then, $\bar u_1\in L^2_{\F_{t_1}}(\Omega,U)$, and
$$
\E\left[
\mathbf H
\left(
t_1,x_{t_1},\bar u_1,
\E[\mathbf p_{t_1}\mid\F_{t_1}],
\mathfrak p_0
\right)
-
\mathbf H
\left(
t_1,x_{t_1},u_{t_1},
\E[\mathbf p_{t_1}\mid\F_{t_1}],
\mathfrak p_0
\right)
\right]
>0,
$$
which contradicts \eqref{eq:hamiltonian_variational_ineq:rough}. Therefore, the maximality condition holds, which concludes the proof of the rough PMP.
\end{proof}

\begin{remark}\label{remark:pmp:rough:localization}
Define the map $\varphi:[0,T]\times\Omega\times U\to\R$ by 
$$
\varphi(t,\omega,v)
:=
\mathbf H
\left(
t,x_t(\omega),v,
\E[\mathbf p_t\mid\F_t](\omega),
\mathfrak p_0
\right),
$$
where $(t,\omega)\mapsto\E[\mathbf p_t\mid\F_t](\omega)$ denotes an optional (hence progressively measurable) projection of $\mathbf p$. Its existence follows from \cite[Chapter 2, Theorem 4.2]{EthierKurtz1986} applied componentwise to the positive
and negative parts of $\mathbf p$, using $\E[\sup_{t\in[0,T]}\|\mathbf p_t\|]<\infty$. 

Let $\{v^k\}_{k\in\N}$ be a dense countable subset of $U$, and define the sets
\begin{align*}
A&:=
\left\{
(t,\omega)\in[0,T]\times\Omega:
\exists v(t,\omega)\in U: \varphi(t,\omega,v)
>
\varphi(t,\omega,u_t(\omega))
\right\},
\\
A^k&:=
\left\{
(t,\omega)\in[0,T]\times\Omega:
\varphi(t,\omega,v^k)
>
\varphi(t,\omega,u_t(\omega))
\right\},
\end{align*}
where $A^k$ is progressively measurable. 

By contradiction, 
suppose that the maximality condition \eqref{eq:pmp:rough:maximality} fails. 
Then, $(\dd t\otimes\Prob)(A)>0$. Since $A=\bigcup_{k\in\N}A^k$ by the continuity of $v\mapsto\varphi(t,\omega,v)$ and density of $\{v^k\}_{k\in\N}$, we get $
0\leq (\dd t\otimes\Prob)(\bigcup_{k\in\N}A^k)\leq\sum_{k\in\N}(\dd t\otimes\Prob)(A^k)$, so there exists a set $A^k$ for some $k\in\N$ that has positive measure $(\dd t\otimes\Prob)(A^k)>0$. 

Next, define the sets 
$$
I^k:=\{t\in[0,T]:\Prob(A^k_t)>0\},
\qquad
A^k_t:=\{\omega\in\Omega:(t,\omega)\in A^k\}.
$$
where $A^k_t\in\F_t$. 
Then, $I^k$ has positive Lebesgue measure. Indeed, by Fubini's theorem, $(\dd t\otimes\Prob)(A^k)=\int_0^T\Prob(A^k_t)\dd t>0$. Also, the set of Lebesgue points $t_1$ where \eqref{eq:hamiltonian_variational_ineq:rough} holds has full Lebesgue measure. Thus, there exists a Lebesgue point $t_1$  in $I^k$ such that $\Prob(A^k_{t_1})>0$ and
$$
\varphi(t_1,\omega,v^k)
>
\varphi(t_1,\omega,u_{t_1}(\omega)),
\qquad
\omega\in A^k_{t_1}.
$$
Thus, the constant control 
$\widehat u_1 :=  v^k$ satisfies the desired inequality \eqref{eq:hamiltonian_variational_ineq:rough:con} on $A_{t_1}:=A^k_{t_1}$.
\end{remark}

\begin{remark}[Deterministic and anticipative controls]
If the admissible controls are deterministic open-loop controls like in \cite{Lew2026}, then the
needle perturbation $\bar u_1$ is deterministic rather than
$\F_{t_1}$-measurable. The same proof gives the maximality condition
$$
u_t
\in
\operatorname*{arg\,max}_{v\in U}
\E\left[
\mathbf H(t,x_t,v,\mathbf p_t,\mathfrak p_0)
\right]
\qquad
\text{for a.e. }t\in[0,T].
$$
Adapting the proof to (not necessarily $\F$-adapted) anticipative controls is also straightforward.
\end{remark}

\section{Applications}\label{sec:applications}

\subsection{Anticipative vs adapted controls}\label{sec:applications:example}
We give an example to show the difference between adapted and anticipative control solutions, as an application of the rough PMP (Theorem \ref{thm:pmp:rough}). Consider the problem with $x=(x_1,x_2)$ 
\begin{align} 
\min\limits_{u\in \mathcal U} \ \ 
	&\E\bigg[
\int_0^T \tfrac12\|u_t\|^2\dd t+\tfrac12x_{2,T}^2
\bigg] 
\ \ \textrm{s.t.}   \ \  
\begin{bmatrix}
\dd x_{1,t}
\\
\dd x_{2,t}
\end{bmatrix} = \begin{bmatrix}
0 \\ u_t
\end{bmatrix}
\dd t +
\begin{bmatrix}
1
\\
\bar\sigma\cos(x_{1,t})
\end{bmatrix}\dd\mbB_t, 
\qquad 
x_0=0,
\end{align}
with $0<T<1$, $\bar\sigma>0$, $U=[-\bar u,\bar u]$ for $\bar u>\bar\sigma/(1-T)$, and $\mbB$ the Stratonovich lift of Brownian motion, which is an instance of \OCP{} without terminal  constraints. State trajectories satisfy
$$
\smash{
x_{1,t}=B_t,
\qquad
x_{2,t}=\int_0^tu_s\dd s+\bar\sigma\sin(B_t).
}
$$
Thus, $|x_{2,t}|\leq T\bar{u}+\bar\sigma<\bar{u}$ almost surely for all $t\in[0,T]$ and any control $u\in\U$, so the terminal cost $g(x)=\tfrac12x_{2}^2$ has bounded and Lipschitz gradient. 
The Hamiltonian  is
$
\mathbf H\big(t,x,u,(\mathbf p_1,\mathbf p_2),\mathfrak p_0\big)
=
\mathbf p_2 u
+
 \tfrac{\mathfrak p_0}2u^2. 
$
Let $(x,u)$ be a solution to the optimal control problem. By the rough PMP (Theorem \ref{thm:pmp:rough}), let $\mathfrak{p}_0=-1$ (due to the non-triviality condition), and define  the adjoint as the solution to the rough SDE \eqref{eq:pmp:rough:adjoint}. The second component satisfies
\begin{align}
\dd \mathbf p_{2,t}
&=
0,
\qquad
\mathbf p_{2,T}=-\nabla_{x_2} g(x_T)=-x_{2,T}
\quad
\implies
\mathbf p_{2,t}=-x_{2,T}
\end{align}
for all times $t\in[0,T]$ almost surely. 

\textbf{The anticipative case}: Let $\U\,{=}\,L^2([0,T]\,{\times}\,\Omega,U)$. By the maximality condition  \eqref{eq:pmp:rough:maximality:anticipative} and $|x_{2,t}|\,{<}\,\bar{u}$,  
\begin{gather*}
\smash{
u_t
\in
\mathop{\arg\max}_{v\in U}\,
\mathbf H\left(t,x_t,v,\mathbf p_t,-1\right)
=
\mathop{\arg\max}_{v\in U}\,
\left(
\mathbf p_{2,t} v
-\tfrac12v^2
\right) 
\implies
u_t=\mathbf p_{2,t}=-x_{2,T},
}
\end{gather*}
so the control is anticipative and constant. Since $x_{2,T}=\int_0^Tu_s\dd s+\bar\sigma\sin(B_T)=-Tx_{2,T}+\bar\sigma\sin(B_T)$, the anticipative optimal control is
$$
u_t=-\frac{\bar\sigma}{1+T}\sin(B_T).
$$
 
\textbf{The adapted case}:  Let $\U=L^2_\F([0,T]\times\Omega,U)$. By the maximality condition \eqref{eq:pmp:rough:maximality},
\begin{gather*}  
u_t=\E[\mathbf p_{2,t}\, |\, \F_t]
=-\E[x_{2,T}\, |\, \F_t].
\end{gather*}  
Define $X_t:=\E[x_{2,T}\, |\, \F_t]$, which satisfies $\E[X_t|\F_s]=X_s$ for $t\geq s$. Then, using $\E[\sin(B_T)|\F_t]=e^{-(T-t)/2}\sin(B_t)$,
\begin{align*}
X_t
&= 
\E\bigg[
x_{2,t} +\int_t^Tu_s\dd s+\bar\sigma(\sin(B_T)-\sin(B_t))\, \bigg|\, \F_t\bigg]
=
x_{2,t} -
\int_t^T\E[X_s|\F_t]\dd s
+
\bar\sigma
(e^{-(T-t)/2}-1)
\sin(B_t)
\\
&=x_{2,t} 
-
(T-t)X_t
+
\bar\sigma (e^{-(T-t)/2}-1)\sin(B_t).
\end{align*}
Thus, $X_t=\frac{1}{1+T-t}(x_{2,t}-
\bar\sigma (1-e^{-(T-t)/2})\sin(B_t))$, and the control is
$$
u_t=-\frac{x_{2,t}-\bar\sigma(1-e^{-(T-t)/2})\sin(x_{1,t})}{1+T-t}.
$$
This example shows the difference between anticipative and adapted solutions. While the anticipative control is constant and depends on $B_T$ which is unknown in many applications, the adapted control varies in time as a function of $x^1_t$, which is an observable state.
The adapted solution could also be computed using the It\^o PMP. 
The conditional bridge in \PMP connects with such an approach.

\subsection{Adjoint matching for fine-tuning generative models}\label{sec:applications:adjoint_matching}
We rederive the adjoint matching method for fine-tuning diffusion models \cite{Domingo2025} using the unified \PMP, and extend it to the case with state-dependent diffusion. 
In this application, the state SDE represents a generative model. Its goal is to generate final states $x_T$ from a desired distribution (e.g., representing images) by selecting appropriate controls $u_t$ (weights). 
Using an SDE as a generative model provides high sample quality and training stability, thanks to the iterative nature of the generation process. A common post-training objective is to fine-tune this model to respect different desiderata by specifying an objective $g(x)$. This fine-tuning problem can be posed as the stochastic optimal control problem
\begin{align} 
\min\limits_{u\in \mathcal U} \ \ 
	&\E\bigg[
\int_0^T (f(x_t,t)+\tfrac12\|u_t\|^2)\dd t+g(x^u_T)\bigg]
\\
\ \textrm{s.t.}   \ \ 
\label{eq:matching_adjoint:x}
&
\dd x^u_t = (b(t,x^u_t)+\sigma(t,x^u_t)u_t)\dd t +\sigma(t,x^u_t)\dd\mbB_t,
\quad t\in[0,T],
\\
&x^u_0=\bar{x}_0,
\end{align}
with $U=\R^m$. 
In the derivations that follow, we apply \PMP{} to this problem  (though it may not satisfy Assumption \ref{assum:pmp} as the cost and dynamics may be unbounded, one may consider an arbitrarily large set $U$, or replace it with a compact set to obtain a different algorithm). %
The Hamiltonian is
\begin{align*}
\mathbf H(t,x,u,\mathbf p,\mathfrak p_0)
&=
\mathbf p^\top (b(t,x)+\sigma(t,x)u)
+
\mathfrak p_0 (f(x,t)+\tfrac12\|u\|^2). 
\end{align*}
Let $(x,u)$ be a solution to the optimal control problem, %
$\mathfrak{p}_0=-1$ (due to the non-triviality condition), and define  the rough adjoint as the solution to the rough SDE
\begin{subequations}
\label{eq:matching_adjoint:p}
\begin{align}
\dd \mathbf p^u_t
&=
-(
\nabla_xb(t,x^u_t)+\nabla_x(\sigma(t,x^u_t)u_t))^\top \mathbf p^u_t-\nabla_x f(t,x^u_t)) 
\dd t
-
\nabla_x\sigma(t,x^u_t)^\top\mathbf p^u_t\dd\mbB_t,
\\
\mathbf p^u_T&=-\nabla g(x^u_T).
\end{align}
\end{subequations}
Let $\sigma_t^u:=\sigma(t,x^u_t)$.
Then,  by the maximality condition, the control satisfies
\begin{gather*}
u_t
\in
\mathop{\arg\max}_{v\in U}\,
\mathbf H\left(t,x^u_t,v,\E[\mathbf p^u_t\, |\, \F_t],-1\right)
=
\mathop{\arg\max}_{v\in U}\,
\left(
\E[\mathbf p^u_t\, |\, \F_t]^\top\sigma^u_tv
-\tfrac12\|v\|^2
\right)
\implies
u_t=\sigma_t^{u\top}\E[\mathbf p^u_t\, |\, \F_t].
\end{gather*} 
These derivations motivate defining the map $\mathcal{L}:\U\times\U\to\R$ by 
\begin{align*} 
\mathcal{L}(u,v)
:= \E\bigg[
\int_0^T\left\|v_t-\sigma_t^{u\top}\E[\mathbf p^u_t\, |\, \F_t]\right\|^2
\bigg].
\end{align*}
Indeed, any optimal control $u\in \U$ must satisfy $\mathcal{L}(u,u)=0$.
Interestingly, if we only minimize over the second variable, 
\begin{equation}\label{eq:matching_adjoint_minv}
\mathop{\arg\min}_{v\in\U}
\mathcal{L}(u,v)
=
\mathop{\arg\min}_{v\in\U}
\E\bigg[\int_0^T
\left\|
v_t-\sigma_t^{u\top}\mathbf p_t^u
\right\|^2
\bigg],
\end{equation}
which follows from properties of conditional expectations, see Remark \ref{sec:matching_adjoint_minv}. %
This observation motivates the following method of successive approximations (MSA)  \cite{Chernousko1982}: 
From an initial guess $u\in\U$:
\begin{enumerate}
\item Evaluate $p^u$:
\begin{enumerate}
\item Forward simulate $x^u$ by solving the SDE \eqref{eq:matching_adjoint:x} from $\bar x_0$,
\item Backward simulate $\mathbf p^u$ by solving the SDE \eqref{eq:matching_adjoint:p} from $-\nabla g(x^u_T)$.
\end{enumerate}
\item Update $u\gets\mathop{\arg\min}_{v}\mathcal{L}(u,v)$ using \eqref{eq:matching_adjoint_minv}:
\begin{align*}
u\gets  
&\mathop{\arg\min}_v \ 
\E\bigg[\int_0^T
\left\|
v_t-\sigma_t^{u\top} \mathbf p_t^u
\right\|^2
\bigg].
\end{align*}
\end{enumerate}
This algorithm is the adjoint matching method of \cite{Domingo2025}. For fixed $u$, the trajectory $x^u$ and adjoint $\mathbf p^u$ are frozen while the residual $\mathcal{L}(u,v)$ is minimized only over $v$. %
As a result, in \eqref{eq:matching_adjoint_minv}, the conditional expectation disappears, which makes the adjoint matching method efficient and practical.

The bridge also identifies the corresponding It\^o adjoint:
\begin{align*} 
\mathcal{L}(u,v)
:= \E\bigg[
\int_0^T\left\|v_t-\sigma_t^{u\top} \E[\mathbf p^u_t\, |\, \F_t]\right\|^2
\bigg]
=\E\bigg[
\int_0^T\left\|v_t-\sigma_t^{u\top} p^u_t\right\|^2
\bigg],
\end{align*}
where  $(p^u,q^u)$ solves the corresponding It\^o BSDE in \PMP, which is generally challenging to solve. \PMP provides an informative connection between the It\^o PMP and the pathwise training approach that is used in adjoint matching.

\begin{remark}[The state-independent diffusion case]
The adjoint matching method was originally derived in \cite{Domingo2025} for the case
$$
\sigma(t,x)=\hat\sigma(t).
$$
Then, the Stratonovich-to-It\^o correction is zero, and the state rough SDE is equivalent to an It\^o SDE. Also, the rough adjoint is the solution to the random ODE (a rough SDE without diffusion term)
\begin{subequations}
\label{eq:matching_adjoint:p:ode}
\begin{align}
\dd \mathbf p^u_t
&=
-(\nabla_xb(t,x^u_t)^\top \mathbf p^u_t-\nabla_x f(t,x^u_t)) 
\dd t,
\\
\mathbf p^u_T&=-\nabla g(x^u_T),
\end{align}
\end{subequations}
and the other derivations are otherwise unchanged.  
The It\^o adjoint pair $(p^u,q^u)$ then solves the BSDE
\begin{subequations}
\label{eq:matching_adjoint:p:sigma(t):ito}
\begin{align} 
\dd p^u_t
&=
-(\nabla_xb(t,x^u_t)^\top p^u_t-\nabla_x f(t,x^u_t))\dd t
+q^u_t\dd B_t,
\\
p^u_T&=-\nabla g(x^u_T),
\end{align} 
\end{subequations}
which remains computationally challenging compared to the pathwise approach, so it is avoided.
\end{remark}

\begin{remark}[Previous It\^o analysis in {\cite{Domingo2026}}]\label{sec:matching_adjoint:comments}
The analysis in \cite{Domingo2026}  via the It\^o PMP and the MSA inspired the derivation above. 
For a diffusion $\sigma(t)$, the adjoint in \cite[Eq.~(40)--(41)]{Domingo2026} is the random terminal-value ODE \eqref{eq:matching_adjoint:p:ode}, which is well-defined. 
However, the state-dependent extension with $\sigma(t,x)$ is more challenging. \cite[Eqs.~(30)-(31)]{Domingo2026} define the adjoint $a$ through a backward SDE with final condition $a_T=\nabla g(X_T)$ and state that it can be solved pathwise via time reversal, while \cite[Lemma~8]{Domingo2026} calls the adjoint $a$ adapted. These requirements are incompatible: If $\dd X_t=\dd B_t$ and $g(x)=\frac12x^2$, \cite[Eqs.~(30)-(31)]{Domingo2026} or \cite[Eq.~(93)]{Domingo2026} give $a_t=X_T$, which is not $\F_t$-adapted, so the It\^o product calculation in \cite[Eqs.~(117)--(126)]{Domingo2026} is incompatible with the anticipative backward process. Using the It\^o PMP, the correct BSDE would be $\dd a_t=q_t\dd B_t$, so that $q_t=1$ and $a_t=B_t$, %
disagreeing with \cite[Eq.~(93)]{Domingo2026}.
\end{remark}

\begin{remark}[Details on loss function derivation]\label{sec:matching_adjoint_minv}
We derive \eqref{eq:matching_adjoint_minv}. First, note that
\begin{align*}
\left\|v_t-\sigma_t^\top \mathbf p^u_t\right\|^2
&=
\left\|
v_t-\sigma_t^\top \E[\mathbf p^u_t\, |\, \F_t]
-
\sigma_t^\top (\mathbf p^u_t-\E[\mathbf p^u_t\, |\, \F_t])
\right\|^2
\\
&\hspace{-10mm}=
\left\|
v_t-\sigma_t^\top \E[\mathbf p^u_t\, |\, \F_t]
\right\|^2
+
\left\|\sigma_t^\top 
(\mathbf p^u_t-\E[\mathbf p^u_t\, |\, \F_t])
\right\|^2
-2
\left\langle
v_t-\sigma_t^\top \E[\mathbf p^u_t\, |\, \F_t]
,
\sigma_t^\top (\mathbf p^u_t-\E[\mathbf p^u_t\, |\, \F_t])
\right\rangle.
\end{align*}
The last term is zero on average:
\begin{align*}
\E\big[
\left\langle
v_t-\sigma_t^\top \E[\mathbf p^u_t\, |\, \F_t]
,
\sigma_t^\top (\mathbf p^u_t-\E[\mathbf p^u_t\, |\, \F_t])
\right\rangle
\big]
&=
\E\big[
\E\left[
\left\langle
v_t-\sigma_t^\top \E[\mathbf p^u_t\, |\, \F_t]
,
\sigma_t^\top (\mathbf p^u_t-\E[\mathbf p^u_t\, |\, \F_t])
\right\rangle
\, |\, \F_t\right]
\big]
\\
&=
\E\big[
\langle
v_t-\sigma_t^\top \E[\mathbf p^u_t\, |\, \F_t]
,
\underbrace{
\E\left[
\sigma_t^\top (\mathbf p^u_t-\E[\mathbf p^u_t\, |\, \F_t])
\rangle
\, |\, \F_t\right]
}_{=0}
\big].
\end{align*}
So,
\begin{align*}
\E\left[
\left\|v_t-\sigma_t^\top \E[\mathbf p^u_t\, |\, \F_t]\right\|^2
\right]
&=
\E\left[
\left\|
v_t-\sigma_t^\top \mathbf p^u_t
\right\|^2
\right]
-
\E\left[
\left\|\sigma_t^\top 
(\mathbf p^u_t-\E[\mathbf p^u_t\, |\, \F_t])
\right\|^2
\right].
\end{align*}
The last term is independent over the variable $v$, which proves \eqref{eq:matching_adjoint_minv}.
\end{remark}

\subsection{Towards a shooting method}\label{sec:applications:shooting}
We use \PMP to inform the design of indirect shooting methods, extending the shooting method for open-loop problems in \cite{Lew2026}. Like in \cite{Lew2026}, we do not prove guarantees for this method, which should be treated as a heuristic informed by \PMP{}. 
To simplify the presentation, we consider the control-affine problem in \cite{Lew2026} with   unbounded cost, drift, and diffusion maps, but the derivations can be adapted to 
bounded maps. 
Consider the \OCP{}
\begin{align*} 
\min\limits_{u\in \mathcal U} 
\ \E\bigg[
\int_0^T \underbrace{(f_0(t,x_t)+\tfrac r2\|u_t\|^2)}_{=f(t,x_t,u_t)}\dd t+g(x_T)
\bigg] 
\ \, \textrm{s.t.}   
\ \, 
\dd x_t = \underbrace{(\mathbf b_0(t,x_t)+\bar Bu_t)}_{=\mathbf b(t,x_t,u_t)}\dd t +\sigma(t,x_t)\dd\mbB_t,
\ \ 
&x_0=\bar{x}_0,
\end{align*}
where $r>0$ and $\bar x_0\in\R^n$. 
There are no final constraints, so $\mathfrak{p}_0=-1$ and the rough Hamiltonian is
$
\mathbf H(t,x,u,\mathbf p,-1)
=
\mathbf p^\top\mathbf b(t,x,u)-f(t,x,u).
$
Applying \PMP, any optimal solution $(x,u)$ satisfies
\begin{subequations}
\label{eq:shooting:pmp}
\begin{align}
\dd x_t
&=
\mathbf b(t,x_t,u_t)\dd t
+\sigma(t,x_t)\dd\mbB_t,
&&x_0=\bar x_0,
\\
\dd\mathbf p_t
&=
-\nabla_x\mathbf H(t,x_t,u_t,\mathbf p_t,-1)\dd t
-\nabla_x\sigma(t,x_t)^\top\mathbf p_t\dd\mbB_t,
&&\mathbf p_T=-\nabla g(x_T),
\\
p_t&=\E[\mathbf p_t\mid\F_t],
\qquad
u_t
\in
\mathop{\arg\max}_{v\in U}
\mathbf H(t,x_t,v,p_t,-1),
\end{align}
\end{subequations}
where $\mathbf p$ is the anticipative rough adjoint and $p$ is the adapted It\^o adjoint. 
Then, applying the maximality condition gives $
u_t=\mathop{\arg\max}_{v\in U} 
\left(
p_t^\top \bar Bv-\tfrac r2\|v\|^2
\right)$, so
\begin{equation}\label{eq:shooting:u=maximality}
 u_t=\Pi(t,p_t):=\tfrac1r\bar B^\top p_t.
\end{equation}

Assume the deterministic Markovian
representation
\begin{equation}
p_t=\E[\mathbf p_t\mid \F_t]=\Xi(t,x_t).
\label{eq:shooting:markov}
\end{equation}
Substituting 
into \eqref{eq:shooting:pmp}, we obtain the coupled rough SDE 
\begin{equation}
\begin{cases}
\begin{aligned}
\dd x_t
&=
\mathbf b\big(t,x_t,\Pi(t,\Xi(t,x_t))\big)\dd t
+\sigma(t,x_t)\dd\mbB_t,
&&x_0=\bar x_0,
\\
\dd\mathbf p_t
&=
-\nabla_x\mathbf H
 \bigl(t,x_t,\Pi(t,\Xi(t,x_t)),\mathbf p_t,-1\bigr)\dd t
-\nabla_x\sigma(t,x_t)^\top\mathbf p_t\dd\mbB_t,
&&\mathbf{p}_0=z,
\end{aligned}
\end{cases}
\end{equation}
whose solution is denoted by $(x^z,\mathbf p^z)$. 
By the transversality condition, if $(x^z,\mathbf p^z)$ is an optimal solution to \OCP{}, then $\mathbf p^z_T=-\nabla g(x^z_T)$. 
Thus, candidate solutions to \OCP{} are zeros of the map
$$
F: L^2(\Omega,\R^n)\to L^2(\Omega,\R^n), 
\ 
\mathbf p_0\mapsto
F(\mathbf p_0)=\mathbf p_T^{\mathbf p_0}+\nabla g(x^{\mathbf p_0}_T).
$$
Thus, we obtain the root-finding problem $F(\mathbf p_0)=0$.  
There are two main difficulties:
\begin{enumerate}
\item[(1)] The conditional expectation map $\Xi$ is generally
unknown.
\item[(2)] The optimization is over the infinite-dimensional space of $\R^n$-valued random variables $\mathbf p_0$.
\end{enumerate}
Both challenges require future work to obtain a method with guarantees. We give preliminary ideas and empirical results below.

\paragraph{(1) Approximating the conditional expectation and connection to the open-loop problem.} We give an example for the conditional approximation and connect it with the open-loop approach in \cite{Lew2026}. 
Since $\mathbf p_t\in L^2$ and under the Markovian assumption, the conditional expectation satisfies 
\begin{equation}
\Xi(t,x_t)
=
\mathop{\arg\min}_{Z\in L_{\boldsymbol\sigma(x_t)}^2(\Omega,\R^n)}
\E\left[\|\mathbf p_t-Z\|^2\right]
=
\mathop{\arg\min}_{\Xi_t:\R^n\to\R^n}
\E\left[\|\mathbf p_t-\Xi_t(x_t)\|^2\right],
\label{eq:shooting:conditional_projection}
\end{equation}
where $\boldsymbol\sigma(x_t)$ is the $\sigma$-algebra generated by $x_t$, and the class of maps $\Xi_t:\R^n\to\R^n$ depends on the problem. 
For instance, by optimizing over the class of linear maps $\widehat\Xi_t(x)=D_tx$ with diagonal matrices  $D_t=\textrm{diag}(D_{1,t},\dots,D_{n,t})$, we obtain
\begin{equation}
\widehat\Xi(t,x_t)
=
\mathop{\arg\min}_{D_t\in \R^{n\times n}}
\E\left[\|\mathbf p_t-D_tx_t\|^2\right]
\ \implies\ 
D_{j,t}=\E[\mathbf p_{j,t}x_{j,t}]/\E[x_{j,t}^2],
\quad j=1,\dots,n.
\end{equation}
Assuming $n=m$ and the diagonal structure $\bar B=\textrm{diag}(\bar B_1,\dots,\bar B_m)$, the maximality condition \eqref{eq:shooting:u=maximality} gives
\begin{equation}\label{eq:shooting:approx:closedloop}
u_{j,t}=\tfrac1r\bar B_j \widehat\Xi_j(t,x_t)=\frac{\bar B_j}{r}\frac{\E[\mathbf p_{j,t}x_{j,t}]}{\E[x_{j,t}^2]}x_{j,t}.
\end{equation}
We obtained a linear feedback control $
u_{j,t}=k_{j,t}x_{j,t}$ with gains $k_{j,t}=\frac{\bar B_j}{r}\frac{\E[\mathbf p_{j,t}x_{j,t}]}{\E[x_{j,t}^2]}$.

\textit{Connection to open-loop problems}. As in \cite[Section 5]{Lew2026} that only handles  problems with deterministic open-loop controls, consider optimizing over the feedback gains of the control $u=Kx$ by solving%
\begin{align*} 
\min\limits_{k\in \mathcal U^{\text{OL}}} 
\ \E\bigg[
\int_0^T \underbrace{(f_0(t,x_t)+\tfrac r2\|K_tx_t\|^2)}_{=f(t,x_t,u_t)}\dd t+g(x_T)
\bigg] 
\ \, \textrm{s.t.}   
\ \, 
\dd x_t = \underbrace{(\mathbf b_0(t,x_t)+\bar BK_tx_t)}_{=\mathbf b(t,x_t,u_t)}\dd t +\sigma(t,x_t)\dd\mbB_t,
\end{align*}
where  $k\in\U^{\rm OL}:=L^\infty([0,T],\R^m)$ parameterizes  diagonal deterministic gains 
$
K=\textrm{diag}(k_1,\dots,k_m)\in L^\infty([0,T],\R^{m\times m}).
$ 
Then, the open-loop maximality condition \eqref{eq:maximality:openloop} is
\begin{equation}
k_t
=
\mathop{\arg\max}_{k\in U}
\ 
\E\left[
\mathbf p_t^\top \bar BKx_t-\tfrac r2\|Kx_t\|^2
\right]
\ \implies\ 
k_{j,t}=\frac{\bar B_j}{r}\frac{\E[\mathbf p_{j,t}x_{j,t}]}{\E[x_{j,t}^2]},
\quad j=1,\dots,m,
\end{equation}
assuming that $n=m$ and $\bar B$ is diagonal. 

Thus, we recover the same structure 
$
u_{j,t}=k_{j,t}x_{j,t}$ for $j=1,\dots,m$, with the same expression for the gains $k$.
However, the adjoint equations of the two shooting methods are different, which we will show can make an important difference in the numerical robustness of the method. %

\paragraph{(2) Sample average (Monte Carlo) approximation.} As in \cite{Lew2026}, we sample independent rough paths
$\mbB^1,\dots,\mbB^M$, define the shooting variables
$z=(z^1,\ldots,z^M)\in\R^{Mn}$ that parameterize  samples of $\mathbf p_0$, and approximate each expectation with its empirical average. 
Then, we integrate the coupled RDE
\begin{equation}\label{eq:shooting:coupled_rde}
\begin{cases}
\begin{aligned}
\dd x^i_t
&=
\mathbf b\big(t,x^i_t,\Pi(t,\Xi^M(t,x^i_t))\big)\dd t
+\sigma(t,x^i_t)\dd\mbB^i_t,
\\
\dd\mathbf p^i_t
&=
-\nabla_x\mathbf H
 \bigl(t,x^i_t,\Pi(t,\Xi^M(t,x^i_t)),\mathbf p^i_t,-1\bigr)\dd t
-\nabla_x\sigma(t,x^i_t)^\top\mathbf p^i_t\dd\mbB^i_t,
\\
x_0^i&=\bar x_0,
\qquad
\mathbf p_0^i=z^i,
\qquad i=1,\ldots,M,
\end{aligned}
\end{cases}
\end{equation}
with  the sample-based approximation
\begin{equation}
\Xi^M(t,\cdot)
=
\mathop{\arg\min}_{\widehat\Xi_t}
\frac1M\sum_{i=1}^M
\|\mathbf p_t^i-\widehat\Xi_t(x_t^i)\|^2
\label{eq:shooting:conditional_projection:sample}
\end{equation}
over suitable functions $\widehat\Xi_t:\R^n\to\R^n$, 
and consider the shooting map
\begin{equation}
F^M(z)
=
\left(
\mathbf p_T^1+\nabla g(x_T^1),
\dots,
\mathbf p_T^M+\nabla g(x_T^M)
\right)\in\R^{Mn}.
\label{eq:shooting:sample_residual}
\end{equation}
To find zeros of $F^M(\cdot)$, the shooting method iteratively applies the Newton steps
$$
z\gets z-\nabla F^M(z)^{-1}F^M(z)
$$
from an initial guess $z$. Thus, the shooting method sequentially integrates the coupled RDE \eqref{eq:shooting:coupled_rde}, evaluates its sensitivities, and solves the linear system $\nabla F^M(z)\Delta z =F^M(z)$ to update $z\gets z-\Delta z$.

\paragraph{Numerical example.} We evaluate the shooting method using approximations (1) and (2) above. We consider the OCP in \cite[Section 5]{Lew2026}, which is an instance of the problem above with $n=m=3$, $f_0(t,x_t)=\tfrac{10}{2}\|x\|^2$, $r=3$, $\mathbf b_0(x)=-J^{-1}S(x)Jx$ with $ 
S(x)=
\SmallMatrix{
0&-x_3&x_2
\\
x_3&0&-x_1
\\
-x_2&x_1&0
}$ and  
$J=\text{diag}(J_1,J_2,J_3)=\text{diag}(3,2,4)=\bar{B}^{-1}$, 
$\sigma(t,x)=0.4\,\textrm{diag}(x)$, 
$x_0=\frac{\pi}{180}(-1, -4.5, 4.5)$,   and  
$B$ a  standard $n$-dimensional Brownian motion. 
We compare two sample-based shooting methods.

\textbf{Feedback shooting method}: 
The first is the shooting method  proposed above, which approximates the conditional expectation as $\widehat\Xi_j(t,x)=D_{j,t}x_{j,t}$. 
Its adjoint equation is
$$
\dd\mathbf p_t=-\left(\nabla\mathbf b_0(t,x_t)^\top \mathbf p_t - \nabla_x f_0(t,x_t)
\right)
\dd t
-\nabla_x\sigma(t,x_t)^\top\mathbf p_t\dd \mbB_t.
$$

\textbf{Open-loop shooting method}: The second is the open-loop shooting method proposed in \cite[Section 5]{Lew2026}, which optimizes over the open-loop gains $k$ and returns the closed-loop controls $u_{j,t}(\omega)=k_{j,t}x_{j,t}(\omega)$, as  described above. Its adjoint equation is
$$
\dd\mathbf p_t=-\left(\nabla\mathbf b_0(t,x_t)^\top \mathbf p_t - \nabla_x f_0(t,x_t)
+
K_t^\top(\bar B^\top \mathbf p_t-rK_tx_t)
\right)
\dd t
-\nabla_x\sigma(t,x_t)^\top\mathbf p_t\dd \mbB_t.
$$
The only difference between the two methods is the term $K_t^\top(\bar B^\top \mathbf p_t-rK_tx_t)$ in the adjoint equation. 
\cite[Section 5]{Lew2026} reports numerical sensitivity to the choice of initial guess $\mathbf p_0$. %
Our hypothesis is that the new feedback shooting method is more robust  since its adjoint equation  does not include this extra term. 

\textbf{Details}: We follow \cite[Section 5]{Lew2026}. We use a zero initial guess $\mathbf p_0^i=0$ and run each Newton method until either $\|F(\{\mathbf p_0^i\}_{i=1}^M)\|_\infty<10^{-12}$ or the method diverges. We use $T=2$ and discretize with a Milstein scheme and $N=40$ nodes.  
We directly solve OCP for $r$ and do not use a homotopy method. 
The code extends the code used in \cite{Lew2026}, and is available at \url{github.com/ToyotaResearchInstitute/rspmp}. 

\textbf{Results}: We test the two shooting methods  for $M=10$ and varying $r\in\{0.5, \dots,5\}$, and for $r=3$ and varying sample sizes $M$. %
We repeat each experiment $100$ times for new samples of $\mbB$. 
  
 First, we find that whenever both methods converge, they return the same solution up to numerical accuracy. 
 An example of solution for $r=3$ is shown in Figure \ref{fig:applications:shooting:trajs}, which corresponds to \cite[Figure 1.1]{Lew2026}. 
 
 Second, we observe that the feedback shooting method using the conditional bridge converges more reliably than the open-loop shooting method \cite{Lew2026} when varying the value of $r$. Results in Figure \ref{fig:applications:shooting:eval} (left, $M=10$) show that the open-loop method is sensitive to the value of $r$, as observed in \cite{Lew2026}, with poor convergence rates for small values of $r$, and higher convergence rates as $r$ increases. In contrast, the proposed feedback shooting method is more robust for  varying control penalizations $r$, which might be due to the simpler adjoint equation.  Results in Figure \ref{fig:applications:shooting:eval} (right, $r=3$) show that the cost of the returned approximate solution (evaluated via $10^5$ simulations) decreases and plateaus as the sample size $M$ increases, 
 suggesting the stability of the sample average approximation as $M$ increases.

\begin{figure}[t]
\centering
\includegraphics[width=0.99\textwidth,trim={0 8mm 0 10mm},clip]
{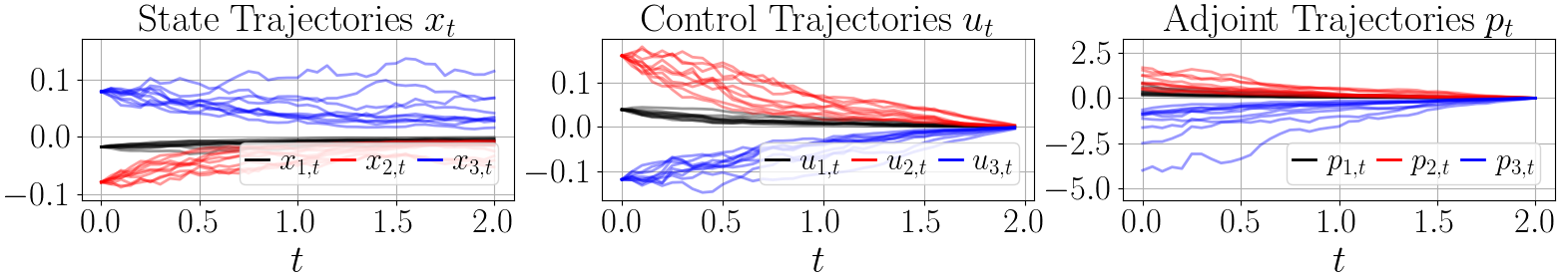}
\vspace{-3mm}
\caption{Ten sample paths of the approximate solutions to the optimal control problem in Section \ref{sec:applications:shooting}.}
\label{fig:applications:shooting:trajs}
\vspace{3mm}
\centering
\includegraphics[width=0.475\textwidth]
{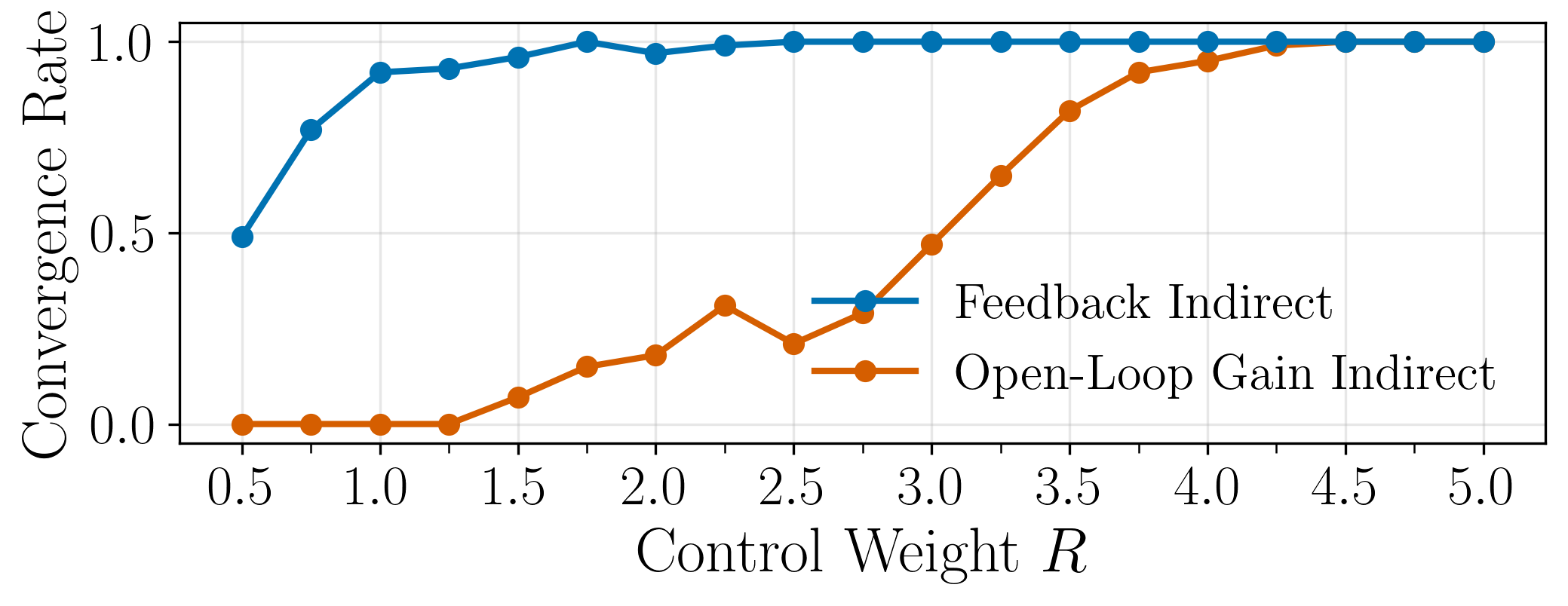}
\hspace{0.02\textwidth}
\includegraphics[width=0.475\textwidth]
{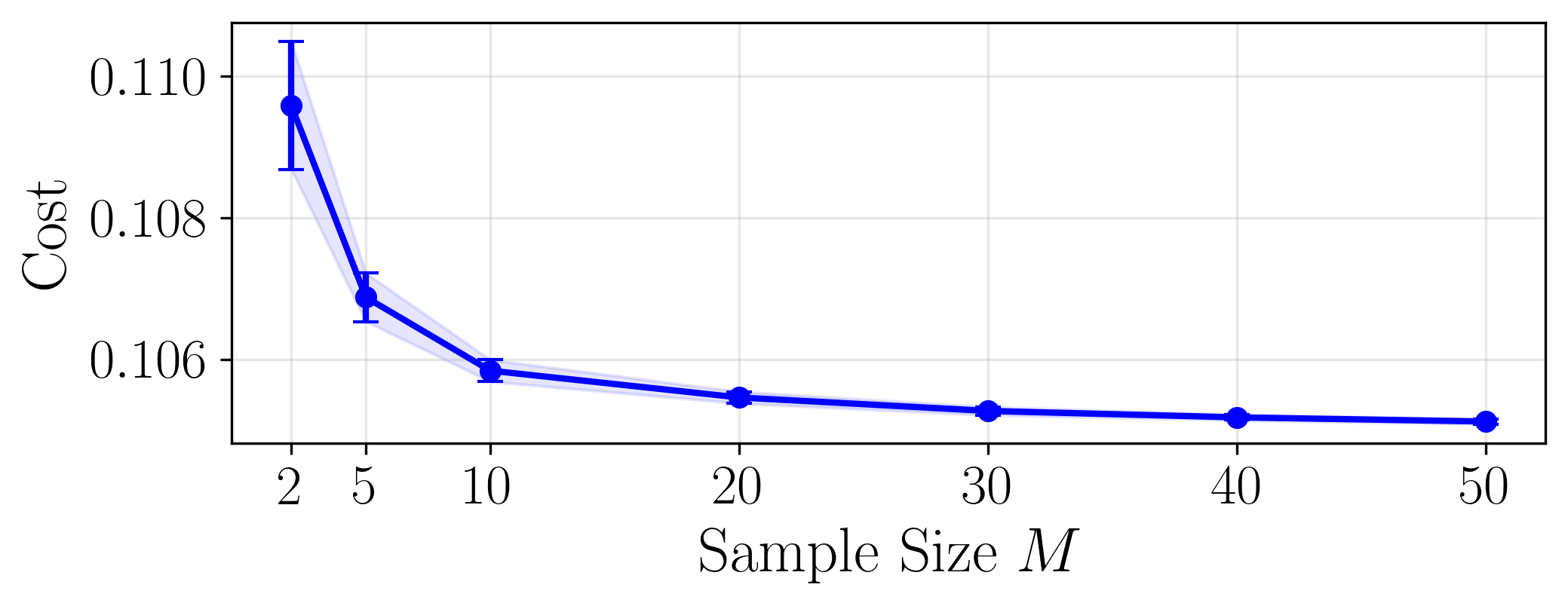}
\vspace{-4mm}
\caption{Left: Convergence rate of the two shooting methods ($1$ = all converged, $0$ = all diverged). Right: Cost of the returned  solution when varying the sample size $M$, with $\pm \,95\%$ confidence intervals.}
\vspace{-2mm}
\label{fig:applications:shooting:eval}
\end{figure}

\appendix
\addtocontents{toc}{\protect\setcounter{tocdepth}{1}}
\section{Proofs of It\^o-Stratonovich Conversion Results} \label{sec:appendix:additional_proofs}
\subsection{It\^o-Stratonovich conversion}\label{sec:ito_stratonovich_conversion:details}
\begin{proof}[Proof of Proposition \ref{prop:ito_stratonovich_conversion}]
Let $x$ solve the It\^o SDE. 
By It\^o's formula applied  to $(t,x)\mapsto \sigma^{ij}(t,x)$ and $x$,
\begin{align*}
\sigma^{ij}(t,x_t)
&=
\sigma^{ij}(0,x_0)
+
\int_0^t\frac{\partial\sigma^{ij}}{\partial t}(s,x_s)\dd s 
+
\sum_{k=1}^n\int_0^t\frac{\partial\sigma^{ij}}{\partial x^k}(s,x_s)\dd x^k_s + 
\int_0^t
\frac12\sum_{k,\ell=1}^n\frac{\partial^2\sigma^{ij}}{\partial x^k\partial x^\ell}(s,x_s)\dd[x^k,x^\ell]_s 
\\
&=
\sigma^{ij}(0,x_0)
+
\int_0^t\frac{\partial\sigma^{ij}}{\partial t}\dd s 
+
\sum_{k=1}^n\int_0^t\frac{\partial\sigma^{ij}}{\partial x^k}
\left(b^k_s\dd s + \sigma^{k\boldsymbol\cdot}_s\dd B_s\right) + 
\int_0^t
\frac12\sum_{k,\ell=1}^n\frac{\partial^2\sigma^{ij}}{\partial x^k\partial x^\ell}\dd[x^k,x^\ell]_s .
\end{align*}
Combining this equation with \cite[Equation (5.2)]{LeGall2016}, we obtain
\begin{align*}
\left[\sigma^{ij}(\cdot,x_\cdot),B^j\right]_t
&=
\left[
\left(
\sigma^{ij}(0,x_0)
+
\sum_{k=1}^n\int_0^\cdot\frac{\partial\sigma^{ij}}{\partial x^k}(s,x_s)
\sigma^{k\boldsymbol\cdot}_s\dd B_s + 
\dots\right),\ B^j\right]_t
=
\int_0^t
\sum_{k=1}^n
\frac{\partial\sigma^{ij}}{\partial x^k}(s,x_s)
\sigma^{kj}_s\dd s,
\end{align*}
where every other term is zero since higher-order terms drop and $[B^\ell,B^j]_t=\delta_{\ell j}t$. Thus, for $i=1,\dots,n$,
\begin{align*}
\int_0^t\sigma^i(s,x_s)\circ\dd B_s
&=
\int_0^t\sigma^i(s,x_s)\dd B_s
+\frac12\sum_{j=1}^d
[\sigma^{ij}(\cdot,x),B^j]_t
\\
&=
\int_0^t\sigma(s,x_s)\dd B_s
+\frac12\int_0^t
\nabla_x\sigma(s,x_s)\sigma(s,x_s)\dd s
\\
&=
\int_0^t\sigma(s,x_s)\dd B_s
+\int_0^t \left(
b(s,x_s,u_s)-\mathbf b(s,x_s,u_s)
\right)\dd s.
\end{align*}
Thus, $x$ also solves the Stratonovich SDE  \eqref{eq:background:stratonovich_sde}. 
Conversely, suppose that $x$ solves \eqref{eq:background:stratonovich_sde}. Then,
\begin{align*}
x_t
&=
\bar x_0
+\int_0^t\mathbf b(s,x_s,u_s)\dd s
+\sum_{j=1}^d\int_0^t\sigma^j(s,x_s)\dd B^j_s
+\frac12\sum_{j=1}^d
[\sigma^j(\cdot,x),B^j]_t.
\end{align*}
Following similar derivations yields the It\^o SDE.
\end{proof}

\subsection{Forward tangent processes}
\label{sec:tangent:strat_to_ito_conversion:details}
We verify the identity
\begin{equation}
\tag{\ref{eq:tangent:strat_to_ito_conversion}}
\big(\nabla_x \Sigma^j\,\sigma^j\big)v+\Sigma^j\Sigma^j v
=
\nabla_x(\Sigma^j\sigma^j)v,
\end{equation}
where dependencies on $(t,x)$ are omitted for conciseness.
First, the product rule gives
\begin{align*} 
\nabla_x(\Sigma^j\sigma^j)v
&=
\big(\nabla_x \Sigma^jv\big)\sigma^j
+
\Sigma^j(\nabla_x\sigma^j)v
=
\big(\nabla_x \Sigma^jv\big)\sigma^j
+
\Sigma^j\Sigma^j v.
\end{align*}
It remains to justify
$$
\big(\nabla_x \Sigma^j\sigma^j\big)v
=
\big(\nabla_x \Sigma^j\big)\sigma^j,
$$
which we check componentwise. We write
$
\Sigma^j_{k\ell}(t,x):=\partial_{x_\ell}\sigma^j_k(t,x)$. 
For each $k$,
\begin{align*}
\left[
\bigl(\nabla_x \Sigma^j\sigma^j\bigr)v
\right]_k
&=
\sum_{m,\ell}
\partial_{x_m}\Sigma^j_{k\ell}\sigma^j_m\,v_\ell
\qquad\text{and}\qquad
\left[
\big(\nabla_x \Sigma^j\,v\big)\sigma^j
\right]_k
=
\sum_{\ell,m}
\partial_{x_\ell}\Sigma^j_{km}v_\ell\sigma^j_m.
\end{align*}
Since $\Sigma^j_{k\ell}=\partial_{x_\ell}\sigma^j_k$, symmetry of second derivatives gives
$$
\partial_{x_m}\Sigma^j_{k\ell}
=
\partial_{x_m}\partial_{x_\ell}\sigma^j_k
=
\partial_{x_\ell}\partial_{x_m}\sigma^j_k
=
\partial_{x_\ell}\Sigma^j_{km}.
$$
Therefore, for every $k$,
$$
\left[
\bigl(\nabla_x \Sigma^j\,\sigma^j\bigr)v
\right]_k
=
\sum_{m,\ell}
\partial_{x_\ell}\Sigma^j_{km}\,\sigma^j_m\,v_\ell
=
\left[
\bigl(\nabla_x \Sigma^j\,v\bigr)\sigma^j
\right]_k.
$$
Hence, 
$
\bigl(\nabla_x \Sigma^j\,\sigma^j\bigr)v
=
\bigl(\nabla_x \Sigma^j\,v\bigr)\sigma^j
$. 
Substituting this identity into the product-rule expansion above yields
$$
\nabla_x(\Sigma^j\sigma^j)v
=
\bigl(\nabla_x \Sigma^j\,\sigma^j\bigr)v
+
\Sigma^j\Sigma^j v,
$$
which is exactly \eqref{eq:tangent:strat_to_ito_conversion}.

\section{Proof of the Itô Stochastic PMP}\label{sec:pmp_ito} 

We now prove the It\^o PMP (Theorem \ref{thm:pmp:ito}),   to highlight similarities and differences with the proof of the rough PMP (Theorem \ref{thm:pmp:rough}) in Section \ref{sec:pmp_rough}, and  for completeness. We consider a scalar Brownian motion ($d=1$) for conciseness, but the proof extends by replacing instances of
$
\sigma_t\dd B_t$ with $\sum_{j=1}^d\sigma_t^j\dd B_t^j$. 
The proof follows the classical needle variations and endpoint separation strategy, as in the proof in \cite{BonalliLewESAIM2022}.

\begin{proof}[Proof of the It\^o PMP (Theorem \ref{thm:pmp:ito})]

Throughout this section,  all the SDEs have well-defined solutions thanks to Theorem \ref{thm:sdes:ito}.

\textbf{Preliminary step: Cost-augmented system, needle variations, endpoint mapping.}

\textit{Cost-augmented system}: 
Define the cost-augmented state $$\widetilde x=\begin{bmatrix}
x \\ x^0
\end{bmatrix}
\in L^2_{\F}\!\left(\Omega,C\!\left([0,T],\R^{n+1}\right)\right)$$ with SDE dynamics
\begin{align}
\label{eq:cost_augmented_state:ito}
\dd \widetilde x_t
&=
\widetilde b(t,\widetilde x_t,u_t)\dd t
+
\widetilde \sigma(t,\widetilde x_t)\dd B_t,
\hspace{9mm} 
\widetilde x_0=(\bar x_0,0),
\end{align}
where
\begin{gather} 
\widetilde b(t,\widetilde x,u):=\begin{bmatrix}
b(t,x,u)
\\
f(t,x,u)
\end{bmatrix},
\qquad
\widetilde \sigma(t,\widetilde x):=\begin{bmatrix}
\sigma(t,x)
\\
0
\end{bmatrix},
\\
\widetilde A_t:=\nabla_{\widetilde x}\widetilde b(t,\widetilde x_t,u_t)
=
\begin{bmatrix}
\nabla_x b(t,x_t,u_t) & 0\\
\nabla_x f(t,x_t,u_t) & 0
\end{bmatrix},
\qquad
\widetilde\Sigma_t:=\nabla_{\widetilde x}\widetilde\sigma(t,\widetilde x_t)
=
\begin{bmatrix}
\nabla_x\sigma(t,x_t) & 0\\
0 & 0
\end{bmatrix}.
\end{gather} 

\textit{Needle variations}:  
Let $u\in L^2([0,T]\times\Omega,U)$ and $V\subset U$ be a countable dense subset.  
We say that $t_1\in[0,T]$ is a \textit{Lebesgue time}  if  almost surely,
\begin{subequations}
\begin{align}
\tag{\ref{eq:lebesgue_time:u}}
\lim_{h\to0}\frac{1}{h}\int_{t_1}^{t_1+h}
\widetilde b(t,\widetilde x_t(\omega),u_t(\omega))\dd t
&= 
\widetilde b(t_1,\widetilde x_{t_1}(\omega),u_{t_1}(\omega)), 
\quad \text{ and}
\\
\tag{\ref{eq:lebesgue_time:vn}}
\lim_{h\to0}\frac{1}{h}\int_{t_1}^{t_1+h}
\widetilde b(t,\widetilde x_t(\omega),v_n)\dd t 
&=
\widetilde b(t_1,\widetilde x_{t_1}(\omega),v_n)
\ \ \text{for all $v_n\in V$.}
\end{align}
\end{subequations}
almost surely.\footnote{In contrast, \cite{BonalliLewESAIM2022} defines a Lebesgue time as satisfying $\lim_{h\to 0}\E[
\frac1h\int_{t_1}^{t_1+h}\left\|
u_t-u_{t_1}
\right\|^2
\dd t 
]=0$  and using Bochner integrals. Our pathwise definition implies the same limit property, but helps when working  with rough SDEs and deriving error bounds like for Lemma \ref{lem:needle:rough} via a rough path approach.} 
Almost every $t\in[0,T]$ is a Lebesgue time, see Section \ref{sec:pmp_rough}.  
Any Lebesgue time $t_1$ satisfies 
\begin{align*}
\E\left[
\frac1h\int_{t_1}^{t_1+h}\left\|
\widetilde b(t,\widetilde x_t,u_t)-\widetilde b(t_1,\widetilde x_{t_1},u_{t_1})
\right\|^2
\dd t 
\right]
&\leq
C_b
\E\left[
\frac1h\int_{t_1}^{t_1+h}\left\|
\widetilde b(t,\widetilde x_t,u_t)-\widetilde b(t_1,\widetilde x_{t_1},u_{t_1})
\right\|
\dd t
\right]
 \mathop{\longrightarrow}\limits_{h\to 0} 0,
\end{align*}
since $b$ is bounded and 
by the dominated convergence theorem.

Let $0<t_1<\cdots<t_q<T$ 
be Lebesgue times of the optimal control $u\in\U$. For $i=1,\dots,q$, let 
$$
\bar u_i\in L^2_{\F_{t_i}}(\Omega,U),
\qquad
0\leq\eta_i<t_{i+1}-t_i,
$$
with $t_{q+1}:=T$. Define the needle variation
$\pi=\{t_i,\eta_i,\bar u_i\}_{i=1}^q$ of $u$ by
$$
u_t^\pi
=
\begin{cases}
\bar u_i, & t\in[t_i,t_i+\eta_i],\\
u_t, & \text{otherwise}.
\end{cases}
$$

Let  $\widetilde x^\pi\in L^2_\F(\Omega,C([0,T],\R^{n+1}))$ be the solution to the SDE
\begin{align}
\dd \widetilde x^\pi_t
&=
\widetilde b(t,\widetilde x^\pi_t,u^\pi_t)\dd t
+
\widetilde \sigma(t,\widetilde x^\pi_t)\dd B_t,
\hspace{9mm} 
\widetilde x^\pi_0=(\bar x_0,0),
\end{align}
that corresponds to the control $u^\pi$. 

For each $i=1,\dots,q$, 
let $\widetilde v^{\pi_i}\in L^2_\F(\Omega,C([t_i,T],\R^{n+1}))$ be the solution to the linearized SDE
\begin{equation}
\label{eq:linearized_variation:ito} 
\dd \widetilde v_t^{\pi_i}
=
\widetilde A_t \widetilde v_t^{\pi_i}\dd t
+
\widetilde\Sigma_t\widetilde v_t^{\pi_i}\dd B_t,
 \quad\ 
 \widetilde v_{t_i}^{\pi_i}
=
\Delta\widetilde b_i
:=
\widetilde b(t_i,\widetilde x_{t_i},\bar u_i)
-
\widetilde b(t_i,\widetilde x_{t_i},u_{t_i}).
\end{equation}
By Theorem \ref{thm:sdes:ito}, the solution satisfies
\begin{equation}
\widetilde v_t^{\pi_i} = 
\widetilde\phi_t\widetilde\psi_{t_1}\widetilde v^{\pi_i}_{t_i}
\quad\text{ for any $t\in[t_1,T]$,}
\end{equation}
where $\widetilde\phi,\widetilde\psi\in L^2_\F(\Omega,C([0,T],\R^{n\times n}))$  with $\phi_t=\psi^{-1}_t$  solve the matrix-valued SDEs
\begin{equation}
\dd \widetilde\phi_t=
\widetilde A_t\widetilde\phi_t\dd t+\widetilde\Sigma_t\widetilde\phi_t\dd B_t, 
\ \  
\phi_0=I,
\qquad\quad
\dd \widetilde\psi_t=-\widetilde\psi_t(\widetilde A_t-\widetilde\Sigma_t^2)\dd t-\widetilde\psi_t\widetilde\Sigma_t\dd B_t,
\ \ 
\psi_0=I.
\end{equation}
By the sparse structure of the matrices $\widetilde A$ and $\widetilde\Sigma$,
\begin{equation}\label{eq:ito_sde:matrix:sparsity}
\widetilde\phi_t
=
\begin{bmatrix}
\phi_t & 0\\
\zeta_t & 1
\end{bmatrix},
\qquad
\widetilde\psi_t
=
\begin{bmatrix}
\psi_t & 0\\
\rho_t & 1
\end{bmatrix},
\qquad
\rho_t=-\zeta_t\psi_t,
\end{equation}
where
$$
\begin{cases}
\dd\phi_t=A_t\phi_t\dd t+\Sigma_t\phi_t\dd B_t,
\\
\phi_0=I_n,
\end{cases}
\qquad
\begin{cases}
\dd\zeta_t=a_t\phi_t\dd t,
\\
\zeta_0=0,
\end{cases}
\qquad
\begin{cases}
\dd\psi_t
=
-\psi_t(A_t-\Sigma_t^2)\dd t
-
\psi_tC_t\dd B_t,
\\
\psi_0=I_n.
\end{cases}
$$

Next, define the map
\begin{equation}
\label{eq:endpoint:ito_gradient}
\Phi:\mathbb R^{n+1}\to\mathbb R^{r+1},
\ 
\widetilde x\mapsto\Phi(\widetilde x)
=
\begin{bmatrix}
h(x)\\
x^0+g(x)
\end{bmatrix},
\quad\text{with}\ 
\nabla\Phi(\widetilde x)
=
\begin{bmatrix}
\nabla h(x) & 0\\
\nabla g(x) & 1
\end{bmatrix},
\end{equation}
that evaluates the terminal constraints and total cost. 
\begin{lemma}[Needle variations for It\^o SDEs {\cite[Lemma 4.2]{BonalliLewESAIM2022}}]
\label{lem:needle:ito}
Under the assumptions of the It\^o PMP (Theorem \ref{thm:pmp:ito}), define the needle variation $\pi=\{t_i,\eta_i,\bar u_i\}_{i=1}^q$, state trajectories $(\widetilde x,\widetilde x^\pi)$, linear variations $\{\widetilde v^{\pi_i}\}_{i=1}^q$, and $\Phi$ as above.  
Then, %
\begin{equation}
\label{eq:needle:ito}
\E\left[
\left\|
\Phi(\widetilde x_T^\pi)
-
\Phi(\widetilde x_T)
-
\sum_{i=1}^q
\eta_i \nabla\Phi(\widetilde x_T)\widetilde v_T^{\pi_i}
\right\|
\right]
=o\left(\sum_{i=1}^q\eta_i\right).
\end{equation}
\end{lemma}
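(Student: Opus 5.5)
We prove Lemma \ref{lem:needle:ito} by the classical first-order expansion of the perturbed It\^o state around the optimal one, as in \cite[Lemma 4.2]{BonalliLewESAIM2022}. Set $\eta:=\sum_{i=1}^q\eta_i$ and consider the remainder
$$
r_t:=\widetilde x^\pi_t-\widetilde x_t-\sum_{i=1}^q\eta_i\widetilde v_t^{\pi_i}\mathbf 1_{[t_i,T]}(t).
$$
The goal is the estimate $\E[\sup_{t}\|r_t\|^2]^{1/2}=o(\eta)$. Once this holds, the claim follows from a Taylor expansion of $\Phi$. Write
$$
\Phi(\widetilde x_T^\pi)-\Phi(\widetilde x_T)=\nabla\Phi(\widetilde x_T)(\widetilde x^\pi_T-\widetilde x_T)+R,
\qquad \|R\|\leq C_\Phi\|\widetilde x^\pi_T-\widetilde x_T\|^2.
$$
This bound uses that $\nabla\Phi$ is Lipschitz, which holds by Assumption \ref{assum:pmp}. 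The term $\nabla\Phi(\widetilde x_T)r_T$ is then $o(\eta)$ in $L^1$ because $\nabla\Phi$ is bounded. It remains to show $\E\|R\|=O(\eta^2)$.

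\textbf{Step 1 (first-order state bound).} We show $\E[\sup_t\|\widetilde x^\pi_t-\widetilde x_t\|^2]\leq C\eta^2$. The difference of the two SDEs has drift $\widetilde b(t,\widetilde x^\pi_t,u^\pi_t)-\widetilde b(t,\widetilde x_t,u_t)$. We split it into a term that is Lipschitz in the state and a control-jump term. The control-jump term is supported on $\bigcup_i[t_i,t_i+\eta_i]$ and bounded by $2C_b$, since $b$ is bounded. Hence its time integral is at most $C\eta$ pathwise. The diffusion difference is Lipschitz in the state. BDG and Gronwall then give the bound. This also yields $\E\|R\|\leq C\eta^2=o(\eta)$.

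\textbf{Step 2 (linearization on each interval).} We proceed inductively over the $q$ needles, or equivalently write one integral equation for $r_t$. First consider a single needle on $[t_1,t_1+\eta_1]$. There, $\widetilde x^\pi_{t_1+\eta_1}-\widetilde x_{t_1+\eta_1}=\int_{t_1}^{t_1+\eta_1}(\widetilde b(s,\widetilde x^\pi_s,\bar u_1)-\widetilde b(s,\widetilde x_s,u_s))\dd s+\int_{t_1}^{t_1+\eta_1}(\widetilde\sigma(s,\widetilde x^\pi_s)-\widetilde\sigma(s,\widetilde x_s))\dd B_s$. The stochastic integral is $O(\eta_1^{3/2})$ in $L^2$ by Step 1. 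The drift integral equals $\eta_1\Delta\widetilde b_1+o(\eta_1)$ in $L^2$, using the Lebesgue-time property in the mean-square form established after the definition of Lebesgue times.

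Here $\bar u_1$ is random, not an element of the countable set $V$. We therefore approximate $\bar u_1$ in $L^2$ by simple $\F_{t_1}$-measurable random variables with values in $V$, using the Lipschitz continuity of $b$ in $u$ and dominated convergence. The error from replacing $\widetilde x^\pi_s$ by $\widetilde x_{t_1}$ is handled by continuity of paths and Step 1.

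After $t_1+\eta_1$, the remainder solves a linear SDE with coefficients $\widetilde A_t,\widetilde\Sigma_t$ plus a source term. Using the Lipschitz continuity of $\nabla\widetilde b$ and $\nabla\widetilde\sigma$, this source is quadratic in $\widetilde x^\pi-\widetilde x$, hence $O(\eta^2)$ in $L^1$ by Step 1. It is $o(\eta)$ in $L^2$ after truncating with the bounded derivatives.

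The linear variation also needs adjusting. $\widetilde v^{\pi_1}$ starts at $t_1$, not at $t_1+\eta_1$. We use $\E\sup_{s\in[t_1,t_1+\eta_1]}\|\widetilde v^{\pi_1}_s-\Delta\widetilde b_1\|^2\to0$, which holds by continuity of $\widetilde v^{\pi_1}$ and the moment bounds of Theorem \ref{thm:sdes:ito}. Stability of linear SDEs (BDG and Gronwall) then propagates the $o(\eta_1)$ initial error to time $T$. For several needles, the contributions add by linearity, since the intervals are disjoint and separated.

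\textbf{Main obstacle.} The delicate point is the Lebesgue-time step with stochastic $\bar u_i$. We need $\E\|\frac1{\eta_1}\int_{t_1}^{t_1+\eta_1}\widetilde b(s,\widetilde x_s,\bar u_1)\dd s-\widetilde b(t_1,\widetilde x_{t_1},\bar u_1)\|^2\to0$, but the definition of Lebesgue time only provides this for deterministic $v_n\in V$. I would first approximate $\bar u_1$ by $V$-valued simple random variables, uniformly in the Lipschitz constant. Boundedness of $b$ then gives dominated convergence. If that fails, a fallback is to restrict the needle values to $V$-valued simple variables in the separation argument, which suffices for Step 3 of the PMP proof by Remark \ref{remark:pmp:rough:localization}.
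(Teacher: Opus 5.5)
Your proposal is correct and follows the standard BDG--Gronwall first-order expansion behind \cite[Lemma 4.2]{BonalliLewESAIM2022}, which the paper cites rather than reproves. The step you flag as the main obstacle is in fact immediate: the Lebesgue-time limits hold for all $v_n\in V$ off a single null set, so for each fixed $\omega$ the density of $V$ and the Lipschitz continuity of $\widetilde b$ in $u$ transfer the limit to $\bar u_1(\omega)$ (your fallback, by contrast, would not directly cover the Step 3 needle $\bar u_1=\widehat u_1\mathbf 1_{A_{t_1}}+u_{t_1}\mathbf 1_{A_{t_1}^c}$). One detail needs tightening: truncation together with only the $L^2$ bound of Step 1 does not make the quadratic source $o(\eta)$ in $L^2$ unless $\|\widetilde x^\pi-\widetilde x\|^2/\eta^2$ is uniformly integrable, but your pathwise bound $2C_b\eta$ on the control-jump term yields $\E[\sup_t\|\widetilde x^\pi_t-\widetilde x_t\|^4]\leq C\eta^4$ by the same BDG--Gronwall argument, which closes that step.
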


\textit{Endpoint mapping}:  
For $\eta\in\R^q_+\cap B^q_\delta$, define the endpoint map
\begin{equation}
\label{eq:endpoint:ito}
F(\eta)
:=
\E\left[
\Phi(\widetilde x_T^\pi)
-
\Phi(\widetilde x_T)
\right],
\end{equation}
which satisfies $F(0)=0$. 
Thanks to Lemma \ref{lem:needle:ito}, $F$ is
Gateaux differentiable at $0$ in directions $\eta\in\mathbb R_+^q$, with Gateaux differential
\begin{equation}
\label{eq:endpoint_differential:ito}
\dd F_0(\eta)
=
\sum_{i=1}^q
\eta_i
\E\left[
\nabla\Phi(\widetilde x_T)
\widetilde v_T^{\pi_i}
\right].
\end{equation}

\textbf{Step 1: Variational linearization and separation argument.}  This step is identical to Step 1 of the proof of the rough PMP in Section \ref{sec:pmp_rough},  using a fixed point argument and the endpoint map $F$.
By this argument, there exists a non-zero vector $\mathfrak{p}=(\mathfrak{p}_1,\dots,\mathfrak{p}_r,\mathfrak{p}_0)\in\R^{r+1}$   such that $\mathfrak{p}_0\in\{0,-1\}$, and
\begin{equation}\label{eq:spmp:ito:proof:mu^TEgradPhiv<=0}
\mathfrak{p}^\top\,
\E\left[
\nabla\widetilde\Phi(\widetilde{x}_T)
\widetilde{v}^{\pi_1}_T
\right]
\leq 
0
\ \ 
\text{for any needle variations $\pi_1=(t_1,\eta_1,\bar{u}_1)$}.
\end{equation}

\textbf{Step 2: Adjoint equation and transversality condition.}
Define
\begin{equation}
\label{eq:adjoint_augmented:ito}
\widetilde{p}_t^\top
:=
\begin{bmatrix}
p_t
\\
p^0_t
\end{bmatrix}^\top
:= 
\E\left[
\mathfrak{p}^\top
\nabla\widetilde\Phi(\widetilde{x}_T)
\widetilde\phi_T\widetilde\psi_t
\,\Big|\,\F_t
\right]. 
\end{equation}
The transversality condition follows directly using $\widetilde\phi_t=\widetilde\psi_t^{-1}$:
$$
\widetilde{p}_T^\top
=
\E\left[
\mathfrak{p}^\top
\nabla\widetilde\Phi(\widetilde{x}_T)
\widetilde\phi_T\widetilde\psi_T
\,\Big|\,\F_T
\right]
=
\mathfrak{p}^\top
\nabla\widetilde\Phi(\widetilde{x}_T)
=
\begin{bmatrix}
\mathfrak p_0\nabla g(x_T)
+
\sum_{i=1}^{r}\mathfrak p_i\nabla h_i(x_T)
&
\mathfrak p_0
\end{bmatrix}.
$$
Also, the last component $p_t^0$ is constant. Indeed, by the sparsity structure of   $(\widetilde\phi,\widetilde\psi)$ and $\nabla\widetilde\Phi$, we have
$$
p_t^0
=
\widetilde p_t^\top e^{n+1}
\mathop{=}^{\eqref{eq:adjoint_augmented:ito}}
\E\left[
\mathfrak{p}^\top
\nabla\widetilde\Phi(\widetilde{x}_T)
\widetilde\phi_T\widetilde\psi_T
e^{n+1}
\,\Big|\,\F_t
\right]
\mathop{=}^{\eqref{eq:ito_sde:matrix:sparsity}}
\E\left[
\mathfrak{p}^\top
\nabla\widetilde\Phi(\widetilde{x}_T)
e^{n+1}
\,\Big|\,\F_t
\right]
\mathop{=}^{\eqref{eq:endpoint:ito_gradient}}
\E\left[
\mathfrak{p}^\top
e^{r+1}
\,\Big|\,\F_t
\right]
=
\mathfrak p_0,
$$
for all $t\in[0,T]$ almost surely, where $e^k:=(0,\dots,0,1)\in\R^k$.
Next, define
$$
M_t:=
\E\left[
\mathfrak{p}^\top
\nabla\widetilde\Phi(\widetilde{x}_T)
\widetilde\phi_T
\,\Big|\,\F_t
\right].
$$ 
The process 
$M_t$ satisfies $\E[\int_0^T\|M_t\|^2\dd t]<\infty$  and is a martingale. Thus, by the martingale representation theorem, there is a process $\mu\in L^2_\F(\Omega\times[0,T],\R^{(n+1)\times d})$ such that $M_t$ can be represented as
$$
M_t
\mathop{=}^{\eqref{eq:martingale_representation}}
M_0
+\int_0^t\mu_s\dd B_s
=
N+\chi_t,
$$
where $N:=M_0=\E[
\mathfrak{p}^\top
\nabla\widetilde\Phi(\widetilde{x}_T)
\widetilde\phi_T
\,|\,\F_0
]$ and $\chi_t:=\int_0^t\mu_s\dd B_s$. Then, 
\begin{equation}
\label{eq:adjoint:ito:decomposition}
\widetilde p_t^\top=M_t\widetilde\psi_t=(N+\chi_t)\widetilde\psi_t.
\end{equation}
Also, by   It\^o's formula,  
$$
\chi_t\widetilde\psi_t
\mathop{=}^{\eqref{eq:ito_formula}}
-
\int_0^t
\left(
\chi_s\widetilde\psi_s(\widetilde A_s-\widetilde\Sigma_s^2)
+
\mu_s\widetilde\psi_s\widetilde\Sigma_s
\right)
\dd s
+
\int_0^t
\left(
\mu_s\widetilde\psi_s 
-
\chi_s\widetilde\psi_s\widetilde\Sigma_s
\right)
\dd B_s.
$$
So,
\begin{align*}
\dd\widetilde{p}_t^\top
&\mathop{=}^{\eqref{eq:adjoint:ito:decomposition}}
N
\dd\widetilde\psi_t
+
\dd(\chi\widetilde\psi)_t
\\
&=
-\left(
N\widetilde\psi_t(
\widetilde A_t-\widetilde\Sigma_t^2
)
+\chi_t\widetilde\psi_t(\widetilde A_t-\widetilde\Sigma_t^2)
+\mu_t\widetilde\psi_t\widetilde\Sigma_t
\right)
\dd t
+
\left(
-N\widetilde\psi_t\widetilde\Sigma_t 
+
\mu_t\widetilde\psi_t
-
\chi_t\widetilde\psi_t\widetilde\Sigma_t
\right)
\dd B_t
\\
&=
-\left(
(N+\chi_t)\widetilde\psi_t
\widetilde A_t
-
(N+\chi_t)\widetilde\psi_t
\widetilde\Sigma_t^2
+
\mu_t\widetilde\psi_t\widetilde\Sigma_t
\right)
\dd t
+
\left(
\mu_t\widetilde\psi_t
-(N
+
\chi_t)\widetilde\psi_t
\widetilde\Sigma_t
\right)
\dd B_t
\\
&\mathop{=}^{\eqref{eq:adjoint:ito:decomposition}}
-\left(
\widetilde{p}^\top_t\widetilde A_t
+
(
-
\widetilde p_t^\top\widetilde\Sigma_t
+
\mu_t\widetilde\psi_t
)
\widetilde\Sigma_t 
\right)
\dd t
+
\Big(
\mu_t\widetilde\psi_t
-
\widetilde{p}^\top_t\widetilde\Sigma_t
\Big)
\dd B_t.
\end{align*}
Define
$
\widetilde q_t^\top
=
\begin{bmatrix}
q_t\\
q_t^0
\end{bmatrix}:=
\left(
-
\widetilde p_t^\top\widetilde\Sigma_t
+
\mu_t\widetilde\psi_t
\right)
$. 
Then, using the sparsity of $(\widetilde A,\widetilde\Sigma)$, 
\begin{align*}
\dd\widetilde{p}_t 
&=
-\Big(
\widetilde A_t^\top \widetilde{p}_t
+
\widetilde\Sigma_t^\top
\widetilde{q}_t
\Big)
\dd t
+
\widetilde{q}_t
\dd B_t
\\
&=
-\begin{bmatrix}
\nabla_x b(t,x_t,u_t)^\top p_t
+
\nabla_x f(t,x_t,u_t) p_t^0
+
\nabla_x\sigma(t,x_t)^\top q_t
\\
0
\end{bmatrix}
\dd t
+
\begin{bmatrix}
q_t\\
q_t^0
\end{bmatrix}
\dd B_t
\end{align*} 
with $p_t^0=\mathfrak p_0$, which gives the adjoint equation.

\textbf{Step 3: Maximality condition.} 
Let $\pi_1=\{t_1,\eta_1,\bar{u}_1\}$ be any needle variation of the optimal control $u$, and
$$
\widetilde v_T^{\pi_1}
=
\widetilde\phi_T\widetilde\psi_{t_1}
\Delta\widetilde b_1,
\quad
\text{with }
\Delta\widetilde b_1
:=
\widetilde b(t_1,\widetilde x_{t_1},\bar u_1)
-
\widetilde b(t_1,\widetilde x_{t_1},u_{t_1}),
$$
be the solution to the linearized SDE \eqref{eq:linearized_variation:ito}.
By the separation inequality and the definition of $\widetilde p_{t_1}$,
$$
0
\mathop{\geq}^{\eqref{eq:spmp:ito:proof:mu^TEgradPhiv<=0}}
\mathfrak p^\top
\E\left[
\nabla\widetilde\Phi(\widetilde x_T)\widetilde v_T^{\pi_1}
\right]
=
\E\left[
\mathfrak p^\top
\nabla\widetilde\Phi(\widetilde x_T)
\widetilde\phi_T\widetilde\psi_{t_1}
\Delta\widetilde b_1
\right]
=
\E\left[
\E\left[
\mathfrak p^\top
\nabla\widetilde\Phi(\widetilde x_T)
\widetilde\phi_T\widetilde\psi_{t_1}
\,\Big|\,\F_{t_1}
\right]
\Delta\widetilde b_1
\right]
\mathop{=}^{\eqref{eq:adjoint_augmented:ito}}
\E\left[
\widetilde p_{t_1}^\top\Delta\widetilde b_1
\right]\!.
$$
Therefore,
$$
\E\left[
p_{t_1}^\top
\big(
b(t_1,x_{t_1},\bar u_1)-b(t_1,x_{t_1},u_{t_1})
\big)
+
\mathfrak p_0
\big(
f(t_1,x_{t_1},\bar u_1)-f(t_1,x_{t_1},u_{t_1})
\big)
\right]\leq 0.
$$
Equivalently,
\begin{equation}
\label{eq:hamiltonian_variational_ineq:ito}
\E\left[
H(t_1,x_{t_1},\bar u_1,p_{t_1},q_{t_1},\mathfrak p_0)
-
H(t_1,x_{t_1},u_{t_1},p_{t_1},q_{t_1},\mathfrak p_0)
\right]\leq 0,
\end{equation}
which holds for any Lebesgue time $t_1$  and $\bar u_1\in L^2_{\F_{t_1}}(\Omega,U)$.

The conclusion follows the  proof of the rough PMP. By contradiction, suppose that the maximality condition does not hold on a set of positive
$(\dd t\otimes\Prob)$-measure. 
Then, by the argument in Remark \ref{remark:pmp:rough:localization}, there exist a Lebesgue point
$t_1$, a set  $A_{t_1}\in\F_{t_1}$ with $\Prob(A_{t_1})>0$, and  $\widehat u_1\in L^2_{\F_{t_1}}(\Omega,U)$, such that
\begin{equation}\label{eq:hamiltonian_variational_ineq:ito:con}
H(t_1,x_{t_1},\widehat u_1,p_{t_1},q_{t_1},\mathfrak p_0)
>
H(t_1,x_{t_1},u_{t_1},p_{t_1},q_{t_1},\mathfrak p_0)
\quad\text{on }A_{t_1}.
\end{equation}
Define
$\bar u_1 := \widehat u_1\mathbf 1_{A_{t_1}} + u_{t_1}\mathbf 1_{A_{t_1}^c}$. 
Then, $\bar u_1\in L^2_{\F_{t_1}}(\Omega,U)$, and
$$
\E\left[
H(t_1,x_{t_1},\bar u_1,p_{t_1},q_{t_1},\mathfrak p_0)
-
H(t_1,x_{t_1},u_{t_1},p_{t_1},q_{t_1},\mathfrak p_0)
\right]
>0,
$$
which contradicts  \eqref{eq:hamiltonian_variational_ineq:ito}. Therefore, the maximality condition holds, which concludes the proof of the It\^o PMP.
\end{proof}

\section{Proofs for Rough SDEs}\label{sec:proofs:rough_paths} 
The background material on rough SDEs (Section \ref{sec:background:rough_sdes}) and corresponding needle variations error bounds  (Lemma \ref{lem:needle:rough}) are non standard, due to the random control $u\in L^2_{\F}([0,T]\times\Omega,U)$ that requires careful derivations to prove existence, uniqueness, measurability, and integrability of solutions. These results follow from existing results in stochastic control and rough path theory. We describe how to derive them by adapting the results in \cite{Lew2026} that  only consider open-loop controls $u\in L^\infty([0,T],U)$. 

To derive these results, we view the control $u\in L^2_{\F}([0,T]\times\Omega,U)$ as the measurable composition
$$
u: \ \Omega\to L^1([0,T],U), \ \ \omega\mapsto u(\omega).
$$ 
Then, as is standard in rough path theory, we prove the continuity of the  It\^o-Lyons map  that returns the solution to the rough differential equation (RDE)
\begin{align*}
\left(\R^n,L^1([0,T],U),\sC_g^p([0,T],\R^d)\right)
\ &\longrightarrow \ 
C([0,T],\R^n)
\\
(\bar y,u,\mbX)
\ &\longmapsto\ 
Y
=
\operatorname{SolveRDE}(\bar y,u,\mbX),
\end{align*}
so that solutions to rough SDEs can be defined as the measurable composition
$$
\omega
\ \longmapsto\ 
\big(\bar x_0(\omega),u(\cdot,\omega),\mbB(\omega)\big)
\ \mathop{\longmapsto} \ 
x(\omega)
=
\operatorname{SolveRDE}
\big(\bar x_0(\omega),u(\cdot,\omega),\mbB(\omega)\big).
$$
Finally, by leveraging $p$-variation bounds and favorable integrability properties of Gaussian rough paths, we derive integrable error bounds   and the needle variation estimates in Lemma \ref{lem:needle:rough} for the rough PMP.

Proving these results %
requires long but straight-forward extensions of results in \cite{Lew2026} to accommodate controls $u\in L^1$. We describe these changes in this section.

Throughout this section, $C_a$ denotes a constant that depends on $a$ (e.g., $C_b$ depends on the drift $b$), and $p\in[2,3)$ unless specified otherwise. %

\subsection{Preliminaries on $p$-variation bounds and Gaussian rough paths}
To study solutions to RDEs and rough SDEs, we use bounds that quantify the regularity of solutions in terms of $p$-variation norms, which measure how fast increments of a path grow.  Importantly, the $p$-variation rough path norms of Gaussian rough paths   $\mbX=\mbB(\omega)$ have favorable integrability properties.

\begin{lemma}[Inequalities for  $p$-variations {\cite[Lemma 2.3 \ellone{(modified)}]{Lew2026}}]
\label{lem:pvariation:inequalities}
Let $p\geq 1$, and $X\in\C^p([0,T],\R^d)$. Then,
\begin{equation}
\label{eq:path_finite_var:infty_ineq}
\|X\|_\infty\leq
\|X_0\|+\|X\|_p.
\end{equation}
Let $X:[0,T]\to\R^d$, $Y^1,\dots,Y^n\in\C^p([0,T],\R^d)$,  $c\geq 0$ \ellone{and $f\in L^1([0,T],\R^d)$}. 
Then, 
\begin{equation}
\label{eq:path_finite_var:sum_pvars}
\hspace{-2mm}
\|X_{s,t}\|\leq \sum_{i=1}^n\|Y^i_{s,t}\|
+
c\ellone{\|f\|_{L^1([s,t],\R^d)}}
\, \forall s,t\in[0,T]
{\implies}
\|X\|_p\leq C_{n,p}
\bigg(\!
\sum_{i=1}^n\|Y^i\|_p+c\ellone{\|f\|_{L^1([0,T],\R^d)}}
\!\bigg)\!.
\end{equation}
Let $p\geq 2$, $X:[0,T]\to\R^d$, $Y^1,\widetilde{Y}^1,\dots,Y^n,\widetilde{Y}^n\in\C^p$, and $Z^1,\dots,Z^m\in\C^\frac{p}{2}$. %
Then, $\|X\|_p\leq\|X\|_\frac{p}{2}$ and 
\begin{align}\label{eq:path_finite_var:sum_p/2vars}
\|X_{s,t}\|
&\leq
\sum_{i=1}^n\|Y^i_{s,t}\|\|\widetilde{Y}^i_{s,t}\|
+\sum_{j=1}^m\|Z^j_{s,t}\|
+
c\ellone{\|f\|_{L^1([s,t],\R^d)}}
\ \forall s,t\in[0,T]
\\[-2mm]
\nonumber
&\implies 
\|X\|_\frac{p}{2}\leq C_{m,n,p}
\bigg(\sum_{i=1}^n\|Y^i\|_p\|\widetilde{Y}^i\|_p\,{+}\sum_{j=1}^m\|Z^j\|_\frac{p}{2}\,{+}\,c\ellone{\|f\|_{L^1([0,T],\R^d)}}
\hspace{-1pt}
\bigg).
\end{align}
Let $p\geq 1$, $\sigma\in C^1_b([0,T]\times\R^n,\R^m)$, and $X\in\C^p([0,T],\R^n)$.  Then,
\begin{equation}\label{eq:sigma(.,X):pvar}
\|\sigma(\cdot,X)\|_p\leq C_p\|\sigma\|_{C^1_b}(\|X\|_p+T).
\end{equation}
Moreover, if $\sigma\in C^2_b([0,T]\times\R^n,\R^m)$ and $X,\widetilde{X}\in\C^p([0,T],\R^n)$, then
\begin{equation}\label{eq:Delta_sigma(.,X):pvar}
\|\sigma(\cdot,X_\cdot)-\sigma(\cdot,\widetilde{X}_\cdot)\|_p\leq 
C_p\|\sigma\|_{C^2_b}
(
1 + \|X\|_p+\|\widetilde{X}\|_p
+
T
)
(\|X_0-\widetilde{X}_0\|+\|X-\widetilde{X}\|_p).
\end{equation}
 \end{lemma}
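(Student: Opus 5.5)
The five claims of Lemma~\ref{lem:pvariation:inequalities} are elementary facts about $p$-variation, and I would prove them directly from the definition of $\|\cdot\|_p$ as a supremum over partitions. They are only modified from \cite[Lemma 2.3]{Lew2026} by the extra $L^1$ term, so I would reprove each and point out where that term enters.

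For \eqref{eq:path_finite_var:infty_ineq}, fix $t$ and use the trivial partition $\{[0,t]\}$. This gives $\|X_{0,t}\|\le \|X\|_{p,[0,t]}\le\|X\|_p$, hence $\|X_t\|\le\|X_0\|+\|X\|_p$.

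For \eqref{eq:path_finite_var:sum_pvars}, fix a partition $\pi$. Apply Minkowski's inequality in $\ell^p(\pi)$ to the bound $\|X_{u,v}\|\le\sum_i\|Y^i_{u,v}\|+c\|f\|_{L^1([u,v])}$. This gives
\[
\Big(\sum_{[u,v]\in\pi}\|X_{u,v}\|^p\Big)^{1/p}\le\sum_i\Big(\sum_{[u,v]\in\pi}\|Y^i_{u,v}\|^p\Big)^{1/p}+c\Big(\sum_{[u,v]\in\pi}\|f\|_{L^1([u,v])}^p\Big)^{1/p}.
\]
The only new point is the last sum. Since $p\ge1$ and the numbers $a_{[u,v]}=\|f\|_{L^1([u,v])}$ are nonnegative with $\sum_{[u,v]\in\pi} a_{[u,v]}=\|f\|_{L^1([0,T])}$ by additivity of the integral, we have $\|a\|_{\ell^p}\le\|a\|_{\ell^1}=\|f\|_{L^1([0,T])}$. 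Taking the supremum over $\pi$ gives the claim; the constant $C_{n,p}$ can even be taken equal to $1$.

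For \eqref{eq:path_finite_var:sum_p/2vars}, I first check that $\|X\|_p\le\|X\|_{p/2}$, which is just $\ell^{p/2}\subset\ell^p$ with norm $\le1$ since $p/2\ge1$. Then I run the same Minkowski argument in $\ell^{p/2}(\pi)$; the $L^1$ term is handled exactly as before because $p/2\ge1$. The product terms need Cauchy--Schwarz (or Hölder) on each partition:
\[
\Big(\sum_{[u,v]\in\pi}\|Y_{u,v}\|^{p/2}\|\widetilde Y_{u,v}\|^{p/2}\Big)^{2/p}\le\Big(\sum_{[u,v]\in\pi}\|Y_{u,v}\|^{p}\Big)^{1/p}\Big(\sum_{[u,v]\in\pi}\|\widetilde Y_{u,v}\|^{p}\Big)^{1/p}\le\|Y\|_p\|\widetilde Y\|_p.
\]

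For \eqref{eq:sigma(.,X):pvar}, the mean value theorem gives $\|\sigma(t,X_t)-\sigma(s,X_s)\|\le\|\sigma\|_{C^1_b}(|t-s|+\|X_{s,t}\|)$. I then apply \eqref{eq:path_finite_var:sum_pvars} with $Y^1=X$ and $f\equiv\|\sigma\|_{C^1_b}$; the $L^1$ term then equals $\|\sigma\|_{C^1_b}T$.

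For \eqref{eq:Delta_sigma(.,X):pvar}, set $\Delta=X-\widetilde X$ and $g(t)=\sigma(t,X_t)-\sigma(t,\widetilde X_t)=\int_0^1\nabla_x\sigma(t,\widetilde X_t+\theta\Delta_t)\,\dd\theta\;\Delta_t$. The increment $g_{s,t}$ splits into two terms. The first is
\[
\int_0^1\nabla_x\sigma(t,Z^\theta_t)\,\dd\theta\;\Delta_{s,t},\qquad Z^\theta:=\widetilde X+\theta\Delta,
\]
which is bounded by $\|\sigma\|_{C^1_b}\|\Delta_{s,t}\|$. The second is
\[
\Big(\int_0^1\big(\nabla_x\sigma(t,Z^\theta_t)-\nabla_x\sigma(s,Z^\theta_s)\big)\,\dd\theta\Big)\,\Delta_s,
\]
which is bounded by $\|\sigma\|_{C^2_b}\big(|t-s|+\|X_{s,t}\|+\|\widetilde X_{s,t}\|\big)\|\Delta\|_\infty$. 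By \eqref{eq:path_finite_var:infty_ineq}, $\|\Delta\|_\infty\le\|\Delta_0\|+\|\Delta\|_p$. Applying \eqref{eq:path_finite_var:sum_pvars} with $Y^1=\Delta$, $Y^2=\|\Delta\|_\infty X$, $Y^3=\|\Delta\|_\infty\widetilde X$, and the constant $f$ for the time term, yields the stated bound.

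The main (mild) obstacle is this last estimate. There I have to track the time dependence of $\sigma$ and make sure the factor $\|\Delta\|_\infty$ is pulled out as a constant before taking the $p$-variation. Everything else is routine partition-wise bookkeeping.
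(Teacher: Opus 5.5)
Your proof is correct and follows the same partition-wise route as the paper. For \eqref{eq:path_finite_var:sum_p/2vars} the paper also bounds each partition sum termwise, applies Cauchy--Schwarz to the product terms, and handles the new $L^1$ term by superadditivity of $a\mapsto a^{p/2}$ (your $\|a\|_{\ell^{p/2}}\le\|a\|_{\ell^1}$). The only difference there is that the paper uses the cruder bound $(\sum_k a_k)^{p/2}\le (n+m+1)^{p/2}\sum_k a_k^{p/2}$, where your Minkowski argument gives constant $1$. The paper simply cites \eqref{eq:path_finite_var:infty_ineq}, \eqref{eq:sigma(.,X):pvar} and \eqref{eq:Delta_sigma(.,X):pvar} from \cite[Lemma 2.3]{Lew2026}, and your self-contained derivations of them are sound, up to the harmless scaling $Y^1=\|\sigma\|_{C^1_b}X$ rather than $Y^1=X$.
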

\begin{proof}[Proof of Lemma \ref{lem:pvariation:inequalities}]
\eqref{eq:path_finite_var:infty_ineq} is in \cite[Lemma 2.3]{Lew2026}. 
To show \eqref{eq:path_finite_var:sum_p/2vars}, for an arbitrary partition  $\pi$ of $[0,T]$,
 \begin{align*}
 \sum_{[s,t]\in\pi}\|X_{s,t}\|^\frac{p}{2}
 &\leq
 \sum_{[s,t]\in\pi}
 \Big(
 \sum_{i=1}^n\|Y^i_{s,t}\|\|\widetilde{Y}^i_{s,t}\|+
 \sum_{j=1}^m\|Z^j_{s,t}\|
 +
 c\ellone{\|f\|_{L^1([s,t],\R^d)}}
 \Big)^\frac{p}{2}
 \\[-2mm]
 &\hspace{-5mm}\leq
 (n+m+1)^\frac{p}{2}
 \Big(
 \sum_{i=1}^n
 \sum_{[s,t]\in\pi}
 \|Y^i_{s,t}\|^\frac{p}{2}\|\widetilde{Y}^i_{s,t}\|^\frac{p}{2}
 +
 \sum_{j=1}^m
 \sum_{[s,t]\in\pi}
 \|Z^j_{s,t}\|^\frac{p}{2}
 +
 c^\frac{p}{2}\sum_{[s,t]\in\pi}
 \ellone{\|f\|_{L^1([s,t],\R^d)}}^\frac{p}{2}
 \Big)
 \\[-1mm]
 &\hspace{-5mm}\leq
 C_{m,n,p}
 \Big(
 \sum_{i=1}^n
 \Big(\sum_{[s,t]\in\pi}
 \|Y^i_{s,t}\|^p
 \Big)^\frac{1}{2}
  \Big(\sum_{[s,t]\in\pi}
 \|\widetilde{Y}^i_{s,t}\|^p
 \Big)^\frac{1}{2}
 +
 \sum_{j=1}^m
 \|Z^j\|_\frac{p}{2}^\frac{p}{2}
 +
 (c\,\ellone{\|f\|_{L^1([0,T],\R^d)}})^\frac{p}{2}
 \Big)
 \\[-1mm]
 &\hspace{-5mm}\leq
 C_{m,n,p}
 \Big(
 \sum_{i=1}^n
 \|Y^i\|_p^\frac{p}{2}
 \|\widetilde{Y}^i\|_p^\frac{p}{2}
 +
 \sum_{j=1}^m
 \|Z^j\|_\frac{p}{2}^\frac{p}{2}
 +
 (c\,\ellone{\|f\|_{L^1([0,T],\R^d)}})^\frac{p}{2}
 \Big),
 \end{align*}
with the same arguments as in \cite{Lew2026}.  \eqref{eq:path_finite_var:sum_pvars} is shown similarly.   
\eqref{eq:sigma(.,X):pvar} and \eqref{eq:Delta_sigma(.,X):pvar} are in \cite[Lemma 2.3]{Lew2026}.
\end{proof}

\begin{lemma}[$p$-variations of Lebesgue integrals {\cite[Lemma 3.7 \ellone{(modified)}]{Lew2026}}]\label{lem:bounds_int_b_ds}
Let $p\in[2,3)$, $U\subseteq\R^m$, %
 $b$ satisfy Assumption \ref{assum:rde_drift},  
$Y,\widetilde{Y}\in C([0,T],\R^n)$, and $u,\tilde{u}\in \ellone{L^1([0,T],U)}$. Then,  
\begin{align}\label{eq:bounds_int_b_ds}
\left\|\int_0^\cdot b(s,Y_s,u_s)\dd s\right\|_\frac{p}{2} &\leq C_{p,b}T,
\  \ \text{and} \ \ \ 
\left\|\int_0^\cdot (b(s,Y_s,u_s)-b(s,\widetilde{Y}_s,u_s))\dd s\right\|_\frac{p}{2} \leq C_{p,b}T\|Y-\widetilde{Y}\|_\infty.
\end{align}
\ellone{
If $u$ and $\tilde{u}$ only differ on an interval $J\subseteq [0,T]$, i.e., $u_t=\tilde{u}_t$ for almost every $t\in [0,T]\setminus J$, then
\begin{align}\label{eq:bounds_int_b_ds:dy_du:u_subinterval}
\left\|\int_0^\cdot (b(s,Y_s,u_s)-b(s,\widetilde{Y}_s,\tilde{u}_s))\dd s\right\|_\frac{p}{2} 
\leq 
C_{p,b}
\big(
\ellone{T}\|Y-\widetilde{Y}\|_\infty
+
|J|
\big).
\end{align}
}
Moreover, if $b$ is also Lipschitz in $u$, then 
\begin{align}\label{eq:bounds_int_b_ds:dy_du}
\left\|\int_0^\cdot (b(s,Y_s,u_s)-b(s,\widetilde{Y}_s,\tilde{u}_s))\dd s\right\|_\frac{p}{2} 
\leq 
C_{p,b}
\big(
\ellone{T}\|Y-\widetilde{Y}\|_\infty
+
\|u-\tilde{u}\|_{\ellone{L^1([0,T],U)}}
\big).
\end{align}
\end{lemma}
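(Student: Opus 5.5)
The plan is to reduce everything to one elementary bound on the increments of the integral path, and then pass to $p$-variation norms through Lemma \ref{lem:pvariation:inequalities}. Write $I_t:=\int_0^t b(s,Y_s,u_s)\dd s$. Assumption \ref{assum:rde_drift} gives $\|b\|\leq C$. Measurability of $s\mapsto b(s,Y_s,u_s)$ follows from the Carath\'eodory structure of $b$ together with the continuity of $Y$ and the measurability of $u$. Hence, for all $s\leq t$,
$$
\|I_{s,t}\|\leq C|t-s| = C\|\mathbf 1\|_{L^1([s,t],\R)}.
$$
This is an increment bound of the form used in \eqref{eq:path_finite_var:sum_p/2vars}, with the $L^1$ term only and with $f\equiv 1$. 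It yields $\|I\|_{\frac p2}\leq C_p C\, T$. More directly, for any partition $\pi$, since $p/2\geq1$, superadditivity gives $\sum_{[s,t]\in\pi}|t-s|^{p/2}\leq T^{p/2}$. Only boundedness of $b$ is used here, never any bound on $u$; this is exactly why the $L^\infty$ hypothesis on the control in \cite{Lew2026} can be relaxed to $L^1$.

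For the second estimate, I apply the Lipschitz bound in $x$ pointwise in $s$, with the same $u_s$ in both terms. This gives
$$
\Big\|\int_s^t (b(r,Y_r,u_r)-b(r,\widetilde Y_r,u_r))\dd r\Big\|\leq C\|Y-\widetilde Y\|_\infty |t-s|.
$$
The same superadditivity argument then gives the bound $C_{p,b}T\|Y-\widetilde Y\|_\infty$.

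For \eqref{eq:bounds_int_b_ds:dy_du:u_subinterval}, I split the integrand as
$$
\big(b(r,Y_r,u_r)-b(r,\widetilde Y_r,u_r)\big)+\big(b(r,\widetilde Y_r,u_r)-b(r,\widetilde Y_r,\tilde u_r)\big).
$$
The second bracket vanishes for a.e.\ $r\notin J$ and is bounded by $2C$ on $J$. Its integral over $[s,t]$ is therefore bounded by $2C\|\mathbf 1_J\|_{L^1([s,t])}$. The increment bound becomes
$$
c_1|t-s|\,\|Y-\widetilde Y\|_\infty+2C\int_s^t\mathbf 1_J(r)\dd r.
$$
Both terms are integrals of nonnegative $L^1$ functions over $[s,t]$, so \eqref{eq:path_finite_var:sum_p/2vars} applies with $f=c_1\|Y-\widetilde Y\|_\infty+2C\mathbf 1_J$. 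The resulting $L^1([0,T])$-norm is $c_1T\|Y-\widetilde Y\|_\infty+2C|J|$. For \eqref{eq:bounds_int_b_ds:dy_du} I use the same splitting, but bound the second bracket by $C\|u_r-\tilde u_r\|$ using the Lipschitz property in $u$. This gives $f=c_1\|Y-\widetilde Y\|_\infty+C\|u-\tilde u\|$, whose $L^1$-norm is the claimed quantity.

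The only step needing care is justifying the passage from these increment bounds with a general $L^1$ function to $p/2$-variation, that is, the modified part of \eqref{eq:path_finite_var:sum_p/2vars}. For a partition $\pi$, $\sum_{[s,t]\in\pi}\big(\int_s^t f\big)^{p/2}\leq\big(\sum_{[s,t]\in\pi}\int_s^t f\big)^{p/2}=\|f\|_{L^1}^{p/2}$, since $p/2\geq 1$ and $f\geq0$. Taking the supremum over $\pi$ gives the claim, so I expect no real obstacle; the remaining work is just bookkeeping of constants.
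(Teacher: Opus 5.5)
Your proposal is correct and follows essentially the same route as the paper. Like the paper, you split the integrand into the $x$-difference and the $u$-difference, bound each increment by an $L^1([s,t])$ quantity (using boundedness of $b$ on $J$, or the Lipschitz property in $u$), and pass to the $\frac p2$-variation via \eqref{eq:path_finite_var:sum_p/2vars}. You additionally verify the unmodified estimate \eqref{eq:bounds_int_b_ds} (which the paper just cites from \cite{Lew2026}) and the $L^1$ part of \eqref{eq:path_finite_var:sum_p/2vars} directly by superadditivity of $x\mapsto x^{p/2}$.
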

\begin{proof}
\eqref{eq:bounds_int_b_ds} is in \cite[Lemma 3.7]{Lew2026}. 
\ellone{If $u$ and $\tilde{u}$ only differ on an interval $J$, then 
\begin{align*}
\bigg\|
\int_s^t (b(r,Y_r,u_r)-b(r,\widetilde{Y}_r,\widetilde{u}_r))\dd r
\bigg\|
&\leq
\bigg\|
\int_s^t (
(b(r,Y_r,u_r)-b(r,\widetilde{Y}_r,u_r))
+
(b(r,\widetilde{Y}_r,u_r)-b(r,\widetilde{Y}_r,\widetilde{u}_r))
)\dd r
\bigg\|
\\
&\leq
 C_b(\|Y-\widetilde{Y}\|_\infty\ellone{|t-s|}+|J\cap [s,t]|).
\end{align*}
Also,}
assuming that $b$ is Lipschitz in $u$, 
$\|\int_s^t (b(r,Y_r,u_r)-b(r,\widetilde{Y}_r,\tilde{u}_r))\dd r\|
\leq C_b(\|Y-\widetilde{Y}\|_\infty\ellone{|t-s|}+\|u-\tilde{u}\|_\ellone{L^1([s,t],U)})$.  
The desired inequalities then follow from \eqref{eq:path_finite_var:sum_p/2vars} in  Lemma \ref{lem:pvariation:inequalities}.
\end{proof}

\begin{definition}[Control and greedy partition {\cite[Definition 2.12]{Lew2026}}]%
\label{def:Nalpha}
A control is a continuous map $w:[0,T]^2\to\R$  such that $w(s,t)\geq 0$ and $w(s,t)+w(t,u)\leq w(s,u)$ for any $s,t,u\in[0,T]$.

Given a control $w$, a resolution $\alpha>0$ and $[s,t]\subseteq[0,T]$, we  define the sequence
$
\tau_0 =s$,  
$\tau_{i+1}=\inf\{u : w(\tau_i,u)\geq \alpha, \tau_i<u\leq t\}\wedge t$ for $i\in\N$, 
with the convention $\inf\emptyset=+\infty$, and
$$
N_{\alpha,[s,t]}(w):=\sup \{n\in\N\cup \{0\}:\tau_n< t\}.
$$

The \textit{greedy partition} of the interval $[s,t]$ is defined as the partition
$
\{\tau_i, i=0,1,\dots,N_{\alpha,[s,t]}(w)+1\}.
$ 
Given $p\in[2,3)$ and a rough path $\mbX\in\sC^p$, %
the control  $w_\mbX$ %
and $N_{\alpha,[s,t]}(\mbX)$  are defined %
as  $
w_\mbX(s,t)
:=
\|X\|_{p,[s,t]}^p
+
\smash{\|\bX\|_{\frac{p}{2},[s,t]}^{p/2}}$ and $N_{\alpha,[s,t]}(\mbX):=N_{\alpha,[s,t]}(w_\mbX)$.
\end{definition} 
The \textit{control} $w$  should not be confused with the \textit{control input} $u$: $w_\mbX$ bounds variations of the rough path $\mbX$, whereas $u$ steers the dynamical system.
Below, $p\in[2,3)$.

\begin{corollary}[$p$-variation rough path norm bounds {\cite[Corollary 2.15]{Lew2026}}]\label{cor:Nalpha:NX_NXtilde_NT:small_intervals}
Let $\mbX,\widetilde{\mbX}\in\sC^p$, $C_p=6^p$, and define 
$$
w:[0,T]^2\to\R,
\ 
w(s,t)=C_p(w_\mbX(s,t)+w_{\widetilde{\mbX}}(s,t)+|t-s|).
$$
Then,  for any $\alpha>0$ and  $[s,t]\subseteq[0,T]$,
\begin{equation}\label{eq:Nalpha<=3CpNalpha_X_and_time}
N_{\alpha,[s,t]}(w)\leq 5C_p(
N_{\alpha,[s,t]}(\mbX)+N_{\alpha,[s,t]}(\widetilde{\mbX})+|t-s|/\alpha+1
).
\end{equation}
Moreover, for any   $0<\alpha\leq 1$  and   any interval $[s,t]\subseteq[0,T]$ small-enough so that  $w(s,t)\leq \alpha$,
\begin{equation}\label{eq:|X|+|Xtilde|+|dt|<=alpha^p}
\|\mbX\|_{p,[s,t]}+\|\widetilde{\mbX}\|_{p,[s,t]}+|t-s|\leq
 \alpha^\frac{1}{p}.
\end{equation}
Finally, for any $0<\alpha\leq 1$ and any interval $[s,t]\subseteq[0,T]$, with $C_{p,\alpha}=66e\alpha^\frac{1}{p}$,
\begin{equation}\label{eq:||X||_p<=exp(N_alpha^p)}
\|\mbX\|_{p,[s,t]}+\|\widetilde{\mbX}\|_{p,[s,t]}+|t-s|
\leq 
C_{p,\alpha}\exp\left(N_{\alpha,[s,t]}(w)\right).
\end{equation}
\end{corollary}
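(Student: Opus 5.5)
The three claims are elementary consequences of the superadditivity of controls and of the greedy partition in Definition \ref{def:Nalpha}. I would prove them in the order \eqref{eq:|X|+|Xtilde|+|dt|<=alpha^p}, \eqref{eq:||X||_p<=exp(N_alpha^p)}, \eqref{eq:Nalpha<=3CpNalpha_X_and_time}, since the last one is the most combinatorial.

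\textbf{Claim \eqref{eq:|X|+|Xtilde|+|dt|<=alpha^p}.} The first step is to check that $w$ is a control. This holds because $w_\mbX$ is superadditive, which follows from the definition of $p$-variation by concatenating partitions, and because $|t-s|$ is additive. If $w(s,t)\leq\alpha\leq1$, then each of $w_\mbX(s,t)$, $w_{\widetilde\mbX}(s,t)$ and $|t-s|$ is at most $\alpha/6^p\leq 1$. Hence
\[
\|X\|_{p,[s,t]}\leq w_\mbX(s,t)^{1/p}\leq\alpha^{1/p}/6,
\]
and, using $w_\mbX\leq 1$,
\[
\|\bX\|_{\frac p2,[s,t]}\leq w_\mbX(s,t)^{2/p}\leq w_\mbX(s,t)^{1/p}\leq\alpha^{1/p}/6.
\]
The same bounds hold for $\widetilde\mbX$. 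For the time term, $|t-s|\leq\alpha/6^p\leq\alpha^{1/p}/6$. Summing the five terms gives a total of at most $\tfrac56\alpha^{1/p}$.

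\textbf{Claim \eqref{eq:||X||_p<=exp(N_alpha^p)}.} Take the greedy partition $\{\tau_i\}_{i=0}^{N+1}$ of $[s,t]$ for $w$, where $N=N_{\alpha,[s,t]}(w)$. By construction and continuity of $w$, each piece $[\tau_i,\tau_{i+1}]$ has $w\leq\alpha$, so \eqref{eq:|X|+|Xtilde|+|dt|<=alpha^p} applies on each piece. The pieces are then glued as follows.
\begin{itemize}
\item For the path, H\"older's inequality over the $N+1$ pieces gives $\|X\|_{p,[s,t]}\leq (N+1)\max_i\|X\|_{p,[\tau_i,\tau_{i+1}]}$.
\item For the second level, Chen's relation \eqref{eq:chen's_relation} writes $\bX_{u,v}$ across several pieces as a sum of within-piece terms and cross terms $X_{a,b}\otimes X_{b,c}$. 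Each cross term is bounded by a product of two piecewise $p$-variations. This yields $\|\bX\|_{\frac p2,[s,t]}\lesssim (N+1)^2\alpha^{1/p}$.
\item For the time term, $|t-s|\leq (N+1)\alpha$ by additivity.
\end{itemize}
Finally, $(N+1)^2\leq C e^{N}$ absorbs all polynomial factors, and tracking the numerical constants should give $66e\,\alpha^{1/p}$.

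\textbf{Claim \eqref{eq:Nalpha<=3CpNalpha_X_and_time}.} By continuity, every greedy interval of $w$ except the last satisfies $w(\tau_i,\tau_{i+1})=\alpha$. On each such interval, one of the three summands of $w/C_p$ is at least $\alpha/(3C_p)$. It therefore suffices to bound, for each summand, the number of disjoint intervals on which that summand is at least $\alpha':=\alpha/(3C_p)$.
\begin{itemize}
\item For the time summand, disjointness gives at most $3C_p|t-s|/\alpha$ such intervals.
\item For $w_\mbX$, I would use two facts. First, a greedy-exchange argument: the $k$-th greedy point of $w_\mbX$ at resolution $\alpha'$ lies to the left of the right endpoint of the $k$-th disjoint interval with $w_\mbX\geq\alpha'$. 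Hence the number of such disjoint intervals is at most $N_{\alpha',[s,t]}(\mbX)+1$. Second, superadditivity: each greedy $\alpha$-interval of $w_\mbX$ has value at most $\alpha$, so it contains at most $\alpha/\alpha'=3C_p$ disjoint $\alpha'$-intervals. This gives $N_{\alpha'}(\mbX)\leq 3C_p\bigl(N_\alpha(\mbX)+1\bigr)$.
\item The same bound holds for $\widetilde\mbX$.
\end{itemize}
Adding the three counts and the final greedy interval gives the bound with constant $5C_p$.

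I expect the main obstacle to be this last claim, specifically the greedy-exchange step and the handling of the boundary intervals where $w$ equals $\alpha$ only up to the endpoint $t$. Constant bookkeeping in \eqref{eq:||X||_p<=exp(N_alpha^p)} is routine but fiddly.
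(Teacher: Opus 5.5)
Your proofs of the single-interval estimate and of the exponential estimate are fine. The five-term count when $w(s,t)\le\alpha\le1$ is exactly right, and gluing the $N+1$ greedy pieces of $w$ (with Chen's relation for the second level) gives a polynomial in $N$ that $e^{N}$ absorbs with a constant well below $66e$. The paper only imports this corollary from the cited reference, so there is no in-paper proof to compare against.

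The gap is in the counting bound for $N_{\alpha,[s,t]}(w)$, in two places. Write $N_\alpha$ for $N_{\alpha,[s,t]}$ and $\alpha'=\alpha/(3C_p)$.

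\emph{First}, the step $N_{\alpha'}(\mbX)\le 3C_p(N_\alpha(\mbX)+1)$ does not follow from your argument. Superadditivity inside a level-$\alpha$ greedy interval $[\rho_j,\rho_{j+1}]$ of $w_\mbX$ only bounds the level-$\alpha'$ greedy intervals \emph{contained} in it. Those whose interior contains some $\rho_j$ lie in no such interval and are not counted. There can be up to $N_\alpha(\mbX)$ of them, so what you actually get is $3C_p(N_\alpha(\mbX)+1)+N_\alpha(\mbX)$.

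\emph{Second}, and more importantly, the constants do not close. Splitting $w$ into its three summands at resolution $\alpha'$ multiplies the ``$+1$'' in each of the two rough-path counts by $\alpha/\alpha'=3C_p$. Even after the correction, your route gives only
\[
N_{\alpha}(w)\le(3C_p+1)\big(N_{\alpha}(\mbX)+N_{\alpha}(\widetilde{\mbX})\big)+\frac{3C_p|t-s|}{\alpha}+6C_p+2.
\]
This right-hand side exceeds the claimed one whenever $N_{\alpha}(\mbX)=N_{\alpha}(\widetilde{\mbX})=0$ and $|t-s|<\alpha/2$. So ``adding the three counts gives $5C_p$'' is false: as written, your argument proves the inequality only with a larger constant, such as $7C_p$.

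The fix is not to split $w$ at all. Superimpose the level-$\alpha$ greedy partitions of $w_\mbX$ and $w_{\widetilde{\mbX}}$. This cuts $[s,t]$ into at most $N_\alpha(\mbX)+N_\alpha(\widetilde{\mbX})+1$ cells $Q$, each with $w_\mbX(Q),w_{\widetilde{\mbX}}(Q)\le\alpha$. Among the $N=N_{\alpha}(w)$ greedy intervals of $w$ on which $w=\alpha$, at most $N_\alpha(\mbX)+N_\alpha(\widetilde{\mbX})$ contain a cell boundary in their interior. Each of the others lies in some cell $Q$, and superadditivity of $w$ itself allows at most $w(Q)/\alpha\le C_p(2+|Q|/\alpha)$ of them in $Q$. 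Summing over cells gives
\[
N\le(2C_p+1)\big(N_\alpha(\mbX)+N_\alpha(\widetilde{\mbX})\big)+2C_p+\frac{C_p|t-s|}{\alpha}\le 3C_p\Big(N_\alpha(\mbX)+N_\alpha(\widetilde{\mbX})+\frac{|t-s|}{\alpha}+1\Big).
\]
This implies the stated bound, and the greedy-exchange step becomes unnecessary.
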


\begin{theorem}[Enhanced Gaussian process and Gaussian rough paths {\cite[Theorem 2.17]{Lew2026}}] %
\label{thm:gaussian_rough_paths}
Define the enhanced Gaussian process $\mbB=(B,\bB)$ as in Section \ref{sec:background:gaussian_paths}, 
whose sample paths are rough paths $\mbB(\omega)=(B(\omega),\bB(\omega))\in\sC^p_g([0,T],\R^d)$ almost surely, and 
where $B$ is a Gaussian process satisfying the regular covariance condition \eqref{eq:gaussian_covariance_regular}. 
Then, for any $\alpha>0$ and $D\geq 0$, 
\begin{equation}\label{eq:gaussian_rough_paths:E[exp(Nalpha)]<infty}
\E\left[\exp\left(DN_{\alpha,[0,T]}(\mbB)\right)\right]<\infty.
\end{equation} 
\end{theorem}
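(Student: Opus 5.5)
The statement is Theorem \ref{thm:gaussian_rough_paths}: for an enhanced Gaussian process $\mbB$ whose covariance satisfies the regular condition \eqref{eq:gaussian_covariance_regular}, we must show $\E[\exp(DN_{\alpha,[0,T]}(\mbB))]<\infty$ for every $\alpha>0$ and $D\geq0$. I would not reprove this from scratch. The plan is to reduce it to the known Gaussian tail estimate for greedy-partition counts, due to Cass, Litterer and Lyons and refined in \cite{Friz2013}.

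The first step is to recall the structure of the natural lift. Under \eqref{eq:gaussian_covariance_regular} with $\rho<3/2$, the covariance has finite two-dimensional $\rho$-variation, controlled by $K|t-s|$. By \cite{Friz2013,Bayer2016}, the lift $\mbB$ then lives in a Gaussian rough path space with an associated Cameron--Martin space $\mathcal H$. Moreover, $\mathcal H$ embeds continuously into $\C^q$ for $q$ with $1/p+1/q>1$, which is the complementary Young regularity condition.

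The second step applies the Cass--Litterer--Lyons argument. Here $w_{\mbB}$ is the control of Definition \ref{def:Nalpha}.
\begin{enumerate}[label=(\alph*)]
\item Consider the translated rough path $T_h\mbB$ for $h\in\mathcal H$. Each greedy interval with $w_{\mbB}\geq\alpha$ forces either $T_{-h}$ of the path to be large on that interval, or $h$ to have large $q$-variation there.
\item Summing over the intervals gives a bound of the form $N_{\alpha,[0,T]}(\mbB)\lesssim \|h\|_{\mathcal H}^{q}$ on the set where the translate $T_{-h}\mbB$ has homogeneous rough path norm at most a threshold $M$.
\item Borell's inequality (the isoperimetric inequality on abstract Wiener space) then yields a Gaussian tail
$$\Prob\bigl(N_{\alpha,[0,T]}(\mbB)>n\bigr)\leq \exp\bigl(-c\, n^{2/q}\bigr).$$
\item Since $q<2$ for $\rho<3/2$ and $p>2\rho$, the exponent satisfies $2/q>1$. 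The tail therefore decays faster than any exponential, which gives $\E[\exp(DN)]<\infty$ for all $D$.
\end{enumerate}

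Finally, the inhomogeneous control $w_\mbX(s,t)=\|X\|^p_{p,[s,t]}+\|\bX\|^{p/2}_{p/2,[s,t]}$ used here may differ from the homogeneous control used in \cite{Friz2013}. I would compare the two by the elementary inequality $a^{p}+b^{p/2}\le C(a+b^{1/2})^p$, so that the $N_\alpha$ counts agree up to rescaling $\alpha$ and a multiplicative constant. The rescaling is harmless because the conclusion holds for every $\alpha$ and every $D$.

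The main obstacle is step (b): verifying the complementary Young regularity $q<2$ from \eqref{eq:gaussian_covariance_regular} and getting the sharp exponent $2/q>1$. In practice I would cite \cite[Theorem 6.3]{Friz2013} (and \cite{Bayer2016} for the regularity condition) rather than redo it, exactly as \cite[Theorem 2.17]{Lew2026} does.
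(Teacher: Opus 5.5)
Your proposal is correct and follows the same route as the paper. The paper does not reprove the statement; it obtains it, via \cite[Theorem 2.17]{Lew2026}, from the Cass--Litterer--Lyons/Friz--Riedel Weibull-tail estimate for $N_{\alpha,[0,T]}(\mbB)$ in \cite{Friz2013} (see also \cite[Theorem 11]{Bayer2016}), and the key point is that the tail exponent $2/q$ exceeds $1$ because complementary Young regularity with $p>2$ forces $q<2$.

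Your sketch of the translation/Borell argument and your reduction from $w_{\mbX}$ to the homogeneous control (using that $w\le C\omega$ implies $N_{\alpha}(w)\le N_{\alpha/C}(\omega)$) are sound supplements. One correction: step (b) actually gives $N_{\alpha,[0,T]}\lesssim \alpha^{-1}M^p+\alpha^{-q/p}\|h\|_{\mathcal H}^q$ rather than $N\lesssim\|h\|_{\mathcal H}^q$, and this is exactly the form Borell's inequality needs.
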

\noindent This result follows from results in \cite{Friz2013}, see also \cite[Theorem 11]{Bayer2016},  and is key to obtaining integrable error bounds.

\subsection{Existence and uniqueness of solutions to RDEs}\label{sec:rdes:existence_uniqueness:nonlinear}\begin{restatement}
  [Nonlinear RDEs {\cite[Theorem 3.9 \ellone{(modified)}]{Lew2026}}]
  {Theorem}
  {thm:rdes:existence_uniqueness}
Let %
\ellone{$u\in L^1([0,T],U)$}, 
 $b$ satisfy Assumption \ref{assum:rde_drift}, 
$\sigma\in C_b^3$, $\bar y\in\R^n$, and 
$\mbX\in\sC_g^p$. 
Then, there exists a unique
$(Y,Y')\in\sD_X^p$ with $Y'_t=\sigma(t,Y_t)$ that solves the RDE
\begin{equation}
\tag{\ref{eq:nonlinear_rde}}
Y_t
=
\bar y
+
\int_0^tb(s,Y_s,u_s)\dd s
+
\int_0^t\sigma(s,Y_s)\dd\mbX_s.
\end{equation}
\end{restatement}
\begin{proof}[Proof of Theorem \ref{thm:rdes:existence_uniqueness}]
Every statement in the proof of \cite[Theorem 3.9]{Lew2026} remains valid if $u\in L^1$, since the drift $b$ is bounded and the control $u$ is fixed.
\end{proof}

\begin{restatement}
  [Linear RDEs: existence and uniqueness of solutions {\cite[Theorem 3.10]{Lew2026}}]
  {Theorem}
  {thm:rde:linear:existence_uniqueness}
Let %
$\bar v\in\R^n$, $A\in L^\infty$, 
$\mbX\in\sC^p$, 
and $(\Sigma,\Sigma')\in\sD_X^p$.   
Then, there exists a unique solution $(V,V')\in\sD^p_X$ with $V_t'=\Sigma_t V_t$ to the  RDE 
\begin{equation}
\tag{\ref{eq:linear_rde}}
V_t
=
\bar v
+
\int_0^tA_sV_s\dd s
+
\int_0^t\Sigma_sV_s\dd\mbX_s.
\end{equation}
\end{restatement}

\begin{assumption}[Stronger regularity of $b$ {\cite[Assumption 3.11]{Lew2026}}]\label{assumption:b:stronger:linear_rde}
Let $U\subseteq\R^m$, and  $b:[0,T]\times\R^n\times U\to\R^n$ satisfy
\begin{itemize} 
\item 
$b(\cdot, x, u):[0,T]\to\R^n$  is  measurable for all $(x,u)\in\R^n\times U$,  
\item  
$b(t,\cdot,\cdot):\R^n\times U\to\R^n$ 
is continuous for almost every $t\in[0,T]$,
\item 
$b(t,\cdot,u):\R^n\to\R^n$  is continuously differentiable  for almost every $t\in[0,T]$  and  all $u\in U$,  
\item %
$\big\|\nablaof{x}b(t,x,u)\big\|\leq C_b$ for almost every $t\in[0,T]$ and all $(x,u)\in\R^n\times U$ for some constant $C_b\geq 0$.
\end{itemize}
\end{assumption}
\begin{corollary}[Linearized RDE: existence and uniqueness of solutions {\cite[Corollary 3.12 \ellone{(modified)}]{Lew2026}}]\label{cor:rde:linear:existence_uniqueness}
Let %
$\bar v\in\R^n$, $U\subseteq\R^m$, 
$u\in \ellone{L^1([0,T],U)}$, 
$b$  satisfy  Assumption \ref{assumption:b:stronger:linear_rde}, 
$\sigma\in C_b^3$, 
$\mbX\in\sC^p$, and 
$(Y,Y')\in\sD^p_X$.
Then, there exists a unique solution $(V,V')\in\sD^p_X$ with $V'=\nablaof{x}\sigma(\cdot,Y_\cdot)V_\cdot$ to the linear RDE $V_t =\bar v +  
\int_0^t
\nablaof{x}b(s,Y_s,u_s)V_s\dd s
+  
\int_0^t
\nablaof{x}\sigma(s,Y_s)V_s\dd\mbX_s$. 
\end{corollary}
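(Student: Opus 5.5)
The plan is to reduce Corollary \ref{cor:rde:linear:existence_uniqueness} to Theorem \ref{thm:rde:linear:existence_uniqueness}. Set $A_t:=\nablaof{x}b(t,Y_t,u_t)$ and $\Sigma_t:=\nablaof{x}\sigma(t,Y_t)$, so that the linearized RDE becomes exactly \eqref{eq:linear_rde}. Existence and uniqueness then follow once we check two hypotheses: $A\in L^\infty([0,T],\R^{n\times n})$, and $(\Sigma,\Sigma')\in\sD^p_X([0,T],\R^{n\times d\times n})$ for a suitable Gubinelli derivative $\Sigma'$. The Gubinelli derivative $V'=\Sigma V=\nablaof{x}\sigma(\cdot,Y_\cdot)V_\cdot$ is then the one provided by the theorem.

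For $A$, measurability in $t$ comes from the Carathéodory structure in Assumption \ref{assumption:b:stronger:linear_rde}. Indeed, $\nablaof{x}b(t,\cdot,\cdot)$ is continuous for a.e. $t$ as a pointwise limit of difference quotients of the continuous maps $b(t,\cdot,\cdot)$, and it is measurable in $t$. Composing with the continuous path $Y$ and the measurable control $u\in L^1([0,T],U)$ therefore gives a measurable map $t\mapsto A_t$. The bound $\|\nablaof{x}b\|\le C_b$ gives $\|A\|_{L^\infty}\le C_b$. The only change from \cite[Corollary 3.12]{Lew2026} is that $u$ is now merely $L^1$ rather than $L^\infty$. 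Since $u$ enters only through $A$, and $A$ is bounded uniformly in $u$, no integrability of $u$ beyond measurability is needed.

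For $\Sigma$, we use $\sigma\in C^3_b$, so that $\nablaof{x}\sigma\in C^2_b$. Define $\Sigma'_t:=\nabla^2_x\sigma(t,Y_t)Y'_t$. By \eqref{eq:sigma(.,X):pvar}, both $\Sigma$ and $\Sigma'$ have finite $p$-variation, using that $\nabla^2_x\sigma\in C^1_b$ and that $Y'\in\C^p$ is bounded by \eqref{eq:path_finite_var:infty_ineq}. For the remainder we Taylor expand:
\[
R^\Sigma_{s,t}=\nablaof{x}\sigma(t,Y_t)-\nablaof{x}\sigma(s,Y_s)-\nabla^2_x\sigma(s,Y_s)Y'_sX_{s,t}.
\]
This splits into three pieces. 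The time increment is $O(|t-s|)$. The second-order spatial term is $O(\|Y_{s,t}\|^2)$. The remaining term is $\nabla^2_x\sigma(s,Y_s)R^Y_{s,t}$. By \eqref{eq:path_finite_var:sum_p/2vars}, these bounds give $\|R^\Sigma\|_{p/2}<\infty$. This is the standard stability of controlled paths under composition with $C^2_b$ functions.

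The main (mild) obstacle is this remainder estimate, in particular handling the explicit time dependence of $\sigma$, which contributes a term of order $|t-s|$ with finite $1$-variation and hence finite $p/2$-variation, and the measurability of $A$ under a merely $L^1$ control. Both are handled as above, after which Theorem \ref{thm:rde:linear:existence_uniqueness} concludes.
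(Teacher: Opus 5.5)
Your reduction is the same as the paper's, which simply defers to the proof of \cite[Corollary 3.12]{Lew2026} and notes that $\nabla_x b$ is bounded and $u$ is fixed. Like that proof, you set $A_t=\nabla_x b(t,Y_t,u_t)$ and $\Sigma_t=\nabla_x\sigma(t,Y_t)$ with $\Sigma'_t=\nabla_x^2\sigma(t,Y_t)Y'_t$, check that $A\in L^\infty$ and $(\Sigma,\Sigma')\in\sD^p_X$, and apply Theorem \ref{thm:rde:linear:existence_uniqueness}.

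One justification is wrong and should be replaced. A pointwise limit of continuous maps need not be continuous. Under Assumption \ref{assumption:b:stronger:linear_rde}, the map $(x,u)\mapsto\nabla_x b(t,x,u)$ can in fact fail to be jointly continuous. For example, take $b(x,u)=u\sin(x/u)$ and $b(x,0)=0$ on $U=[-1,1]$. Then $\partial_x b(0,u)=1$ for $u\neq0$, but $\partial_x b(0,0)=0$. The measurability of $A$ still holds by a direct argument:
\begin{itemize}
\item Each difference quotient $t\mapsto k\big(b(t,Y_t+e_j/k,u_t)-b(t,Y_t,u_t)\big)$ is measurable, because $b$ is Carath\'eodory.
\item $A_te_j$ is the a.e.\ limit of these quotients as $k\to\infty$, hence measurable.
\end{itemize}

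A smaller point concerns \eqref{eq:path_finite_var:sum_p/2vars}. It is stated for increments of paths $Z^j\in\mathcal{C}^{p/2}$, whereas $R^Y$ is a two-parameter map. The same partition-sum argument nonetheless applies verbatim, using the definition of $\|R^Y\|_{p/2}$.
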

\begin{proof}
\ellone{The proof is identical to the proof of \cite[Corollary 3.12]{Lew2026}, since $\nablaof{x}b$ is bounded and $u$ is fixed.}
\end{proof}

\subsection{Bounds on solutions to RDEs}\label{sec:rdes:bounds}
\subsubsection{Bounds on solutions to nonlinear RDEs}
\begin{proposition}
[Error bound for solutions to RDEs on short intervals {\cite[Proposition 3.14 \ellone{(modified)}]{Lew2026}}]\label{prop:rdes:error_bound}
Let %
$y,\tilde{y}\in\R^n$, $U\subseteq\R^m$, 
$u,\tilde{u}\in \ellone{L^1([0,T],U)}$, 
$b$ satisfy Assumption \ref{assum:rde_drift} and be Lipschitz in $u$, 
$\sigma\in C_b^3$,  
$\mbX,\widetilde{\mbX}\in\sC^p$, 
 $(Y,Y')\in\sD^p_X$ and $(\widetilde{Y},\widetilde{Y}')\in\sD^p_{\widetilde{X}}$ 
 solve the RDEs 
\begin{align*}
Y_t=y+\int_0^tb(s,Y_s,u_s)\dd s+\int_0^t\sigma(s,Y_s)\dd\mbX_s,
\ \
\widetilde{Y}_t=\tilde{y}+\int_0^tb(s,\widetilde{Y}_s,\tilde{u}_s)\dd s+\int_0^t\sigma(s,\widetilde{Y}_s)\dd\widetilde{\mbX}_s,
\ \, t\in[0,T].
\end{align*}
Then, there exist two constants $C_{p,b,\sigma}\geq 1$ and $0<\alpha_{p,b,\sigma}<1$ such that
\begin{gather}\label{eq:bounds_pvars_solutions_RDEs:small_intervals:combined}
\|Y\|_{p,I},\, 
\|\widetilde{Y}\|_{p,I},\, 
\|R^Y\|_{\frac{p}{2},I},\, 
\|R^{\widetilde{Y}}\|_{\frac{p}{2},I},\, 
K_{Y,I},\, 
K_{\widetilde{Y},I}\leq
C_{p,b,\sigma},
\quad\text{and}
\\[2mm]
\label{eq:RDE:DY'+dRY:close}
\|Y'-\widetilde{Y}'\|_{p,I}+\|R^Y-R^{\widetilde{Y}}\|_{\frac{p}{2},I}
\leq
C_{p,b,\sigma}(\|Y_{t_0}-\widetilde{Y}_{t_0}\|
+
\|\Delta\mbX\|_{p,I}
+
\|u-\tilde{u}\|_{\ellone{L^1(I,U)}}),
\end{gather}
for any interval $I=[t_0,t_1]\subseteq[0,T]$ such that $\|\mbX\|_{p,I}+\|\widetilde{\mbX}\|_{p,I}+|I|\leq
 \alpha_{p,b,\sigma}^\frac{1}{p}$.

\ellone{
If $u$ and $\tilde{u}$ only differ on a subinterval $J\subseteq I$, i.e., $u_t=\tilde{u}_t$ for almost every $t\in I\setminus J$, then}
\begin{align}\label{eq:RDE:DY'+dRY:close:u_subinterval}
\ellone{
\|Y'-\widetilde{Y}'\|_{p,I}+\|R^Y-R^{\widetilde{Y}}\|_{\frac{p}{2},I}
\leq
C_{p,b,\sigma}
(\|\Delta Y_{t_0}\|
+
\|\Delta \mbX\|_{p,I}
+
|J|
).
}
\end{align}
\end{proposition}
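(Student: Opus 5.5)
The plan is to follow the structure of the proof of \cite[Proposition 3.14]{Lew2026}: derive local a priori bounds and stability estimates on intervals that are short in the rough-path sense, while tracking the control only through $L^1$-type quantities. The drift $b$ is bounded and Lipschitz, so the control enters only through the Lebesgue integral $\int b(s,Y_s,u_s)\dd s$. Its $\frac p2$-variation is controlled by Lemma \ref{lem:bounds_int_b_ds}. This gives the bound $C_{p,b}T$ for a single solution and $C_{p,b}(T\|Y-\widetilde Y\|_\infty+\|u-\tilde u\|_{L^1})$ for differences. When $u$ and $\tilde u$ agree off $J$, the difference bound becomes $C_{p,b}(T\|Y-\widetilde Y\|_\infty+|J|)$ by \eqref{eq:bounds_int_b_ds:dy_du:u_subinterval}. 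None of these bounds involves $\|u\|_\infty$.

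\textbf{A priori bound \eqref{eq:bounds_pvars_solutions_RDEs:small_intervals:combined}.} On $I=[t_0,t_1]$, write $R^Y_{s,t}=\int_s^t b\,\dd r+\big(\int_s^t\sigma(r,Y_r)\dd\mbX_r-\sigma(s,Y_s)X_{s,t}\big)$. The rough integral term is estimated by Proposition \ref{prop:rough_integral_welldefined:error_bound} applied to the controlled path $(\sigma(\cdot,Y),\nabla_x\sigma\,\sigma)$. Its remainder satisfies $\|R^{\sigma(\cdot,Y)}\|_{\frac p2}\le C_\sigma(\|R^Y\|_{\frac p2}+\|Y\|_p^2+|I|)$ by Taylor expansion using $\sigma\in C^3_b$, and its Gubinelli derivative has $p$-variation bounded through \eqref{eq:sigma(.,X):pvar}. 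Combining these with Lemma \ref{lem:pvariation:inequalities} gives
\[
\|R^Y\|_{\frac p2,I}\le C\big(|I|+(\|R^Y\|_{\frac p2,I}+\|Y\|^2_{p,I}+|I|+1)(\|X\|_{p,I}+\|\bX\|_{\frac p2,I})\big),
\]
together with $\|Y\|_{p,I}\le C(\|X\|_{p,I}+\|R^Y\|_{\frac p2,I})$. Choosing $\alpha$ small enough that $\|\mbX\|_{p,I}+|I|\le\alpha^{1/p}$ lets me absorb the $\|R^Y\|$ terms into the left-hand side, which yields the uniform constant. The same argument applies to $\widetilde Y$ and to the quantities $K_{Y,I}$, which are built from these same norms as in \cite{Lew2026}.

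\textbf{Stability estimates \eqref{eq:RDE:DY'+dRY:close} and \eqref{eq:RDE:DY'+dRY:close:u_subinterval}.} Write $\Delta Y'=\sigma(\cdot,Y)-\sigma(\cdot,\widetilde Y)$ and bound it with \eqref{eq:Delta_sigma(.,X):pvar}, using the a priori bounds just proved. For $\Delta R$, split the difference of rough integrals into two parts: the difference of integrands against $\mbX$, and $\sigma(\cdot,\widetilde Y)$ integrated against $\mbX-\widetilde{\mbX}$. The standard local Lipschitz estimate for rough integrals (\cite[Theorem 4.17]{Friz2020}-type, as used in \cite{Lew2026}) bounds these by
\[
C\big(\|\Delta Y_{t_0}\|+\|\Delta Y'\|_{p,I}+\|\Delta R\|_{\frac p2,I}\big)\|\mbX\|_{p,I}+C\|\Delta\mbX\|_{p,I}.
\]
The drift difference is bounded by Lemma \ref{lem:bounds_int_b_ds}, with $\|Y-\widetilde Y\|_\infty\le\|\Delta Y_{t_0}\|+\|\Delta Y'\|_\infty\|X\|_p+\dots$ controlled via \eqref{eq:path_finite_var:infty_ineq}. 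Shrinking $\alpha$ again absorbs the $\Delta Y',\Delta R$ terms. What remains is either the $\|u-\tilde u\|_{L^1(I)}$ term or, in the subinterval case, the $|J|$ term, coming directly from the corresponding drift estimate.

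\textbf{Main obstacle.} The delicate step is the absorption argument. I must ensure that the constant multiplying $\|\Delta R\|$ and $\|\Delta Y'\|$ is at most $\tfrac12$ with $\alpha$ depending only on $(p,b,\sigma)$. In particular, no dependence on $u$ may enter through the drift. This holds because every drift bound uses only $\|b\|_\infty$, $\|\nabla_x b\|_\infty$ and the Lipschitz constant in $u$. I would check this carefully when rewriting the proof of \cite[Proposition 3.14]{Lew2026}, replacing each appearance of $\|u\|_{L^\infty}$ by these constants or by $|J|$.
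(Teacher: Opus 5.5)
Your proposal is correct and follows the paper's route. The paper's proof just reruns \cite[Proposition 3.14]{Lew2026} with the $u$-independent drift estimates of Lemma \ref{lem:bounds_int_b_ds}, exactly as you do: \eqref{eq:bounds_int_b_ds:dy_du} gives the $\|u-\tilde u\|_{L^1(I,U)}$ term and \eqref{eq:bounds_int_b_ds:dy_du:u_subinterval} gives the $|J|$ term, both applied on $I$ so that the factor $T$ becomes the small $|I|$ that absorption needs.

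One point to tighten: your a priori inequality is quadratic in $\|R^Y\|_{\frac p2,I}$ through $\|Y\|_{p,I}^2$, so smallness of $\alpha$ alone does not absorb it. You need a continuity argument in the right endpoint of $I$, or a ball bound from the local fixed-point construction together with uniqueness, to stay on the small branch.
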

\begin{proof}
\ellone{The proof follows  \cite[Proposition 3.14]{Lew2026}, using the modified inequalities \eqref{eq:bounds_int_b_ds} and  
\eqref{eq:bounds_int_b_ds:dy_du:u_subinterval} instead of \cite[Equation (3.13) in Lemma 3.7]{Lew2026} to obtain \eqref{eq:RDE:DY'+dRY:close} and \eqref{eq:RDE:DY'+dRY:close:u_subinterval}.
}
\end{proof}
 
\begin{lemma}[Bounds for solutions to RDEs on long intervals {\cite[Lemma 3.15]{Lew2026}}]\label{lem:rdes:bound:entire_interval}
Let %
$y,U,u,b,\sigma,\mbX,Y,Y'$ be as in Theorem \ref{thm:rdes:existence_uniqueness}, where $b$ satisfies Assumption \ref{assum:rde_drift} and $\sigma\in C_b^3$.   
Then, there exist constants $C_{p,T,b,\sigma}\geq 1$ and $0<\alpha_{p,b,\sigma}<1$ such that  
\begin{align}
\label{eq:RDE:Yp_Y'p_RYp/2:bound:full_interval}
\|Y\|_{p,[0,T]}+
\|Y'\|_{p,[0,T]}+
\|R^Y\|_{\frac{p}{2},[0,T]}
&\leq
C_{p,T,b,\sigma}\exp\left(
C_{p,T,b,\sigma}N_{\alpha_{p,b,\sigma},[0,T]}(\mbX)
\right),
\\
\label{eq:RDE:Y:bound:full_interval}
\|Y\|_{\infty,[0,T]}
&\leq
C_{p,T,b,\sigma}\exp\left(
C_{p,T,b,\sigma}N_{\alpha_{p,b,\sigma},[0,T]}(\mbX)
\right)+\|y\|.
\end{align}
\end{lemma}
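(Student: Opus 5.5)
The plan is the standard greedy-partition patching argument, as in \cite[Lemma 3.15]{Lew2026}. The local estimates of Proposition \ref{prop:rdes:error_bound} are concatenated along the greedy partition of $[0,T]$ built from a control that dominates both the rough path and the time increment. I fix $\alpha:=\alpha_{p,b,\sigma}$ from Proposition \ref{prop:rdes:error_bound}, taken without loss of generality $\le1$. I then set $w(s,t)=C_p(w_\mbX(s,t)+|t-s|)$, which is the control of Corollary \ref{cor:Nalpha:NX_NXtilde_NT:small_intervals} with $\widetilde{\mbX}=\mbX$, up to constants. The greedy partition $\{\tau_i\}_{i=0}^{N+1}$ with $N=N_{\alpha,[0,T]}(w)$ satisfies $w(\tau_i,\tau_{i+1})\le\alpha$. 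By \eqref{eq:|X|+|Xtilde|+|dt|<=alpha^p}, each subinterval $I_i=[\tau_i,\tau_{i+1}]$ is small enough for Proposition \ref{prop:rdes:error_bound} to apply. Restarting the RDE at $\tau_i$ from $Y_{\tau_i}$, \eqref{eq:bounds_pvars_solutions_RDEs:small_intervals:combined} gives $\|Y\|_{p,I_i}+\|R^Y\|_{\frac p2,I_i}\le C_{p,b,\sigma}$, uniformly in $i$ and in the initial point. The Gubinelli derivative $Y'=\sigma(\cdot,Y)$ is controlled by \eqref{eq:sigma(.,X):pvar}: $\|Y'\|_{p,I_i}\le C\|\sigma\|_{C^1_b}(\|Y\|_{p,I_i}+|I_i|)\le C$. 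The bounded drift from Assumption \ref{assum:rde_drift} enters only through Lemma \ref{lem:bounds_int_b_ds}, so the constants do not depend on $u\in L^1$.

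Next I glue the local estimates. For $p$-variation on a concatenation of $N+1$ intervals, the standard superadditivity argument gives $\|Y\|_{p,[0,T]}\le (N+1)^{1-1/p}\sum_i\|Y\|_{p,I_i}\le C(N+1)$. For the remainder, the identity $R^Y_{s,t}=R^Y_{s,r}+R^Y_{r,t}+Y'_{s,r}X_{r,t}$ (up to sign conventions) gives $\|R^Y\|_{\frac p2,[0,T]}\le C(N+1)^{c}\big(\sum_i\|R^Y\|_{\frac p2,I_i}+\|Y'\|_{p}\|X\|_{p}\big)$. Here $\|X\|_p$ is itself bounded by $C\exp(N)$ via \eqref{eq:||X||_p<=exp(N_alpha^p)}. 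All these polynomial-in-$N$ factors, and the product terms, are absorbed into $C\exp(CN)$ because $(N+1)^k\le C_k e^{N}$. Finally, \eqref{eq:Nalpha<=3CpNalpha_X_and_time} converts $N_{\alpha,[0,T]}(w)$ into $5C_p(2N_{\alpha,[0,T]}(\mbX)+T/\alpha+1)$. The $T/\alpha+1$ part goes into the constant $C_{p,T,b,\sigma}$, which yields \eqref{eq:RDE:Yp_Y'p_RYp/2:bound:full_interval}. The sup bound \eqref{eq:RDE:Y:bound:full_interval} is then immediate from \eqref{eq:path_finite_var:infty_ineq}: $\|Y\|_\infty\le\|y\|+\|Y\|_p$.

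I expect the main obstacle to be the gluing step for the remainder $R^Y$ in $\tfrac p2$-variation across subintervals, because the cross terms $Y'_{s,r}X_{r,t}$ couple different blocks. I would handle it with the super-additive control trick: for $[s,t]$ spanning several blocks, split at the partition points and bound the cross terms by $\|Y'\|_{p,[0,T]}\|X\|_{p,[0,T]}$. Both factors are already controlled by $C\exp(CN)$, so the product remains of the same exponential form after enlarging $C$. A second point to verify is that the local constant in \eqref{eq:bounds_pvars_solutions_RDEs:small_intervals:combined} truly does not depend on $Y_{\tau_i}$ or on $u$. This holds because $\sigma\in C^3_b$ and $b$ is bounded, so only $\|\mbX\|_{p,I_i}$ and $|I_i|$ enter.
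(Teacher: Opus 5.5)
Your proposal is correct and follows the same route as the cited proof of \cite[Lemma 3.15]{Lew2026}, which the paper invokes here without reproducing it. That route is: uniform local bounds from Proposition \ref{prop:rdes:error_bound} on the greedy partition, then concatenation of the $p$- and $\tfrac p2$-variations with the cross terms $Y'_{s,r}X_{r,t}$ controlled by $\|Y'\|_{p}\|X\|_{p}$, then conversion of $N_{\alpha,[0,T]}(w)$ into $N_{\alpha,[0,T]}(\mbX)$ via Corollary \ref{cor:Nalpha:NX_NXtilde_NT:small_intervals}. Only two cosmetic fixes are needed: use that corollary's control verbatim with $\widetilde{\mbX}=\mbX$, i.e.\ $w=C_p(2w_{\mbX}+|t-s|)$, so its estimates apply exactly, and note that the single-solution bound \eqref{eq:bounds_pvars_solutions_RDEs:small_intervals:combined} does not need the Lipschitz-in-$u$ hypothesis of Proposition \ref{prop:rdes:error_bound}, which this lemma does not assume.
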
 

\begin{proposition}
[Error bound for solutions to RDEs on long intervals {\cite[Proposition 3.16 \ellone{(modified)}]{Lew2026}}]\label{prop:rdes:error_bound:entire_interval}
Define %
$y,\tilde{y},U,u,\tilde{u},b,\sigma$, $\mbX,\widetilde{\mbX}$
as in Proposition \ref{prop:rdes:error_bound}, 
where $b$ satisfies Assumption \ref{assum:rde_drift} and is Lipschitz in $u$. Let $(Y,Y')\in\sD^p_X$ and $(\widetilde{Y},\widetilde{Y}')\in\sD^p_{\widetilde{X}}$  
solve the RDEs
\begin{align*}
Y_t=y+\int_0^tb(s,Y_s,u_s)\dd s+\int_0^t\sigma(s,Y_s)\dd\mbX_s,
\ \
\widetilde{Y}_t=\tilde{y}+\int_0^tb(s,\widetilde{Y}_s,\tilde{u}_s)\dd s+\int_0^t\sigma(s,\widetilde{Y}_s)\dd\widetilde{\mbX}_s,
\ \, t\in[0,T].
\end{align*} 
Then, %
for any interval $I=[t_0,t_1]\subseteq[0,T]$, 
\begin{align}\label{eq:RDE:DY'+dRY:close:full_interval}
\hspace{-2mm}
\|Y'-\widetilde{Y}'\|_{p,I}+\|R^Y-R^{\widetilde{Y}}\|_{\frac{p}{2},I}
&\leq
C\exp\left(
CN_{\alpha,I}(w)
\right)(\|\Delta Y_{t_0}\|
+
\|\Delta \mbX\|_{p,I}
+
\|\Delta u\|_{\ellone{L^1(I,U)}}
),
\\
\label{eq:RDE:DY:close:full_interval}
\|Y-\widetilde{Y}\|_{\infty,I} 
&\leq
C\exp\left(
CN_{\alpha,I}(w)
\right)(\|\Delta Y_{t_0}\|
+
\|\Delta \mbX\|_{p,I}
+
\|\Delta u\|_{\ellone{L^1(I,U)}}
),
\end{align}
for some constants $C\geq 1$ and $0<\alpha<1$ that depend on $(p,b,\sigma)$.

\ellone{
If $u$ and $\tilde{u}$ only differ on a subinterval $J\subseteq I$, i.e., $u_t=\tilde{u}_t$ for almost every $t\in I\setminus J$, then
}
\begin{align}\label{eq:RDE:DY:close:full_interval:u_subinterval}
\ellone{
\|Y'-\widetilde{Y}'\|_{p,I}+\|R^Y-R^{\widetilde{Y}}\|_{\frac{p}{2},I}
+
\|Y-\widetilde{Y}\|_{\infty,I} 
\leq
C\exp\left(
CN_{\alpha,I}(w)
\right)(\|\Delta Y_{t_0}\|
+
\|\Delta \mbX\|_{p,I}
+
|J|
).
}
\end{align}
\end{proposition}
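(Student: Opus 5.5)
The plan is a standard patching argument: combine the short-interval estimates of Proposition \ref{prop:rdes:error_bound} along a greedy partition of $I$, and propagate the errors multiplicatively from one piece to the next. With $\alpha=\alpha_{p,b,\sigma}$, we use the control $w(s,t)=C_p(w_\mbX(s,t)+w_{\widetilde\mbX}(s,t)+|t-s|)$ from Corollary \ref{cor:Nalpha:NX_NXtilde_NT:small_intervals} and let $\{\tau_i\}_{i=0}^{N+1}$ be the greedy partition of $I=[t_0,t_1]$ for $w$, where $N=N_{\alpha,I}(w)$. Each piece $I_i=[\tau_i,\tau_{i+1}]$ satisfies $w(I_i)\le\alpha$. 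By \eqref{eq:|X|+|Xtilde|+|dt|<=alpha^p}, this gives $\|\mbX\|_{p,I_i}+\|\widetilde\mbX\|_{p,I_i}+|I_i|\le\alpha^{1/p}$, so the hypotheses of Proposition \ref{prop:rdes:error_bound} hold on every $I_i$.

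\textbf{Step 1 (one piece).} On $I_i$, Proposition \ref{prop:rdes:error_bound} gives
\[
E_i:=\|Y'-\widetilde Y'\|_{p,I_i}+\|R^Y-R^{\widetilde Y}\|_{\frac p2,I_i}\le C\big(\|\Delta Y_{\tau_i}\|+\|\Delta\mbX\|_{p,I_i}+\|\Delta u\|_{L^1(I_i)}\big).
\]
In the subinterval case we use \eqref{eq:RDE:DY'+dRY:close:u_subinterval} instead, with $|J\cap I_i|$ in place of $\|\Delta u\|_{L^1(I_i)}$. We then control the sup-norm error on $I_i$. For $s,t\in I_i$, write $Y_{s,t}=Y'_sX_{s,t}+R^Y_{s,t}$ and subtract the analogous identity for $\widetilde Y$. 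Since $\|X\|_{p,I_i}$ and $\|\widetilde Y'\|_\infty\le\|\sigma\|_\infty$ are bounded,
\[
\|\Delta Y\|_{\infty,I_i}\le\|\Delta Y_{\tau_i}\|+C\big(\|\Delta Y'\|_{\infty,I_i}+\|\Delta\mbX\|_{p,I_i}+E_i\big).
\]
Here $\|\Delta Y'\|_{\infty,I_i}\le\|\Delta Y'_{\tau_i}\|+E_i$ by \eqref{eq:path_finite_var:infty_ineq}, and $\|\Delta Y'_{\tau_i}\|=\|\sigma(\tau_i,Y_{\tau_i})-\sigma(\tau_i,\widetilde Y_{\tau_i})\|\le C\|\Delta Y_{\tau_i}\|$. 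Hence $\|\Delta Y\|_{\infty,I_i}\le C(\|\Delta Y_{\tau_i}\|+\|\Delta\mbX\|_{p,I}+\|\Delta u\|_{L^1(I_i)})$.

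\textbf{Step 2 (iteration).} Set $D_i:=\|\Delta Y_{\tau_i}\|$. Step 1 gives the recursion $D_{i+1}\le C(D_i+\|\Delta\mbX\|_{p,I}+\|\Delta u\|_{L^1(I_i)})$ with $C\ge1$. Unrolling it, $D_i\le C^{i}\big(D_0+i\|\Delta\mbX\|_{p,I}+\|\Delta u\|_{L^1(I)}\big)$, and the factor $i\le N+1$ is absorbed into $C^{N+1}\le C'\exp(C'N)$. Taking the maximum over pieces yields \eqref{eq:RDE:DY:close:full_interval}. For \eqref{eq:RDE:DY'+dRY:close:full_interval}, we glue the local $p$- and $p/2$-variation bounds using the superadditivity-type inequality $\|Z\|_{p,I}\le (N+1)^{1-1/p}\sum_i\|Z\|_{p,I_i}$.

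\textbf{Main obstacle.} The remainder does not glue additively. For $s\in I_i$ and $t\in I_j$ we have
\[
R_{s,t}=\sum_k R_{\tau_k,\tau_{k+1}}+\sum_k\big(Y'_{\tau_k}-Y'_s\big)X_{\tau_k,\tau_{k+1}},
\]
so the cross terms $\Delta(Y'_{s,\tau_k}X_{\tau_k,\tau_{k+1}})$ must be bounded. I would expand these differences into $\Delta Y'\cdot X$ and $Y'\cdot\Delta X$ pieces and bound them with \eqref{eq:path_finite_var:sum_p/2vars}, using the bounds $\|\Delta Y'\|_{p,I}$ and $\|X\|_{p,I}$. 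The latter is at most $C\exp(N)$ by \eqref{eq:||X||_p<=exp(N_alpha^p)}, and the extra factor is absorbed into the exponential. The subinterval case \eqref{eq:RDE:DY:close:full_interval:u_subinterval} follows verbatim, since $\sum_i|J\cap I_i|=|J|$. Finally, $N_{\alpha,I}(w)$ can be bounded by $N_{\alpha,I}(\mbX)+N_{\alpha,I}(\widetilde\mbX)$ via \eqref{eq:Nalpha<=3CpNalpha_X_and_time} if needed.
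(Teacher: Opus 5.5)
Your proposal is correct and matches the paper's approach: the paper defers to the greedy-partition proof of \cite[Proposition 3.16]{Lew2026}, applying the short-interval estimates \eqref{eq:RDE:DY'+dRY:close} and \eqref{eq:RDE:DY'+dRY:close:u_subinterval} on each piece of the greedy partition of $w$ and propagating $\|\Delta Y_{\tau_i}\|$ inductively, exactly as you do. One small remark: for the remainder cross terms, \eqref{eq:path_finite_var:sum_p/2vars} does not apply literally, since your bounds are not products of increments over $[s,t]$; the step still closes because at most $N$ intervals of any partition of $I$ contain a greedy point $\tau_k$ in their interior, so crude bounds suffice ($\|\Delta Y'_{s,\tau_k}\|\le\|\Delta Y'\|_{p,I}$, $\|\widetilde Y'\|_\infty\le\|\sigma\|_\infty$, $\|X_{\tau_k,\tau_{k+1}}\|\le 1$ on each piece, and $\|\Delta X_{\tau_k,\tau_{k+1}}\|\le\|\Delta\mbX\|_{p,I}$), at the cost of a polynomial factor in $N$ that is absorbed into $\exp(CN)$.
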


\begin{proof}
\ellone{The proof follows  \cite[Proposition 3.16]{Lew2026}, using the modified inequalities \eqref{eq:RDE:DY'+dRY:close} and  
\eqref{eq:RDE:DY'+dRY:close:u_subinterval} instead of \cite[Equation (3.20) in Proposition 3.14]{Lew2026} to obtain \eqref{eq:RDE:DY'+dRY:close:full_interval}-\eqref{eq:RDE:DY:close:full_interval:u_subinterval}.
}
\end{proof}

\subsubsection{Bounds on solutions to linear RDEs}
\begin{lemma}[Bounds on solutions to linear RDEs {\cite[Lemma 3.17]{Lew2026}}]\label{lem:rde:linear:bounded_solutions}
Let %
$y,A,\mbX,\Sigma,\Sigma'$ be as in Theorem \ref{thm:rde:linear:existence_uniqueness}. 
Assume that  there exist two constants $C_\Sigma\geq 1$ and $0<\alpha_\Sigma<1$ such that 
$\|\Sigma\|_{\infty,I} + \|\Sigma\|_{p,I} +\|\Sigma'\|_{p,I}+\|R^\Sigma\|_{\frac{p}{2},I}
\leq
C_\Sigma$ 
for any interval $I\subseteq[0,T]$ such that $\|\mbX\|_{p,I}+|I|\leq
 \alpha_\Sigma^{1/p}$.  
Then,  
\begin{align}
\label{eq:RDE:linear:Vp_V'p_RVp/2:bound}
\|V\|_{p,[0,T]}+
\|V'\|_{p,[0,T]}+
\|R^V\|_{\frac{p}{2},[0,T]}
&\leq 
C_{p,T,A,\Sigma}\exp\left(C_{p,T,A,\Sigma}N_{\alpha_{p,A,\Sigma},[0,T]}(\mbX)\right)\|v\|,
\\
\label{eq:RDE:linear:V:bound}
\|V\|_{\infty,[0,T]}
&\leq 
C_{p,T,A,\Sigma}\exp\left(C_{p,T,A,\Sigma}N_{\alpha_{p,A,\Sigma},[0,T]}(\mbX)\right)\|v\|,
\end{align}
where the constants $C_{p,T,A,\Sigma}\geq 1$ and $0<\alpha_{p,A,\Sigma}\leq\alpha_\Sigma$    only depend on $(p,T,\|A\|_\infty,C_\Sigma,\alpha_\Sigma)$. 
\end{lemma}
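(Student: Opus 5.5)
The plan is the standard greedy-partition argument used for nonlinear RDEs (Lemma \ref{lem:rdes:bound:entire_interval}), adapted to the linear equation. Because the equation is linear, the solution scales linearly in $\|v\|$. The steps are: a local estimate on small intervals, iteration of that estimate along the greedy partition of Definition \ref{def:Nalpha}, and a final gluing of the $p$-variation seminorms.

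\textbf{Step 1: local estimate.} Fix an interval $I=[s,t]$ with $\|\mbX\|_{p,I}+|I|\leq\alpha^{1/p}$, where $\alpha\leq\alpha_\Sigma$ will be chosen small. On $I$, write $V_{s,r}=\int_s^rA_\tau V_\tau\dd\tau+\int_s^r\Sigma_\tau V_\tau\dd\mbX_\tau$, with Gubinelli derivative $V'=\Sigma V$. The controlled path $(\Sigma V,(\Sigma V)')$ has derivative $(\Sigma V)'=\Sigma'V+\Sigma\Sigma V$ and remainder $R^{\Sigma V}_{u,r}=R^\Sigma_{u,r}V_u+\Sigma_rR^V_{u,r}+\Sigma_{u,r}V'_uX_{u,r}$ (up to the exact form). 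I then bound these using the product inequalities \eqref{eq:path_finite_var:sum_pvars}–\eqref{eq:path_finite_var:sum_p/2vars} together with the hypothesis on $\Sigma$. Applying the rough integral estimate of Proposition \ref{prop:rough_integral_welldefined:error_bound}, and bounding the drift by $\|A\|_\infty|I|\,\|V\|_{\infty,I}$, gives
\[
\|V\|_{p,I}+\|V'\|_{p,I}+\|R^V\|_{\frac p2,I}\leq C\big(\|V_s\|+\alpha^{1/p}(\|V\|_{p,I}+\|V'\|_{p,I}+\|R^V\|_{\frac p2,I})\big),
\]
with $\|V\|_{\infty,I}\leq\|V_s\|+\|V\|_{p,I}$ by \eqref{eq:path_finite_var:infty_ineq}. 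Taking $\alpha$ small enough, depending only on $(p,\|A\|_\infty,C_\Sigma,\alpha_\Sigma)$, lets me absorb the last term. The result is $\|V\|_{p,I}+\|V'\|_{p,I}+\|R^V\|_{\frac p2,I}\leq C\|V_s\|$ and hence $\|V_t\|\leq(1+C)\|V_s\|$.

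\textbf{Step 2: iteration.} Take the greedy partition of $[0,T]$ for the control $w(s,t)=C_p(w_\mbX(s,t)+|t-s|)$ at resolution $\alpha$. By Corollary \ref{cor:Nalpha:NX_NXtilde_NT:small_intervals}, each subinterval satisfies the smallness condition of Step 1, and the number of subintervals is at most $C(N_{\alpha,[0,T]}(\mbX)+T/\alpha+1)$. Chaining $\|V_{\tau_{i+1}}\|\leq(1+C)\|V_{\tau_i}\|$ across subintervals gives $\|V_{\tau_i}\|\leq(1+C)^i\|v\|\leq e^{C'N}\|v\|$, which yields \eqref{eq:RDE:linear:V:bound}.

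\textbf{Step 3: gluing.} Combine the local seminorms. For the $p$-variation, use $\|V\|_{p,[0,T]}\leq n_I^{1-1/p}\sum_i\|V\|_{p,I_i}$, and similarly for $\|V'\|_p$. For the remainder, use Chen-type splitting: $R^V_{s,t}=\sum_i R^V_{\tau_i,\tau_{i+1}}+\sum_i V'_{s,\tau_i}X_{\tau_i,\tau_{i+1}}$. This is bounded by the local remainders plus $\|V'\|_p\|X\|_p$ terms. All factors are polynomial in the number of subintervals $n_I$, so they are absorbed into $\exp(CN)$.

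I expect Step 1 to be the main obstacle. The difficulty is obtaining a closed a priori estimate for $\Sigma V$ whose constant depends only on $C_\Sigma$. This requires tracking the cross term $\Sigma_{u,r}V'_u$ in $R^{\Sigma V}$, which is bounded in $p/2$-variation by $\|\Sigma\|_p\|X\|_p\|V'\|_\infty$. The remaining terms of Step 1, and all of Steps 2–3, are routine transcription of \cite[Lemma 3.17]{Lew2026}.
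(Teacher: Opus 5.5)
Your three-step structure (a closed local estimate on intervals with $\|\mbX\|_{p,I}+|I|\le\alpha^{1/p}$, chaining along the greedy partition, and polynomial gluing) is the standard argument. The paper gives no proof here beyond citing \cite[Lemma 3.17]{Lew2026}, and your Steps 2--3 are fine as written. The gap is in Step 1, and it is not the cross term you flagged. Your formula $R^{\Sigma V}_{u,r}=R^\Sigma_{u,r}V_u+\Sigma_rR^V_{u,r}+\Sigma_{u,r}V'_uX_{u,r}$ is exact and involves only hypothesised quantities. Its last term is bounded in $\tfrac p2$-variation by $\|\Sigma\|_{p,I}\|\Sigma\|_{\infty,I}\|V\|_{\infty,I}\|X\|_{p,I}$, by Cauchy--Schwarz over the partition.

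The real problem is the Gubinelli derivative $(\Sigma V)'=\Sigma'V+\Sigma\Sigma V$, which enters in two places.
\begin{enumerate}
\item Proposition \ref{prop:rough_integral_welldefined:error_bound} puts the term $(\Sigma V)'_u\bX_{u,r}$ into $R^V_{u,r}$. Bounding it requires $\|(\Sigma V)'\|_{\infty,I}\le(\|\Sigma'\|_{\infty,I}+\|\Sigma\|_{\infty,I}^2)\|V\|_{\infty,I}$.
\item The error term of that proposition requires $\|(\Sigma V)'\|_{p,I}$, which contains $\|\Sigma'\|_{\infty,I}\|V\|_{p,I}$.
\end{enumerate}
The hypothesis controls only the seminorm $\|\Sigma'\|_{p,I}$. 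The bound $\|\Sigma'\|_{\infty,I}\le\|\Sigma'_s\|+\|\Sigma'\|_{p,I}$ leaves $\|\Sigma'_s\|$ uncontrolled. So your local constant cannot depend only on $(p,\|A\|_\infty,C_\Sigma,\alpha_\Sigma)$, and the absorption step does not close.

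No sharper estimate can fix this: the bound is false without a sup-norm bound on $\Sigma'$, because the solution sees $\Sigma'$ through the area of $\mbX$. Here is a counterexample.
\begin{itemize}
\item Take $d=2$, $A=0$, and the pure-area rough path $X\equiv0$, $\bX_{s,t}=\tfrac a2(t-s)(e_1\otimes e_2-e_2\otimes e_1)$. It satisfies Chen's relation and $\bX+\bX^\top=X\otimes X$, and it stays fixed throughout.
\item Take $\Sigma\equiv0$ and a constant Gubinelli derivative $\Sigma'$ whose only nonzero block is $\lambda I_n$, the derivative of $\Sigma^1$ in direction $2$.
\item Then $(\Sigma,\Sigma')\in\sD^p_X$ and $\|\Sigma\|_{\infty,I}+\|\Sigma\|_{p,I}+\|\Sigma'\|_{p,I}+\|R^\Sigma\|_{\frac p2,I}=0$ on every interval, so the hypothesis holds with $C_\Sigma=1$.
\item The rough integral reduces to $\lim\sum_{[u,r]\in\pi}(\Sigma V)'_u\bX_{u,r}$. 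Hence the RDE is the ODE $\dot V_t=\kappa\lambda V_t$, with $\kappa=\pm a/2\neq0$ fixed by the pairing convention.
\item Therefore $\|V_T\|=e^{\kappa\lambda T}\|v\|$ is unbounded as $\kappa\lambda\to+\infty$, while $C_\Sigma$, $\|A\|_\infty$ and $N_\alpha(\mbX)$ stay fixed.
\end{itemize}
Smooth, small, fast loops approximating this area exhibit the same blow-up.

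So you must add $\|\Sigma'\|_{\infty,I}\le C_\Sigma$ to the hypothesis. It holds in the only place the lemma is used, Lemma \ref{lem:rde:linearized:bounded_solutions}, where $\Sigma'_t=\nabla_x^2\sigma(t,Y_t)\sigma(t,Y_t)$ is bounded by $\|\sigma\|_{C^2_b}^2$. With that assumption, your Step 1 closes as you describe, and Steps 2--3 then give both stated bounds.
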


\begin{lemma}[Bounds on the Jacobian flow {\cite[Lemma 3.18 \ellone{(modified)}]{Lew2026}}]
\label{lem:rde:linearized:bounded_solutions}
Let %
$y,v\in\R^n$, 
$U\subseteq\R^m$, 
$u\in \ellone{L^1([0,T],U)}$, 
$b$  satisfy  Assumption \ref{assumption:b:stronger:linear_rde}, 
$\sigma\in C_b^3$,  
$\mbX\in\sC^p$, 
and $(Y,Y'),(V,V')\in\sD^p_X$ 
solve $Y_t=y+\int_0^tb(s,Y_s,u_s)\dd s+\int_0^t\sigma(s,Y_s)\dd\mbX_s$   and  $V_t =v +  
\int_0^t
\nablaof{x}b(s,Y_s,u_s)V_s\dd s
+  
\int_0^t
\nablaof{x}\sigma(s,Y_s)V_s\dd\mbX_s$. 
Then, 
\begin{align}
\label{eq:rde:linearized:Vp_V'p_RVp/2:bound}
\|V\|_{p,[0,T]}+
\|V'\|_{p,[0,T]}+
\|R^V\|_{\frac{p}{2},[0,T]}
&\leq 
C_{p,T,b,\sigma}\exp\left(C_{p,T,b,\sigma}N_{\alpha_{p,b,\sigma},[0,T]}(\mbX)\right)\|v\|,
\\
\label{eq:rde:linearized:bounded_solutions}
\|V\|_{\infty,[0,T]}
&\leq 
C_{p,T,b,\sigma}\exp\left(C_{p,T,b,\sigma}N_{\alpha_{p,b,\sigma},[0,T]}(\mbX)\right)\|v\|
\end{align}
for some constants $C_{p,T,b,\sigma}\geq 1$ and $0<\alpha_{p,b,\sigma}<1$.
\end{lemma}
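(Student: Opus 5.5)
The plan is to reduce to Lemma \ref{lem:rde:linear:bounded_solutions}, applied with $A_t:=\nablaof{x}b(t,Y_t,u_t)$ and $(\Sigma_t,\Sigma_t'):=(\nablaof{x}\sigma(t,Y_t),\,\nabla_x^2\sigma(t,Y_t)Y_t')$ with $Y_t'=\sigma(t,Y_t)$. The only change from \cite[Lemma 3.18]{Lew2026} is that $u\in L^1([0,T],U)$ instead of $L^\infty$. Since $\nablaof{x}b$ is bounded uniformly in $u$ by Assumption \ref{assumption:b:stronger:linear_rde}, we have $\|A\|_\infty\leq C_b$ whatever the control. Measurability of $t\mapsto A_t$ holds because $\nablaof{x}b$ is Carath\'eodory-type and $(Y,u)$ is measurable. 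So $A\in L^\infty$ with a bound depending only on $b$, and the control never enters the constants.

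\textbf{Step 1.} We verify the hypothesis of Lemma \ref{lem:rde:linear:bounded_solutions} on $\Sigma$. We need constants $C_\Sigma\geq1$ and $0<\alpha_\Sigma<1$, depending only on $(p,b,\sigma)$, such that
\[
\|\Sigma\|_{\infty,I}+\|\Sigma\|_{p,I}+\|\Sigma'\|_{p,I}+\|R^\Sigma\|_{\frac p2,I}\leq C_\Sigma
\]
whenever $\|\mbX\|_{p,I}+|I|\leq\alpha_\Sigma^{1/p}$.
\begin{itemize}
\item The sup bound is immediate from $\sigma\in C^3_b$.
\item For $\|\Sigma\|_{p,I}$ and $\|\Sigma'\|_{p,I}$, we use \eqref{eq:sigma(.,X):pvar} applied to $\nablaof{x}\sigma$ and $\nabla^2_x\sigma$ composed with $Y$. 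The product $\nabla^2_x\sigma(\cdot,Y)\,\sigma(\cdot,Y)$ is handled with the product bound of Lemma \ref{lem:pvariation:inequalities}.
\item For the remainder, we Taylor expand:
\[
R^\Sigma_{s,t}=\nablaof{x}\sigma(t,Y_t)-\nablaof{x}\sigma(s,Y_s)-\nabla^2_x\sigma(s,Y_s)\sigma(s,Y_s)X_{s,t},
\]
which is bounded by $C(|t-s|+\|R^Y_{s,t}\|+\|Y_{s,t}\|^2)$. Then \eqref{eq:path_finite_var:sum_p/2vars} gives the $\frac p2$-variation bound.
\end{itemize}
All of these reduce to the short-interval bounds \eqref{eq:bounds_pvars_solutions_RDEs:small_intervals:combined} on $\|Y\|_{p,I}$ and $\|R^Y\|_{\frac p2,I}$ from Proposition \ref{prop:rdes:error_bound}. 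That proposition already holds for $u\in L^1$ because $b$ is bounded. We take $\alpha_\Sigma:=\alpha_{p,b,\sigma}$ from there. This step is the main obstacle: the point to check is that every constant is independent of $u$ and of $\|Y\|_\infty$. This holds because only increments of $Y$ enter and $\sigma\in C^3_b$ has globally bounded derivatives.

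\textbf{Step 2.} We apply Lemma \ref{lem:rde:linear:bounded_solutions}, which gives \eqref{eq:rde:linearized:Vp_V'p_RVp/2:bound} and \eqref{eq:rde:linearized:bounded_solutions}. The constants there depend only on $(p,T,\|A\|_\infty,C_\Sigma,\alpha_\Sigma)$, hence only on $(p,T,b,\sigma)$. The exponent is $N_{\alpha_{p,A,\Sigma},[0,T]}(\mbX)$ with $\alpha_{p,A,\Sigma}\leq\alpha_\Sigma$, which we rename $\alpha_{p,b,\sigma}$.

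Existence and uniqueness of $V$ is not needed separately here: it is Corollary \ref{cor:rde:linear:existence_uniqueness}, which already allows $u\in L^1$.
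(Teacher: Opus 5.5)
Your proposal is correct and takes the same route as the paper. The paper only says the proof is that of \cite[Lemma 3.18]{Lew2026} with the control held fixed, and that argument is exactly your reduction to Lemma~\ref{lem:rde:linear:bounded_solutions} with $A_t=\nabla_x b(t,Y_t,u_t)$ and $\Sigma_t=\nabla_x\sigma(t,Y_t)$, where the short-interval hypothesis is checked via the a priori bounds on $Y$ and the control enters only through the $u$-uniform bounds on $b$ and $\nabla_x b$. Two minor points: Proposition~\ref{prop:rdes:error_bound} is stated for two rough paths, so with $\widetilde{\mbX}=\mbX$ you should take $\alpha_\Sigma$ slightly smaller, e.g.\ $\alpha_\Sigma=2^{-p}\alpha_{p,b,\sigma}$. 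Also, its single-solution bound \eqref{eq:bounds_pvars_solutions_RDEs:small_intervals:combined} uses only boundedness of $b$, not Lipschitz continuity in $u$, which this lemma does not assume.
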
 
\begin{proof}
\ellone{The proof is identical to the proof of \cite[Lemma 3.18]{Lew2026}, since the control $u$ is fixed.}
\end{proof}

\subsection{Continuity of solutions to RDEs}
\ellone{In this section, we show that the  solution maps
$$
Y,V: 
\R^n\times L^1([0,T],U)\times \sC^p_g([0,T],\R^d)
\to C([0,T],\R^n)
$$
are continuous. This continuity is used in Section \ref{sec:rdes:random_integrable} to show the measurability of solutions of rough SDEs, which are RDEs with random initial conditions, random controls, and Gaussian rough paths.
}

\subsubsection{Continuity of solutions to nonlinear RDEs}

\begin{lemma}
[Continuity of solutions to RDEs {\cite[Lemma 3.19 \ellone{(modified)}]{Lew2026}}]\label{lem:rdes:continuity}
Let %
$y,\tilde{y},U,u,\tilde{u},b,\sigma,\mbX,\widetilde{\mbX}$
be as in Proposition \ref{prop:rdes:error_bound}, 
where $b$ satisfies Assumption \ref{assum:rde_drift} and is Lipschitz in $u$ and $\sigma\in C_b^3$, 
and $(Y,Y')\in\sD^p_X$ and $(\widetilde{Y},\widetilde{Y}')\in\sD^p_{\widetilde{X}}$  
solve the RDEs
\begin{align*}
Y_t=y+\int_0^tb(s,Y_s,u_s)\dd s+\int_0^t\sigma(s,Y_s)\dd\mbX_s,
\ 
\widetilde{Y}_t=\tilde{y}+\int_0^tb(s,\widetilde{Y}_s,\tilde{u}_s)\dd s+\int_0^t\sigma(s,\widetilde{Y}_s)\dd\widetilde{\mbX}_s,
\  t\in[0,T],
\end{align*} and assume that there exists a constant $M\geq 0$ such that   $\|\mbX\|_p,\|\widetilde{\mbX}\|_p\leq M$. Then,%
\begin{align}
\hspace{-2mm}
\|Y'-\widetilde{Y}'\|_{p,[0,T]}+
\|R^Y-R^{\widetilde{Y}}\|_{\frac{p}{2},[0,T]}
&\leq 
C_{p,T,b,\sigma,M}
\left(
\|y-\tilde{y}\| 
+
\|\Delta \mbX\|_{p,[0,T]}
 + 
 \|u-\tilde{u}\|_{\ellone{L^1([0,T],U)}}
\right),
\\
\|Y-\widetilde{Y}\|_{\infty,[0,T]}
&\leq 
C_{p,T,b,\sigma,M}
\left(
\|y-\tilde{y}\|
+
\|\Delta \mbX\|_{p,[0,T]}
 + 
 \|u-\tilde{u}\|_{\ellone{L^1([0,T],U)}}
\right)
\end{align}
for a constant $C_{p,T,b,\sigma,M}\geq 0$.
\end{lemma}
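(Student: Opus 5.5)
The plan is to deduce the global estimates from the local estimate of Proposition \ref{prop:rdes:error_bound} by patching it over a greedy partition. Because the rough path norms are uniformly bounded by $M$, the number of pieces in such a partition is bounded by a constant depending only on $M$.

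First, I would set up the partition. Take the control $w(s,t)=C_p(w_\mbX(s,t)+w_{\widetilde{\mbX}}(s,t)+|t-s|)$ from Corollary \ref{cor:Nalpha:NX_NXtilde_NT:small_intervals}, and take $\alpha=\alpha_{p,b,\sigma}$ from Proposition \ref{prop:rdes:error_bound}. The greedy partition $\{\tau_i\}$ of $[0,T]$ then satisfies $w(\tau_i,\tau_{i+1})\leq\alpha$ on each piece. By \eqref{eq:|X|+|Xtilde|+|dt|<=alpha^p}, the smallness condition of Proposition \ref{prop:rdes:error_bound} holds on each $I_i=[\tau_i,\tau_{i+1}]$. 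Superadditivity of $w$ bounds the number of pieces:
\[
N_{\alpha,[0,T]}(w)\leq w(0,T)/\alpha\leq C_p\big(M^p+M^{p/2}+2M^p+T\big)/\alpha=:N_M .
\]
Here I use $w_\mbX(0,T)\leq\|X\|_p^p+\|\bX\|_{p/2}^{p/2}\leq M^p+M^{p/2}$.

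Second, I would iterate across the pieces. On $I_i$, estimate \eqref{eq:RDE:DY'+dRY:close} gives
\[
\|Y'-\widetilde Y'\|_{p,I_i}+\|R^Y-R^{\widetilde Y}\|_{\frac p2,I_i}\leq C\big(\|\Delta Y_{\tau_i}\|+\|\Delta\mbX\|_{p,[0,T]}+\|\Delta u\|_{L^1}\big).
\]
I would bound the increment of $\Delta Y$ on $I_i$ by writing
\[
\Delta Y_{s,t}=(Y'_s-\widetilde Y'_s)X_{s,t}+\widetilde Y'_s(X_{s,t}-\widetilde X_{s,t})+(R^Y-R^{\widetilde Y})_{s,t}.
\]
Using $\|Y'-\widetilde Y'\|_{\infty,I_i}\leq\|Y'_{\tau_i}-\widetilde Y'_{\tau_i}\|+\|Y'-\widetilde Y'\|_{p,I_i}$, the identity $Y'_{\tau_i}=\sigma(\tau_i,Y_{\tau_i})$ with $\sigma$ Lipschitz, and $\|\widetilde Y'\|_\infty\leq\|\sigma\|_\infty$, this gives
\[
\|\Delta Y\|_{\infty,I_i}\leq C'\big(\|\Delta Y_{\tau_i}\|+\|\Delta\mbX\|_p+\|\Delta u\|_{L^1}\big),
\]
with $C'$ depending on $M$ only through $\|X\|_p\leq M$. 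Setting $E=\|\Delta\mbX\|_p+\|\Delta u\|_{L^1}$ and $e_i=\|\Delta Y_{\tau_i}\|$, I get the recursion $e_{i+1}\leq C'(e_i+E)$. Hence $e_i\leq (2C')^{i}(\|y-\tilde y\|+E)$ for every $i\leq N_M+1$. This already yields the sup-norm bound, with constant $(2C')^{N_M+1}$.

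Third, I would glue the local $p$- and $p/2$-variation bounds into global ones. For $p$-variation, $\|Z\|_{p,[0,T]}\leq (N_M+1)^{1-1/p}\sum_i\|Z\|_{p,I_i}$. For the remainder, a two-parameter object, I would apply the Chen-type identity across partition points,
\[
R_{s,t}=R_{s,\tau}+R_{\tau,t}+(Y'_s-Y'_\tau)X_{\tau,t}.
\]
Applied to $R^Y-R^{\widetilde Y}$, this produces cross terms of the form $\Delta Y'\cdot X$ and $\widetilde Y'\cdot\Delta X$. These are controlled via \eqref{eq:path_finite_var:sum_p/2vars} using the already established $p$-variation bounds on $\Delta Y'$ and the $L^\infty$ bounds on $\widetilde Y'$, with the number of pieces bounded by $N_M$.

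I expect this gluing of the remainder across intervals to be the main obstacle, since cross terms pile up across partition points. Because there are only finitely many pieces, it is bookkeeping rather than a conceptual difficulty, and it mirrors the proof of Proposition \ref{prop:rdes:error_bound:entire_interval}. A shortcut is to invoke Proposition \ref{prop:rdes:error_bound:entire_interval} directly with $I=[0,T]$ and bound $\exp(CN_{\alpha,[0,T]}(w))\leq \exp(CN_M)$ via the count above. That yields both inequalities immediately, with $C_{p,T,b,\sigma,M}=C e^{CN_M}$.
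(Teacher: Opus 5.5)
Your proposal is correct, and its ``shortcut'' is exactly the paper's argument: apply Proposition~\ref{prop:rdes:error_bound:entire_interval} on $I=[0,T]$ with $\Delta Y_0=y-\tilde y$, and bound $N_{\alpha,[0,T]}(w)\le w(0,T)/\alpha$ by a quantity depending only on $(p,T,M,\alpha)$; your longer greedy-partition patching simply re-derives that proposition under the uniform bound $M$. There are two harmless slips: the correct bound is $w(0,T)\le C_p(2M^p+2M^{p/2}+T)$, and the Chen-type identity reads $R_{s,t}=R_{s,\tau}+R_{\tau,t}+(Y'_\tau-Y'_s)X_{\tau,t}$.
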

\begin{proof}
\ellone{It is identical to the proof of \cite[Lemma 3.19]{Lew2026} using the modified $L^1$ estimates \eqref{eq:RDE:DY'+dRY:close:full_interval}-\eqref{eq:RDE:DY:close:full_interval}.}
\end{proof}

The following result relaxes the Lipschitz condition of $b$ in $u$, and will be used to prove continuity of solutions to linear RDEs without assuming that $\nablaof{x}b$ is Lipschitz in $u$ in \ref{lem:rde:linearized:continuous}.
\begin{lemma}
[\ellone{Continuity of solutions to RDEs without Lipschitz continuity of $b$ in $u$}]\label{lem:rdes:continuity:sqrtholderinu}
Let %
$y,\tilde{y},U,u,\tilde{u},b,\\\sigma,\mbX,\widetilde{\mbX},Y,Y',\widetilde{Y},\widetilde{Y}'$ be as in Lemma \ref{lem:rdes:continuity}, 
where $b$ satisfies Assumption \ref{assum:rde_drift}. 
Instead of assuming that $b$ is Lipschitz in $u$, assume that $b$ is only $\frac12$-H\"older continuous in $u$:
\begin{equation}
\label{eq:b:holdercontinuous_in_u}
\left\|
b(t,x,u)-b(t,x,\widetilde u)
\right\|
\leq C_b
\sqrt{\|u-\widetilde u\|}
\end{equation}
for  almost every $t\in[0,T]$, all $x,\tilde{x}\in\R^n$, and all $u,\widetilde u\in U$.

Assume that there exists a constant $M\geq 0$ such that   $\|\mbX\|_p,\|\widetilde{\mbX}\|_p\leq M$. Then,
\begin{align}
\label{eq:RDE:DY'+dRY:close:full_interval:continuity}
\hspace{-2mm}
\|Y'-\widetilde{Y}'\|_{p,[0,T]}+
\|R^Y-R^{\widetilde{Y}}\|_{\frac{p}{2},[0,T]}
&\leq 
C_{p,T,b,\sigma,M}
\!\left(
\|y-\tilde{y}\| 
+
\|\Delta \mbX\|_{p,[0,T]}
 + 
 \ellone{
 \|u-\tilde{u}\|_{\ellone{L^1([0,T],U)}}
 ^{1/2}}
\right)\!,
\\
\label{eq:RDE:DY:close:full_interval:continuity}
\|Y-\widetilde{Y}\|_{\infty,[0,T]}
&\leq 
C_{p,T,b,\sigma,M}
\!\left(
\|y-\tilde{y}\|
+
\|\Delta \mbX\|_{p,[0,T]}
 + 
 \ellone{
 \|u-\tilde{u}\|_{\ellone{L^1([0,T],U)}}
 ^{1/2}}
\right)\!.
\end{align}
\end{lemma}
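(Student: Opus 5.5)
The plan is to reduce to the Lipschitz-in-$u$ estimates of Lemma \ref{lem:rdes:continuity} by isolating where the control enters. The control appears only through the drift integral $\int_0^\cdot b(s,Y_s,u_s)\dd s$. The first step is a replacement for \eqref{eq:bounds_int_b_ds:dy_du} in Lemma \ref{lem:bounds_int_b_ds}. For $[s,t]\subseteq[0,T]$, we split
\[
b(r,Y_r,u_r)-b(r,\widetilde Y_r,\tilde u_r)=\big(b(r,Y_r,u_r)-b(r,\widetilde Y_r,u_r)\big)+\big(b(r,\widetilde Y_r,u_r)-b(r,\widetilde Y_r,\tilde u_r)\big).
\]
The first term is bounded by $C_b|t-s|\|Y-\widetilde Y\|_\infty$. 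The second term is bounded by $C_b\int_s^t\|u_r-\tilde u_r\|^{1/2}\dd r$.

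Next we need a $p/2$-variation bound for the second term. The map $(s,t)\mapsto\int_s^t\|u_r-\tilde u_r\|^{1/2}\dd r$ is additive, so we can use \eqref{eq:path_finite_var:sum_p/2vars} with $f:=\|u-\tilde u\|^{1/2}\in L^1$. Since $p/2\geq1$, the $p/2$-variation of an additive control is at most its total mass. By Cauchy--Schwarz,
\[
\int_0^T\|u_r-\tilde u_r\|^{1/2}\dd r\leq T^{1/2}\|u-\tilde u\|_{L^1([0,T],U)}^{1/2}.
\]
This gives
\[
\Big\|\int_0^\cdot (b(s,Y_s,u_s)-b(s,\widetilde Y_s,\tilde u_s))\dd s\Big\|_{\frac p2,I}\leq C_{p,b}\big(|I|\,\|Y-\widetilde Y\|_{\infty,I}+|I|^{1/2}\|u-\tilde u\|^{1/2}_{L^1(I,U)}\big)
\]
on every interval $I$.

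The second step is to rerun the short-interval estimate, Proposition \ref{prop:rdes:error_bound}, with this bound. Its proof uses the drift only through the term $\|u-\tilde u\|_{L^1(I,U)}$, which now becomes $\|u-\tilde u\|_{L^1(I,U)}^{1/2}$; the contraction argument on small intervals is unchanged, since the Lipschitz dependence on $Y$ is untouched. We then chain these local estimates over the greedy partition, exactly as in Proposition \ref{prop:rdes:error_bound:entire_interval}. At each step the initial discrepancy $\|\Delta Y_{\tau_i}\|$ is controlled by the previous interval's bound, producing the factor $C\exp(CN_{\alpha,[0,T]}(w))$. In the sum we bound each local term by the global one, $\|u-\tilde u\|_{L^1(I_i)}^{1/2}\leq\|u-\tilde u\|_{L^1([0,T])}^{1/2}$.

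Finally, the bound $\|\mbX\|_p,\|\widetilde{\mbX}\|_p\leq M$ turns the exponential factor into a constant. By \eqref{eq:Nalpha<=3CpNalpha_X_and_time}, $N_{\alpha}(w)$ is bounded by a quantity depending only on $(M,T,\alpha,p)$, because $N_\alpha(\mbX)\leq w_\mbX(0,T)/\alpha\leq C_pM^p/\alpha$. This is the same argument as in Lemma \ref{lem:rdes:continuity} and gives \eqref{eq:RDE:DY'+dRY:close:full_interval:continuity} and \eqref{eq:RDE:DY:close:full_interval:continuity}.

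The main obstacle is the second step: checking that in the proof of \cite[Proposition 3.14]{Lew2026} the control enters only through the $p/2$-variation of the drift integral. The concern is that a Gubinelli-derivative or remainder estimate might use Lipschitz dependence in $u$ directly. I expect it does not, since $Y'=\sigma(\cdot,Y)$ is independent of $u$ and the remainder $R^Y$ contains the drift integral additively. If a term $\|\Delta u\|_{L^1}$ appears elsewhere, I would replace it by the same $L^1$-of-square-root quantity and apply Cauchy--Schwarz again.
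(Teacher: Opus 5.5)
Your proposal is correct and matches the paper's proof. The paper likewise replaces \eqref{eq:bounds_int_b_ds:dy_du} by the bound $C_{p,b}\big(T\|Y-\widetilde Y\|_\infty+\sqrt{T}\,\|u-\tilde u\|_{L^1([0,T],U)}^{1/2}\big)$, obtained from the H\"older assumption and Cauchy--Schwarz, and then adapts the short-interval, greedy-partition chaining, and bounded-$N_\alpha$ argument of Lemma \ref{lem:rdes:continuity}; you simply spell those steps out in more detail. One cosmetic point: $w_{\mbX}(0,T)\le M^p+M^{p/2}$ rather than $C_pM^p$, which is harmless because the constant may depend on $M$.
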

\begin{proof}
\ellone{First,  under the $\frac12$-H\"older continuity assumption in \eqref{eq:b:holdercontinuous_in_u}, the bound \eqref{eq:bounds_int_b_ds:dy_du} can be revised as
\begin{align}\label{eq:bounds_int_b_ds:dy_du:1/2holder}
\left\|\int_0^\cdot (b(s,Y_s,u_s)-b(s,\widetilde{Y}_s,\tilde{u}_s))\dd s\right\|_\frac{p}{2} 
\leq 
C_{p,b}
\big(
\ellone{T}\|Y-\widetilde{Y}\|_\infty
+
\sqrt{T}\|u-\tilde{u}\|_{\ellone{L^1([0,T],U)}}^{1/2}
\big).
\end{align}
The desired bounds then follow by adapting the proof of Lemma \ref{lem:rdes:continuity} \cite[Lemma 3.19]{Lew2026}.
}
\end{proof}

\subsubsection{Continuity of solutions to linear RDEs}

The continuity of solutions to  linear RDEs is proved under the following stronger assumption. 
\begin{assumption}[Stronger regularity of $b$]\label{assumption:b:stronger}
Let  $U\subseteq\R^m$, and $b$ satisfy Assumption \ref{assumption:b:stronger:linear_rde} and  
\begin{gather*}
\|b(t,x,u)\|+\big\|\nablaof{x}b(t,x,u)\big\| \leq C_b,
\quad
\|\nablaof{x}b(t,x,u)-\nablaof{x}b(t,\tilde{x},u)\|\leq C_b\|x-\tilde{x}\|,
\\[1mm]
\ellone{
\left\|
b(t,x,u)-b(t,x,\widetilde u)
\right\|
\leq C_b
\|u-\widetilde u\|,
}
\end{gather*} for  almost every $t\in[0,T]$, all $x,\tilde{x}\in\R^n$, and all $u,\widetilde u\in U$.
\end{assumption}
\begin{remark}[Interpolation inequality]\label{remark:interpolation}
\ellone{If $b$ satisfies  Assumption \ref{assumption:b:stronger}, then
\begin{equation}\label{eq:nablab:holder:interpolation}
\left\|
\nablaof{x}b(t,x,u)-\nablaof{x}b(t,x,\widetilde u)
\right\|
\leq C_b
\sqrt{\|u-\widetilde u\|},
\end{equation}
Indeed, denote $\Delta b(x)=b(x,u)-b(x,\widetilde{u})$, and apply Taylor's Theorem to get
\begin{align*}
\Delta b(x+he)-\Delta b(x)
&=
\int_0^1\nabla\Delta b(x+\theta he)\dd\theta he
=
\nablaof{x}\Delta b(x)he+
\int_0^1(\nablaof{x}\Delta b(x+\theta he)-\nablaof{x}\Delta b(x))\dd\theta he
\end{align*}
 for any $h\geq 0$ and unit-norm $e\in\R^n$.  
Then, using $\|
\Delta b(x+he)-\Delta b(x)
\|\leq C_b\|u-\widetilde u\|$ and $\|\nablaof{x}\Delta b(x)-\nablaof{x}\Delta b(\widetilde x)\|\leq C_b\|x-\widetilde x\|$, we obtain 
\begin{align*}
\|\nablaof{x}\Delta b(x)\|h
&\leq
\|
\Delta b(x+he)-\Delta b(x)
\|
+
\left\|
\int_0^1(\nablaof{x}\Delta b(x+\theta he)-\nablaof{x}\Delta b(x))\dd\theta he
\right\|
\leq C_b(\|u-\widetilde u\|+h^2).
\end{align*}
Choosing $h=\sqrt{\|u-\widetilde u\|}$, we obtain \eqref{eq:nablab:holder:interpolation}.
}
\end{remark}

\begin{lemma}[Continuity of the Jacobian flow {\cite[Lemma 3.21 \ellone{(modified)}]{Lew2026}}]\label{lem:rde:linearized:continuous}
Let %
$y,\tilde{y},U,u,\tilde{u},b,\sigma,\mbX,\widetilde{\mbX},Y,\\Y',\widetilde{Y},\widetilde{Y}',M$ be as in Lemma \ref{lem:rdes:continuity}, 
and \ellone{assume that $b$ satisfies  Assumption \ref{assumption:b:stronger}}, 
and 
$\sigma\in C_b^4$. 
Let $v\in C_b^1(\R^n,\R^n)$ and  $(V,V')\in\sD^p_X$ and $(\widetilde{V},\widetilde{V}')\in\sD^p_{\widetilde{X}}$ 
solve the linear RDEs 
\begin{align*}
\dd V_t &= 
\nablaof{x}b(t,Y_t,u_t)V_t\dd t +
\nablaof{x}\sigma(t,Y_t)V_t\dd\mbX_t
\ \text{ and }\   
\dd \widetilde{V}_t = 
\nablaof{x}b(t,\widetilde{Y}_t,\tilde{u}_t)\widetilde{V}_t\dd t +
\nablaof{x}\sigma(t,\widetilde{Y}_t)\widetilde{V}_t\dd\widetilde{\mbX}_t
\end{align*}
over $[0,T]$ and from $v(y)$ and $v(\tilde y)$, respectively. 
Then,  for a constant $C_{p,T,b,\sigma,v,M}\geq 0$,
\begin{align*}
\|V'-\widetilde{V}'\|_{p,[0,T]}+
\|R^V-R^{\widetilde{V}}\|_{\frac{p}{2},[0,T]}
+
\|V-\widetilde{V}\|_{\infty,[0,T]}
&\leq 
\\
&\hspace{-4cm}
C_{p,T,b,\sigma,v,M}
\left(
\|y-\tilde{y}\| 
+
\|\Delta \mbX\|_{p,[0,T]}
 + 
 \ellone{
 \|\Delta u\|_{\ellone{L^1([0,T],U)}}
}
 + 
 \ellone{
 \|\Delta u\|_{\ellone{L^1([0,T],U)}}
 ^{1/2}}
\right).
\end{align*}
\end{lemma}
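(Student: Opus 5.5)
We compare the two linear RDEs as instances of the same linear RDE with perturbed coefficients, and use the continuity of the nonlinear solution map for the coefficient processes. Writing $A_t:=\nablaof{x}b(t,Y_t,u_t)$, $\widetilde A_t:=\nablaof{x}b(t,\widetilde Y_t,\tilde u_t)$, $\Sigma_t:=\nablaof{x}\sigma(t,Y_t)$ and $\widetilde\Sigma_t:=\nablaof{x}\sigma(t,\widetilde Y_t)$, the difference $\Delta V:=V-\widetilde V$ formally satisfies
$$
\dd \Delta V_t=A_t\Delta V_t\dd t+\Sigma_t\Delta V_t\dd\mbX_t+(A_t-\widetilde A_t)\widetilde V_t\dd t+\big(\Sigma_t\widetilde V_t\dd\mbX_t-\widetilde\Sigma_t\widetilde V_t\dd\widetilde{\mbX}_t\big).
$$
Because $\mbX\neq\widetilde{\mbX}$, this cannot be handled directly as a single RDE. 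Following the proof of \cite[Lemma 3.21]{Lew2026}, we therefore run the usual argument on small intervals of the greedy partition of the joint control $w$ from Corollary \ref{cor:Nalpha:NX_NXtilde_NT:small_intervals}. On each such interval $I$, we bound $\|V'-\widetilde V'\|_{p,I}+\|R^V-R^{\widetilde V}\|_{\frac p2,I}$ by a contraction estimate: a constant times the difference at the left endpoint, plus the forcing terms. We then propagate these bounds across the at most $N_{\alpha,[0,T]}(w)+1$ intervals. Since $\|\mbX\|_p,\|\widetilde{\mbX}\|_p\le M$, the number of intervals is bounded by a constant depending on $M$, which gives a constant $C_{p,T,b,\sigma,v,M}$.

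The forcing terms are controlled in three steps. First, the initial difference satisfies $\|v(y)-v(\tilde y)\|\le\|v\|_{C^1_b}\|y-\tilde y\|$. Second, the rough forcing involves $\nablaof{x}\sigma(\cdot,Y)-\nablaof{x}\sigma(\cdot,\widetilde Y)$ and its Gubinelli derivative. Using $\sigma\in C^4_b$ and the estimate \eqref{eq:Delta_sigma(.,X):pvar}, these are controlled by $\|Y-\widetilde Y\|_\infty$, $\|Y'-\widetilde Y'\|_p$ and $\|R^Y-R^{\widetilde Y}\|_{\frac p2}$, which Lemma \ref{lem:rdes:continuity} bounds by $\|y-\tilde y\|+\|\Delta\mbX\|_p+\|\Delta u\|_{L^1}$. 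The cross terms also produce a factor $\|\Delta\mbX\|_{p,I}$ multiplied by the size of $\widetilde V$, and Lemma \ref{lem:rde:linearized:bounded_solutions} bounds $\widetilde V$ uniformly in terms of $M$. Third, the drift forcing $\int(A_s-\widetilde A_s)\widetilde V_s\dd s$ splits as
$$
\big[\nablaof{x}b(s,Y_s,u_s)-\nablaof{x}b(s,\widetilde Y_s,u_s)\big]+\big[\nablaof{x}b(s,\widetilde Y_s,u_s)-\nablaof{x}b(s,\widetilde Y_s,\tilde u_s)\big].
$$
The first bracket is Lipschitz in $x$, so it is controlled by $\|Y-\widetilde Y\|_\infty$. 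For the second bracket we use the interpolation inequality \eqref{eq:nablab:holder:interpolation} from Remark \ref{remark:interpolation}, which makes it $\tfrac12$-H\"older in $u$. By Cauchy--Schwarz, $\int\sqrt{\|\Delta u_s\|}\dd s\le\sqrt T\|\Delta u\|_{L^1}^{1/2}$, so the argument of Lemma \ref{lem:rdes:continuity:sqrtholderinu} via \eqref{eq:bounds_int_b_ds:dy_du:1/2holder} yields $\frac p2$-variation bounds of order $\|Y-\widetilde Y\|_\infty+\|\Delta u\|_{L^1}^{1/2}$. This is where the extra $\|\Delta u\|_{L^1}^{1/2}$ term in the statement comes from.

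The main obstacle is the second step: carefully bookkeeping the rough-integral difference against two different rough paths, while keeping the controlled-path structure of $(\Sigma\widetilde V,(\Sigma\widetilde V)')$. This requires the third derivatives of $\sigma$ in the Gubinelli derivative, which is why $\sigma\in C_b^4$ is assumed. We expect to reuse the estimate of \cite[Lemma 3.21]{Lew2026} essentially verbatim, since that estimate does not involve $u$. Only the drift estimate changes, through the $L^1$ and square-root terms described above. Finally, the sup-norm bound on $V-\widetilde V$ follows from \eqref{eq:path_finite_var:infty_ineq}, applied with the initial difference together with the $p$-variation bound obtained from the controlled-path decomposition.
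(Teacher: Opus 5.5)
Your argument is correct in outline, but it takes a different route from the paper.

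\textbf{Your route.} You estimate $V-\widetilde V$ directly. On the greedy partition of the joint control $w$, you derive a contraction-type estimate for the difference of two linear equations driven by different rough paths, with forcing terms, and then propagate it across the intervals.

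\textbf{The paper's route.} The paper never writes an equation for the difference. It uses the a priori bound on $V$ and $\widetilde V$, which is uniform in $M$ by Lemma \ref{lem:rde:linearized:bounded_solutions}. With it, the pair $(Y,V)$ becomes the solution of a joint \emph{nonlinear} RDE with bounded coefficients: $\hat b(t,(x,v),u)=(b(t,x,u),\nablaof{x}b(t,x,u)v)$ and $\hat\sigma(t,(x,v))=(\sigma(t,x),\nablaof{x}\sigma(t,x)v)$, truncated in $v$ beyond the a priori bound. The interpolation inequality \eqref{eq:nablab:holder:interpolation} makes $\hat b$ $\tfrac12$-H\"older in $u$, and $\sigma\in C^4_b$ is what makes $\hat\sigma$ regular enough. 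Lemma \ref{lem:rdes:continuity} and Lemma \ref{lem:rdes:continuity:sqrtholderinu}, applied to this joint system, then give the whole bound in one step.

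\textbf{Trade-off.} The paper's reduction reuses all the two-rough-path bookkeeping already inside the nonlinear continuity lemmas. That bookkeeping is exactly the step you flag as the main obstacle. Your route exploits linearity and avoids truncating coefficients. It also makes the origin of each term transparent: $\|\Delta u\|_{L^1}$ enters through $Y-\widetilde Y$, and $\|\Delta u\|_{L^1}^{1/2}$ through the $u$-dependence of $\nablaof{x}b$. The cost is that you must prove the stability estimate for rough integrals against $\mbX$ versus $\widetilde{\mbX}$ yourself, plus the gluing of remainders across intervals.

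\textbf{Caveat on your citation.} As the paper uses it, \cite[Lemma 3.21]{Lew2026} is itself proved through this joint-system reduction. It does not contain a $u$-independent linear perturbation estimate, so the ``verbatim reuse'' you plan is probably not available. You would instead need the standard sewing-lemma argument applied to the difference of the two germs. This is routine but not free.
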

\begin{proof}
The proof follows \cite[Lemma 3.21]{Lew2026}, \ellone{but does not assume that $\nablaof{x}b$ is Lipschitz in $u$}. Since $(V,V')$ is bounded,  it solves a nonlinear RDE with bounded coefficients $(\hat{b},\hat\sigma)$, \ellone{where $\hat{b}$ satisfies}
\begin{equation}
\tag{\ref{eq:b:holdercontinuous_in_u}}
\smash{
\ellone{
\big\|
\hat{b}(t,x,u)-\hat{b}(t,x,\widetilde u)
\big\|
\mathop{\leq}^{\eqref{eq:nablab:holder:interpolation}} 
C_b
\sqrt{\|u-\widetilde u\|}.
}
}
\end{equation}
 Thus, the pair $((Y,V),(Y',V'))$ solves a joint RDE with bounded coefficients, 
and similarly for $(\widetilde{Y},\widetilde{V})$. 
Then using Lemma \ref{lem:rdes:continuity} and \eqref{eq:RDE:DY'+dRY:close:full_interval:continuity} and  \eqref{eq:RDE:DY:close:full_interval:continuity} in Lemma \ref{lem:rdes:continuity:sqrtholderinu},
\begin{align*} 
\|(Y,V)-(\widetilde{Y},\widetilde{V})\|_{\infty} 
&\leq
C_{p,T,b,\sigma,v,M}\big(
\|(y,v(y))-(\tilde{y},v(\tilde{y}))\|
+
\|\Delta\mbX\|_{p}
+
\ellone{
\|\Delta u\|_{\ellone{L^1}}
+
 \|\Delta u\|_{\ellone{L^1}}
 ^{1/2}}
\big),
\end{align*}
and similarly for $\|(Y',V')-(\widetilde{Y}',\widetilde{V}')\|_{p}+\|R^{(Y,V)}-R^{(\widetilde{Y},\widetilde{V})}\|_{\frac{p}{2}}$,
from which we deduce the desired result.
\end{proof}

\subsection{Solutions to rough SDEs}\label{sec:rdes:random_integrable}
\begin{restatement}
  [a) (Rough SDEs {\cite[Theorem 3.22 \ellone{(modified and extended)}]{Lew2026}}]
  {Theorem}
  {thm:sdes:rough}
Let $\rho\in[1,3/2)$, $p\in(2\rho,3)$, $\mbB$ be an enhanced Gaussian process (see Section \ref{sec:background:gaussian_paths}),  $\ell\geq1$, 
$U\subseteq\R^m$, 
$u\in L^\ell([0,T]\times\Omega,U)$, 
$\bar x_0\in L^\ell(\Omega,\R^n)$, 
$b$ satisfy Assumption \ref{assum:rde_drift} and be Lipschitz in $u$, 
and $\sigma\in C_b^3$.

 For almost every $\omega\in\Omega$, let $(x(\omega),x'(\omega))\in\sD_{B(\omega)}^p([0,T],\R^n)$ 
solve the RDE
\begin{equation}
\tag{\ref{eq:random_gaussian_rde}}
\dd x_t(\omega)
=
b(t,x_t(\omega),u_t(\omega))\dd t
+
\sigma(t,x_t(\omega))\dd\mbB_t(\omega),
\quad t\in[0,T]
\end{equation}
from $\bar x_0(\omega)$. Then, $x\in L^\ell(\Omega,C([0,T],\R^n))$. 

If $\bar x_0\in L^\ell_{\F_0}$, $u\in L^\ell_\F$, and $\mbB|_{[0,t]}:\Omega\to\sC^p_g([0,t],\R^d)$ is $\F_t$-measurable  for every $t\in[0,T]$, then $x$ is also $\F$-adapted. 
\end{restatement}
\begin{proof}
Note that $u\in L^1([0,T]\times\Omega,U)$ since 
 $ 
\|u(\omega)\|_{L^1([0,T],U)}\leq T^{1-1/\ell}\|u(\omega)\|_{L^\ell([0,T],U)}
$ almost surely by H\"older's inequality, so we may view $\omega\mapsto u(\omega)$ as a measurable map with values in $L^1([0,T],U)$.   
The proof follows the proof of \cite[Theorem 3.22]{Lew2026}, accounting for $L^1$-valued random controls.

\textit{1) Measurability}: The solution map $x:\Omega\to C([0,T],\R^n)$ is the composition of the measurable map 
$$
(\bar x_0,u,\mbB):\Omega\to\R^n\times L^1([0,T],U)\times\sC^p_g([0,T],\R^d)
$$
and of the continuous map (thanks to Lemma \ref{lem:rdes:continuity})
$$
Y:
\R^n\times L^1([0,T],U)\times\sC^p_g([0,T],\R^d) \to C([0,T],\R^n)
$$
that solves the RDE $\dd Y_t=
b(t,Y_t,u_t)\dd t
+
\sigma(t,Y_t)\dd\mbX_t$ with initial condition $y\in\R^n$, control $u\in L^1([0,T],U)$, and driving signal $\mbX\in\sC^p_g([0,T],\R^d)$. 
Thus, $x$ is measurable. 

By restricting the maps above to  $[0,t]$, it is clear (see the proof of \cite[Theorem 9.1]{Friz2020}) that if $\bar x_0\in L^\ell_{\F_0}$, $u$ is progressively measurable, and $\mbB|_{[0,t]}$ is $\F_t$-measurable for every $t\in[0,T]$, then $x$ is also $\F$-adapted.  

\textit{2) Integrability}: 
By Lemma \ref{lem:rdes:bound:entire_interval},  there exist constants $C_{p,T,b,\sigma}\geq 1$ and $0<\alpha_{p,b,\sigma}<1$ such that  
\begin{align*}
\|x\|_{\infty,[0,T]}
&\smash{\mathop{\leq}^{\eqref{eq:RDE:Y:bound:full_interval}}}
C_{p,T,b,\sigma}\exp\left(
C_{p,T,b,\sigma}N_{\alpha_{p,b,\sigma},[0,T]}(\mbB)
\right)+\|\bar x_0\|
\end{align*}
almost surely.  
Since $\E\left[\exp\left(
CN_{\alpha,[0,T]}(\mbB)
\right)\right]<\infty$  by \eqref{eq:gaussian_rough_paths:E[exp(Nalpha)]<infty}, we obtain $\E[\|x\|_\infty^\ell]<\infty$, so we conclude that  $x\in L^\ell(\Omega,C([0,T],\R^n))$. 
\end{proof}

\begin{restatement}
  [b) (Rough SDEs {\cite[Theorem 3.22 \ellone{(modified and extended)}]{Lew2026}}]
  {Theorem}
  {thm:sdes:rough}

 In addition to the definitions and assumptions of Theorem \ref{thm:sdes:rough} (a), assume that $(b,\sigma)$ satisfy conditions (i)-(v) of Theorem \ref{thm:sdes:rough} (b). 
Let $t_1\in[0,T]$,  and 
$\bar v_{t_1},\bar p_T\in L^\ell(\Omega,\R^n)$. 
For almost every $\omega\in\Omega$, let $(v(\omega),v'(\omega)),(p(\omega),p'(\omega))\in\sD_{B(\omega)}^p([t_1,T],\R^n)$ with $v'_t(\omega)=\Sigma_tv_t$ and $p'_t(\omega)=-\Sigma_t^\top p_t$ solve the RDEs
\begin{align}
\tag{\ref{eq:tangent_rde_background}}
v_t(\omega)
&=
\bar v_{t_1}(\omega)
+
\int_{t_1}^t\nabla_xb(s,x_s,u_s)(\omega)v_s(\omega)\dd s
+
\int_{t_1}^t\nabla_x\sigma(s,x_s)(\omega)v_s(\omega)\dd\mbB_s(\omega),
\\
\tag{\ref{eq:adjoint_rde_background}}
p_t(\omega)
&=
\bar p_T(\omega)
+
\int_t^T
\nabla_xb(s,x_s,u_s)(\omega)^\top p_s(\omega)
\dd s
+
\int_t^T
\nabla_x\sigma(s,x_s)(\omega)^\top p_s(\omega)\dd\mbB_s(\omega),
\end{align}
on $[t_1,T]$. Then, $v,p\in L^{\ell'}(\Omega,C([t_1,T],\R^n))$ for any $\ell'<\ell$. 
If $\bar x_0\in L^\ell_{\F_0}$, $\bar v_{t_1}$ is $\F_{t_1}$-measurable, $u$ is progressively measurable, and $\mbB|_{[0,t]}:\Omega\to\sC^p_g([0,t],\R^d)$ is $\F_t$-measurable  for every $t\in[0,T]$, then $v$ is also $\F$-adapted.
\end{restatement}
\begin{proof}
The proof follows the proof of Theorem \ref{thm:sdes:rough} (a). The measurability of the solution maps follows from Lemma \ref{lem:rde:linearized:continuous}. 
The integrability of these maps follows from Lemma \ref{lem:rde:linearized:bounded_solutions} and H\"older's inequality (which explains the weaker integrability with $\ell'<\ell$, see also Remark \ref{remark:rough_adjoint_integrability}). 
\end{proof}

\subsection{Needle variations}
Finally, we derive the needle variation error bounds in Lemma \ref{lem:needle:rough} for the  rough PMP. The key differences with the results in \cite{Lew2026} are that the control $u$ is $L^1$-valued  and that the bounds do not depend on value of the optimal control $u$ nor on the needle variation values $\bar{u}_1,\dots,\bar{u}_q$. This is possible by using the boundedness of the drift $b$ and expressing error bounds in terms of the needle variation durations $\eta_i$ using \eqref{eq:RDE:DY:close:full_interval:u_subinterval}.  As a result, the error bounds are integrable even though the control is stochastic.
\begin{proposition}
[Needle variations and linearized RDEs {\cite[Proposition 4.2 \ellone{(modified)}]{Lew2026}}]
\label{prop:linear_variation}  
 Let %
$y\in\R^n$, $U\subseteq\R^m$, 
$u\in \ellone{L^1([0,T],U)}$, 
$b$   satisfy Assumption \ref{assumption:b:stronger}  and be Lipschitz in $u$, $\sigma\in C_b^4$, and 
$\mbX\in\sC^p$. 
Let $(Y,Y')\in\sD^p_X$  and $(Y^{\pi_1},(Y^{\pi_1})')\in\sD^p_X$ %
solve the RDEs 
\begin{align*}
Y_t&=y+\int_0^tb(s,Y_s,u_s)\dd s+\int_0^t\sigma(s,Y_s)\dd\mbX_s,
\ \ \ 
Y^{\pi_1}_t=y+\int_0^tb(s,Y^{\pi_1}_s,u^{\pi_1}_s)\dd s+\int_0^t\sigma(s,Y^{\pi_1}_s)\dd\mbX_s,
\end{align*}
on $[0,T]$, 
where %
$t_1\in[0,T]$ is a Lebesgue point, %
$\eta_1\in[0,T-t_1]$, %
$\bar{u}_1\in U$, %
and the needle-like variation $\pi_1=(t_1,\eta_1,\bar{u}_1)$ of $u$ is defined by 
 $$
 u^{\pi_1}_t=\begin{cases}
 \bar{u}_1\quad&\text{if }t\in[t_1,t_1+\eta_1],
 \\
 u_t&\text{otherwise}.
 \end{cases}
 $$ 
 Let 
$(V^{\pi_1},(V^{\pi_1})')\in\sD^p_X$ with $(V^{\pi_1})'=\nablaof{x}\sigma(\cdot,Y_\cdot)V^{\pi_1}_\cdot$ solve the linear RDE
\begin{subequations}\label{eq:rde:linear_variation}
\begin{align}
V^{\pi_1}_t&=b(t_1,Y_{t_1},\bar{u}_1)-b(t_1,Y_{t_1},u_{t_1}),
&&t\in[0,t_1],
\\
V^{\pi_1}_t &= 
V^{\pi_1}_{t_1}
+
\int_{t_1}^t
\nablaof{x}b(s,Y_s,u_s)V^{\pi_1}_s\dd s +
\int_{t_1}^t
\nablaof{x}\sigma(s,Y_s)V^{\pi_1}_s\dd\mbX_s,
 &&t\in[t_1,T].
\end{align}
\end{subequations}
Then, there exist constants $C>0$ and $0<\alpha<1$ that depend on \ellone{$(p,T,b,\sigma)$} such that 
\begin{align}
\label{eq:needle_like_deltasols}
\|Y^{\pi_1}-Y\|_{\infty,[0,T]}
&\leq 
C\exp\left(
CN_{\alpha,[0,T]}(\mbX)
\right)
\eta_1,
\\
\label{eq:needle_like_deltasols_variation}
\|Y^{\pi_1}-Y-\eta_1V^{\pi_1}\|_{\infty,[t_1+\eta_1,T]}&\leq C\exp\left(
CN_{\alpha,[0,T]}(\mbX)
\right)
o(\eta_1).
\end{align}
\end{proposition}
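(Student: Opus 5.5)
The first estimate \eqref{eq:needle_like_deltasols} follows directly from the subinterval bound \eqref{eq:RDE:DY:close:full_interval:u_subinterval} in Proposition \ref{prop:rdes:error_bound:entire_interval}. We apply it with $\widetilde{\mbX}=\mbX$ (so $\Delta\mbX=0$), the same initial condition (so $\Delta Y_0=0$), $I=[0,T]$, and $J=[t_1,t_1+\eta_1]$, on which $u$ and $u^{\pi_1}$ differ. This gives $\|Y^{\pi_1}-Y\|_\infty\leq C\exp(CN_{\alpha,[0,T]}(w))\eta_1$ with $w=C_p(2w_\mbX+|t-s|)$. By \eqref{eq:Nalpha<=3CpNalpha_X_and_time}, $N_{\alpha,[0,T]}(w)\leq C(N_{\alpha,[0,T]}(\mbX)+T/\alpha+1)$, and the time term is absorbed into the constant. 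No bound on $\|u\|$ or $\|\bar u_1\|$ enters, since $b$ is bounded.

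For \eqref{eq:needle_like_deltasols_variation}, we split at $t_1+\eta_1$. On the short interval $[t_1,t_1+\eta_1]$, we write
\[
Y^{\pi_1}_{t_1+\eta_1}-Y_{t_1+\eta_1}=\int_{t_1}^{t_1+\eta_1}\big(b(s,Y^{\pi_1}_s,\bar u_1)-b(s,Y_s,u_s)\big)\dd s+\int_{t_1}^{t_1+\eta_1}\big(\sigma(s,Y^{\pi_1}_s)-\sigma(s,Y_s)\big)\dd\mbX_s .
\]
Using the Lipschitz property in $x$ together with the first estimate, the drift term equals $\int_{t_1}^{t_1+\eta_1}(b(s,Y_s,\bar u_1)-b(s,Y_s,u_s))\dd s+O(\eta_1^2)$. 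This is $\eta_1V^{\pi_1}_{t_1}+o(\eta_1)$ by the Lebesgue time property \eqref{eq:lebesgue_time}, combined with continuity of $Y$. When $\bar u_1\notin V$, we approximate it by $v_n\in V$ and use the Lipschitz property of $b$ in $u$. For the rough integral term, we use the local estimate of Proposition \ref{prop:rough_integral_welldefined:error_bound}: it is bounded by $\|\Delta Y\|_\infty\|X\|_{p,[t_1,t_1+\eta_1]}$ plus remainder terms. The first factor is $O(\eta_1)$ and the second tends to $0$ by continuity of the control $w_\mbX$, so the term is $o(\eta_1)$. The Gubinelli derivative differences and remainders are controlled by \eqref{eq:RDE:DY'+dRY:close:full_interval:u_subinterval}, giving $O(\eta_1)$ times vanishing $p$-variation factors. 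Meanwhile $\eta_1(V^{\pi_1}_{t_1+\eta_1}-V^{\pi_1}_{t_1})=o(\eta_1)$ by continuity of $V^{\pi_1}$ and Lemma \ref{lem:rde:linearized:bounded_solutions}. Hence the error $\delta:=\|Y^{\pi_1}_{t_1+\eta_1}-Y_{t_1+\eta_1}-\eta_1V^{\pi_1}_{t_1+\eta_1}\|$ is $C\exp(CN)\,o(\eta_1)$.

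On $[t_1+\eta_1,T]$, the two controls coincide, so $Z:=Y^{\pi_1}-Y-\eta_1V^{\pi_1}$ satisfies a linear RDE driven by the Jacobian coefficients $\nabla_xb(s,Y_s,u_s)$ and $\nabla_x\sigma(s,Y_s)$. Its forcing consists of second-order Taylor remainders, of size $O(\|Y^{\pi_1}-Y\|^2)=O(\eta_1^2)$ in the drift by Lipschitz continuity of $\nabla_xb$; the rough remainders are handled in the controlled-path sense using $\sigma\in C_b^4$. We then follow \cite[Proposition 4.2]{Lew2026}: view $(Y,Y^{\pi_1},V^{\pi_1})$ as a joint RDE, and propagate over the greedy partition of Definition \ref{def:Nalpha} the local bound (small intervals, Proposition \ref{prop:rdes:error_bound}) that the $p$-variation error grows by at most a constant factor per interval. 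This yields the factor $\exp(CN_{\alpha}(\mbX))(\delta+\eta_1^2)$.

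The main obstacle is the short-interval step. The $o(\eta_1)$ there comes from Lebesgue-time convergence, which is pathwise and not uniform in $\omega$, so we must keep it as a pathwise $o(\eta_1)$ multiplied by the exponential factor, without making it depend on $u$ or $\bar u_1$. The boundedness of $b$, which dominates the Lebesgue-point averages by $2C_b$, is what makes this possible; it also later allows dominated convergence in Lemma \ref{lem:needle:rough}.
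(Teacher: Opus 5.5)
Your proposal is correct and follows the same route as the paper. The first bound is exactly the paper's use of \eqref{eq:RDE:DY:close:full_interval:u_subinterval} with $J=[t_1,t_1+\eta_1]$, so that only $\eta_1$ enters and not the control values. The second bound is the split-at-$t_1+\eta_1$ and propagation argument of \cite[Proposition 4.2]{Lew2026}, which the paper invokes, and your added short-interval details fit what that argument needs: density of $V$ plus Lipschitz continuity in $u$ to reach $\bar u_1$, and the uniform bound $\|b\|\le C_b$ keeping the pathwise $o(\eta_1)/\eta_1$ dominated for Lemma \ref{lem:needle:rough}.
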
 
\begin{proof}
\ellone{The proof follows the steps of \cite[Proposition 4.2]{Lew2026}, using the modified estimates in \eqref{eq:RDE:DY:close:full_interval:u_subinterval} 
that depend on the impulse duration $\eta_1$ and to avoid dependencies on the control values $u$ and $\bar u_1$.}
\end{proof}
Proposition \ref{prop:linear_variation}  can be extended to needle variations with multiple spikes by induction, see \cite{Lew2026}.

\begin{restatement}
  [Needle variations for rough SDEs {\cite[Lemma 4.3 \ellone{(modified)}]{Lew2026}}]
  {Lemma}
  {lem:needle:rough}
Let $\rho,p,\mbB,\ell,U,u,b,\sigma,x,x'$ be as in Theorem \ref{thm:sdes:rough}, 
where $b$ also satisfies Assumption \ref{assumption:b:stronger}  and is Lipschitz in $u$, $\sigma\in C_b^4$, 
 $\mbB=(B,\bB)$ is an enhanced Gaussian process and $(x(\omega),x'(\omega))\in\sD^p_{B(\omega)}$  solves 
\begin{align}
\tag{\ref{eq:random_gaussian_rde}}
x_t(\omega)&=
\bar x_0(\omega)
+\int_0^tb(s,x_s(\omega),u_s(\omega))\dd s+\int_0^t\sigma(s,x_s(\omega))\dd\mbB_s(\omega),
\quad\ \ t\in[0,T].
\end{align}  
Let $0<t_1<\dots<t_q<T$ be Lebesgue times \ellone{satisfying  \eqref{eq:lebesgue_time}, 
$\bar{u}_i\in L^\ell_{\F_{t_i}}(\Omega,U)$} for $i=1,\dots,q$, 
 $0\leq \eta_i< t_{i+1}-t_i$ for $i=1,\dots, q-1$ and $0\leq\eta_q< T-t_q$, and define the needle-like variation $\pi=\{t_1,\dots,t_q,\eta_1,\dots,\eta_q,\bar{u}_1,\dots,\bar{u}_q\}$ of $u$ as the control $u^\pi$ defined by%
 $$
  u^\pi_t=\begin{cases}
 \bar{u}_i\quad&\text{if }t\in[t_i,t_i+\eta_i],
 \\
 u_t&\text{otherwise}.
 \end{cases}
 $$
Let $(x^\pi(\omega),(x^\pi(\omega))')\in\sD^p_{B(\omega)}([0,T],\R^n)$ be the pathwise solution to the RDE
\begin{align*} 
x^\pi_t(\omega)=\bar x_0(\omega)+\int_0^tb(s,x^\pi_s(\omega),u^\pi_s(\omega))\dd s+\int_0^t\sigma(s,x^\pi_s(\omega))\dd\mbB_s(\omega),
\  t\in[0,T],
\end{align*}
and for $i=1,\dots,q$, let $(v^{\pi_i}(\omega),(v^{\pi_i}(\omega))')\in\sD^p_{B(\omega)}([0,T],\R^n)$ be the pathwise solutions to the RDEs 
\begin{subequations}
\begin{align*}
v^{\pi_i}_t(\omega) &= 
v^{\pi_i}_{t_i}(\omega)
+
\int_{t_i}^t
\nablaof{x}b(s,x_s(\omega),u_s(\omega))v^{\pi_i}_s(\omega)\dd s +
\int_{t_i}^t
\nablaof{x}\sigma(s,x_s(\omega))v^{\pi_i}_s(\omega)\dd\mbB_s(\omega),
 &&t\in[t_i,T],
\\
v^{\pi_i}_t(\omega)&=b(t_i,x_{t_i}(\omega),\bar{u}_i(\omega))-b(t_i,x_{t_i}(\omega),u_{t_i}(\omega)),
&&t\in[0,t_i],
\end{align*}
\end{subequations}
with $x,x^\pi,v^{\pi_1},\dots,v^{\pi_q}\in L^\ell(\Omega,C([0,T],\R^n))$.  
Let $\Phi:\R^n\to\R^k$ be continuously differentiable and satisfy $\big\|\nabla\Phi(x)\big\|\leq C_\Phi$ 
and 
$\big\|\nabla\Phi(x)-\nabla\Phi(\tilde{x})\big\|\leq C_\Phi\|x-\tilde{x}\|$ for all $x,\tilde{x}\in\R^n$ for some $C_\Phi<\infty$. 
 Then, 
\begin{align}
\label{eq:needle_like_deltasols:B}
\E\left[
\left\|
\Phi(x^\pi)-\Phi(x)
\right\|_{\infty,[0,T]}
\right]
&\leq 
C\,
\E\left[\exp(CN_{\alpha,[0,T]}(\mbB))\right]
\sum_{i=1}^q\eta_i,
\\[-1mm]
\label{eq:needle_like_deltasols_variation:B}
\E\bigg[
\bigg\|
\Phi(x^\pi)-\Phi(x)-\sum_{i=1}^q\eta_i\nabla\Phi(x)v^{\pi_i}
\bigg\|_{\infty,[t_q+\eta_q,T]}
\bigg]
&\leq 
C\,
\E\left[\exp(CN_{\alpha,[0,T]}(\mbB))\right]
o\bigg(\sum_{i=1}^q\eta_i\bigg),
\end{align}
where $0<C<\infty$ and $0<\alpha<1$ depend on \ellone{$(p,T,b,\sigma,\Phi)$}, and $\E\left[\exp(CN_{\alpha,[0,T]}(\mbB))\right]<\infty$.
\end{restatement} 

\begin{proof}[Proof of Lemma \ref{lem:needle:rough}]
\ellone{
The proof follows the proof of \cite[Lemma 4.3]{Lew2026}, using the bounds  \eqref{eq:needle_like_deltasols}-\eqref{eq:needle_like_deltasols_variation} that do not depend on the (now stochastic) control values $u(\omega),\bar u_1(\omega), \dots, \bar u_q(\omega)$. 
}
\end{proof}

\bibliographystyle{IEEEtran}
\bibliography{main}

@Preamble{"\newcommand{\noopsort}[1]{} " #
"\newcommand{\printfirst}[2]{#1} " #
"\newcommand{\singleletter}[1]{#1} " #
"\newcommand{\switchargs}[2]{#2#1} "}

@String { jrn_EDPS_ESAIMCOCV        = {{ESAIM: Control, Optimisation \& Calculus of Variations}} }

@String { jrn_SIAM_JCO              = {{SIAM Journal on Control and Optimization}} }

@String { jrn_SIAM_JNA              = {{SIAM Journal on Numerical Analysis}} }

@String { proc_ICLR                 = {{Int.\ Conf.\ on Learning Representations}} }

@String { pub_Springer              = {{Springer}} }

@String { pub_Springer_NY           = {{Springer New York}} }

@String { pub_SSBM                  = {{Springer Science \& Business Media}} }

@String { pub_Wiley                 = {{John Wiley \& Sons}} }

@Book{LeGall2016,
  Title                    = {Brownian Motion, Martingales, and Stochastic Calculus},
  Author                   = {Le Gall, J.~F.},
  Publisher                = pub_Springer,
  Year                     = {2016}
}

@book{Agrachev2004,
  year = {2004},
  publisher = {Springer Berlin Heidelberg},
  author = {Agrachev, A.~A. and Sachkov, Y.~L.},
  title = {Control Theory from the Geometric Viewpoint}
}

@article{Diehl2016,
  title = {Stochastic control with rough paths},
  volume = {75},
  number = {2},
  journal = {Applied Mathematics \&; Optimization},
  publisher = pub_SSBM,
  author = {Diehl, J. and Friz, P.~K. and Gassiat, P.},
  year = {2016},
  pages = {285-315}
}

@article{Allan2020,
  title = {Pathwise stochastic control with applications to robust filtering},
  volume = {30},
  number = {5},
  journal = {The Annals of Applied Probability},
  publisher = {Institute of Mathematical Statistics},
  author = {Allan, A.~L. and Cohen, S.~N.},
  year = {2020},
}

@book{Lyons2002,
  title = {System Control and Rough Paths},
  publisher = {Oxford University Press},
  author = {Lyons, T. and Qian, Z.},
  year = {2002},
}

@book{Friz2010,
  title = {Multidimensional Stochastic Processes as Rough Paths: Theory and Applications},
  publisher = {Cambridge University Press},
  author = {Friz, P.~K. and Victoir, N.~B.},
  year = {2010},
}

@book{Friz2020,
  title = {A course on rough paths: with an introduction to regularity structures},
  journal = {Universitext},
  publisher = {Springer International Publishing},
  author = {Friz, P.~K. and Hairer, M.},
  year = {2020}
}

@misc{Allan2021,
  author       = {Allan, A.},
  title        = {Lecture Notes on Rough Path Theory},
  howpublished = {ETH Z\"urich},
  year         = {2021}
}

@article{Friz2018,
  title = {Differential equations driven by rough paths with jumps},
  volume = {264},
  number = {10},
  journal = {Journal of Differential Equations},
  publisher = {Elsevier {BV}},
  author = {Friz, P.~K. and Zhang, H.},
  year = {2018},
  pages = {6226--6301}
}

@article{Friz2013,
  title = {Integrability of (Non-)Linear Rough Differential Equations and Integrals},
  volume = {31},
  number = {2},
  journal = {Stochastic Analysis and Applications},
  author = {Friz, P. and Riedel, S.},
  year = {2013},
  pages = {336--358}
}

@article{Bayer2016,
  title = {From Rough Path Estimates to Multilevel {Monte} {Carlo}},
  volume = {54},
  number = {3},
  journal = jrn_SIAM_JNA,
  author = {Bayer, C. and Friz, P.~K. and Riedel, S. and Schoenmakers, J.},
  year = {2016},
  month = jan,
  pages = {1449--1483}
}

@article{Berret2020,
  title = {Stochastic optimal open-loop control as a theory of force and impedance planning via muscle co-contraction},
  volume = {16},
  number = {2},
  journal = {{PLOS} Computational Biology},
  author = {Berret, B. and Jean, F.},
  year = {2020},
}

@book{Yong1999,
  title = {Stochastic controls: {H}amiltonian systems and {HJB} equations},
  publisher = pub_Springer_NY,
  author = {Yong, J. and Zhou, X.~Y.},
  year = {1999}
}

@article{BonalliLewESAIM2022,
  author    = {Bonalli, R. and Lew, T. and Pavone, M.},
  title     = {Sequential Convex Programming For Non-Linear Stochastic Optimal Control},
  journal   = jrn_EDPS_ESAIMCOCV,
  volume    = {28},
  number    = {64},
  year      = {2022},
}

@article{E2017,
  title = {Deep Learning-Based Numerical Methods for High-Dimensional Parabolic Partial Differential Equations and Backward Stochastic Differential Equations},
  volume = {5},
  number = {4},
  journal = {Communications in Mathematics and Statistics},
  publisher = {Springer Science and Business Media LLC},
  author = {E,  W. and Han, J. and Jentzen, A.},
  year = {2017},
  pages = {349--380}
}

@article{Rogers2007,
  title = {Pathwise Stochastic Optimal Control},
  volume = {46},
  number = {3},
  journal = jrn_SIAM_JCO,
  author = {Rogers,  L.~C.~G.},
  year = {2007},
  pages = {1116--1132}
}

@article{HADavis1992,
  title = {A Deterministic Approach To Stochastic Optimal Control With Application To Anticipative Control},
  volume = {40},
  journal = {Stochastics and Stochastic Reports},
  author = {Davis, H.~A.~M. and Burstein, G.},
  year = {1992},
  pages = {203--256}
}

@article{Lew2026,
  title = {Rough Stochastic {P}ontryagin Maximum Principle and an Indirect Shooting Method},
  volume = {64},
  number = {2},
  journal = jrn_SIAM_JCO,
  author = {Lew, T.},
  year = {2026},
  pages = {869--905}
}

@INPROCEEDINGS{Domingo2025,
author    = {Domingo-Enrich, C. and Drozdzal, M. and Karrer, B. and Chen, R.~T.~Q},
title     = {Adjoint Matching: Fine-tuning Flow and Diffusion Generative Models with Memoryless Stochastic Optimal Control},
booktitle = proc_ICLR,
year      = {2025},
}

@article{FrizControlled2024,
author = {Friz, P.~K. and L\^e, K. and Zhang, H.},
title = {Controlled rough {SDEs}, pathwise stochastic control and dynamic programming principles},
journal = {The Annals of Probability},
year = {2024},
}

@article{Domingo2026,
	title={Adjoint Matching through the Lens of the Stochastic Maximum Principle in Optimal Control},
	author={Domingo-Enrich, C. and Han, J.},
	journal={Transactions on Machine Learning Research},
	year={2026},
}

@article{Chernousko1982,
  title = {Method of successive approximations for solution of optimal control problems},
  volume = {3},
  journal = {Optimal Control Applications and Methods},
  author = {Chernousko, F.~L. and Lyubushin,  A.~A.},
  year = {1982},
  pages = {101--114}
}

@misc{Bank2026,
  author = {Bank, P. and Dause, J.~R. and de Feo, F. and Friz, P.~K.},
  title = {Duality for Stochastic Control with non-{Markovian} Random Coefficients},
  year = {2026},
	note      = {Available at \url{https://arxiv.org/abs/2609.05101}},
}

@misc{Ashkarian2026,
  author = {Ashkarian, E. and Chakraborty, P. and Honnappa, H. and Tindel, S.},
  title = {The {Pontryagin} maximum principle and {$Q$}-functions in rough environments},
  year = {2026},
	note      = {Available at \url{https://arxiv.org/abs/2601.05354}},
}

@article{Horst2026,
  author = {Horst, U. and Zhang, H.},
  title = {Pontryagin Maximum Principle for rough stochastic systems and pathwise stochastic control},
  year = {2026},
  volume = {64},
  number = {5},
  journal = jrn_SIAM_JCO,
}

@article{Buckdahn2007,
  title = {Pathwise Stochastic Control Problems and Stochastic {HJB} Equations},
  volume = {45},
  number = {6},
  journal = jrn_SIAM_JCO,
  author = {Buckdahn, R. and Ma, J.},
  year = {2007},
  pages = {2224--2256}
}

@book{EthierKurtz1986,
  author    = {Ethier, S.~N. and Kurtz, T.~G.},
  title     = {Markov Processes: Characterization and Convergence},
  publisher = pub_Wiley,
  year      = {1986}
}

\end{document}